\newtheorem{theorem}{Theorem}
\newaliascnt{lemma}{theorem}
\newtheorem{lemma}[lemma]{Lemma}
\newaliascnt{corollary}{theorem}
\newtheorem{corollary}[corollary]{Corollary}
\newaliascnt{proposition}{theorem}
\newtheorem{proposition}[proposition]{Proposition}
\newaliascnt{conjecture}{theorem}
\newtheorem{conjecture}[conjecture]{Conjecture}
\newaliascnt{question}{theorem}
\theoremstyle{definition}
\newaliascnt{definition}{theorem}
\newtheorem{definition}[definition]{Definition}
\newaliascnt{remark}{theorem}
\newtheorem{remark}[remark]{Remark}
\newaliascnt{example}{theorem}
\newaliascnt{notation}{theorem}
\newtheorem{notation}[notation]{Notation}
\newtheorem*{acknowledgements}{Acknowledgements}
\newif\ifhascomments \hascommentstrue
\renewcommand{\setminus}{\smallsetminus}
\newcommand{\Z}{\mathbb{Z}}
\newcommand{\bZ}{\mathbb{Z}}
\newcommand{\Q}{\mathbb{Q}}
\newcommand{\R}{\mathbb{R}}
\newcommand{\cT}{\mathscr{T}}
\newcommand{\cX}{\mathcal{X}}
\newcommand{\cY}{\mathcal{Y}}
\newcommand{\cZ}{\mathcal{Z}}
\newcommand{\cI}{\mathcal{I}}
\newcommand{\cA}{\mathcal{A}}
\newcommand{\cC}{\mathcal{C}}
\newcommand{\cD}{\mathcal{D}}
\newcommand{\cU}{\mathcal{U}}
\newcommand{\cO}{\mathcal{O}}
\newcommand{\cV}{\mathcal{V}}
\newcommand{\cF}{\mathcal{F}}
\newcommand{\cG}{\mathcal{G}}
\newcommand{\cH}{\mathcal{H}}
\newcommand{\cJ}{\mathscr{J}}
\newcommand{\mcJ}{\mathcal{J}}
\newcommand{\cK}{\mathcal{K}}
\newcommand{\cE}{\mathcal{E}}
\newcommand{\cP}{\mathcal{P}}
\newcommand{\cL}{\mathcal{L}}
\newcommand{\cR}{\mathcal{R}}
\newcommand{\sL}{\mathscr{L}}
\newcommand{\sM}{\mathscr{M}}
\newcommand{\sI}{\mathscr{I}}
\newcommand{\sJ}{\mathscr{J}}
\newcommand{\bL}{\mathbb{L}}
\newcommand{\bH}{\mathbb{H}}
\newcommand{\bG}{\mathbb{G}}
\newcommand{\bA}{\mathbb{A}}
\newcommand{\Var}{\mathbf{Var}}
\newcommand{\diff}{\mathrm{d}}
\newcommand{\red}{\mathrm{red}}
\newcommand{\id}{\mathrm{id}}
\newcommand{\Gor}{\mathrm{Gor}}
\newcommand{\str}{\mathrm{str}}
\DeclareMathOperator{\rep}{rep}
\DeclareMathOperator{\synt}{synt}
\DeclareMathOperator{\Res}{Res}
\DeclareMathOperator{\st}{st}
\DeclareMathOperator{\het}{ht}
\DeclareMathOperator{\e}{e}
\DeclareMathOperator{\Spec}{Spec}
\DeclareMathOperator{\Ext}{Ext}
\DeclareMathOperator{\ord}{ord}
\DeclareMathOperator{\Hom}{Hom}
\DeclareMathOperator{\Isom}{Isom}
\DeclareMathOperator{\Aut}{Aut}
\DeclareMathOperator{\GL}{GL}
\DeclareMathOperator{\len}{len}
\DeclareMathOperator{\tors}{tors}
\DeclareMathOperator{\coh}{coh}
\DeclareMathOperator{\coker}{coker}
\DeclareMathOperator{\rank}{rank}
\DeclareMathOperator{\uHom}{\underline{\Hom}}
\DeclareMathOperator{\uIsom}{\underline{\Isom}}
\DeclareMathOperator{\uAut}{\underline{\Aut}}
\DeclareMathOperator{\HD}{HD}
\newcommand{\sW}{\mathscr{W}}
\newcommand{\sHom}{\mathscr{H}om}
\DeclareMathOperator{\wt}{wt}
\tikzset{cong/.style={draw=none,edge node={node [sloped, allow upside down, auto=false]{$\cong$}}},
         Isom/.style={above,every to/.append style={edge node={node [sloped, allow upside down, auto=false]{$\sim$}}}}}
\title{Stringy Hodge numbers via crepant resolutions by Artin stacks}
\author{Matthew Satriano and Jeremy Usatine}
\thanks{MS was partially supported by a Discovery Grant from the National Science and Engineering Research Council of Canada as well as a Mathematics Faculty Research Chair from the University of Waterloo}
\address{Matthew Satriano, Department of Pure Mathematics, University of Waterloo}
\email{msatriano@uwaterloo.ca}
\address{Jeremy Usatine, Department of Mathematics, Florida State University}
\email{jusatine@fsu.edu}
\begin{document}

\begin{abstract}
In a previous paper we showed that any variety with log-terminal singularities admits a crepant resolution by a smooth Artin stack. In this paper we prove the converse, thereby proving that a variety admits a crepant resolution by a smooth Artin stack if and only if it has log-terminal singularities. Furthermore if $\cX \to Y$ is such a resolution, we obtain a formula for the stringy Hodge numbers of $Y$ in terms of (motivically) integrating an explicit weight function over twisted arcs of $\cX$. That weight function takes only finitely many values, so we believe this result provides a plausible avenue for finding a long-sought cohomological interpretation for stringy Hodge numbers. Using that the resulting integral is defined intrinsically in terms of $\cX$, we also obtain a notion of stringy Hodge numbers for smooth Artin stacks, that in particular, recovers Chen and Ruan's notion of orbifold Hodge numbers.
\end{abstract}

\maketitle

\setcounter{tocdepth}{1}

\tableofcontents

\numberwithin{theorem}{section}
\numberwithin{lemma}{section}
\numberwithin{corollary}{section}
\numberwithin{proposition}{section}
\numberwithin{conjecture}{section}
\numberwithin{question}{section}
\numberwithin{remark}{section}
\numberwithin{definition}{section}
\numberwithin{example}{section}
\numberwithin{notation}{section}

\section{Introduction}

Let $V$ be a complex projective variety. It is natural to ask if there is a nice notion of Hodge numbers for $V$ even in the case where $V$ has singularities. Although we will not attempt to summarize the various successful approaches to this question that have appeared throughout the literature, we note that one naive first attempt at such a question would be to take a resolution of singularities $X \to V$ and consider the Hodge numbers of $X$. Of course, this recipe depends on the choice of resolution $X \to V$ and therefore fails to provide a well defined notion for $V$. Therefore one might hope to modify this recipe by considering only a special class of resolutions. From the perspective of mirror symmetry, it is natural to consider only \emph{crepant} resolutions $X \to V$, i.e., those whose relative canonical divisor $K_{X/V}$ is trivial. This approach has many favorable features. For example if $X \to V$ and $X' \to V$ are both crepant resolutons, then $X$ and $X'$ have the same Hodge numbers\footnote{Deep questions remain about which other invariants $X$ and $X'$ share. For example, entire swaths of the literature have been inspired by a famous open conjecture of Bondal and Orlov, which predicts that $X$ and $X'$ are derived equivalent.} \cite{Kontsevich} (or see e.g., \cite[Theorem 3.4]{Batyrev1998}). Unfortunately, this approach also suffers from a major weakness: the condition that $V$ admit a crepant resolution is a rather restrictive one. One remedy for this weakness is to allow more general resolutions $X \to V$, but instead of considering the Hodge numbers of $X$, one instead considers a recipe depending on the pair $(X, K_{X/V})$. In \cite{Batyrev1998} Batyrev used this approach to define the \emph{stringy Hodge numbers} of $V$ when $V$ has log-terminal singularities. Log-terminal singularities are a broad class that, for example, arise in the minimal model program, and Batyrev's stringy Hodge numbers have seen many beautiful applications across a diverse range of areas including mirror symmetry and the representation theory of finite groups. But as successful as this approach has been, it produces a downside. Because of the way in which $K_{X/V}$ is cooked into the definition, the stringy Hodge numbers of $V$ have no known interpretation as dimensions of cohomology groups. For example, this is why the following well-known conjecture of Batyrev's remains wide-open.

\begin{conjecture}[{\cite[Conjecture 3.10]{Batyrev1998}}]\label{conjectureBatyrevNonnegativity}
The stringy Hodge numbers of a projective variety are nonnegative.
\end{conjecture}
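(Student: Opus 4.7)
The plan is to combine the two main results of this paper: first, that any variety $Y$ with log-terminal singularities admits a crepant resolution by a smooth Artin stack $\cX \to Y$, and second, that the stringy Hodge numbers of $Y$ can be computed by motivically integrating an explicit weight function $w$ over the twisted arc space of $\cX$, where $w$ takes only finitely many values. The hope, emphasized in the abstract, is that this reformulation opens a route to interpreting the stringy Hodge numbers as dimensions of honest cohomology groups, from which \autoref{conjectureBatyrevNonnegativity} would be immediate.

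Concretely, I would stratify the twisted arc space of $\cX$ into finitely many constructible pieces $Z_a$ according to the level sets of $w$, and rewrite the integral defining the stringy $E$-function as a finite sum $\sum_a \bL^{-a} \, [Z_a]$ in a suitable ring of motives. Because the shifts are fixed integers and the sum is finite, no limiting or cancellation issues appear, and the stringy $E$-function of $Y$ is manifestly a finite $\Z$-linear combination of Hodge--Deligne polynomials of the strata $Z_a$ (once these have been reduced to finite-dimensional truncations via a standard cylinder argument on some jet space $\cL_n(\cX)$). The next step is to extract the individual stringy Hodge numbers by isolating bidegrees: one wants to exhibit, for each $(p,q)$, a mixed Hodge structure whose $(p,q)$-piece has dimension equal to the stringy Hodge number $h^{p,q}_{\str}(Y)$. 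A natural candidate is to build a spectral sequence converging to such a cohomology, whose $E_1$-page assembles the contributions $\bL^{-a}[Z_a]$ of the strata, and to show degeneration at $E_1$; the finiteness of the weight function is what makes this bounded and potentially tractable.

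The main obstacle, and the reason the conjecture has resisted attack, is the passage from the motivic identity (a signed sum of Hodge--Deligne polynomials of strata) to an unsigned statement about dimensions. Even though each $Z_a$ contributes a class whose Hodge--Deligne realization has nonnegative coefficients, the shifts by $\bL^{-a}$ and the combination across strata could, a priori, produce negative coefficients in the stringy polynomial; ruling this out requires either (i) a geometrically meaningful filtration on a single cohomology theory of $\cX$ whose graded pieces realize the $Z_a$, or (ii) an Eilenberg--Moore or Leray-type argument identifying the total contribution with $H^*$ of an actual space. I would first test this framework in settings where the answer is known: the toric and, more generally, orbifold case, where Chen--Ruan cohomology should recover the correct dimensions and thereby serve as a proof of concept. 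A successful match there would suggest that the required cohomology in general is a stacky analogue of Chen--Ruan cohomology, built from the inertia-like geometry of the twisted arc space of $\cX$, and the remaining task would be to construct it and prove the comparison with the motivic integral.
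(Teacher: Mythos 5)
This statement is Batyrev's conjecture, which the paper explicitly describes as remaining wide open; the paper does not prove it, and neither does your proposal. What you have written is a research program, not a proof, and you yourself name the unfilled gap in your third paragraph: the passage from the motivic identity to an unsigned statement about dimensions of cohomology groups. The paper's contribution (\autoref{maintheorem} and \autoref{corollaryStringyHodgeDeligne}) is exactly the finite-sum reformulation
\[
	\int_{\cA_\cX} \bL^{-\wt_\cX}\diff\nu_\cX = \sum_{w \in \Z}\bL^{-w} \nu_{\cX}(\cA_\cX \cap \wt_\cX^{-1}(w)),
\]
and the authors are explicit that nonnegativity would follow only if one could give a cohomological interpretation of each $\HD\left(\nu_{\cX}(\cA_\cX \cap \wt_\cX^{-1}(w))\right)$ — which is precisely what is missing and is described as ongoing, speculative work.

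Two concrete steps in your sketch would fail as stated. First, you propose to reduce the strata $Z_a = \cA_\cX \cap \wt_\cX^{-1}(w)$ to constructible subsets of a finite-level jet stack ``via a standard cylinder argument''; but these sets are in general \emph{not} cylinders (the paper gives the example $\cX = [\bA^2_k/\bG_m]$ with weights $1$ and $-1$), only measurable, so their volumes live in the completed ring $\widehat{\sM}_k$ and are not finite $\Z$-linear combinations of classes of finite-type stacks. The Ekedahl/Laszlo--Olsson route to cohomology therefore only works in the Deligne--Mumford (orbifold) case, where Chen--Ruan cohomology is already known; it does not extend to the stacks produced by \autoref{citedTheoremLogTerminalImpliesCrepantExists} for a general log-terminal $Y$. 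Second, even granting a cohomological realization of each stratum, nonnegativity requires purity or $E_1$-degeneration statements that you only hope for, and any correct argument must somewhere use the hypothesis $\HD_{\str}(Y) \in \Z[u,v]$: the Schepers--Veys example cited in the paper shows that without polynomiality the sign-corrected coefficients can genuinely be negative, so a purely formal argument from the finite-sum formula cannot succeed.
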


From the perspective of e.g., \autoref{conjectureBatyrevNonnegativity}, it would be useful to rescue the approach of studying stringy Hodge numbers via crepant resolutions. More precisely, we could ask which log-terminal varieties admit crepant resolutions if we are willing to also consider resolutions by stacks. In recent work, we showed that the answer to this question is \emph{all of them}.

\begin{theorem}[{\cite[Theorem 1.1\footnote{Although \cite[Theorem 1.1]{SatrianoUsatine3} is stated with the assumption that $Y$ is a variety, i.e., separated, the proof never uses that assumption.}]{SatrianoUsatine3}}]\label{citedTheoremLogTerminalImpliesCrepantExists}
Let $k$ be an algebraically closed field of characteristic 0, and let $Y$ be an irreducible finite type scheme over $k$ with log-terminal singularities. Then there exists a smooth irreducible finite type Artin stack $\cX$ over $k$ and a crepant resolution $\cX \to Y$ with affine diagonal.
\end{theorem}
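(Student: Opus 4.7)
The plan is to start with a log resolution $f \colon \widetilde Y \to Y$, available by Hironaka in characteristic $0$, with simple normal crossings exceptional divisor $E = \sum_i E_i$ and rational discrepancies $a_i > -1$ from the log-terminal hypothesis. The strategy is then to modify $\widetilde Y$ by a stacky construction producing a smooth Artin stack $\cX \to Y$ whose relative canonical vanishes, so that the composition is crepant.

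The cleanest first attempt is an iterated root stack $\pi \colon \cX \to \widetilde Y$ of weight $r_i$ along each $E_i$. With $\widetilde E_i$ the stacky root divisor satisfying $\pi^{\ast} E_i = r_i \widetilde E_i$, the root-stack canonical formula together with $K_{\widetilde Y/Y} = \sum a_i E_i$ yields
\[
K_{\cX/Y} \;=\; \sum_i \bigl(r_i(a_i+1) - 1\bigr)\,\widetilde E_i.
\]
Crepancy forces $r_i = 1/(a_i+1)$, which is a positive integer only when $a_i + 1$ is the reciprocal of a positive integer. For a general log-terminal singularity the $a_i$ are arbitrary rationals in $(-1,\infty)$, so root stacks alone do not suffice, and I expect this to be the main technical obstacle.

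To overcome it, I would enlarge the toolkit to include stacky weighted blowups of smooth toroidal centers in $(\widetilde Y, E)$. Such blowups are realizable as smooth Artin stacks (locally quotients of affine schemes by $\bG_m$-actions), they preserve the toroidal structure, and they allow one to introduce new exceptional divisors with prescribed rational discrepancies controlled by the chosen weights. Writing each $a_i + 1 = p_i/q_i$ in lowest terms, the idea is to iteratively apply such stacky blowups --- with weights guided by the combinatorial data of the SNC stratification of $E$ --- until every surviving exceptional component has discrepancy of the form $(n-1)/n$, at which point a final iterated root stack achieves crepancy. Termination and existence of such a sequence should reduce to a combinatorial subdivision argument on the fan of the toroidal pair $(\widetilde Y, E)$, with the inequality $a_i > -1$ guaranteeing positivity of all weights that appear.

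Smoothness, irreducibility, and finite type are preserved step by step, since root stacks along smooth Cartier divisors and weighted blowups of smooth centers are smooth stacky modifications with connected geometric fibers. The affine diagonal conclusion follows because each such operation locally presents $\cX$ as a quotient of an affine scheme by a linearly reductive group, a property preserved under composition.
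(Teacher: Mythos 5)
You should first note that the paper does not prove this statement at all: it is quoted verbatim from \cite[Theorem 1.1]{SatrianoUsatine3}, so the only meaningful comparison is with the construction given there. Measured against that construction, your plan has a genuine gap, not just a technical one. Every tool you allow yourself --- root stacks along snc divisors and stacky weighted blowups with positive weights --- produces, in characteristic $0$, a tame Deligne--Mumford stack with finite cyclotomic stabilizers $\mu_d$. Within that world crepancy over a general log-terminal $Y$ is unreachable. The discrepancy $a(E;Y)$ of a divisorial valuation is independent of the model on which $E$ appears, so the exceptional divisors of your log resolution keep their discrepancies after every further (stacky, toroidal, weighted) blowup; the only operation in your toolkit that changes a coefficient is the root stack, which replaces $a_i$ by $r_i(a_i+1)-1$ and hence reaches $0$ only when $a_i=\tfrac{1}{r_i}-1\in(-1,0]$ (note the sign: the target shape is $-(n-1)/n$, not $(n-1)/n$). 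Any valuation with $a_i>0$, or with $a_i+1$ not the reciprocal of an integer, can therefore never be made crepant by your procedure, and it cannot be discarded by further subdivision either, since subdivisions retain the old rays. The failure is not an artifact of your specific recipe: for instance, for a factorial terminal Gorenstein $3$-fold point such as $x^2+y^2+z^2+w^3=0$, every exceptional divisorial valuation has discrepancy $\geq 1$, and one checks that no smooth tame DM stack is crepant over $Y$ at all (crepancy forbids extracting any such valuation or rooting along non-exceptional divisors, smallness plus factoriality forces the coarse space to equal $Y$, and $Y$ is not a quotient singularity); equivalently, a crepant resolution by a smooth tame DM stack would make the stringy $E$-function an orbifold $E$-polynomial, which fails for general log-terminal singularities.

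This is exactly why the theorem is stated for Artin stacks: the essential new idea in \cite{SatrianoUsatine3}, which your proposal is missing, is to pass from finite cyclotomic stabilizers to positive-dimensional ($\bG_m$-type) stabilizers along the exceptional locus. The construction there does begin, as you do, with a resolution and the discrepancies $a_i$, but it then glues in quotient stacks by tori rather than iterated root stacks; having a pair of integer weights of mixed sign at one's disposal (the flavor of local model $[\bA^2_k/\bG_m]$ with weights $1,-1$ that already appears in this paper's introduction), rather than a single root order $r_i$, is what allows an arbitrary rational discrepancy $a_i>-1$ to be cancelled, and it is also why the resulting $\cX\to Y$ only factors as a good moduli space morphism followed by a tame proper map rather than having a proper coarse space. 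So apart from the sign slip, the correct parts of your proposal (the log resolution, the root-stack discrepancy formula, and the observation that root stacks alone do not suffice) are the easy parts; the step you hope to supply by ``stacky weighted blowups plus a combinatorial subdivision argument'' is precisely the step that cannot work inside the Deligne--Mumford world.
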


In light of this existence result, we turn our focus in this paper to the task of \emph{applying} such a stacky resolution. In particular we show the converse to \autoref{citedTheoremLogTerminalImpliesCrepantExists}, thereby obtaining a new criterion for varieties to have log-terminal singularities. We also obtain a new formula for stringy Hodge numbers in terms of these stacky resolutions. To our knowledge, this formula is the first known description for stringy Hodge numbers in terms of motivically integrating a function \emph{that takes only finitely many values}. As we will discuss below, we believe such an integral provides a plausible avenue for finding a long-sought cohomological interpretation for stringy Hodge numbers.

\subsection{Statement of main results}

Throughout this paper, $k$ is an algebraically closed field of characteristic 0. 

We begin by recalling our definition of a crepant resolution $\cX \to Y$ by an Artin stack $\cX$. In the special case where the stack $\cX$ is a scheme, the following definition coincides with the usual definition of crepant resolution.

\begin{definition}
Let $Y$ be a $\Q$-Gorenstein irreducible finite type scheme over $k$, let $\cX$ be a smooth irreducible finite type Artin stack over $k$, and let $\cX \to Y$ be a morphism over $k$. We call $\cX \to Y$ \emph{weakly birational} if there exists a dense open substack $\cU$ of $\cX$ such that the composition $\cU \hookrightarrow \cX \to Y$ is an open immersion. In that case we may consider the \emph{relative canonical divisor} $K_{\cX/Y}$ as defined in \cite[Definition 1.6]{SatrianoUsatine3}, which in particular is a $\Q$-Cartier divisor on $\cX$. We call $\cX \to Y$ \emph{crepant} if it is weakly birational and $K_{\cX/Y} = 0$. We call $\cX \to Y$ \emph{strongly birational} if there exists a dense open subscheme $V$ of $Y$ such that $V \times_Y \cX \to V$ is an isomorphism. We call $\cX \to Y$ a \emph{resolution} if it is strongly birational and factors as a good moduli space morphism followed by a tame\footnote{Although the definition in \cite{SatrianoUsatine3} did not include this tameness hypothesis, the crepant resolutions constructed there satisfy this constraint, see e.g., \cite[Theorem 8.6]{SatrianoUsatine3}. The tameness assumption makes the definition equivalent to $\cX$ admitting a good moduli space $X$ and the induced map $X \to Y$ being proper.} proper morphism.
\end{definition}

Our first main result is the converse to \autoref{citedTheoremLogTerminalImpliesCrepantExists}. Together we obtain the following new characterization of log-terminal singularities.

\begin{theorem}\label{mainTheoremCrepantResolutionImpliesLogTerminal}
Let $Y$ be a $\Q$-Gorenstein irreducible finite type scheme over $k$. Then the following are equivalent:
\begin{enumerate}
\item There exists a smooth irreducible finite type Artin stack $\cX$ over $k$ and a crepant resolution $\cX \to Y$.
\item $Y$ has log-terminal singularities.
\end{enumerate}
\end{theorem}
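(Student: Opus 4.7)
The direction $(2) \Rightarrow (1)$ is \autoref{citedTheoremLogTerminalImpliesCrepantExists}, so the task is to prove $(1) \Rightarrow (2)$: given a crepant resolution $\cX \to Y$ with $\cX$ a smooth Artin stack, the plan is to show that the discrepancy $a(E, Y)$ of every divisor $E$ over $Y$ satisfies $a(E, Y) > -1$.

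The first approach, fitting the motivic framework of the paper, is to leverage the change-of-variables formula for twisted arcs. The crepancy $K_{\cX/Y} = 0$ makes the stacky motivic integral $\int \mathbb{L}^{-w}$ over (cylinders in) the space of twisted arcs of $\cX$ correspond, via any log resolution $\pi \colon W \to Y$, to the Batyrev-style integral $\int \mathbb{L}^{-\mathrm{ord}_{t} K_{W/Y}}$ over the classical arc space of $W$. The paper shows that the weight function $w$ takes only finitely many values, so the stacky integral is bounded and hence finite on every cylinder; consequently the Batyrev-side integral is also finite on every cylinder. The contribution from arcs with fixed positive contact order along an exceptional divisor $E$ is a geometric-type series in $\mathbb{L}^{-(a(E, Y) + 1)}$, which is bounded in the appropriate completion of the Grothendieck ring of varieties precisely when $a(E, Y) + 1 > 0$. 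Finiteness along every such cylinder therefore forces $a(E, Y) > -1$ for every divisor $E$ over $Y$, which is exactly log-terminality.

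A more direct alternative runs as follows. For each divisor $E$ over $Y$, fix a log resolution $\pi \colon W \to Y$ realizing $E$ and produce a smooth Artin stack $\tilde{\mathcal{W}}$ by a sequence of root stacks on $W$ (with some multiplicity $r \geq 1$ along $E$), carrying a representable proper birational morphism $f \colon \tilde{\mathcal{W}} \to \cX$ over $Y$. One then computes the relative canonical $K_{\tilde{\mathcal{W}}/Y}$ in two ways: via $g \colon \tilde{\mathcal{W}} \to W$ and the root stack formula, the coefficient of the stacky transform $\tilde{E}$ in $K_{\tilde{\mathcal{W}}/W} + g^{*} K_{W/Y}$ is $(r - 1) + r \cdot a(E, Y) = r(a(E, Y) + 1) - 1$; via $f$ together with crepancy, $K_{\tilde{\mathcal{W}}/Y} = K_{\tilde{\mathcal{W}}/\cX}$, whose coefficient at the exceptional divisor $\tilde{E}$ is $\geq 0$ by the smoothness of $\tilde{\mathcal{W}}$ and $\cX$ together with representability in codimension one. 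Equating the two yields $r(a(E, Y) + 1) \geq 1$, hence $a(E, Y) > -1$.

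The principal obstacle in the motivic approach is establishing the change-of-variables formula between twisted arcs of $\cX$ and arcs of $W$, together with the finiteness of $w$; this is precisely where the paper's technical heart lies. In the direct approach, the obstacle is constructing $\tilde{\mathcal{W}}$ equipped with a representable morphism to a general (Artin, not just Deligne--Mumford) $\cX$: root stacks suffice in the DM case, but for general $\cX$ one must either extend the stacky-modification toolkit to positive-dimensional stabilizers or work locally on $\cX$ via the local structure theorem for good moduli space morphisms so as to reduce to a quotient-stack situation.
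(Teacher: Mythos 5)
Your reduction of the problem to $(1)\Rightarrow(2)$ and the general shape of your first (motivic) sketch are in the spirit of the paper, but as written both of your routes have the same genuine gap: nothing in the proposal gets you from the Artin stack $\cX$ to a tame model, and that passage is where the paper's proof actually lives. In your motivic argument you integrate $\bL^{-\wt_\cX}$ over cylinders in all of $|\sJ(\cX)|$ and assert that this corresponds, through $Y$, to a Batyrev-type integral on a log resolution $W$. But the change of variables (\autoref{corollaryTwistedChangeOfVariables}) requires a point-level bijection $\overline{\cC}(k')\to E(k')$ between twisted arcs and arcs of $Y$, and for a genuinely Artin $\cX$ this fails on $|\sJ(\cX)|$: already for $\cX=[\bA^2_k/\bG_m]$ with weights $1,-1$ over $Y=\Spec k[xy]$, the arc $xy=t$ lifts to the two untwisted arcs $(t,1)$, $(1,t)$ and also to a twisted arc of order $2$ through the origin, so the unrestricted correspondence you rely on is false. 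Likewise, the finiteness of the values of $\wt_\cX$ is only proved (and only invoked) on $\cA_\cX$, the image of the twisted arcs of the Edidin--Rydh canonical reduction of stabilizers $\cX'\to\cX$; on all of $|\sJ(\cX)|$ infinitely many sectors $\sJ^\ell(\cX)$ are nonempty and no such finiteness is available. The paper's proof of $(1)\Rightarrow(2)$ is precisely the mechanism you are missing: replace $\cX$ by $\cX'$, which is smooth, tame, proper and birational over $Y$ and satisfies $K_{\cX'/Y}\geq 0$ because $K_{\cX'/\cX}\geq 0$ (\autoref{theoremEffectiveResolutionImpiesLogTerminal}, using \cite[Theorem 2.11]{EdidinRydh} and \cite[Lemma 8.2]{SatrianoUsatine3}); then tameness makes $\cA_{\cX'}=|\sJ(\cX')|$ a bounded cylinder, $K\geq 0$ together with local constancy of the weight (\autoref{propositionWeightFunctionIsLocallyConstant}) and the thin-set result (\autoref{theoremThinSubsetOfTwistedJets}) gives integrability, and \autoref{corollaryLogTerminalConvergenceExpressionForGorensteinWithCanonical} converts integrability into log-terminality of $Y$.

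Your second, ``direct'' sketch has the same hole, and you flag it yourself: the existence of a smooth stack $\widetilde{\sW}$ over a log resolution of $Y$ with a \emph{representable} proper birational map to a general Artin $\cX$ is exactly what is not supplied; root stacks only deliver this in the Deligne--Mumford/tame case, and even there the choice of rooting index and the extension of the rational map to a morphism require an argument, not an observation. Granting such a map, your discrepancy bookkeeping $(r-1)+r\,a(E,Y)\geq 0$, hence $a(E,Y)>-1$, is fine (modulo treating separately the case where $E$ is already a divisor on $\cX$, where crepancy gives $a(E,Y)=0$ directly), but the unproven lifting step is precisely the difficulty the paper sidesteps with the canonical reduction of stabilizers rather than solving head-on. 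The concrete fix for either of your arguments is therefore to insert the reduction-of-stabilizers step, with the verification $K_{\cX'/\cX}\geq 0$, before running the rest.
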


Our next main result provides a formula for stringy Hodge numbers in terms of crepant resolutions by smooth Artin stacks. In what follows, $\mu^\Gor_Y$ denotes the \emph{Gorenstein measure} on the arc scheme $\sL(Y)$ as introduced by Denef and Loeser \cite{DenefLoeser2002}, and $\nu_\cX$ denotes the motivic measure on the stack $\sJ(\cX)$ of twisted arcs of $\cX$. See \autoref{sectionMotivicIntegrationTwistedArcs} below for the definitions of $\sJ(\cX)$, $\nu_\cX$, $\wt_\cX$, and the associated integrals.

\begin{notation}
If $\cX$ is a finite type Artin stack over $k$ with affine diagonal and $\cX$ has a stable good moduli space, we define a subset $\cA_\cX \subset |\sJ(\cX)|$ as follows. Let $\cX' \to \cX$ be the canonical reduction of stabilizers of Edidin--Rydh \cite[Theorem 2.11]{EdidinRydh}. The representable map $\cX' \to \cX$ induces in the obvious way a map $\sJ(\cX') \to \sJ(\cX)$, and we let $\cA_\cX$ denote the image of the associated map on topological spaces $|\sJ(\cX')| \to |\sJ(\cX)|$.
\end{notation}

\begin{theorem}\label{maintheorem}
Let $Y$ be a $\Q$-Gorenstein irreducible finite type scheme over $k$, let $\cX$ be a smooth irreducible finite type Artin stack over $k$, let $\cX \to Y$ be a crepant resolution, and assume that $\cX$ has affine diagonal. Then $\bL^{-\wt_\cX}$ is integrable on $\cA_\cX$, and
\[
	\int_{\sL(Y)}\diff\mu^\Gor_Y = \int_{\cA_\cX} \bL^{-\wt_\cX}\diff\nu_\cX.
\]
Furthermore, the function $\wt_\cX$ takes only finitely many values on $\cA_\cX$.
\end{theorem}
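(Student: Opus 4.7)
My plan is to prove the equality as a compound change-of-variables formula, reducing to two pieces by factoring the crepant resolution through the good moduli space. Since $\cX \to Y$ is a resolution, we may write it as $\cX \xrightarrow{\pi} X \to Y$ where $\pi$ is a good moduli space morphism and $X \to Y$ is a tame proper birational morphism of schemes. Using $K_{\cX/Y}=0$ together with the pullback relation $K_{\cX/Y} = K_{\cX/X} + \pi^* K_{X/Y}$, one identifies $K_{X/Y}$ in terms of $\pi$, and in particular sees that $X$ has log-terminal singularities (so that $\mu_X$ makes sense in the Gorenstein sense if needed).

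On the scheme side, I would apply the Denef--Loeser motivic change of variables for the Gorenstein measure (\cite{DenefLoeser2002}) to the proper birational morphism $X \to Y$ to obtain
\[
\int_{\sL(Y)}\diff\mu^{\Gor}_Y \;=\; \int_{\sL(X)} \bL^{-\ord K_{X/Y}}\,\diff\mu_X.
\]
On the stack side, composition with $\pi$ sends twisted arcs of $\cX$ to ordinary arcs of $X$, inducing a natural morphism $\sJ(\cX) \to \sL(X)$. By construction $\cA_\cX$ is the image of the twisted arcs of the Edidin--Rydh canonical reduction $\cX' \to \cX$, i.e.\ the locus where the stabilizer behavior has been canonically tamed. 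The crux is to show that, restricted to $\cA_\cX$, this morphism pushes $\nu_\cX$ forward to $\mu_X$ with Jacobian $\bL^{-(\wt_\cX - \ord K_{X/Y}\circ \pi)}$. Concatenating the two formulas yields the equality in the theorem.

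For the finiteness claim, on $\cA_\cX$ the weight $\wt_\cX$ should decompose as an age-type invariant of the stabilizer action along the twisted arc plus a term involving $\ord K_{\cX/Y}$, the latter vanishing by crepancy. The Edidin--Rydh reduction forces the inertia on $\cX'$ to be finite and controlled, so the age invariant ranges over a finite set of rational numbers bounded by $\dim \cX$. Hence $\wt_\cX$ takes only finitely many values on $\cA_\cX$, and integrability of $\bL^{-\wt_\cX}$ on $\cA_\cX$ follows immediately from that boundedness together with measurability of its (finite) level sets.

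The main obstacle will be the stack-to-scheme change of variables for $\pi\colon \cX \to X$ at the level of generality required here: for smooth Deligne--Mumford stacks such a formula goes back to Yasuda, but here $\cX$ is only a smooth Artin stack with affine diagonal and potentially positive-dimensional stabilizers. The affine-diagonal hypothesis and the Edidin--Rydh reduction should together single out $\cA_\cX$ as the correct measurable locus on which the Jacobian calculation collapses to the finite-stabilizer age formula, and carrying out that local Jacobian computation on an \'etale presentation of $\cX'$ as a quotient stack is where I expect the main technical work to lie.
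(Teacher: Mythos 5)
There are two genuine gaps. First, the proposed factorization $\cX \xrightarrow{\pi} X \to Y$ through the good moduli space does not reduce the problem, because $X$ is in general singular --- indeed in the motivating examples $X$ is just $Y$ itself --- so the ``scheme side'' formula you write, $\int_{\sL(Y)}\diff\mu^{\Gor}_Y = \int_{\sL(X)}\bL^{-\ord K_{X/Y}}\diff\mu_X$, is not available: $\mu_X$ in the Denef--Loeser transformation rule requires $X$ smooth, and replacing it by $\mu_X^{\Gor}$ requires knowing that $X$ is $\Q$-Gorenstein with log-terminal singularities, which you assert (``one identifies $K_{X/Y}$ \ldots and in particular sees that $X$ has log-terminal singularities'') but do not prove; that assertion is essentially the content of the implication (1)$\Rightarrow$(2) of \autoref{mainTheoremCrepantResolutionImpliesLogTerminal}, which is itself a theorem requiring the full machinery. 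Consequently all of the content is pushed into the ``stack-to-scheme change of variables'' for $\pi$, which you explicitly leave as the main obstacle; with $X=Y$ that step \emph{is} the theorem, so the outline is circular rather than a reduction. The paper avoids this entirely by never factoring through $X$: twisted arcs of $\cX$ are reinterpreted as untwisted arcs of the warping stack $\sW(\cX)$, the measures are compared in \autoref{theoremTwistedToWarped}, the untwisted change of variables for $\sW(\cX)\to Y$ is imported from earlier work, and the correction term is identified as $\het_{\cX/Y}+\wt_\cX-1$ in \autoref{theoremHeightWarpingAndHeightWeight}, giving \autoref{corollaryTwistedChangeOfVariables}; crepancy then removes $\het_{\cX/Y}$ via \autoref{propComparingRelativeCanonicalAndHeight}.

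Second, integrability does not ``follow immediately'' from boundedness of $\wt_\cX$ plus measurability of its level sets: the sets $\cA_\cX\cap\wt_\cX^{-1}(w)$ are in general \emph{not} cylinders (the paper's $[\bA^2_k/\bG_m]$ example), and their measurability, together with convergence of the resulting volumes, is exactly the nontrivial point. In the paper this is obtained by exhausting $\cA_\cX$ (up to the measure-zero set of twisted arcs through $\cX\setminus\cU$, \autoref{theoremThinSubsetOfTwistedJets}) by bounded cylinders $\cC^n\cap\cA_\cX$, matching them bijectively at the level of field-valued points with cylinders in $\sL(Y)$ (\autoref{propPreimageIntersectedWithERLocus}, which rests on Bresciani--Vistoli lifting and lifting through the Edidin--Rydh steps), and using that the limit of $\mu^{\Gor}_Y(E^n)$ exists precisely because $Y$ is log-terminal; none of this bijectivity, negligibility, or convergence input appears in your outline. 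Your finiteness argument is closest to the paper's, though note $\wt_\cX$ is intrinsic to $\cX$ and has no $\ord K_{\cX/Y}$ summand: the paper gets finiteness from tameness of the Edidin--Rydh reduction (only finitely many $\ell$ contribute to $\cA_\cX$) together with local constancy of $\overline{\wt}_\cX$ on the finite type cyclotomic inertia stack (\autoref{propositionWeightFunctionIsLocallyConstant}).
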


\begin{remark}
Note that in \autoref{maintheorem}, $\mu^\Gor_Y$ is well defined by \autoref{mainTheoremCrepantResolutionImpliesLogTerminal}, and $\cA_\cX$ is well defined by \autoref{propositionCrepantResolutionImpliesStableGMS} below. The equality in \autoref{maintheorem} occurs in the modified Grothendieck ring $\widehat{\sM}_k[\bL^{1/m}]$ where $m$ is such that the $m$th canonical sheaf $\omega_{Y,m}$ is invertible. The definition of $\widehat{\sM}_k$ is recalled in \autoref{sectionPreliminaries}.
\end{remark}

A key feature of the Gorenstein measure $\mu_Y^\Gor$ is that it specializes to the stringy Hodge numbers of $Y$ in the following way. The notion of Hodge--Deligne polynomial induces a ring morphism $\HD: \widehat{\sM}_k[\bL^{1/m}] \to \Q\llbracket u^{-1/m}, v^{-1/m} \rrbracket [u,v]$, and \cite[Section 5]{DenefLoeser2002} (or see \cite[Chapter 7 Section 3.4]{ChambertLoirNicaiseSebag}) implies that the \emph{stringy Hodge--Deligne invariant}\footnote{also often called the \emph{stringy E-function}} $\HD_{\str}(Y)$ of $Y$ is related to $\mu_Y^\Gor$ via
\[
	\HD_{\str}(Y) = \HD\left(\bL^{\dim Y}\int_{\sL(Y)}\diff\mu^\Gor_Y\right).
\]
If $\HD_{\str}(Y) \in \Z\llbracket u^{-1}, v^{-1}\rrbracket[u,v]$, which for example always occurs if $Y$ is Gorenstein, we set $h^{p,q}_{\str}(Y) \in \Z$ to be such that
\[
	\HD_{\str}(Y) = \sum_{p,q} (-1)^{p+q} h^{p,q}_{\str}(Y) u^p v^q.
\]
If $\HD_{\str}(Y) \in \Z[u,v]$, then the stringy Hodge numbers of $Y$ are defined to be the numbers $h_{\str}^{p,q}(Y)$.

\begin{remark}
Although Batyrev defined the power series $\HD_{\str}(Y)$ for all $Y$ with log-terminal singularities, he only called the (sign-corrected) coefficients stringy Hodge numbers in the case where $\HD_{\str}(Y) \in \Z[u,v]$, and it is under that assumption that he formulated \autoref{conjectureBatyrevNonnegativity}. In fact, \cite[Remark 3.3(i)]{SchepersVeys} gives an example of a projective variety $Y$  with Gorenstein canonical singularities where not all $h_{\str}^{p,q}(Y)$ are nonnegative. This example does not contradict \autoref{conjectureBatyrevNonnegativity} because it does not satisfy $\HD_{\str}(Y) \in \Z[u,v]$, so one is not supposed to refer to the $h^{p,q}_{\st}(Y)$ as stringy Hodge numbers in this case.
\end{remark}

Therefore \autoref{maintheorem} immediately implies the following new formula for stringy Hodge numbers, which we emphasize can be applied to any log-terminal variety $Y$ by \autoref{citedTheoremLogTerminalImpliesCrepantExists}. We also emphasize that the right hand side of the following equality depends only on the stack $\cX$ and not on $Y$ or the map $\cX \to Y$.

\begin{corollary}\label{corollaryStringyHodgeDeligne}
Let $Y$ be a $\Q$-Gorenstein irreducible finite type scheme over $k$, let $\cX$ be a smooth irreducible finite type Artin stack over $k$, let $\cX \to Y$ be a crepant resolution, and assume that $\cX$ has affine diagonal. Then $\bL^{-\wt_\cX}$ is integrable on $\cA_\cX$, and
\[
	\HD_{\str}(Y) = \HD\left( \bL^{\dim\cX} \int_{\cA_\cX} \bL^{-\wt_\cX}\diff\nu_\cX \right).
\]
\end{corollary}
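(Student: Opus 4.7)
The plan is to derive this corollary as a direct formal consequence of \autoref{maintheorem}, combined with the Denef--Loeser formula
\[
    \HD_{\str}(Y) = \HD\left(\bL^{\dim Y} \int_{\sL(Y)} \diff\mu^\Gor_Y\right)
\]
recalled immediately before the corollary statement.

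First I would verify that every object in the statement is well defined. Because $\cX \to Y$ is a crepant resolution, \autoref{mainTheoremCrepantResolutionImpliesLogTerminal} implies that $Y$ has log-terminal singularities, so the Gorenstein measure $\mu^\Gor_Y$ and the stringy Hodge--Deligne invariant $\HD_{\str}(Y)$ are both defined. In addition, $\cA_\cX$ is well defined by \autoref{propositionCrepantResolutionImpliesStableGMS}, and integrability of $\bL^{-\wt_\cX}$ on $\cA_\cX$ is part of the conclusion of \autoref{maintheorem}.

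Next I would combine the ingredients. Applying \autoref{maintheorem} gives the motivic identity
\[
    \int_{\sL(Y)} \diff\mu^\Gor_Y = \int_{\cA_\cX} \bL^{-\wt_\cX} \diff\nu_\cX.
\]
Since $\cX \to Y$ is a resolution it is strongly birational, so $\cX$ contains a dense open isomorphic to a dense open of the irreducible scheme $Y$, forcing $\dim \cX = \dim Y$. Multiplying both sides by $\bL^{\dim \cX} = \bL^{\dim Y}$ and applying the ring morphism $\HD$ then yields the claimed formula via the Denef--Loeser identity displayed above.

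There is no substantive obstacle here: the content resides entirely in \autoref{maintheorem}, and the corollary is the formal deduction obtained after inserting the dimension-matching factor $\bL^{\dim \cX}$ and specializing under $\HD$. The only things one must actually verify are the dimension equality $\dim \cX = \dim Y$ (immediate from strong birationality) and the well-definedness of each piece of notation, both of which follow at once from the results already cited.
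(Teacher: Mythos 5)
Your proposal is correct and matches the paper's intent exactly: the paper treats this corollary as an immediate consequence of \autoref{maintheorem} together with the displayed relation $\HD_{\str}(Y) = \HD\bigl(\bL^{\dim Y}\int_{\sL(Y)}\diff\mu^\Gor_Y\bigr)$, which is precisely the deduction you carry out. Your explicit check that $\dim\cX = \dim Y$ (from strong birationality and irreducibility) is the only detail the paper leaves implicit, and it is handled correctly.
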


\subsection{Dreaming of a cohomology theory}

\autoref{corollaryStringyHodgeDeligne} suggests we should make the following definition.

\begin{definition}\label{definitionStringyHodgeDeligneStack}
Let $\cX$ be a smooth irreducible finite type Artin stack over $k$ with affine diagonal and stable good moduli space, and assume that $\bL^{-\wt_\cX}$ is integrable on $\cA_\cX$. Then we define the \emph{stringy Hodge--Deligne invariant} of $\cX$ to be
\[
	\HD_{\str}(\cX) = \HD\left( \bL^{\dim\cX} \int_{\cA_\cX} \bL^{-\wt_\cX}\diff\nu_\cX \right).
\]
\end{definition}

In the special case where $\cX$ is Deligne--Mumford, the orbifold cohomology of Chen and Ruan \cite{ChenRuan} provides a cohomological interpretation for $\HD_{\str}(\cX)$. It is natural to ask for a cohomological interpretation for $\HD_{\str}(\cX)$ in the level of generality of \autoref{definitionStringyHodgeDeligneStack}, as by \autoref{citedTheoremLogTerminalImpliesCrepantExists} and \autoref{corollaryStringyHodgeDeligne}, this would provide a cohomological interpretation for stringy Hodge numbers of all log-terminal varieties. We briefly discuss, in a quite speculative manner, why we believe such a goal is plausible.

One reason is that, unlike any previously appearing formula for $\HD_{\str}(Y)$ of which we are aware, \autoref{corollaryStringyHodgeDeligne} expresses $\HD_{\str}(\cX)$ and therefore $\HD_{\str}(Y)$ in terms of integrating a function taking only finitely many values. In other words the right hand side of the defining equation
\[
	\int_{\cA_\cX} \bL^{-\wt_\cX}\diff\nu_\cX = \sum_{w \in \Z}\bL^{-w} \nu_{\cX}(\cA_\cX \cap \wt_\cX^{-1}(w))
\]
is a finite sum. Therefore to find a cohomological interpretation for stringy Hodge numbers, it would be sufficient to find a cohomological interpretation for each
\[
	\HD\left(\nu_{\cX}(\cA_\cX \cap \wt_\cX^{-1}(w))\right).
\]
If $\cA_\cX \cap \wt_\cX^{-1}(w)$ were a cylinder in the sense of \autoref{definitionCylinderForUnion}, Laszlo and Olsson's compactly supported cohomology of Artin stacks \cite{LaszloOlsson} would allow us to build the desired cohomological interpretation due to \cite[Theorem 2.5]{Ekedahl} and the nature of our definition of the motivic measure $\nu_\cX$. In fact this holds when $\cX$ is Deligne--Mumford, in which case we recover orbifold cohomology. But more generally, $\cA_\cX \cap \wt_\cX^{-1}(w)$ is not always a cylinder, as can be checked for example when $\cX = [\bA^2_k / \bG_m]$ with $\bG_m$ acting on $\bA^2_k$ with weights 1 and -1. Yet we still expect that $\cA_\cX \cap \wt_\cX^{-1}(w)$ is better behaved than an arbitrary measurable set: since $\cA_\cX$ is the image of $|\sJ(\cX')|$ where $\cX' \to \cX$ is the canonical reduction of stabilizers of Edidin-Rydh, we expect $\cA_\cX$ behaves like a semi-algebraic set. Indeed for a map of varieties $X' \to X$, the image of $\sL(X')$ is a semi-algebraic set in $\sL(X)$ by Pas's quantifier elimination (see the discussion immediately preceeding Proposition 2.3 in \cite{DenefLoeser1999}). Following this line of thinking, one could hope that $\cA_\cX \cap \wt_\cX^{-1}(w)$ corresponds to something like a non-archimedean analytic stack over $k\llparenthesis t \rrparenthesis$. In the case of schemes and at the level of Euler characteristics, one can compare motivic integration and cohomology of non-archimedean analytic spaces \cite[Section 3.9]{NicaisePayneSchroeter}. Exploring this circle of ideas in order to find a cohomological interpretation for $\HD_{\str}(\cX)$ and thus $\HD_{\str}(Y)$ is the subject of ongoing work.

\subsection{Methods and the warping stack}

Most of this paper is devoted to proving two intermediate results, which themselves are the key inputs to proving \autoref{mainTheoremCrepantResolutionImpliesLogTerminal} and \autoref{maintheorem}. One is  \autoref{corollaryTwistedChangeOfVariables}, which is a general change of variables formula in the setting of twisted arcs of Artin stacks. The other is \autoref{theoremThinSubsetOfTwistedJets}, which shows that the twisted arcs that factor through a fixed closed substack of $\cX$ form a measurable set of measure 0. These two results should be of independent interest, as they equip the twisted/Artin setting with the same fundamental toolkit that makes motivic integration useful in the scheme setting.

Our proofs of \autoref{corollaryTwistedChangeOfVariables} and \autoref{theoremThinSubsetOfTwistedJets} rely heavily on the warping stack $\sW(\cX)$ that we introduced in \cite{SatrianoUsatine4}. An essential observation is that twisted arcs of $\cX$ can also be considered as untwisted arcs of $\sW(\cX)$. Thus following the bootstrapping philosophy proposed in \cite{SatrianoUsatine4}, the warping stack $\sW(\cX)$ allows us to reduce our study of twisted arcs to the case of untwisted arcs. More specifically \autoref{theoremTwistedToWarped} provides a comparison between the motivic measure on twisted arcs of $\cX$ and the motivic measure on untwisted arcs of $\sW(\cX)$, so as a result of the untwisted change of variables formula proved in \cite{SatrianoUsatine2}, we immediately obtain a change of variables formula in the twisted setting as well. In combination with \autoref{theoremHeightWarpingAndHeightWeight}, which allows us to rewrite the correction term in that change of variables formula, we obtain the desired twisted change of variables formula \autoref{corollaryTwistedChangeOfVariables}. Similarly \autoref{theoremTwistedToWarped} also reduces \autoref{theoremThinSubsetOfTwistedJets} to the untwisted case, which we prove in \autoref{theoremThinSubsetsAreNegligible}.

\begin{notation}
Throughout this paper, if $\cX$ is a stack we will let $\cX(A)$ denote its category of $A$-valued points, we will let $\overline{\cX}(A)$ denote the set of isomorphism classes of this category, and we will let $|\cX|$ denote the topological space associated to $\cX$. Furthermore if $\cC \subset |\cX|$ and $k'$ is a field, we will let $\cC(k')$ denote the category of $k'$-valued points of $\cX$ whose class in $|\cX|$ is contained in $\cC$, and we will let $\overline{\cC}(k')$ denote the set of isomorphism classes of the category $\cC(k')$.
\end{notation}

\begin{acknowledgements}
This paper benefited greatly from conversations with Piotr Achinger, Dori Bejleri, Jason Bell, Bhargav Bhatt, Ron Cherny, Dan Edidin, Changho Han, Elana Kalashnikov, Johannes Nicaise, Wim Veys, and Dimitri Wyss. We thank Aji Dhillon and Brett Nasserden for conversations that led to the proof of \autoref{prop:auts-Dnell}. We thank Takehiko Yasuda for pointing out \cite[Remark 15.6]{Yasuda2024}.
\end{acknowledgements}

\section{Preliminaries}\label{sectionPreliminaries}

We begin by setting some notation that will be used throughout this paper. If $\cX$ is an Artin stack over $k$, we will let $\sL_n(\cX)$ denote its $n$th jet stack, we will let $\sL(\cX)$ denote its arc stack, and we will let $\theta_n: \sL(\cX) \to \sL_n(\cX)$ and $\theta^n_m: \sL_n(\cX) \to \sL_m(\cX)$ denote the associated truncation morphisms. We refer to \cite[Section 3]{SatrianoUsatine} for basic properties of these notions. For any ring $A$ we will let $D_A = \Spec(A\llbracket t \rrbracket)$, so in particular, for any field extension $k'$ of $k$ the category $\sL(\cX)(k')$ is the category $\cX(D_{k'})$ of ($k'$-valued) arcs of $\cX$. If it is clear from context, we may suppress the ring $A$ and use only the notation $D$.

\begin{notation}
Let $\cX$ be a locally finite type Artin stack over $k$. If $\cI$ is a quasi-coherent ideal sheaf on $\cX$, then we let $\ord_{\cI}: |\sL(\cX)| \to \Z \cup \{\infty\}$ denote its associated order function, see e.g., \cite[Remark 2.6]{SatrianoUsatine2}. This order function evaluated at an arc of $\cX$ is the $t$-vanishing order of the pullback of $\cI$ along that arc. Similarly if $\cD$ is a Cartier divisor on $\cX$, we let $\ord_{\cD}: |\sL(\cX)| \to \Z \cup \{\infty\}$ denote its associated order function, see e.g., \cite[Paragraph after Remark 1.9]{SatrianoUsatine3}. Extending linearly, we can also consider $\ord_{\cD}: |\sL(\cX)| \to \Q \cup \{\infty\}$ when $\cD$ is a $\Q$-Cartier $\Q$-divisor on $\cX$. If $\cX \to \cY$ is a morphism of locally finite type Artin stacks, we let $\het_{\cX/\cY}: |\sL(\cX)| \to \Z \cup \{\infty\}$ denote the height function defined in \cite[Definition 6.2]{SatrianoUsatine5}. In particular if $\cX$ is smooth over $k$,
\[
	\het_{\cX/Y} = \het^{(0)}_{L_{\cX/Y}} -  \het^{(1)}_{L_{\cX/Y}}
\]
where the height functions on the right hand side are were defined in \cite[Definition 3.1]{SatrianoUsatine2} as follows:~for any $E\in D^-_{\coh}(\cX)$ any arc $\varphi: D_{k'}\to \cX$, we let
\[
	\het_E^{(i)}(\varphi) = \dim_{k'} \bH^i (L\varphi^*E).
\]
\end{notation}

Let $K_0(\Var_k)$ denote the Grothendieck ring of $k$-varieties, and let ${\widehat{\sM}}_k$ denote the ring obtained by inverting the class of $\bA^1_k$ in $K_0(\Var_k)$ then completing with respect to the dimension filtration. If $\cY$ is a finite type Artin stack over $k$ with affine geometric stabilizers, we will let $\e(\cY)$ denote the class of $\cY$ in $\widehat{\sM}_k$, and more generally if $\cC$ is a constructible subset of $|\cY|$, we will let $\e(\cC)$ denote the class of $\cC$ in $\widehat{\sM}_k$. We refer to \cite[Subsection 2.2]{SatrianoUsatine} for details on the definition of these classes. We also set $\bL = \e(\bA^1_k)$.

A major tool used throughout this paper is the warping stack $\sW(\cX)$ introduced in \cite{SatrianoUsatine4}, which we now recall. A \emph{warp} of a scheme $T$ is a flat finitely presented good moduli space map $\cT \to T$ with affine diagonal, and a \emph{warped map} from $T$ to a stack $\cX$ is a pair $(\cT \to T, \cT \to \cX)$ consisting of a warp $\cT \to T$ and a representable map $\cT \to \cX$. The \emph{warping stack} $\sW(\cX)$ of a stack $\cX$ over $k$ is a category fibered in groupoids (over the category of $k$-schemes) such that for each $k$-scheme $T$, $\sW(\cX)(T)$ is the category of warped maps from $T$ to $\cX$. See \cite[Remark 1.4]{SatrianoUsatine4} for details on defining $\sW(\cX)$ as a fibered category. If $\cX$ is a locally finite type Artin stack with affine diagonal over $k$, then \cite[Theorem 1.9(1)]{SatrianoUsatine4} implies that $\sW(\cX)$ is a locally finite type Artin stack over $k$. Furthermore in this case, the map $\tau: \cX \to \sW(\cX)$ defined by $(T \to \cX) \mapsto (T \to T, T \to \cX)$ is an open immersion by \cite[Theorem 1.9(2)]{SatrianoUsatine4}, and throughout this paper we identify $\cX$ with the corresponding open substack of $\sW(\cX)$.

The remainder of this section is dedicated to miscellaneous results that will be used later in the paper.

\begin{lemma}\label{lemmaPullBackCotangentComplexBecomesTwoVectorBundles}
Let $\cX$ be a smooth equidimensional finite type Artin stack over $k$ with affine diagonal, and assume that $\cX$ has a good moduli space. Let $k'$ be a field extension of $k$, and let $\cT$ be an Artin stack over $k$ with good moduli space isomorphic to either $\Spec(k'\llbracket t \rrbracket)$ or $\Spec(k')$. Let $\varphi: \cT \to \cX$ be a morphism over $k$. Then there exist $r, s \in \Z_{\geq 0}$ with $r-s = \dim\cX$ and an exact triangle
\[
	L\varphi^*L_{\cX} \to \cF \to \cG
\]
such that $\cF$ is a rank $r$ locally free sheaf on $\cT$ and $\cG$ is a rank $s$ locally free sheaf on $\cT$.
\end{lemma}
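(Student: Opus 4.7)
The plan is to produce a global representation of $L\varphi^* L_\cX$ as a two-term complex $[\cF \xrightarrow{d} \cG]$ of locally free sheaves in cohomological degrees $[0,1]$ on $\cT$, and then read off the desired triangle from the stupid filtration. First I would recall that since $\cX$ is smooth over $k$ with affine diagonal, $L_\cX$ is perfect of tor-amplitude $[0,1]$ with virtual rank $\dim\cX$. The cleanest justification is via the Alper--Hall--Rydh local structure theorem: étale locally $\cX \simeq [U/G]$ with $U$ smooth affine and $G$ linearly reductive, and there $L_\cX$ is modeled by $[\Omega_{U/k} \to \mathfrak{g}^\vee \otimes \cO_U]$, a two-term complex of free modules of ranks $\dim U$ and $\dim G$ whose difference equals $\dim[U/G] = \dim\cX$. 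Since $L\varphi^*$ preserves perfectness, tor-amplitude and virtual rank, $L\varphi^* L_\cX$ is perfect of tor-amplitude $[0,1]$ on $\cT$ with virtual rank $\dim\cX$.

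The crux is to promote this local picture to a \emph{global} two-term vector-bundle presentation on $\cT$. For this I would exploit the assumption that the good moduli space of $\cT$ is $\Spec(k'\llbracket t\rrbracket)$ or $\Spec(k')$, both spectra of Noetherian local rings, so that $\cT$ has a unique closed point whose stabilizer is linearly reductive. Combining the Alper--Hall--Rydh local structure of $\cT$ near that closed point with the affineness of the good moduli space morphism, one obtains enough of a resolution property to carry out the standard procedure: choose a surjection from a vector bundle $\cF_0$ onto $H^0(L\varphi^* L_\cX)$, lift it to a morphism $\cF_0 \to L\varphi^* L_\cX$ in the derived category, and verify that the cone is of the form $\cG_0[-1]$ with $\cG_0$ locally free in degree $1$. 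Adding common trivial summands $\cO_\cT^N$ to both $\cF_0$ and $\cG_0$ if necessary then ensures $r, s \geq 0$ while keeping $r - s = \dim\cX$, by the virtual rank computation above.

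Given such a representation $L\varphi^* L_\cX \simeq [\cF \to \cG]$, the stupid short exact sequence of complexes
\[
    0 \to \cG[-1] \to [\cF \to \cG] \to \cF \to 0
\]
yields a distinguished triangle $\cG[-1] \to L\varphi^* L_\cX \to \cF$, which rotates to the desired triangle $L\varphi^* L_\cX \to \cF \to \cG$. The main obstacle is the middle step, namely globalizing the two-term presentation of $L\varphi^* L_\cX$ across all of $\cT$; this is precisely where the affine diagonal of $\cX$ and the hypothesis that the good moduli space of $\cT$ is the spectrum of a Noetherian local ring are both essential.
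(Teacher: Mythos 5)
Your reduction to finding a global two-term locally free presentation of $L\varphi^*L_\cX$ is reasonable, but the step you yourself flag as the crux is exactly where the proposal fails, in two ways. First, the recipe is wrong as stated: if $\cF_0$ is a vector bundle surjecting onto $H^0(L\varphi^*L_\cX)$ and you lift to a map $\cF_0 \to L\varphi^*L_\cX$, the cone $C$ satisfies $H^{-1}(C)=\ker(\cF_0\to H^0)$, $H^0(C)=0$, and $H^1(C)\cong H^1(L\varphi^*L_\cX)$; the last group is nonzero whenever the image point has positive-dimensional stabilizer (and the kernel is typically nonzero too), so $C$ is not of the form $\cG_0[-1]$. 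The correct ``standard procedure'' resolves $H^1$ (equivalently, resolves $H^0$ of the dual), uses cohomological affineness to kill the $\Ext^2$ lifting obstruction, and a tor-amplitude argument to see the resulting fiber is locally free. Second, even the corrected procedure needs $\cT$ to have enough vector bundles, and the lemma assumes nothing about $\cT$ beyond the existence of its good moduli space — no affine diagonal, no finite type — so your appeal to the local structure theorem \emph{for $\cT$} to get ``enough of a resolution property'' is not justified at this generality, and you never supply the argument.

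The paper avoids needing anything on $\cT$: it applies \cite[Theorem 6.1(1)]{AHRetalelocal} to $\cX$, producing a Nisnevich cover $X'\to X$ of the good moduli space with $\cX'=\cX\times_X X'\cong[U/\GL_n]$, $U$ smooth affine. Since $T$ is $\Spec(k')$ or the henselian local $\Spec(k'\llbracket t\rrbracket)$, the map $T\to X$ lifts through $X'$ (lift the closed point, then use étale/infinitesimal lifting), so $\varphi$ factors through $\psi:\cT\to\cX'$ and $L\varphi^*L_\cX\cong L\psi^*L_{\cX'}$. The equivariant triangle $\rho^*L_{\cX'}\to\Omega^1_U\to\Omega^1_\rho$ descends to a triangle $L_{\cX'}\to\cF'\to\cG'$ of vector bundles of ranks $\dim U$ and $\dim\GL_n$ on $\cX'$, and one simply pulls back along $\psi$. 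If you want to rescue your route, you must either prove a resolution property for the specific $\cT$'s at hand or, better, build the presentation upstream on $\cX'$ as the paper does.
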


\begin{proof}
Let $X$ be the good moduli space of $\cX$. By \cite[Theorem 6.1(1)]{AHRetalelocal} there exists an algebraic space $X'$ and a Nisnevich covering $X' \to X$ such that $\cX' = \cX \times_X X' \cong [U/\GL_n]$ for some affine scheme $U$ over $k$. Let $\pi: \cX' \to \cX$ denote the projection. Let $T$ be the good moduli space of $\cT$. The map $\varphi: \cT \to \cX$ induces a map $T \to X$. Since $X' \to X$ is a Nisnevich cover, $T \to X$ factors as some $T \to X'$ followed by $X' \to X$ (in the case where $T \cong \Spec(k')$ this is immediate and when $T \cong \Spec(k'\llbracket t \rrbracket)$ we lift the special point then use infinitesimal lifting for the \'{e}tale map $X' \to X$ to get the desired map $T \to X'$). Therefore $\varphi: \cT \to \cX$ factors as some $\psi: \cT \to \cX'$ followed by the map $\pi: \cX' \to \cX$. Since $\cX' \to \cX$ is \'{e}tale, $\cX'$ is equidimensional and smooth over $k$, $\dim\cX' = \dim\cX$, and $L_{\cX'} \cong L\pi^*L_{\cX}$. Thus
\[
	L\varphi^*L_\cX \cong L\psi^* L_{\cX'}.
\]
Set $r = \dim U$ and $s = \dim\GL_n$. We have that $U$ is equidimensional and smooth and $r - s = \dim\cX' = \dim\cX$. Letting $\rho\colon U\to\cX'$ be the smooth cover, we have an exact triangle
\[
\rho^*L_{\cX'} \to\Omega^1_U\xrightarrow{\alpha}\Omega^1_\rho.
\]
Since $\alpha$ is an equivariant map of vector bundles with $\GL_n$-action, it descends to an exact triangle
\[
	L_{\cX'} \to \cF' \to \cG'
\]
where $\cF'$ is a rank $r$ locally free sheaf and $\cG'$ is a rank $s$ locally free sheaf. The lemma follows by applying $L\psi^*$ to this exact triangle.
\end{proof}

\begin{lemma}\label{lemmaSpreadingOutGenericFiber}
Let $S$ be a noetherian scheme, let $Z$ be a finite type scheme over $S$, let $\cY$ and $\cF$ be finite type Artin stacks over $S$, let $z \in Z$ be the generic point of an irreducible component of $Z$, and let $k(z)$ be the residue field of $z$. If $\cY \to Z$ is a morphism such that
\[
	(\cY \times_Z \Spec(k(z))_\red \cong (\cF \times_S \Spec(k(z)))_\red
\]
as stacks over $k(z)$, then there exists an open neighborhood $U \subset Z$ of $z$ such that
\[
	(\cY \times_Z U)_\red \cong (\cF \times_S U)_\red
\]
as stacks over $S$.
\end{lemma}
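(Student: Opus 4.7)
The plan is to reduce the statement to a standard spreading-out argument for finite-type Artin stacks over an integral noetherian base.

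First, I would shrink $Z$ to an open neighborhood of $z$ meeting no other irreducible component of $Z$, so that $z$ becomes the unique generic point. For an open $U \subset Z$, the stack $\cY \times_Z U_\red$ is a closed substack of $\cY \times_Z U$ with the same underlying topological space, hence $(\cY \times_Z U)_\red = (\cY \times_Z U_\red)_\red$; the same holds for $\cF$. Thus opens of $Z$ and opens of $Z_\red$ encode the same data for the conclusion, and I may further replace $Z$ by $Z_\red$ and assume $Z$ is integral with generic point $z$.

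Next, set $\cA := \cY_\red$ and $\cB := (\cF \times_S Z)_\red$, both reduced finite-type Artin stacks over $Z$. Because $\Spec k(z) \to Z$ is the localization of $Z$ at its generic point, it preserves reducedness, so $\cA \times_Z \Spec k(z) = (\cY \times_Z \Spec k(z))_\red$ and $\cB \times_Z \Spec k(z) = (\cF \times_S \Spec k(z))_\red$. The hypothesis therefore becomes $\cA \times_Z \Spec k(z) \cong \cB \times_Z \Spec k(z)$ as stacks over $k(z)$.

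The crux of the argument is then the following spreading-out principle, which I would invoke as a standard consequence of limit arguments: for finite-type Artin stacks $\cA$, $\cB$ over a noetherian integral scheme $Z$ with generic point $z$, any isomorphism $\cA \times_Z \Spec k(z) \xrightarrow{\sim} \cB \times_Z \Spec k(z)$ over $k(z)$ extends to an isomorphism $\cA \times_Z U \cong \cB \times_Z U$ over $U$ for some nonempty open $U \subset Z$ containing $z$. To prove it, I would pick smooth affine atlases $A \to \cA$ and $B \to \cB$ of finite type over $Z$; writing $\cO_{Z,z} = \varinjlim_{U \ni z} \cO_Z(U)$, the finite presentation of everything in sight lets the atlas-level and groupoid-level isomorphisms over $k(z)$ spread out to some open $U$, with compatibility with the groupoid structure forced after possibly shrinking $U$ once more. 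Descending then yields $\cA \times_Z U \cong \cB \times_Z U$, which by construction of $\cA$ and $\cB$ is the desired isomorphism $(\cY \times_Z U)_\red \cong (\cF \times_S U)_\red$ over $S$.

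The main obstacle is the spreading-out step. At the level of schemes it is entirely routine, but for Artin stacks one needs to track smooth atlases and groupoid structures with some care. A cleaner alternative would be to invoke representability of isomorphism stacks under suitable finiteness hypotheses (e.g., affine diagonal) and reduce directly to the classical scheme case.
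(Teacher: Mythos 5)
Your proposal is correct in outline, but it is worth noting that the paper does not actually argue this lemma at all: its ``proof'' is a citation, namely that the exact statement is verified inside the proof of Proposition~2.8 of the authors' earlier paper \cite{SatrianoUsatine}. So you are supplying a genuinely independent, self-contained route. Your preliminary reductions are all sound: $z$ lies on no other irreducible component, so you may shrink $Z$; reduction commutes with open immersions and with the (flat, localization) base change $\Spec k(z)\to Z$, so passing to $Z_\red$ and to $\cA=\cY_\red$, $\cB=(\cF\times_S Z)_\red$ is legitimate, and an isomorphism over an open $U$ is automatically over $S$. The entire content then sits in the spreading-out step, and here your write-up is only a sketch: an isomorphism $\cA_{k(z)}\xrightarrow{\sim}\cB_{k(z)}$ has no reason to carry a chosen atlas of $\cA$ to a chosen atlas of $\cB$, so the phrase ``the atlas-level and groupoid-level isomorphisms spread out'' hides the real work. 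The correct and standard fix is exactly the limit formalism you allude to: writing $\Spec k(z)=\varprojlim_{U\ni z}U$, the category of finitely presented algebraic stacks over the limit with their morphisms is the $2$-colimit of the corresponding categories over the $U$'s (Stacks Project, chapter on limits of algebraic stacks, or Rydh's noetherian approximation results), so the $1$-morphism, a quasi-inverse, and the $2$-isomorphisms all descend to some $U$; alternatively one spreads out only the $1$-morphism and uses that being an isomorphism is itself a finitely presented condition that descends. With such a reference (or with the groupoid bookkeeping carried out via the graph of the morphism rather than via matching atlases), your argument is complete; what it buys over the paper's approach is an explicit, quotable proof rather than a pointer into the body of another proof.
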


\begin{proof}
This exact statement is verified in the proof of \cite[Proposition 2.8]{SatrianoUsatine}.
\end{proof}

\begin{proposition}\label{propFibrationClass}
Let $\cY$ and $\cZ$ be finite type stacks over $k$ with affine geometric stabilizers, let $\cY \to \cZ$ be a morphism over $k$, and let $\Theta \in \widehat{\sM}_k$. Suppose that for every field extension $k'$ of $k$ and $k$-morphism $\Spec(k') \to \cZ$ there exists a finite type stack $\cF$ over $k$ with affine geometric stabilizers such that $\e(\cF) = \Theta$ and
\[
	(\cY \times_\cZ \Spec(k'))_\red \cong (\cF \times_{\Spec(k)} \Spec(k'))_\red
\]
as stacks over $k'$. Then
\[
	\e(\cY) = \Theta \e(\cZ).
\]
\end{proposition}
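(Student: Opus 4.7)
The plan is to proceed by noetherian induction on $\cZ$. The empty case is trivial; for the inductive step it suffices to produce a dense open substack $\cU \subset \cZ$ such that $\e(\cY \times_\cZ \cU) = \Theta \cdot \e(\cU)$. Indeed, the closed complement $\cW := \cZ \setminus \cU$ inherits the hypothesis of the proposition for the restricted morphism $\cY \times_\cZ \cW \to \cW$, so induction gives $\e(\cY \times_\cZ \cW) = \Theta \cdot \e(\cW)$, and additivity of $\e(-)$ in $\widehat{\sM}_k$ over the stratification $\cZ = \cU \sqcup \cW$ then yields $\e(\cY) = \Theta \cdot \e(\cZ)$. Throughout the argument I would freely replace stacks by their reductions, which does not affect the class in $\widehat{\sM}_k$.

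To construct such a $\cU$, I would choose a smooth surjection $\pi \colon Z \to \cZ$ from a finite type scheme $Z$ and pick a generic point $z$ of an irreducible component of $Z$ whose image $\pi(z)$ is generic in an irreducible component of $\cZ$. Applying the hypothesis to the composition $\Spec(k(z)) \to Z \to \cZ$ produces a finite type stack $\cF$ over $k$ with affine geometric stabilizers satisfying $\e(\cF) = \Theta$ and $(\cY \times_\cZ \Spec(k(z)))_\red \cong (\cF \times \Spec(k(z)))_\red$. Applying \autoref{lemmaSpreadingOutGenericFiber} to the morphism $\cY \times_\cZ Z \to Z$ at $z$ then yields an open neighborhood $V \subset Z$ of $z$ with $(\cY \times_\cZ V)_\red \cong (\cF \times V)_\red$ as stacks over $k$, hence $\e(\cY \times_\cZ V) = \e(\cF \times V) = \Theta \cdot \e(V)$.

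The main obstacle is to pass from this atlas-level equality on $V \subset Z$ to the analogous stack-level equality on an open substack of $\cZ$. The natural candidate is $\cU := \pi(V)$, which is open in $\cZ$ since $\pi$ is smooth and thus open; after replacing $V$ with $V \cap \pi^{-1}(\cU)$ we may arrange $V = \pi^{-1}(\cU)$ so that $V \to \cU$ is a smooth surjection. I would then try to descend the isomorphism $(\cY \times_\cZ V)_\red \cong (\cF \times V)_\red$ along $V \to \cU$ by checking its compatibility with the two pullbacks along $V \times_\cU V \rightrightarrows V$, using that reduction commutes with base change along arbitrary field extensions in characteristic zero and applying \autoref{lemmaSpreadingOutGenericFiber} once more to $V \times_\cU V$ to match the descent data at generic points of its components. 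If descent of the iso resists a direct argument, the fallback is to work entirely with classes in $\widehat{\sM}_k$: apply the scheme-case reasoning of the previous paragraph also to $\cY \times_\cZ (V \times_\cU V) \to V \times_\cU V$ (and to higher fibered products as needed), and then invoke the standard groupoid formulas relating $\e(\cU)$ and $\e(\cY \times_\cZ \cU)$ to the classes on $V$ and $V \times_\cU V$ to extract the desired equality. Either way, descent from atlas to stack constitutes the technical crux of the argument.
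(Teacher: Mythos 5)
Your first two paragraphs are fine and in fact coincide with the paper's final reduction step: once the base is a \emph{scheme}, noetherian induction plus \autoref{lemmaSpreadingOutGenericFiber} gives the claim exactly as you say. The gap is in your third paragraph, i.e., in passing from the atlas $V$ back to the stack $\cU$. The isomorphism $(\cY \times_\cZ V)_\red \cong (\cF \times V)_\red$ over the atlas in general does \emph{not} descend, and no modification of it will: consider $\cY = \Spec(k) \to \cZ = B\bG_m$. Every fiber over a field-valued point of $\cZ$ is a $\bG_m$-torsor over a field, hence isomorphic to $\bG_m$ (so the hypothesis holds with $\cF = \bG_m$), but $\cY$ is the universal torsor and is not isomorphic to $\bG_m \times \cZ$ over any nonempty open substack of $\cZ$. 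So "fiberwise trivial" families over a stacky base need not be trivial over any open substack, and matching descent data at generic points of $V \times_\cU V$ cannot repair this. Your fallback is also not available: there is no general formula computing $\e(\cU)$ (or $\e(\cY\times_\cZ\cU)$) from the classes of the terms of the \v{C}ech nerve of an arbitrary smooth cover; the motivic class of a stack is not an alternating sum over the nerve, so "standard groupoid formulas" in this generality do not exist.

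The paper's proof sidesteps descent entirely by choosing a very special presentation: by Kresch's stratification one may assume $\cZ = [W/G]$ with $W$ a scheme and $G$ a general linear group, so that $W \to \cZ$ and $\cV := \cY\times_\cZ W \to \cY$ are $G$-torsors. Since $\GL_n$ is special, Ekedahl's Proposition 1.1(ii) gives $\e(W) = \e(G)\e(\cZ)$ and $\e(\cV) = \e(G)\e(\cY)$, and $\e(G)$ is a unit in $\widehat{\sM}_k$; this is precisely the "groupoid formula" you would need, but it is valid only because the atlas is a torsor under a special group, not an arbitrary smooth cover. The fibers of $\cV \to W$ over field points agree with those of $\cY \to \cZ$, so one is reduced to the scheme case, which your argument then handles correctly. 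To fix your write-up, replace the generic smooth atlas and the descent/Čech step by this torsor reduction (or supply some other argument with this multiplicativity), and keep your scheme-case induction as the concluding step.
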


\begin{proof}
By \cite[Proposition 3.5.9]{Kresch}, $\cZ$ can be stratified by stacks of the form $[W / G]$ where $W$ is a finite type scheme over $k$ and $G$ is a general linear group. Thus we may assume that $\cZ$ is of the form $[W/G]$. Set $\cV = \cY \times_\cZ W$. Then \cite[Proposition 1.1(ii)]{Ekedahl} implies
\[
	\e(W) = \e(G)\e(\cZ) \qquad \text{and} \qquad \e(\cV) = \e(G)\e(\cY),
\]
and \cite[Proposition 1.1(i)]{Ekedahl} implies $\e(G)$ is a unit in $\widehat{\sM}_k$. Therefore we are reduced to the case where $\cZ$ is a scheme, but that case follows immediately from noetherian induction and \autoref{lemmaSpreadingOutGenericFiber}.
\end{proof}

Lastly, we give the following result akin to the infinitesimal lifting criterion.

\begin{proposition}[{Infinitesimal lifting criterion with cohomologically affine stacks}]
\label{prop:coh-affine-smooth}
Consider a commutative diagram of solid arrows and morphisms of Artin stacks 
\[
\xymatrix{
\cZ\ar[r]^-{f}\ar@{^{(}->}[d]_-{\iota} & \cY\ar[d]\\
\cZ'\ar[r]\ar@{-->}[ur]^-{f'} & \Spec k
}
\]
with $f$ representable. If $\cZ$ cohomologically affine, $\iota$ is a square-zero thickening, and $\cY$ is smooth over $k$, then there exists a representable map $f'$ making the diagram commute.
\end{proposition}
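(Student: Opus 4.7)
The plan is to split the statement into two steps: first, produce an extension $f'$ of $f$ by showing the deformation-theoretic obstruction to lifting along $\iota$ vanishes; then verify that any such extension is automatically representable.

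For the first step, I would appeal to the standard obstruction theory for extending a morphism to an Artin stack along a square-zero thickening (Illusie, Olsson): if $\cJ \subset \cO_{\cZ'}$ denotes the square-zero ideal of $\iota$, then the obstruction to extending $f$ to a map $f' \colon \cZ' \to \cY$ lies in $\Ext^1_{\cO_\cZ}(Lf^*L_{\cY/k}, \cJ)$. Since $\cY$ is smooth over $k$, the cotangent complex $L_{\cY/k}$ is perfect with tor-amplitude in cohomological degrees $[0,1]$; this can be checked locally by choosing a smooth chart $p \colon U \to \cY$ from a smooth scheme $U$ and reading the transitivity triangle
\[
p^*L_{\cY/k} \to \Omega_{U/k} \to \Omega_{U/\cY}
\]
to see that $p^*L_{\cY/k}$ is represented by a two-term complex of vector bundles in degrees $0$ and $1$ (compare the proof of \autoref{lemmaPullBackCotangentComplexBecomesTwoVectorBundles}). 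Consequently $(Lf^*L_{\cY/k})^\vee$ has tor-amplitude $[-1,0]$, so the cohomology sheaves of $(Lf^*L_{\cY/k})^\vee \otimes^L \cJ$ are concentrated in degrees $-1$ and $0$.

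I would then run the hypercohomology spectral sequence
\[
E_2^{p,q} = H^p\bigl(\cZ, H^q\bigl((Lf^*L_{\cY/k})^\vee \otimes^L \cJ\bigr)\bigr) \Longrightarrow \Ext^{p+q}_{\cO_\cZ}(Lf^*L_{\cY/k}, \cJ).
\]
Cohomological affineness of $\cZ$ gives $E_2^{p,q} = 0$ for all $p \geq 1$, while the tor-amplitude computation gives $E_2^{p,q} = 0$ for all $q \geq 1$. Every $(p,q)$ with $p + q = 1$ satisfies $p \geq 1$ or $q \geq 1$, hence $\Ext^1_{\cO_\cZ}(Lf^*L_{\cY/k}, \cJ) = 0$; the obstruction vanishes and an extension $f'$ exists.

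Finally, to see $f'$ is representable, let $T \to \cY$ be any morphism from a scheme. Base-changing $\iota$ produces a closed immersion $\cZ \times_\cY T \hookrightarrow \cZ' \times_\cY T$ whose ideal is the pullback of $\cJ$ and thus square-zero. Since $f$ is representable the source is an algebraic space, and a square-zero thickening of an algebraic space is itself an algebraic space, so $\cZ' \times_\cY T$ is an algebraic space and $f'$ is representable. The main delicate point throughout is the tor-amplitude accounting for $L_{\cY/k}$ of the smooth Artin stack $\cY$ together with the conventions of the Illusie/Olsson obstruction class; once these are settled, both the vanishing and the representability arguments are formal.
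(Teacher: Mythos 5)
Your proposal is correct and takes essentially the same route as the paper: the obstruction is placed in $\Ext^1(Lf^*L_{\cY},\cI)$ via Olsson's theorem (legitimate since $f$ is representable), killed by combining cohomological affineness of $\cZ$ in the global direction with the fact that $L_\cY$ is perfect in degrees $[0,1]$ for smooth $\cY$ in the local direction, and representability of $f'$ is automatic for extensions across a thickening, which the paper handles by citing the Stacks Project tag that you re-derive by base change to a scheme $T \to \cY$. The only cosmetic difference is that you package the two vanishings through the local-to-global Ext spectral sequence and a tor-amplitude count, whereas the paper writes $\Ext^1 = H^0\mathcal{E}xt^1$ and checks the vanishing of $\mathcal{E}xt^1$ on a smooth cover using the explicit two-term triangle.
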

\begin{proof}
Upon showing $f'$ exists, it is necessarily representable by \cite[Tag 0CJ9]{stacks-project}. Let $\cI$ be the ideal sheaf defining $\iota$. By \cite[Theorem 1.5]{OlssonDefRep} (taking $X=\cZ$, $X'=\cZ'$, $Y=Y'=\cY$, and $Z=Z'=\Spec k$) we see the obstruction to the existence of the dotted arrow lives in $\Ext^1(Lf^*L_\cY,\cI)$. Since $\cZ$ is cohomologically affine, we see $\Ext^1(Lf^*L_\cY,\cI)=H^0\mathcal{E}xt^1(Lf^*L_\cY,\cI)$. Let $\rho\colon Y\to\cY$ be a smooth cover and $p\colon Z\to\cZ$ (resp.~$g\colon Z\to Y$) be the pullback of $\rho$ (resp.~$f$). It suffices to show
\[
p^*\mathcal{E}xt^1(Lf^*L_\cY,\cI)=\mathcal{E}xt^1(p^*Lf^*L_\cY,p^*\cI)=\mathcal{E}xt^1(Lg^*\rho^*L_\cY,p^*\cI)
\]
is the zero sheaf. From the exact triangle
\[
Lg^*\rho^*L_Y\to g^*\Omega^1_Y\to g^*\Omega^1_\rho
\]
we see $\mathcal{E}xt^1(Lg^*\rho^*L_\cY,p^*\cI)$ vanishes.
\end{proof}

\section{Coherent sheaves on twisted discs}

Throughout this paper for any $\ell \in \Z_{>0}$ we set
\[
	\cD^\ell = [\Spec(k\llbracket t^{1/\ell} \rrbracket) / \mu_\ell],
\] 
where $\xi \in \mu_\ell$ acts on $\Spec(k\llbracket t^{1/\ell} \rrbracket)$ by $f(t^{1/\ell}) \mapsto f(\xi t^{1/\ell})$. Then $\cD^\ell$ admits a tame proper quasi-finite flat finitely presented good moduli space map $\cD^\ell \to \Spec(k\llbracket t \rrbracket)$ where the composition $\Spec(k\llbracket t^{1/\ell}\rrbracket) \to \cD^\ell \to \Spec(k\llbracket t \rrbracket)$ is given by the usual inclusion $k\llbracket t \rrbracket \to k \llbracket t^{1/\ell} \rrbracket$. Note that the map
\[
	\cD^\ell \otimes_{k\llbracket t \rrbracket} k\llparenthesis t \rrparenthesis \to \Spec(k\llparenthesis t \rrparenthesis)
\]
is an isomorphism, so we may refer to the \emph{generic point} of $\cD^\ell$. For any $n \in \Z_{\geq 0}$ we set
\[
	\cD^\ell_n = \cD^\ell \otimes_{k\llbracket t \rrbracket} k[t]/(t^{n+1}) = [\Spec(k[t^{1/\ell}]/(t^{n+1})) / \mu_\ell].
\]
For any algebra $A$ over $k$, we set
\[
	\cD^\ell_{A} = \cD^\ell \otimes_{k\llbracket t \rrbracket} A\llbracket t \rrbracket, \quad \text{and} \quad \cD^\ell_{n, A} = \cD^\ell_n \otimes_{k[t]/(t^{n+1})} A[t]/(t^{n+1}).
\]

Note that coherent sheaves on $\cD^\ell$ are the same as $\Z/\ell$-graded modules over $k\llbracket t^{1/\ell} \rrbracket$, where $t^{\ell}$ is in degree 1. Therefore the following graded version of the structure theorem for finitely generated modules and Smith normal form will be useful in working with coherent sheaves on $\cD^\ell$.

\begin{proposition}[{Graded Structure Theorem and Smith Normal Form}]\label{prop:graded-Smith}
Let $\kappa$ be a field and $R=\kappa\llbracket t\rrbracket$ be $\bZ/\ell$-graded with $t$ of degree $1$.
\begin{enumerate}[label=(\roman*)]
\item\label{prop:graded-Smith::str-thm} If $M$ is a finitely generated graded $R$-module, then we have a graded isomorphism
\[
M\simeq\bigoplus_i R(a_i)\oplus\bigoplus_j R/t^{n_j}(b_j).
\]
\item\label{prop:graded-Smith::smith} If $\alpha\colon F'\to F$ is a graded morphism of finitely generated graded free $R$-module, then we have a commutative diagram
\[
\xymatrix{
F'\ar[r]^-{\alpha}\ar[d]_-{\simeq} & F\ar[d]^-{\simeq}\\
\bigoplus_i R(a_i)\ar[r]^-{A} & \bigoplus_j R(b_j)
}
\]
where the vertical maps are graded isomorphisms, and $A$ is a matrix all of whose non-zero entries are powers of $t$ and live on the diagonal.
\end{enumerate}
\end{proposition}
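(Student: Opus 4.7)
The plan is to prove part (ii) first by a graded Gaussian elimination, then to deduce part (i) as a standard consequence. The observation that makes everything work is the structure of homogeneous elements of $R$: writing $R_d = t^d \cdot \kappa\llbracket t^\ell\rrbracket$ for $d \in \{0,1,\ldots,\ell-1\}$, the degree-zero piece $R_0 = \kappa\llbracket t^\ell\rrbracket$ is a DVR with uniformizer $t^\ell$, so every nonzero homogeneous element of $R$ equals $u \cdot t^n$ for a unique $n \geq 0$ and a unit $u \in R_0^\times$. In particular the $t$-adic valuation is well-defined on nonzero homogeneous elements, and any homogeneous element of valuation $\geq n$ is a homogeneous multiple of $t^n$ in $R$.

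For part (ii), I would proceed by induction on $\min(r,s)$, where $F' = \bigoplus_{i=1}^{r} R(a_i)$ and $F = \bigoplus_{j=1}^{s} R(b_j)$, imitating the classical Smith normal form algorithm while verifying that every elementary operation is graded. The base cases and the case $\alpha = 0$ are trivial; otherwise one chooses a nonzero matrix entry $A_{ji}$ of minimum $t$-adic valuation $n_0$, and after rescaling the $i$-th basis vector of $F'$ by a unit in $R_0^\times$, one may assume $A_{ji} = t^{n_0}$. For any other entry $A_{j'i}$ in column $i$, the quotient $\mu := A_{j'i}/t^{n_0}$ lies in $R$ by the valuation bound, and is homogeneous of degree $(b_{j'} - a_i) - (b_j - a_i) = b_{j'} - b_j$, which is precisely the degree required for the row operation replacing row $j'$ by row $j'$ minus $\mu$ times row $j$ to be a graded basis change on $F$. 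Clearing column $i$ in this way, and then clearing row $j$ by analogous graded column operations, yields a block-diagonal matrix with $[t^{n_0}]$ in one block and a smaller graded matrix in the other, to which I apply the inductive hypothesis. This degree bookkeeping is the main technical point of the proof; once it is handled, the rest is routine.

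For part (i), the first step is to note that $R$ is graded Noetherian: its only ideals are $(0)$ and $(t^n)$ for $n \geq 0$, all of which are homogeneous, so the ordinary Noetherian property upgrades to the graded one. Given a finitely generated graded $M$, choosing finitely many homogeneous generators produces a graded surjection $F \to M$ from a finitely generated graded free module $F$; the kernel is a graded submodule of $F$ and so is itself finitely generated by homogeneous elements, yielding a graded presentation $F' \to F \to M \to 0$. Applying part (ii) to this presentation map puts it, after graded basis changes of $F'$ and $F$, into a diagonal form whose nonzero entries are powers of $t$, and reading off the cokernel then gives the desired decomposition $M \simeq \bigoplus_i R(a_i) \oplus \bigoplus_j R/t^{n_j}(b_j)$ directly.
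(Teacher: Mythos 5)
Your proof is correct, and its core — graded Smith normal form by row/column operations, using that every nonzero homogeneous element of $R$ is a unit of $R_0=\kappa\llbracket t^\ell\rrbracket$ times a power of $t$, so that a minimal-valuation pivot divides all other entries with homogeneous quotients of exactly the right degree, followed by deducing (i) from a graded presentation — is the same as the paper's Steps 3 and 4 (your valuation bookkeeping is in fact slightly more careful than the paper's, which writes the units as scalars in $\kappa$). The one divergence is at the front end: you take ``graded free'' to mean a direct sum of shifts $\bigoplus_i R(a_i)$ and obtain homogeneous generators of the kernel from graded Noetherianity, whereas the paper's Steps 1--2 construct presentations via Nakayama and prove that a graded module which is merely free as an $R$-module automatically admits a homogeneous basis (this is part of the content of the vertical isomorphisms in (ii), and it is the form in which the result gets applied to locally free sheaves on twisted discs later in the paper). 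This is not a real gap in your argument, since once your (i) is established the stronger reading of (ii) follows at once (a graded module that is free has no torsion, so (i) exhibits it as a sum of shifts), but it is worth stating that reduction explicitly so that (ii) applies to graded modules that are only known to be free as modules.
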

\begin{proof}
The proof proceeds in several steps.\\
\\
\noindent\emph{Step 1:~presentations of graded modules.} Let $M$ be a finitely generated graded $R$-module. Then $M/tM$ is a graded $\kappa$-vector space with $(M/tM)_i=M_i/tM_{i-1}$. Choosing bases for each subspace $(M/tM)_i$ and lifting to elements of $M_i$, we have a graded morphism
\begin{equation}\label{eqn:smith-surj}
F:=\bigoplus_j R(a_j)\xrightarrow{\pi} M,
\end{equation}
which is surjective by Nakayama's Lemma. Note that by the usual structure theorem for modules over a PID, the kernel of $\pi$ is free, so we have a presentation
\begin{equation}\label{eqn:smith-presentation}
0\to F'\to F\xrightarrow{\pi} M\to 0
\end{equation}
of graded modules. Furthermore, by construction,
\begin{equation}\label{eqn:smith-compare-ranks}
\rank_R F=\dim_\kappa M/tM.
\end{equation}

\noindent\emph{Step 2:~\ref{prop:graded-Smith::str-thm} holds for free modules.} Suppose $M$ is a free finite rank graded $R$-module. Consider the presentation \eqref{eqn:smith-presentation}.
By equation \eqref{eqn:smith-compare-ranks} and the fact that $\dim_\kappa M/tM=\dim_\kappa(M\otimes_R \kappa)=\rank_R M$, we see $F'=0$, so $\pi$ is an isomorphism. Thus, by \eqref{eqn:smith-surj}, $M$ has the desired form.
\\
\\
\noindent\emph{Step 3:~proof of \ref{prop:graded-Smith::smith}.} Since \ref{prop:graded-Smith::str-thm} holds for free modules, we may choose homogeneous bases $e_1,\dots,e_m$ for $F$ and $f_1,\dots,f_n$ for $F'$; let $e_j$ have degree $a_j\in\bZ/\ell$ and $f_i$ have degree $b_i\in\bZ/\ell$. Let $A$ be the matrix of $\alpha$ with respect to these bases. Thus, either $A_{ij}$ vanishes or $A_{ij}=u_{ij}t^{v_{ij}}$ with $u_{ij}\in\kappa$, $v_{ij}\geq0$, and $v_{ij}\equiv b_i-a_j\pmod{\ell}$. If $A\neq0$, then after rearranging the order of our bases, we may assume $v_{11}\leq v_{ij}$ for all $i,j$ for which $A_{ij}\neq0$; after further multiplying by a unit, we may assume $u_{11}=1$. Note that if $A_{i1}\neq0$, then $t^{v_{i1}}f_1$ has degree $b_i$, so replacing $f_i$ by $f_i-u_{i1}t^{v_{i1}}f_1$, we may assume $A_{i1}=0$ for all $i\neq1$. Similarly, we may assume $A_{1j}=0$ for all $j\neq1$. We conclude \ref{prop:graded-Smith::smith} holds by induction applied to the submatrix obtained by deleting the first row and column of $A$.\\
\\
\noindent\emph{Step 4:~proof of \ref{prop:graded-Smith::str-thm}.} This is now immediate by considering the presentation \eqref{eqn:smith-presentation} and applying \ref{prop:graded-Smith::smith} to the map $F'\to F$.
\end{proof}

We end this section with a lemma that will be useful in defining motivic integration over twisted arcs.

\begin{lemma}\label{lemmaPushforwardRankrVB}
Let $\ell \in \Z_{>0}$, let $r \in \Z_{\geq 0}$, let $k'$ be a field extension of $k$, and let $\cF_0$ be a rank $r$ locally free sheaf on $\cD^\ell_{0,k'}$. Then $\dim_{k'}H^0(\cF_0) = r$.
\end{lemma}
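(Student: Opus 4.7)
The plan is to translate the statement into pure commutative algebra via the groupoid presentation of $\cD^\ell_{0,k'}$, then apply a variant of the graded structure theorem already developed in \autoref{prop:graded-Smith}.

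Concretely, set $s = t^{1/\ell}$ and $R_0 = k'[s]/(s^\ell)$, which is $\bZ/\ell$-graded with $s$ of degree $1$. Then a quasi-coherent sheaf on $\cD^\ell_{0,k'} = [\Spec R_0 / \mu_\ell]$ is the same datum as a $\bZ/\ell$-graded $R_0$-module; locally free sheaves of rank $r$ correspond to graded $R_0$-modules whose underlying module (forgetting the grading) is free of rank $r$. Global sections $H^0(\cF_0)$ correspond to the $\mu_\ell$-invariants of the associated module, i.e., its degree-$0$ part.

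Let $M$ be the graded $R_0$-module corresponding to $\cF_0$. The first step is to show
\[
M \;\simeq\; \bigoplus_{i=1}^{r} R_0(a_i)
\]
as graded $R_0$-modules for some $a_1,\dots,a_r \in \bZ/\ell$. This is an essentially identical argument to the proof of \autoref{prop:graded-Smith}\ref{prop:graded-Smith::str-thm} for free modules: choose homogeneous lifts of a $k'$-basis of the graded vector space $M/sM$, producing a graded surjection $F = \bigoplus_{i=1}^{r} R_0(a_i) \twoheadrightarrow M$ by graded Nakayama, where $r = \dim_{k'} M/sM = \rank_{R_0} M$. Since source and target are free of the same rank over the local ring $R_0$, this surjection is an isomorphism. (Equivalently, one can choose a graded lift $\widetilde{\cF}$ of $\cF_0$ to $\cD^\ell_{k'}$ by picking a graded free resolution; apply \autoref{prop:graded-Smith}\ref{prop:graded-Smith::str-thm} to $\widetilde{\cF}$ and then reduce mod $t$.)

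Finally, the degree-$0$ piece of each summand $R_0(a_i)$ consists of those $s^j$ with $0 \le j \le \ell - 1$ and $j \equiv a_i \pmod{\ell}$, which is exactly a one-dimensional $k'$-vector space spanned by $s^{\bar a_i}$ (where $\bar a_i$ is the representative in $\{0,\dots,\ell-1\}$). Summing over $i$ gives $\dim_{k'} H^0(\cF_0) = \dim_{k'} M_0 = r$, as claimed. No step here is a real obstacle; the only mildly delicate point is making sure the decomposition into shifted copies of $R_0$ is truly graded, which is handled by the graded Nakayama argument above.
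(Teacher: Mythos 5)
Your proof is correct and follows essentially the same route as the paper: decompose the graded module corresponding to $\cF_0$ into rank-one graded shifts of the truncated ring and observe that each shift contributes a one-dimensional space of invariants. The paper simply invokes \autoref{prop:graded-Smith} to get the decomposition into summands $\left(k'\llbracket t^{1/\ell}\rrbracket/(t)\right)(a)$, whereas you rerun the graded Nakayama argument directly over $k'[s]/(s^\ell)$, which amounts to the same thing.
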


\begin{proof}
By \autoref{prop:graded-Smith}, $\cF_0$ is a direct sum of coherent sheaves of the form $\left(k'\llbracket t^{1/\ell} \rrbracket/(t)\right)(a)$ for some $a \in \{0, \dots, \ell - 1\}$. Therefore the proposition follows directly from the fact that the invariants of $\left(k'\llbracket t^{1/\ell} \rrbracket/(t)\right)(a)$ is the 1-dimensional $k'$-vector space spanned by $t^{a/\ell}$.
\end{proof}

\begin{remark}
The analog of \autoref{lemmaPushforwardRankrVB} where $\cD^\ell_{0,k'}$ is replaced with $B\mu_\ell$ is false.
\end{remark}

\section{The weight function and motivic integration over twisted arcs}\label{sectionMotivicIntegrationTwistedArcs}

The primary purpose of this section is to develop the formalism of motivic integration over twisted arcs of Artin stacks as well as define a \emph{weight function} (\autoref{def:wt-fnc}) on cyclotomic inertia; the latter function (up to a dimension factor) generalizes the well-known age function, and it will appear as a correction term in our general change of variables formula (\autoref{corollaryTwistedChangeOfVariables}) in the twisted setting.

Throughout this section, let $\cX$ be a finite type Artin stack over $k$ with affine diagonal.

\begin{notation}
If $\cY$ and $\cZ$ are Artin stacks over an algebraic space $S$, we let $\uHom_S(\cY, \cZ)$ denote the Hom stack parametrizing $S$-morphisms from $\cY$ to $\cZ$ and we let $\uHom^{\rep}_S(\cY, \cZ)$ denote the Hom stack parametrizing representable $S$-morphisms from $\cY$ to $\cZ$.
\end{notation}

\begin{proposition}\label{propositionJetStackOpenInHomStack}
Let $\ell \in \Z_{>0}$ and $n \in \Z_{\geq 0}$. Then the natural map
\[
	\uHom^{\rep}_k(\cD^\ell_n, \cX) \to \uHom_k(\cD^\ell_n, \cX)
\]
is a open immersion.
\end{proposition}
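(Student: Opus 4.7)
The plan is to identify $\uHom^{\rep}_k(\cD^\ell_n, \cX)$ with an open substack of $\uHom_k(\cD^\ell_n, \cX)$ cut out by an explicit condition on the induced morphism of inertia group schemes, using that $\cD^\ell_n$ has a single geometric point with automorphism group $\mu_\ell$.

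First I would reduce representability to a condition on inertia. A morphism of stacks is representable iff the induced map on automorphism groups at every geometric point is injective. Since $\cD^\ell_n$ is a nilpotent thickening of its residual gerbe $B\mu_\ell \hookrightarrow \cD^\ell_n$, it has (for any algebraically closed field $K \supseteq k$) a unique isomorphism class of $K$-point, with automorphism group $\mu_\ell(K)$. For a family $f: \cD^\ell_{n,T} \to \cX$ over a $k$-scheme $T$, restriction to the closed substack $B\mu_{\ell, T} \hookrightarrow \cD^\ell_{n,T}$ therefore yields a morphism of affine $T$-group schemes
\[
\phi\colon \mu_{\ell, T} \to A := I\cX \times_\cX T,
\]
where $T \to \cX$ is the composition $T \to B\mu_{\ell, T} \to \cD^\ell_{n,T} \to \cX$. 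The morphism $f$ is representable at a point $t \in T$ iff $\phi_t$ is a monomorphism, and the goal reduces to showing that this condition defines an open subscheme of $T$.

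Next I would show this locus is open. The closed subgroup schemes of $\mu_\ell$ are precisely $\mu_d$ for $d \mid \ell$, so $\phi_t$ fails to be injective iff there exists $d \mid \ell$ with $d > 1$ for which $\phi_t|_{\mu_{d, t}}$ is trivial. For each such $d$, let $Z_d \subset T$ be the locus where $\phi|_{\mu_{d, T}}$ agrees with the constant morphism $c := e_A \circ \pi\colon \mu_{d, T} \to T \to A$. Since $\cX$ has affine diagonal, $I\cX \to \cX$ is affine, so $A \to T$ is separated, and the equalizer
\[
E_d := \mu_{d, T} \times_{(\phi|_{\mu_{d,T}},\, c),\, A \times_T A,\, \Delta_A} A
\]
is a closed subscheme of $\mu_{d, T}$. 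The complement $U_d := \mu_{d, T} \setminus E_d$ is open, and since $\mu_{d, T} \to T$ is finite flat (hence universally open), $\pi(U_d) \subseteq T$ is open. Thus $Z_d = T \setminus \pi(U_d)$ is closed, and the non-representable locus $\bigcup_{d \mid \ell,\, d > 1} Z_d$ is a finite union of closed sets, hence closed.

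Finally, since this construction is compatible with base change in $T$, the inclusion $\uHom^{\rep}_k(\cD^\ell_n, \cX) \hookrightarrow \uHom_k(\cD^\ell_n, \cX)$ is representable by open immersions, and hence is itself an open immersion. The main subtlety I expect is the initial reduction: one must justify that testing representability on $B\mu_{\ell, T} \hookrightarrow \cD^\ell_{n, T}$ (rather than on all of $\cD^\ell_{n, T}$) loses no information, which ultimately follows because representability is checked on geometric points and the inclusion $B\mu_\ell \hookrightarrow \cD^\ell_n$ is a homeomorphism on underlying topological spaces with the same residual gerbe. The rest is a clean application of standard facts about finite flat group schemes.
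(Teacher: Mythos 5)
Your argument is correct, but it takes a genuinely different route from the paper. The paper's proof is a two-step transfer: it quotes its companion result \cite[Proposition 4.4]{SatrianoUsatine4}, which says that the representable locus inside $\uHom_{k[t]/(t^{n+1})}(\cD^\ell_n, \cX \otimes_k k[t]/(t^{n+1}))$ is open, and then applies the Weil restriction $\Res_{(k[t]/(t^{n+1}))/k}$, using \cite[Proposition 3.5(vii)]{Rydh} to see that the open immersion is preserved; this buys brevity and consistency with the earlier paper but hides the mechanism. You instead give a self-contained argument: reduce representability to injectivity of the maps on automorphism groups at geometric points (this is exactly \cite[Tag 0CJ9]{stacks-project}, which the paper itself invokes in the proof of its infinitesimal lifting criterion, so your key input is already in its toolkit), observe that every geometric point of $\cD^\ell_{n,T}$ factors through the residual gerbe $B\mu_{\ell,T}$ with the same automorphism group $\mu_\ell$, package the data as a homomorphism $\phi\colon \mu_{\ell,T}\to I\cX\times_\cX T$ of $T$-group schemes, and cut out the non-injective locus by the equalizer construction, using that $\cX$ has affine (hence separated) inertia and that $\mu_{d,T}\to T$ is finite flat, hence open. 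Your approach makes transparent exactly where the affine-diagonal hypothesis and characteristic $0$ enter. Two small points deserve a word if you write this up: the step from ``every point of the fibre $\mu_{d,\kappa(t)}$ lies in the equalizer $E_d$'' to ``$\phi_t|_{\mu_{d}}$ is the trivial homomorphism'' uses that $\mu_{d,\kappa(t)}$ is reduced (true in characteristic $0$, where $\mu_\ell$ is \'etale, and this reducedness is also what lets you check triviality of the kernel on geometric points); and one should note explicitly that the condition at a point of $T'$ depends only on its image in $T$, so that ``$f_{T'}$ is representable'' is equivalent to $|T'|\to|T|$ landing in the open set $U = T\setminus\bigcup_{d\mid\ell,\,d>1}Z_d$, which is what representability of the inclusion by open immersions requires. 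With those remarks added, your proof is complete and arguably more informative than the paper's citation-based one, at the cost of reproving in this special case what \cite[Proposition 4.4]{SatrianoUsatine4} already provides in general.
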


\begin{proof}
The natural map
\[
	\uHom^{\rep}_{k[t]/(t^{n+1})}(\cD^\ell_n, \cX \otimes_k k[t]/(t^{n+1})) \to \uHom_{k[t]/(t^{n+1})}(\cD^\ell_n, \cX \otimes_k k[t]/(t^{n+1}))
\]
is an open immersion by \cite[Proposition 4.4]{SatrianoUsatine4}. Using \cite[Proposition 3.5(vii)]{Rydh}, the result follows by applying the Weil restriction $\Res_{(k[t]/(t^{n+1}) )/ k}$ to this open immersion.
\end{proof}

A version of the following definition was introduced in \cite{Yasuda2006}, where it was used to develop motivic integration in the special case where $\cX$ is a Deligne--Mumford stack. The following version is consistent with \cite{Yasuda2024} (see Remark 15.6 (loc.~cit.) for a comparison).

\begin{definition}
Let $\ell \in \Z_{>0}$ and $n \in \Z_{\geq 0}$. The \emph{stack of twisted $n$-jets of $\cX$ of order $\ell$} is
\[
	\sJ^\ell_n(\cX) = \uHom^{\rep}_k(\cD^\ell_k, \cX).
\]
\end{definition}

\begin{remark}\label{remarkJetStackLocallyFiniteTypeAffineDiagonalFiniteType}
By \autoref{propositionJetStackOpenInHomStack} and \cite[Theorem 3.12(xv, xxi) and Remark 3.13]{Rydh}, $\sJ^\ell_n(\cX)$ is a finite type Artin stack over $k$ with affine diagonal.
\end{remark}

For $m \geq n$ the closed immersion $\Spec(k[t]/(t^{n+1})) \to \Spec(k[t]/(t^{m+1}))$ induces a closed immersion $\cD^\ell_n \to \cD^\ell_m$ that induces the \emph{truncation map}
\[
	\theta^m_n: \sJ^\ell_m(\cX) \to \sJ^\ell_n(\cX).
\]

\begin{definition}
If $\ell \in \Z_{>0}$, the \emph{stack of twisted arcs of $\cX$ of order $\ell$} is
\[
	\sJ^\ell(\cX) = \varprojlim_n \sJ^\ell_n(\cX),
\]
where the inverse system is given by the truncation morphisms $\theta^m_n$. We also call the canonical maps $\theta_n: \sJ^\ell(\cX) \to \sJ^\ell_n(\cX)$ \emph{truncation morphisms}.
\end{definition}

\begin{remark}
Although we have no reason to believe $\sJ^\ell(\cX)$ is an Artin stack, it is a stack \cite[Proposition 2.1.9]{Talpo}. We use the symbol $\sJ^\ell(\cX)(k')$ to denote the category of its $k'$-valued points, $\overline{\sJ^\ell(\cX)}(k')$ to denote the set of isomorphism classes of its $k'$-valued points, and the symbol $|\sJ^\ell(\cX)|$ to denote the set of equivalence classes of its points.
\end{remark}

\begin{remark}\label{remarkFormalGAGAandTannakaDuality}
For any field extension $k'$ of $k$, we have that $\sJ^\ell(\cX)(k')$ is the category of representable $k$-morphisms $\cD^\ell_{k'} \to \cX$. This follows from formal GAGA \cite[Theorem 1.6]{AHRetalelocal} and Tannaka duality \cite[1.7.6]{AHRetalelocal}.
\end{remark}

\begin{remark}\label{remarkSurjectiveTruncationMorphisms}
If $\cX$ is smooth over $k$, then $\overline{\sJ^\ell(\cX)}(k') \to \overline{\sJ^\ell_n(\cX)}(k')$ is surjective for all field extensions $k'$ of $k$ by \autoref{prop:coh-affine-smooth}. In particular $|\sJ^\ell(\cX)| \to |\sJ^\ell_n(\cX)|$ is surjective.
\end{remark}

\begin{definition}
Let $\ell \in \Z_{>0}$ and let $\cC \subset |\sJ^\ell(\cX)|$. We call $\cC$ a \emph{cylinder} if we can write $\cC = \theta_n^{-1}(\cC_n)$ for some $n \in \Z_{\geq 0}$ and $\cC_n$ a constructible subset of $|\sJ^\ell_n(\cX)|$.
\end{definition}

\begin{proposition}
Assume that $\cX$ is smooth over $k$, let $\ell \in \Z_{>0}$, and let $\cC \subset |\sJ^\ell(\cX)|$ be a cylinder. Then for all $n \in \Z_{\geq 0}$, the subset $\theta_n(\cC) \subset |\sJ^\ell_n(\cX)|$ is constructible.
\end{proposition}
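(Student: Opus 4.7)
The plan is to split into two cases according to whether $n$ is at least as large as the level $m$ at which the cylinder is cut out. Unpacking the definition, we have $\cC = \theta_m^{-1}(\cC_m)$ for some $m \in \Z_{\geq 0}$ and some constructible subset $\cC_m \subset |\sJ^\ell_m(\cX)|$. By the compatibility $\theta_{\min(m,n)} = \theta^{\max(m,n)}_{\min(m,n)} \circ \theta_{\max(m,n)}$ of truncation maps, the set $\theta_n(\cC)$ will in each case be expressible in terms of $\cC_m$ together with the finite-type truncation $\theta^{\max(m,n)}_{\min(m,n)}$ between the two jet stacks.

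For $n \geq m$, I first claim $\theta_n(\cC) = (\theta^n_m)^{-1}(\cC_m)$. The inclusion $\subseteq$ is immediate from $\theta_m = \theta^n_m \circ \theta_n$. For the reverse inclusion, given $y \in |\sJ^\ell_n(\cX)|$ with $\theta^n_m(y) \in \cC_m$, I apply the surjectivity of $\theta_n \colon |\sJ^\ell(\cX)| \to |\sJ^\ell_n(\cX)|$ from \autoref{remarkSurjectiveTruncationMorphisms} (which uses that $\cX$ is smooth) to lift $y$ to some $x \in |\sJ^\ell(\cX)|$; then $\theta_m(x) = \theta^n_m(y) \in \cC_m$ shows $x \in \cC$. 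Since $\theta^n_m$ is a morphism of finite type Artin stacks (using \autoref{remarkJetStackLocallyFiniteTypeAffineDiagonalFiniteType}), the preimage of a constructible set is constructible, and we are done.

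For $n < m$, I use $\theta_n = \theta^m_n \circ \theta_m$ to write $\theta_n(\cC) = \theta^m_n(\theta_m(\cC))$. The surjectivity of $\theta_m$ (again from \autoref{remarkSurjectiveTruncationMorphisms}) gives $\theta_m(\cC) = \theta_m(\theta_m^{-1}(\cC_m)) = \cC_m$, so $\theta_n(\cC) = \theta^m_n(\cC_m)$. At this step, I invoke the Chevalley-type statement that the image of a constructible subset under a finite type morphism of finite type Artin stacks is constructible, which reduces to the classical scheme statement via smooth covers of source and target.

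The only nontrivial ingredient in this plan is the surjectivity of $\theta_n$ on $k'$-points for every field extension $k'/k$, which is precisely what \autoref{remarkSurjectiveTruncationMorphisms} gives us via the infinitesimal lifting criterion \autoref{prop:coh-affine-smooth} (this is where smoothness of $\cX$ and cohomological affineness of $\cD^\ell_{n,k'}$ are used). The main potential obstacle is ensuring the Chevalley step in Case 2 works for Artin stacks rather than just schemes, but since both $\sJ^\ell_m(\cX)$ and $\sJ^\ell_n(\cX)$ are of finite type over $k$, this is a standard reduction via smooth atlases.
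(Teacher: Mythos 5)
Your proof is correct and follows essentially the same route as the paper: the same case split with $\theta_n(\cC) = (\theta^n_m)^{-1}(\cC_m)$ for $n \geq m$ and $\theta_n(\cC) = \theta^m_n(\cC_m)$ for $n < m$, using the surjectivity of truncation from \autoref{remarkSurjectiveTruncationMorphisms} and Chevalley's theorem for Artin stacks. You have merely spelled out the verifications that the paper leaves implicit.
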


\begin{proof}
There exists some $m \in \Z_{\geq 0}$ and a constructible subset $\cC_m \subset |\sJ^\ell_m(\cX)|$ such that $\cC = \theta_m^{-1}(\cC_n)$. By \autoref{remarkSurjectiveTruncationMorphisms}
\[
	\theta_n(\cC) = \begin{cases}(\theta^n_m)^{-1}(\cC_m), & m \leq n \\ \theta^m_n(\cC_m), & m \geq n \end{cases}.
\]
The proposition then follows from Chevalley's theorem for Artin stacks \cite[Theorem 5.1]{HallRydh}.
\end{proof}

\begin{proposition}\label{propositionFibersOfTruncation}
Assume that $\cX$ is equidimensional and smooth over $k$ and has a good moduli space. Let $n \in \Z_{\geq 0}$ and $\ell \in \Z_{>0}$, let $k'$ be a field extension of $k$, and let $\Spec(k') \to \sJ^\ell_n(\cX)$ be a $k$-morphism. Then there exist $a,b \in \Z_{\geq 0}$ with $a-b = \dim\cX$ such that
\[
	\sJ^\ell_{n+1}(\cX) \times_{\sJ^\ell_n(\cX)} \Spec(k') \cong \bA^{a}_{k'} \times_{k'} B\bG_{a,k'}^{b}
\]
as stacks over $k'$.
\end{proposition}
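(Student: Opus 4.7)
The $k'$-point in question corresponds to a representable morphism $\varphi\colon\cD^\ell_{n,k'}\to\cX$, and the fiber parametrizes representable lifts $\tilde\varphi\colon\cD^\ell_{n+1,k'}\to\cX$ restricting to $\varphi$. Because $\cD^\ell_{n,k'}\hookrightarrow\cD^\ell_{n+1,k'}$ is a homeomorphism on underlying topological spaces (each reduces to $B\mu_{\ell,k'}$), representability is automatic for every lift of a representable $\varphi$, so I may work with the full deformation theory of morphisms into $\cX$ rather than imposing a representability constraint on the space of lifts.

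The plan is to extract the fiber from the cotangent complex. Let $\cI$ denote the square-zero ideal of the thickening $\cD^\ell_{n,k'}\hookrightarrow\cD^\ell_{n+1,k'}$. Tracking the $\mu_\ell$-grading shows $\cI\cong\cO_{\cD^\ell_{0,k'}}$ (its generator $t^{n+1}$ has $\mu_\ell$-degree $(n+1)\ell\equiv 0$), so $\cI$ is a rank-one locally free sheaf on the reduction. By adjunction,
\[
\Ext^i_{\cD^\ell_{n,k'}}(L\varphi^*L_\cX,\cI)=\Ext^i_{\cD^\ell_{0,k'}}(L\varphi_0^*L_\cX,\cI),
\]
where $\varphi_0$ is the restriction of $\varphi$ to $\cD^\ell_{0,k'}$. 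Applying \autoref{lemmaPullBackCotangentComplexBecomesTwoVectorBundles} to $\varphi_0$ (valid since $\cD^\ell_{0,k'}$ has good moduli space $\Spec k'$) yields an exact triangle $L\varphi_0^*L_\cX\to\cF_0\to\cG_0$ with $\cF_0,\cG_0$ locally free of ranks $r,s$ satisfying $r-s=\dim\cX$. Cohomological affineness of $\cD^\ell_{0,k'}$ (the quotient of an affine scheme by the linearly reductive group $\mu_\ell$) kills $\Ext^{\geq 1}$ of locally free sheaves against $\cI$, and \autoref{lemmaPushforwardRankrVB} gives $\dim_{k'}\Ext^0(\cF_0,\cI)=r$ and $\dim_{k'}\Ext^0(\cG_0,\cI)=s$. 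The long exact sequence of the triangle therefore collapses to
\[
0\to\Ext^{-1}(L\varphi^*L_\cX,\cI)\to k'^{\,s}\xrightarrow{\alpha}k'^{\,r}\to\Ext^0(L\varphi^*L_\cX,\cI)\to 0,
\]
with vanishing $\Ext^{\geq 1}$. Writing $b=\dim\ker\alpha$ and $a=\dim\coker\alpha$, we get $a-b=r-s=\dim\cX$.

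Finally, by Olsson's deformation theory of representable morphisms \cite{OlssonDefRep}, combined with the Deligne correspondence between strictly commutative Picard stacks and 2-term complexes of abelian sheaves, the vanishing of obstructions (also ensured by \autoref{prop:coh-affine-smooth}) identifies the fiber as the Picard stack associated to $[\,\Hom(\cG_0,\cI)\to\Hom(\cF_0,\cI)\,]$ in cohomological degrees $[-1,0]$; concretely, this is $[\bA^r_{k'}/\bG_{a,k'}^s]$ with $\bG_{a,k'}^s$ acting by translation through $\alpha$. Since $\ker\alpha\cong\bG_{a,k'}^b$ acts trivially while $\bG_{a,k'}^s/\ker\alpha\cong\bG_{a,k'}^{s-b}$ acts freely by translation on $\bA^r_{k'}$, this quotient simplifies to $\bA^{r-(s-b)}_{k'}\times_{k'} B\bG_{a,k'}^b=\bA^a_{k'}\times_{k'} B\bG_{a,k'}^b$, as required. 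The main technical point is justifying the genuine Picard-stack description of the fiber (not just its set of iso classes and stabilizer groups); this is where the linear structure coming from the cotangent-complex resolution of Olsson's deformation theory is essential.
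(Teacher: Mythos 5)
Your proposal is correct and follows essentially the same route as the paper: identify the square-zero ideal $(t^{n+1})/(t^{n+2})\cong\cO_{\cD^\ell_{0,k'}}$, reduce the $\Ext$ computation to $\cD^\ell_{0,k'}$, apply \autoref{lemmaPullBackCotangentComplexBecomesTwoVectorBundles} and \autoref{lemmaPushforwardRankrVB} to obtain the four-term exact sequence giving $a-b=r-s=\dim\cX$, and use deformation theory (with \autoref{prop:coh-affine-smooth} supplying lifts and automatic representability) to identify the fiber. The only difference is presentational: where you describe the fiber as the Picard stack of $[\Hom(\cG_0,\cI)\to\Hom(\cF_0,\cI)]$ and simplify $[\bA^{r}_{k'}/\bG_{a,k'}^{s}]$ to $\bA^{a}_{k'}\times_{k'}B\bG_{a,k'}^{b}$, the paper computes the isomorphism classes and automorphism groups of $A$-valued points functorially in $A$ and concludes the same product decomposition, so the stack-level identification you flag as the main technical point is treated in the paper at a comparable level of detail and rests on the same deformation-theoretic input.
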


\begin{proof}
Let $\Spec(k') \to \sJ^\ell_n(\cX)$ correspond to the twisted jet $\varphi_n: \cD^\ell_{n, k'} \to \cX$, and let $\varphi_0$ be the composition $\cD^\ell_0 \hookrightarrow \cD^\ell_n \xrightarrow{\varphi_n} \cX$. Consider the $k'$ vector spaces
\begin{align*}
	E^0 &= \Ext_{\cD^\ell_{n,k'}}^0(L\varphi_n^* L_\cX, ((t^{n+1})/(t^{n+2}))\cO_{\cD^\ell_{n,k'}})\\
	&= \Ext_{\cD^\ell_{0,k'}}^0(L\varphi_0^* L_\cX, \cO_{\cD^\ell_{0,k'}})\otimes((t^{n+1})/(t^{n+2}))\cO_{\cD^\ell_{n,k'}}
\end{align*}
and
\begin{align*}
	E^{-1} &= \Ext_{\cD^\ell_{n,k'}}^{-1}(L\varphi_n^* L_\cX, ((t^{n+1})/(t^{n+2}))\cO_{\cD^\ell_{n,k'}})\\
	&= \Ext_{\cD^\ell_{0,k'}}^{-1}(L\varphi_{0}^* L_\cX, \cO_{\cD^\ell_{0,k'}})\otimes ((t^{n+1})/(t^{n+2}))\cO_{\cD^\ell_{n,k'}}.
\end{align*}
For any $k'$-algebra $A$, let $\varphi_{n,A}$ denote the composition $\cD^\ell_{n,A} \to \cD^\ell_{n,k'} \xrightarrow{\varphi_n} \cX$ and $\varphi_{0,A}$ denote the composition $\cD^\ell_{0,A} \to \cD^\ell_{0,k'} \xrightarrow{\varphi_0} \cX$. Since the $A$-valued points of the fiber $\sJ^\ell_{n+1}(\cX) \times_{\sJ^\ell_n(\cX)} \Spec(k')$ are nonempty by \autoref{prop:coh-affine-smooth}, deformation theory tell us
\begin{align*}
	\overline{\sJ^\ell_{n+1}(\cX) \times_{\sJ^\ell_n(\cX)} \Spec(k')}(A) &= \Ext_{\cD^\ell_{n,A}}^0(L\varphi_{n,A}^* L_\cX, ((t^{n+1})/(t^{n+2}))\cO_{\cD^\ell_{n,A}})\\
	&= \Ext_{\cD^\ell_{0,A}}^0(L\varphi_{0, A}^* L_\cX, \cO_{\cD^\ell_{0,A}})\otimes((t^{n+1})/(t^{n+2}))\cO_{\cD^\ell_{n,k'}}\\
	&= E^0 \otimes_{k'} A
\end{align*}
and each object of $(\sJ^\ell_{n+1}(\cX) \times_{\sJ^\ell_n(\cX)} \Spec(k'))(A)$ has automorphism group
\begin{align*}
	\Ext_{\cD^\ell_{n,A}}^{-1}(&L\varphi_{n,A}^* L_\cX, ((t^{n+1})/(t^{n+2}))\cO_{\cD^\ell_{n,A}})\\
	&= \Ext_{\cD^\ell_{0,A}}^{-1}(L\varphi_{0, A}^* L_\cX, \cO_{\cD^\ell_{0,A}})\otimes((t^{n+1})/(t^{n+2}))\cO_{\cD^\ell_{n,k'}}\\
	&= E^{-1} \otimes_{k'} A.
\end{align*}
Since these equalities are natural in $A$, we get that
\[
	\sJ^\ell_{n+1}(\cX) \times_{\sJ^\ell_n(\cX)} \Spec(k') \cong \bA^{\dim_{k'}E^0}_{k'} \times_{k'} B\bG^{\dim_{k'}E^{-1}}_{a,k'}.
\]
Therefore we only need to show that
\[
	\dim_{k'}E^0 - \dim_{k'}E^{-1} = \dim\cX.
\]
By \autoref{lemmaPullBackCotangentComplexBecomesTwoVectorBundles}, there exist $r, s \in \Z_{\geq 0}$ with $r-s = \dim\cX$ and an exact triangle
\[
	L\varphi_0^* L_\cX \to \cF \to \cG
\]
with $\cF$ a rank $r$ locally free sheaf and $\cG$ a rank $s$ locally free sheaf. We then have the exact sequence
\[
	0 \to E^{-1} \to H^0(\sHom(\cG, \cO_{\cD^\ell_{0,k'}})) \to H^0(\sHom(\cF, \cO_{\cD^\ell_{0,k'}})) \to E^{0} \to 0.
\]
Therefore
\begin{align*}
	\dim_{k'}E^0 - \dim_{k'}&E^{-1} \\
	&= \dim_{k'} H^0(\sHom(\cF, \cO_{\cD^\ell_{0,k'}})) - \dim_{k'}H^0(\sHom(\cG, \cO_{\cD^\ell_{0,k'}}))\\
	&= r - s\\
	&= \dim\cX,
\end{align*}
where the second equality is by \autoref{lemmaPushforwardRankrVB}.
\end{proof}

\begin{theorem}
Assume that $\cX$ is equidimensional and smooth over $k$ and has a good moduli space. Let $\ell \in \Z_{>0}$, and let $\cC \subset |\sJ^\ell(\cX)|$ be a cylinder. Then the sequence
\[
	\{ \e(\theta_n(\cC)) \bL^{-(n+1)\dim\cX} \}_{n \in \Z_{\geq 0}}
\]
stabilizes for $n$ sufficiently large. Furthermore if $m \in \Z_{\geq 0}$ and $\cC_m \subset |\sJ^\ell_m(\cX)|$ is a constructible subset such that $\cC = \theta_m^{-1}(\cC_m)$, then
\[
	\lim_{n \to \infty} \e(\theta_n(\cC)) \bL^{-(n+1)\dim\cX} = \e(\cC_m)\bL^{-(m+1)\dim\cX}.
\]
\end{theorem}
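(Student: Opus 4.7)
The plan is to show inductively that $\e(\theta_n(\cC)) = \bL^{(n-m)\dim\cX}\,\e(\cC_m)$ for all $n \geq m$. Once this is established, multiplying by $\bL^{-(n+1)\dim\cX}$ immediately gives both the stabilization and the claimed value of the limit.

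First I would observe that since $\cX$ is smooth, \autoref{remarkSurjectiveTruncationMorphisms} implies each truncation $\theta_n\colon|\sJ^\ell(\cX)|\to|\sJ^\ell_n(\cX)|$ is surjective. In particular $\theta_m(\cC)=\theta_m(\theta_m^{-1}(\cC_m))=\cC_m$, which handles the base case, and more generally $\theta_n(\cC)=(\theta^n_m)^{-1}(\cC_m)$ for every $n\geq m$. Consequently $\theta_{n+1}(\cC)=(\theta^{n+1}_n)^{-1}(\theta_n(\cC))$ as subsets of $|\sJ^\ell_{n+1}(\cX)|$.

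For the inductive step, I would stratify the constructible set $\theta_n(\cC)$ as a finite disjoint union of locally closed substacks $\cZ_i\subset\sJ^\ell_n(\cX)$, so that
\[
\e(\theta_n(\cC))=\sum_i\e(\cZ_i), \qquad \e(\theta_{n+1}(\cC))=\sum_i\e\bigl(\sJ^\ell_{n+1}(\cX)\times_{\sJ^\ell_n(\cX)}\cZ_i\bigr).
\]
I would then apply \autoref{propFibrationClass} to each restricted truncation $\sJ^\ell_{n+1}(\cX)\times_{\sJ^\ell_n(\cX)}\cZ_i\to\cZ_i$ with $\Theta=\bL^{\dim\cX}$. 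The needed fiber hypothesis is supplied by \autoref{propositionFibersOfTruncation}: for any $\Spec(k')\to\cZ_i$ the fiber is $\bA^a_{k'}\times_{k'}B\bG^b_{a,k'}$ for some $a,b\geq 0$ with $a-b=\dim\cX$, and taking $\cF=\bA^a_k\times_k B\bG^b_{a,k}$ gives $\e(\cF)=\bL^a\cdot\bL^{-b}=\bL^{\dim\cX}$. Summing over $i$ then yields $\e(\theta_{n+1}(\cC))=\bL^{\dim\cX}\,\e(\theta_n(\cC))$, which closes the induction.

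The subtle point I expect to flag is that $a$ and $b$ themselves genuinely depend on the chosen point of $\cZ_i$ — so the fibers are not mutually isomorphic — but this is precisely why \autoref{propFibrationClass} is formulated to allow $\cF$ (not just its class $\Theta$) to vary pointwise. The relation $a-b=\dim\cX$ keeps $\bL^{a-b}$ constantly equal to $\bL^{\dim\cX}$, and that is exactly what the hypothesis requires.
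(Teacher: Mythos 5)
Your proposal is correct and follows essentially the same route as the paper, whose proof is exactly the combination of \autoref{propFibrationClass}, \autoref{propositionFibersOfTruncation}, and the identity $\e(B\bG_{a,k}) = \bL^{-1}$ (since $\bG_a$ is special), with the identification $\theta_n(\cC) = (\theta^n_m)^{-1}(\cC_m)$ via the surjectivity of truncation. Your stratification into locally closed substacks and your remark that $\autoref{propFibrationClass}$ only requires the class $\Theta$, not the fiber itself, to be constant are precisely the details the paper leaves implicit.
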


\begin{proof}
The theorem follows immediately from \autoref{propFibrationClass}, \autoref{propositionFibersOfTruncation}, and the fact that $\bG_a$ is a special group so $\e(B\bG_{a,k}) = \bL^{-1}$.
\end{proof}

We may now define the motivic volume $\nu_\cX$ on twisted arcs.

\begin{definition}
Assume that $\cX$ is equidimensional and smooth over $k$ and has a good moduli space. Let $\ell \in \Z_{>0}$, and let $\cC \subset |\sJ^\ell(\cX)|$ be a cylinder. Then the \emph{motivic volume} of $\cC$ is
\[
	\nu_\cX(\cC) = \lim_{n \to \infty} \e(\theta_n(\cC)) \bL^{-(n+1)\dim\cX}.
\]
\end{definition}

\begin{definition}\label{definitionCylinderForUnion}
The \emph{stack of twisted arcs} of $\cX$ is
\[
	\sJ(\cX) = \bigsqcup_{\ell \in \Z_{>0}} \sJ^\ell(\cX).
\]
We call a subset $\cC \subset |\sJ(\cX)|$ \emph{bounded} if $\cC \cap |\sJ^\ell(\cX)| = \emptyset$ for all but finitely many $\ell$. For any $n \in \Z_{\geq 0}$ the \emph{stack of twisted $n$-jets} of $\cX$ is
\[
	\sJ_n(\cX) = \bigsqcup_{\ell \in \Z_{>0}} \sJ^\ell_n(\cX).
\]
We have a \emph{truncation morphism} $\theta_n: \sJ(\cX) \to \sJ_n(\cX)$ that restricts to each truncation morphism $\theta_n: \sJ^\ell(\cX) \to \sJ^\ell_n(\cX)$. We call a subset $\cC \subset |\sJ(\cX)|$ a \emph{cylinder} if there exists some $n \in \Z_{\geq 0}$ and locally constructible subset $\cC_n \subset |\sJ_n(\cX)|$ such that $\cC = \theta_n^{-1}(\cC_n)$. We note that a subset $\cC \subset |\sJ(\cX)|$ is a bounded cylinder if and only if each $\cC \cap |\sJ^\ell(\cX)|$ is a cylinder in $|\sJ^\ell(\cX)|$ and $\cC \cap |\sJ^\ell(\cX)| = \emptyset$ for all but finitely many $\ell$.

If $\cX$ is equidimensional and smooth over $k$ and has a good moduli space and $\cC \subset |\sJ(\cX)|$ is a bounded cylinder, we define the \emph{motivic volume} of $\cC$ as the finite sum
\[
	\nu_\cX(\cC) = \sum_{\ell \in \Z_{>0}} \nu_{\cX}(\cC \cap |\sJ^\ell(\cX)|).
\]
\end{definition}

\subsection{Weight functions}

In this subsection we define the function $\wt_\cX$ used in \autoref{maintheorem} and show that it arises from a locally constant function on the cyclotomic inertia of $\cX$. The following definition was introduced in \cite{AbramovichGraberVistoli} in order to study Gromov--Witten theory of Deligne--Mumford stacks.

\begin{definition}
Let $\ell \in \Z_{>0}$. The \emph{cyclotomic inertia stack of $\cX$ of order $\ell$} is
\[
	I_{\mu_\ell}(\cX) = \uHom_k^{\rep}(B\mu_\ell, \cX).
\]
\end{definition}

\begin{proposition}
Let $\ell \in \Z_{>0}$. Then $I_{\mu_\ell}(\cX)$ is a finite type Artin stack over $k$ with affine diagonal.
\end{proposition}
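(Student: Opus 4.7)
The plan is to realize $I_{\mu_\ell}(\cX)$ as an open substack of $\uHom_k(B\mu_\ell, \cX)$ and then control the larger Hom stack through its forgetful morphism to $\cX$. For the open-immersion step, I would mimic \autoref{propositionJetStackOpenInHomStack}:~since $B\mu_\ell$ has the good moduli space $\Spec k$ and affine diagonal, the inclusion $\uHom_k^{\rep}(B\mu_\ell, \cX) \hookrightarrow \uHom_k(B\mu_\ell, \cX)$ should follow directly from \cite[Proposition 4.4]{SatrianoUsatine4} applied to the pair $(B\mu_\ell, \cX)$. This reduces the problem to showing that $\uHom_k(B\mu_\ell, \cX)$ is itself a finite type Artin stack over $k$ with affine diagonal.

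The core step is to analyze the evaluation morphism $\pi \colon \uHom_k(B\mu_\ell, \cX) \to \cX$ obtained by pullback along the canonical atlas $\Spec k \to B\mu_\ell$, which sends a pair $(\xi, \phi\colon \mu_{\ell,T}\to \uAut_T\xi)$ to $\xi$. I claim $\pi$ is representable by affine morphisms of finite type. To check this, I would take a smooth cover $U \to \cX$ with universal object $u_\ast$ and analyze the $2$-fibered product $\uHom_k(B\mu_\ell, \cX)\times_\cX U$:~a $T$-point is a tuple $((\xi,\phi), u, \alpha\colon \xi \simeq u_\ast)$, and the built-in isomorphism $\alpha$ rigidifies the data, showing that the groupoid of $T$-points is equivalent to the set-valued functor
\[
    T\mapsto \bigl\{(u\in U(T),\; \phi\colon \mu_{\ell,T}\to \uAut_T(u_\ast))\bigr\}.
\]
Because $k$ is algebraically closed of characteristic $0$, a choice of primitive $\ell$-th root of unity identifies $\mu_\ell$ with the constant group scheme $\underline{\Z/\ell}$, so this functor is the closed $\ell$-torsion subscheme of the affine finite type $U$-group scheme $\uAut_U(u_\ast)$, confirming the claim.

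Granting the representability of $\pi$, it is immediate that $\uHom_k(B\mu_\ell, \cX)$ is a finite type Artin stack over $k$, since representability by affine finite type morphisms preserves these properties. For affine diagonal, given two $T$-points $(\xi_1, \phi_1)$ and $(\xi_2, \phi_2)$, the isomorphism sheaf is the closed subfunctor of $\uIsom_T(\xi_1, \xi_2)$ cut out by the equivariance condition $\sigma\phi_1 = \phi_2\sigma$; since $\cX$ has affine diagonal, $\uIsom_T(\xi_1, \xi_2)$ is affine, hence so is the closed subfunctor. The main obstacle I anticipate is the rigidification step:~tracking the $2$-morphisms carefully enough to identify the fibered product groupoid with a set-valued functor, and then invoking the classical identification of group scheme homomorphisms $\mu_\ell \to G$ with the $\ell$-torsion closed subscheme $G[\ell]$ for non-commutative affine $G$, which ultimately relies on trivializing $\mu_\ell$ over $k$.
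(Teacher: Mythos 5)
Your proposal is correct, and its first step is the same as the paper's: both obtain $I_{\mu_\ell}(\cX)=\uHom^{\rep}_k(B\mu_\ell,\cX)$ as an open substack of $\uHom_k(B\mu_\ell,\cX)$ via \cite[Proposition 4.4]{SatrianoUsatine4} (with no Weil restriction needed, since the good moduli space of $B\mu_\ell$ is $\Spec k$). Where you genuinely diverge is in handling the ambient Hom stack: the paper simply quotes Rydh's general algebraicity theorem for Hom stacks with proper flat source \cite[Theorem 3.12(xv, xxi), Remark 3.13]{Rydh}, whereas you argue directly that the evaluation map $\uHom_k(B\mu_\ell,\cX)\to\cX$ is affine and of finite type, by describing a $T$-point as a pair $(\xi,\phi\colon\mu_{\ell,T}\to\uAut_T(\xi))$, rigidifying against a fixed object, and identifying the resulting sheaf with the $\ell$-torsion closed subscheme $\{g: g^\ell=e\}$ of the affine finite type group scheme $\uAut_T(\xi)$ (using $\mu_\ell\cong\underline{\Z/\ell}$ in characteristic $0$), while the Isom sheaves are closed subschemes of the affine Isom schemes of $\cX$, giving affine diagonal. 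This buys an elementary, self-contained proof together with the stronger structural fact that $\uHom_k(B\mu_\ell,\cX)$ (hence $I_{\mu_\ell}(\cX)$) is affine of finite type over $\cX$; the paper's citation is shorter and uniform with \autoref{propositionJetStackOpenInHomStack}, where the source $\cD^\ell_n$ is not a gerbe and no such explicit moduli description of maps is available. Two small points to tighten, neither a real gap: your rigidification argument works verbatim for an arbitrary $T\to\cX$, so you can either run it there directly or, if you insist on checking only over the smooth cover $U$, you must invoke fppf descent of representability, affineness and finite type to descend the conclusion to $\cX$; and at the end one should record that the open substack $I_{\mu_\ell}(\cX)$ inherits finite typeness (the Hom stack is noetherian, so open substacks are quasi-compact) and affine diagonal (the diagonal of an open substack is a base change of the ambient diagonal).
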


\begin{proof}
By \cite[Proposition 4.4]{SatrianoUsatine4}, $I_{\mu_\ell}(\cX)$ is an open substack of $\uHom_k(B\mu_\ell, \cX)$. The latter Hom stack is a finite type Artin stack over $k$ with affine diagonal by \cite[Theorem 3.12(xv, xxi) and Remark 3.13]{Rydh}.
\end{proof}

For any $\ell \in \Z_{>0}$, $w \in \Z$, and $k$-algebra $A$, we let $\cO_{B\mu_{\ell,A}}(w)$ denote the coherent sheaf on $B\mu_{\ell,A} = [\Spec(A) / \mu_{\ell, A}]$ corresponding to the rank one free $A$-module $A$ with $\mu_{\ell,A}$-action where $\xi \in \mu_{\ell}$ acts by multiplication by $\xi^{-w}$. If $E \in D(B\mu_{\ell,A})$, we set $E(w) = E \otimes^L_{\cO_{B\mu_{\ell,A}}} \cO_{B\mu_{\ell,A}}(w)$.

\begin{definition}\label{def:wt-fnc}
Let $\ell \in \Z_{>0}$. We define a \emph{weight function} $\overline{\wt}_\cX: |I_{\mu_\ell}(\cX)| \to \Q$ as follows. If $p \in |I_{\mu_\ell}(\cX)|$ is represented by $\varphi: B\mu_{\ell, k'} \to \cX$ for some field extension $k'$ of $k$, then we set
\begin{align*}
	\overline{\wt}_\cX(p) &= \\
	\dim\cX &+ (1/\ell)\sum_{w = 1}^\ell w\left[ \dim_{k'} H^1\left((L\varphi^*L_\cX)(-w) \right) - \dim_{k'} H^0\left((L\varphi^*L_\cX)(-w) \right) \right].
\end{align*}
\end{definition}

\begin{remark}
An equivalent description of $\overline{\wt}_{\cX}(p)$ is as follows. Let $\{c_j\}_j$ and $\{d_m\}_m$ be integers in $\{1, \dots, \ell\}$ such that
\[
	L^1\varphi^*L_\cX \cong \bigoplus_{j} \cO_{B\mu_{\ell,k'}}(c_j)
\]
and
\[
	L^0\varphi^*L_\cX \cong \bigoplus_{m} \cO_{B\mu_{\ell,k'}}(d_m).
\]
Then
\[
	\overline{\wt}_\cX (p) = \dim\cX + (1/\ell)\left(\sum_j c_j - \sum_m d_m\right).
\]
\end{remark}

\begin{proposition}\label{propositionWeightFunctionIsLocallyConstant}
Assume that $\cX$ is equidimensional and smooth over $k$ and has a good moduli space. Then for any $\ell \in \Z_{>0}$, the function $\overline{\wt}_\cX: |I_{\mu_\ell}(\cX)| \to \Q$ is locally constant.
\end{proposition}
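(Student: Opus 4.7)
The plan is to express $\overline{\wt}_\cX$ as a $\Q$-linear combination of fiberwise Euler characteristics of perfect complexes on $I_{\mu_\ell}(\cX)$, each of which is automatically locally constant.

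First, let $\Phi \colon B\mu_{\ell, I_{\mu_\ell}(\cX)} \to \cX$ denote the universal representable morphism and let $\pi \colon B\mu_{\ell, I_{\mu_\ell}(\cX)} \to I_{\mu_\ell}(\cX)$ be the structure map. Since $\cX$ is smooth, $L\Phi^* L_\cX$ is a perfect complex of Tor amplitude $[0,1]$. Because $\mu_\ell$ is linearly reductive in characteristic zero, the isotypic decomposition of $\mu_\ell$-equivariant objects is canonical and glues globally, yielding
\[
L\Phi^* L_\cX \;\cong\; \bigoplus_{w \in \Z/\ell\Z} \cE_w \boxtimes \cO_{B\mu_\ell}(w)
\]
where $\cE_w := R\pi_*((L\Phi^* L_\cX)(-w))$ is a perfect complex on $I_{\mu_\ell}(\cX)$. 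For any $\varphi \in I_{\mu_\ell}(\cX)(k')$ representing some $p \in |I_{\mu_\ell}(\cX)|$, base change together with the vanishing of higher cohomology of line bundles on $B\mu_{\ell, k'}$ gives
\[
H^i\bigl((L\varphi^* L_\cX)(-w)\bigr) \;\cong\; H^i(\cE_w|_\varphi)
\]
for all $i$.

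Since $\cE_w$ inherits Tor amplitude $[0,1]$, its fiberwise Euler characteristic $\chi(\cE_w|_\varphi) = \dim H^0(\cE_w|_\varphi) - \dim H^1(\cE_w|_\varphi)$ is locally constant on $|I_{\mu_\ell}(\cX)|$, as this reduces to the analogous well-known fact for perfect complexes on schemes by passing to a smooth scheme cover. Rearranging the defining formula of $\overline{\wt}_\cX$ then yields
\[
\overline{\wt}_\cX(p) \;=\; \dim \cX - \frac{1}{\ell} \sum_{w=1}^{\ell} w \cdot \chi(\cE_w|_\varphi),
\]
which is a $\Q$-linear combination of locally constant integer-valued functions, hence locally constant.

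The main technical point will be establishing the global isotypic decomposition together with the base change formula for $R\pi_*$; both follow from linear reductivity of $\mu_\ell$ in characteristic zero, which ensures that $\pi$ is cohomologically affine with $R^{>0}\pi_* = 0$ on coherent sheaves and that perfect complexes on $B\mu_{\ell, S}$ decompose canonically by character for any base $S$.
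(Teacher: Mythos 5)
Your proof is correct, but it takes a genuinely different route from the paper. The paper argues via specialization invariance: it takes a $k'\llbracket t\rrbracket$-point of $I_{\mu_\ell}(\cX)$, i.e.\ a representable map $B\mu_{\ell,k'}\otimes_{k'}k'\llbracket t\rrbracket\to\cX$, and compares the weight at the special and generic points using the global two-term locally free resolution of \autoref{lemmaPullBackCotangentComplexBecomesTwoVectorBundles} (this is where the good moduli space hypothesis enters, through the \'etale-local structure theorem) together with the fact that the invariants of a vector bundle on $B\mu_{\ell,k'}\otimes_{k'}k'\llbracket t\rrbracket$ form a finite free $k'\llbracket t\rrbracket$-module. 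You instead work with the universal family over $I_{\mu_\ell}(\cX)$: you decompose $L\Phi^*L_\cX$ by $\mu_\ell$-weights over the trivial gerbe $B\mu_{\ell,I_{\mu_\ell}(\cX)}$, note each isotypic piece $\cE_w$ is perfect of amplitude $[0,1]$, identify the terms in \autoref{def:wt-fnc} with cohomology of the derived fibers of $\cE_w$ (correctly matching the paper's convention that $H^i$ there means derived global sections on $B\mu_{\ell,k'}$, i.e.\ invariants, which is exactly what your weight-$w$ pushforward and base change compute, since taking invariants is exact and commutes with the $\cO$-linear weight decomposition), and invoke local constancy of fiberwise Euler characteristics of perfect complexes, descended from a smooth scheme cover. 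Your route buys something: it is more self-contained and slightly more general, since perfectness of $L_\cX$ in amplitude $[0,1]$ is smooth-local and so the good moduli space hypothesis is not actually used, and it gives local constancy directly rather than deducing it from constancy along DVR specializations on a noetherian space. What the paper's route buys is economy within the paper: it recycles \autoref{lemmaPullBackCotangentComplexBecomesTwoVectorBundles}, which is needed elsewhere anyway, and avoids setting up the universal weight decomposition and its base change, though, as you note, those are routine consequences of linear reductivity of $\mu_\ell$ in characteristic zero.
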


\begin{proof}
Let $k'$ be a field extension of $k$, and let $\varphi: B\mu_{\ell, k'} \otimes_{k'} k'\llbracket t \rrbracket \to \cX$ be a representable $k$-morphism. Then $\varphi$ corresponds to a $k$-morphism $\psi: \Spec(k'\llbracket t \rrbracket) \to I_{\mu_\ell}(\cX)$. Let $\varphi_0: B\mu_{\ell, k'} \to \cX$ denote the composition 
\[
	B{\mu_{\ell, k'}} = B\mu_{\ell, k'} \otimes_{k'} k'\llbracket t \rrbracket/(t) \hookrightarrow B\mu_{\ell, k'} \otimes_{k'} k'\llbracket t \rrbracket \xrightarrow{\varphi} \cX,
\]
and let $\varphi^\circ: B\mu_{\ell, k'\llparenthesis t \rrparenthesis} \to \cX$ denote the composition
\[
	B{\mu_{\ell, k'\llparenthesis t \rrparenthesis}} = B\mu_{\ell, k'} \otimes_{k'} k'\llparenthesis t \rrparenthesis \to B\mu_{\ell, k'} \otimes_{k'} k'\llbracket t \rrbracket \xrightarrow{\varphi} \cX.
\]
Then $\varphi_0$ (resp. $\varphi^\circ$) represents the image in $|I_{\mu_\ell}(\cX)|$ of the special (resp. generic) point of $\Spec(k'\llbracket t \rrbracket)$ under $\psi$. It is therefore sufficient to show that
\begin{align*}
	\dim_{k'} &H^1\left((L\varphi_0^*L_\cX)(-w) \right) - \dim_{k'} H^0\left((L\varphi_0^*L_\cX)(-w) \right)\\
	&= \dim_{k'\llparenthesis t \rrparenthesis} H^1\left((L(\varphi^\circ)^*L_\cX)(-w) \right) - \dim_{k'\llparenthesis t \rrparenthesis} H^0\left((L(\varphi^\circ)^*L_\cX)(-w) \right)
\end{align*}
for any $w \in \{1, \dots, \ell\}$. By \autoref{lemmaPullBackCotangentComplexBecomesTwoVectorBundles} there exist finite rank locally free sheaves $\cF$ and $\cG$ on $B\mu_{\ell, k'} \otimes_{k'} k'\llbracket t \rrbracket$ and an exact triangle
\[
	L\varphi^*L_\cX \to \cF \to \cG.
\]
Let $\cF(-w)_0, \cG(-w)_0$ be the pullback to $\Spec(k')$ of $\cF(-w), \cG(-w)$, respectively, and let $\cF(-w)^\circ, \cG(-w)^\circ$ be the pullback to $\Spec(k'\llparenthesis t \rrparenthesis)$ of $\cF(-w), \cG(-w)$, respectively. We then have exact triangles
\[
	L\varphi_0^*L_\cX(-w) \to \cF(-w)_0 \to \cG(-w)_0
\]
and
\[
	L(\varphi^\circ)^*L_\cX(-w) \to \cF(-w)^\circ \to \cG(-w)^\circ,
\]
which induce exact sequences
\begin{align*}
	0 \to &H^0\left((L\varphi_0^*L_\cX)(-w) \right) \to H^0\left(  \cF(-w)_0 \right)\\
	&\to H^0\left( \cG(-w)_0 \right) \to H^1\left( (L\varphi_0^*L_\cX)(-w) \right) \to 0
\end{align*}
and
\begin{align*}
	0 \to &H^0\left((L(\varphi^\circ)^*L_\cX)(-w) \right) \to H^0\left(  \cF(-w)^\circ \right)\\
	&\to H^0\left( \cG(-w)^\circ \right) \to H^1\left( (L(\varphi^\circ)^*L_\cX)(-w) \right) \to 0.
\end{align*}
Therefore it is sufficient to show that
\[
	\dim_{k'} H^0\left(  \cF(-w)_0 \right) = \dim_{k'\llparenthesis t \rrparenthesis}H^0\left(  \cF(-w)^\circ \right)
\]
and
\[
	\dim_{k'} H^0\left(  \cG(-w)_0 \right) = \dim_{k'\llparenthesis t \rrparenthesis}H^0\left(  \cG(-w)^\circ \right).
\]
Since $\cF(-w)$ and $\cG(-w)$ are finite rank locally free sheaves on $B\mu_{\ell, k'} \otimes_{k'} k'\llbracket t \rrbracket$, we have that $H^0(\cF(-w))$ and $H^0(\cG(-w))$ are finitely generated and torsion free over $k'\llbracket t \rrbracket$ and are therefore finite rank free $k'\llbracket t \rrbracket$-modules. Therefore
\[
	\dim_{k'}\left( H^0(\cF(-w)) \otimes_{k'\llbracket t \rrbracket} k' \right) = \dim_{k'\llparenthesis t \rrparenthesis}\left( H^0(\cF(-w)) \otimes_{k'\llbracket t \rrbracket} k'\llparenthesis t \rrparenthesis \right)
\]
and
\[
	\dim_{k'}\left( H^0(\cG(-w)) \otimes_{k'\llbracket t \rrbracket} k' \right) = \dim_{k'\llparenthesis t \rrparenthesis}\left( H^0(\cG(-w)) \otimes_{k'\llbracket t \rrbracket} k'\llparenthesis t \rrparenthesis \right).
\]
The result then follows from adjunction \cite[Proposition 4.7(iii)]{Alper}.
\end{proof}

The closed immersion $B\mu_\ell = (\cD^\ell_0)_{\red} \hookrightarrow \cD^\ell_0$ induces a morphism $\theta^0: \sJ^\ell_0(\cX) \to I_{\mu_\ell}(\cX)$ and we let $\theta: \sJ^\ell(\cX) \to I_{\mu_\ell}(\cX)$ denote the composition
\[
	\sJ^\ell(\cX) \xrightarrow{\theta_0} \sJ^\ell_0(\cX) \xrightarrow{\theta^0} I_{\mu_\ell(\cX)}.
\]

\begin{definition}
For any $\ell \in \Z_{>0}$, we define the \emph{weight function} $\wt_\cX: |\sJ^\ell(\cX)| \to \Q$ as the composition $|\sJ^\ell(\cX)| \xrightarrow{\theta} |I_{\mu_\ell}(\cX)| \xrightarrow{\overline{\wt}_\cX} \Q$. We let $\wt_\cX: |\sJ(\cX)| \to \Q$ be the function that restricts to $\wt_\cX: |\sJ^\ell(\cX)| \to \Q$ for all $\ell$.
\end{definition}

\begin{remark}
By \autoref{propositionWeightFunctionIsLocallyConstant}, if $\cX$ is equidimensional and smooth over $k$ and has a good moduli space, then $\wt_\cX: |\sJ^\ell(\cX)| \to \Q$ has cylinder level sets.
\end{remark}

\subsection{Measurable sets and integrable functions}

Throughout this subsection, we will additionally assume that $\cX$ is equidimensional and smooth over $k$ and has a good moduli space. In particular, the motivic volume $\nu_\cX$ is well defined on bounded cylinders. There is a standard approach to using motivic volumes of (bounded) cylinders to define measurable sets and their motivic volumes. The following definitions make this explicit in our setting.

\begin{definition}
Let $\cC \subset |\sJ(\cX)|$, let $\cC^0 \subset |\sJ(\cX)|$ be a bounded cylinder, let $\{\cC^i\}_{i \in I}$ be a collection of bounded cylinders in $|\sJ(\cX)|$, and let $\varepsilon \in \R_{>0}$. We call $(\cC^0, \{\cC^i\}_{i \in I})$ a \emph{bounded cylindrical $\varepsilon$-approximation} of $\cC$ if $\Vert \nu_\cX(\cC^i)\Vert < \varepsilon$ for all $i$ and
\[
	(\cC \cup \cC^0) \setminus (\cC \cap \cC^0) \subset \bigcup_{i \in I} \cC^i.
\]
\end{definition}

\begin{definition}\label{definitionMeasurableSetOfTwistedArcs}
Let $\cC \subset |\sJ(\cX)|$. We call $\cC$ \emph{measurable} if for any $\varepsilon \in \R_{>0}$,  it has a bounded cylindrical $\varepsilon$-approximation. If $\cC$ is measurable, the \emph{motivic volume} $\nu_\cX(\cC)$ of $\cC$ is the unique element of $\widehat{\sM}_k$ satisfying
\[
	\Vert \nu_\cX(\cC) - \nu_\cX(\cC^0) \Vert < \varepsilon
\]
for any bounded cylindrical $\varepsilon$-approximation $(\cC^0, \{\cC^i\}_{i \in I})$.
\end{definition}

\begin{remark}
The fact that such a unique element exists follows from standard methods. See, e.g., \cite[Chapter 6 Theorem 3.3.2]{ChambertLoirNicaiseSebag}. Note that the proof there applies word for word except for the following two exceptions. \cite[Proposition 4.4]{SatrianoUsatine5} must be used in place of the special case of schemes, and we need that covers of bounded cylinders by bounded cylinders admit a finite subcover. The latter statement follows directly from \cite[Proposition 2.2]{SatrianoUsatine2} and part (a) and the beginning of part (b) of \autoref{theoremTwistedToWarped} below. We note that the proof of \autoref{theoremTwistedToWarped} parts (a) and (b) never use \autoref{definitionMeasurableSetOfTwistedArcs}.
\end{remark}

We now define measurable and integrable functions on $|\sJ(\cX)|$.

\begin{definition}
Let $\cC \subset |\sJ(\cX)|$, and let $f: \cC \to \Q \cup \{\infty\}$. We call $f$ \emph{measurable} if the level sets of $f$ are measurable. We say $\bL^f$ is \emph{integrable} if $f$ is measurable and there exists some $m \in \Z_{>0}$ and a measurable subset $\cE \subset |\sJ(\cX)|$ such that
\begin{itemize}

\item $\nu_{\cX}(\cE)$ = 0,

\item the restriction of $f$ to $\cC \setminus \cE$ takes values in $\frac{1}{m}\Z$, and

\item the series
\[
	\sum_{q \in \frac{1}{m}\Z} \bL^q \nu_\cX(f^{-1}(q))
\]
converges in $\widehat{\sM}_k[\bL^{1/m}]$.

\end{itemize}
If $\bL^f$ is integrable we set
\[
	\int_\cC \bL^f \diff\nu_\cX = \sum_{q \in \frac{1}{m}\Z} \bL^q \nu_\cX(f^{-1}(q)) \in \widehat{\sM}_k[\bL^{1/m}].
\]
\end{definition}

\subsection{Height and order functions}

We end this section by introducing some useful functions on $|\sJ(\cX)|$. For any field extension $k'$ of $k$ and twisted arc $\varphi: \cD^\ell_{k'} \to \cX$, we may consider the untwisted arc of $\cX$ given by the composition
\[
	\Spec(k'\llbracket t \rrbracket) \xrightarrow{\sim} \Spec(k'\llbracket t^{1/\ell}\rrbracket) \to \cD^\ell_{k'} \xrightarrow{\varphi} \cX
\]
where the first map is given by $f(t^{1/\ell}) \mapsto f(t)$ and the second map is the usual covering map. This assignment induces a map of sets
\[
	\omega: |\sJ^\ell(\cX)| \to |\sL(\cX)|.
\]
\begin{definition}
Let $\cI$ be a quasi-coherent ideal sheaf on $\cX$, let $\cD$ be a $\Q$-Cartier $\Q$-divisor on $\cX$, and let $\cX \to \cY$ be a morphism to a locally finite type Artin stack $\cY$ over $k$. For any $\ell \in \Z_{>0}$, the \emph{order function} $\ord_{\cI}: |\sJ^\ell(\cX)| \to \Q \cup \{\infty\}$ of $\cI$ is the composition
\[
	|\sJ^\ell(\cX)| \xrightarrow{\omega} |\sL(\cX)| \xrightarrow{\ord_{\cI}} \Z \cup \{\infty\} \xrightarrow{\cdot (1/\ell)} \Q \cup \{\infty\},
\]
the \emph{order function} $\ord_{\cD}: |\sJ^\ell(\cX)| \to \Q \cup \{\infty\}$ of $\cD$ is the composition
\[
	|\sJ^\ell(\cX)| \xrightarrow{\omega} |\sL(\cX)| \xrightarrow{\ord_{\cD}} \Q \cup \{\infty\} \xrightarrow{\cdot (1/\ell)} \Q \cup \{\infty\},
\]
and the \emph{height function} $\het_{\cX/\cY}: |\sJ^\ell(\cX)| \to \Q \cup \{\infty\}$ is the composition
\[
	|\sJ^\ell(\cX)| \xrightarrow{\omega} |\sL(\cX)| \xrightarrow{\het_{\cX/\cY}} \Z \cup \{\infty\} \xrightarrow{\cdot (1/\ell)} \Q \cup \{\infty\}.
\]
We let $\ord_{\cI}, \ord_{\cD}, \het_{\cX/\cY}: |\sJ(\cX)| \to \Q \cup \{\infty\}$ denote the functions that restrict to $\ord_{\cI}, \ord_{\cD}, \het_{\cX/\cY}: |\sJ^\ell(\cX)| \to \Q \cup \{\infty\}$, respectively, for all $\ell$.
\end{definition}

\section{The locus of syntomic warps}
\label{sec:syntomic-locus}


The warping stack $\sW(\cX)$ need not be smooth even when $\cX$ is. Thus, it is often useful to restrict attention to smooth substacks. We introduce two such substacks, $\sW^{\synt}(\cX)$ and $\widetilde{\sW}(\cX)$, which will play an important role throughout the paper.

\begin{definition}
Let $\sW^{\synt}(\cX)$ be the fibered category which is the full subcategory of $\sW(\cX)$ whose objects are warped maps $(\cT\xrightarrow{\pi} T,\cT\to\cX)$ with $\pi$ is syntomic. 
\end{definition}

\begin{proposition}\label{prop:syntomic-locus}
Let $\cX$ be a smooth Artin stack over $k$ with affine diagonal. Then $\sW^{\synt}(\cX)$ is an open substack of $\sW(\cX)$ that is smooth over $k$.
\end{proposition}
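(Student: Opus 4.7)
The plan is to prove openness and smoothness separately, using the infinitesimal lifting criterion for the latter. For openness, fix an arbitrary map $T\to\sW(\cX)$ corresponding to a warped map $(\pi\colon \cT \to T, \cT\to\cX)$, and show that the preimage of $\sW^{\synt}(\cX)$ in $T$ is open. By definition of a warp, $\pi$ is flat and finitely presented, so syntomicity is equivalent to having lci fibers; the locus of $t\in T$ over which $\pi$ has lci fibers is a standard open condition for flat finitely presented morphisms. Since $\sW(\cX)$ is a locally finite type Artin stack by \cite[Theorem 1.9(1)]{SatrianoUsatine4}, it then follows that $\sW^{\synt}(\cX)$ is an open substack, and in particular is itself a locally finite type Artin stack over $k$.

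For smoothness, we apply the infinitesimal lifting criterion. Fix a square-zero closed immersion $T\hookrightarrow T'$ of affine schemes with ideal $I$, and suppose we are given a map $T\to\sW^{\synt}(\cX)$ corresponding to a warped map $(\pi\colon\cT\to T, f\colon\cT\to\cX)$ with $\pi$ syntomic; the task is to extend this to $T'$. The first step is to produce a flat lift $\pi'\colon\cT'\to T'$ of $\pi$. By Olsson's deformation theory \cite{OlssonDefRep}, the obstruction lives in $\Ext^2_{\cT}(L_{\cT/T},\pi^*I)$. Because $\pi$ is syntomic, $L_{\cT/T}$ is perfect of Tor-amplitude $[-1,0]$, so $\mathcal{E}xt^i(L_{\cT/T},\pi^*I)=0$ for $i\geq 2$; and because $T$ is affine and $\pi$ is a good moduli space morphism, $\cT$ is cohomologically affine, so quasi-coherent cohomology vanishes in positive degrees. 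Combining these via the local-to-global spectral sequence yields $\Ext^2_{\cT}(L_{\cT/T},\pi^*I)=0$, producing the desired lift $\pi'$. Flatness and syntomicity pass to $\pi'$; the affine diagonal lifts uniquely under square-zero thickenings; and the good moduli space property is preserved under flat deformations, so $\pi'$ remains a warp.

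It remains to lift $f$ to a representable morphism $f'\colon\cT'\to\cX$. The closed immersion $\cT\hookrightarrow\cT'$ is square-zero with ideal sheaf $(\pi')^*I$, and $\cT$ is cohomologically affine as above, so \autoref{prop:coh-affine-smooth} applies directly (with $\cZ=\cT$, $\cZ'=\cT'$, $\cY=\cX$) to produce the desired representable extension $f'$ to the smooth stack $\cX$. The pair $(\pi',f')$ then defines the required map $T'\to\sW^{\synt}(\cX)$. The most delicate point will be verifying that the flat deformation $\pi'$ of a good moduli space morphism remains a good moduli space morphism; this is not formal and will require invoking Alper-style deformation results together with the compatibility of the good moduli property under flat base change.
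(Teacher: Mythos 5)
Your overall strategy matches the paper's: openness via the fiberwise syntomic locus, smoothness via the infinitesimal criterion with the obstruction in $\Ext^2(L_{\cT/T},\pi^*I)$ killed by cohomological affineness together with vanishing of the local Ext sheaf, and then deforming $f$ by \autoref{prop:coh-affine-smooth}. But two of your justifications are incorrect as stated. For openness, the locus in $T$ over which a flat, finitely presented morphism has lci fibers is \emph{not} open in general: it is the complement of the image of the (closed) non-syntomic locus of $\cT$, hence a priori only constructible, and one needs that $\pi$ is a closed map to conclude. This is exactly where the warp structure enters: good moduli space morphisms are universally closed (see \cite[Theorem 4.16]{Alper}), and the paper's proof uses this explicitly. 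For the obstruction computation, your claim that syntomicity of $\pi$ makes $L_{\cT/T}$ perfect of Tor-amplitude $[-1,0]$ is false for warps, whose stabilizers are typically positive-dimensional, so that $H^1(L_{\cT/T})\neq 0$ (already for the syntomic warp $[\bA^1_k/\bG_m]\to\Spec k$). What is true, via the triangle $\rho^*L_{\cT/T}\to L_{\widetilde{T}/T}\to\Omega^1_\rho$ for a smooth cover $\rho\colon\widetilde{T}\to\cT$, is that $L_{\cT/T}$ is perfect of amplitude contained in $[-1,1]$; this still gives $\mathcal{E}xt^i(L_{\cT/T},\pi^*I)=0$ for $i\geq 2$, so your conclusion survives, but the intermediate statement must be corrected.

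The more serious issue is the step you explicitly defer: that the flat deformation $\pi'\colon\cT'\to\Spec A'$ is again a warp, i.e.\ again a flat, finitely presented good moduli space morphism with affine diagonal. Your proposal offers only a plan here, and the tool you name does not apply: compatibility of good moduli spaces with flat base change is irrelevant, since $\Spec A'$ is a nilpotent thickening of $\Spec A$, not a flat base change; what is needed is that the good moduli space property deforms along such thickenings. This is precisely the content of \cite[Proposition 3.1]{SatrianoUsatine4}, which the paper invokes at this point (together with \cite[Tag 06AG]{stacks-project} to see that $\pi'$ is syntomic). As written, your argument has the correct skeleton, but this crucial step is a genuine gap rather than a proof.
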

\begin{proof}
We first show $\sW^{\synt}(\cX)\subset\sW(\cX)$ is an open substack. Let $T\to\sW(\cX)$ be a map corresponding to a warped map $(\cT\xrightarrow{\pi}T,\cT\xrightarrow{f}\cX)$. Since the syntomic condition can be checked on a smooth cover of $\cT$, \cite[Tag 02V3]{stacks-project} shows the locus $\cU\subset\cT$ where $\pi|_\cU$ is syntomic is an open substack of $\cT$. Thus, $U:=T\setminus\pi(\cT\setminus\cU)$ is the locus in $T$ where $\pi$ is syntomic. Since $\pi$ is a good moduli space map, hence closed, we see $U\subset T$ is open.

We now know $\sW^{\synt}(\cX)$ is locally of finite type over $k$, so to prove smoothness, we must show formal smoothness. Let $\Spec A\to\Spec A'$ be a nilpotent thickening with ideal sheaf $\cI$, and let $(\cT\xrightarrow{\pi}\Spec A,\cT\xrightarrow{f}\cX)\in\sW^{\synt}(\cX)(A)$. The obstruction to deforming $\pi$ over $\Spec A'$ lives in $\Ext^2(L_{\cT/A},\pi^*\cI)=H^0\mathcal{E}xt^2(L_{\cT/A},\pi^*\cI)$, where the equality uses that $\pi$ is cohomologically affine. Thus, to show $\mathcal{E}xt^2(L_{\cT/A},\pi^*\cI)$ vanishes, we may check after pullback by a smooth cover $\rho\colon\widetilde{T}\to \cT$. Then $\rho^*\mathcal{E}xt^2(L_{\cT/A},\pi^*\cI)=\mathcal{E}xt^2(\rho^*L_{\cT/A},\rho^*\pi^*\cI)$. From the exact triangle
\[
\rho^*L_{\cT/A}\to L_{\widetilde{T}/A}\to \Omega^1_\rho
\]
and the fact that $\widetilde{T}$ is syntomic over $A$, we see
\[
\mathcal{E}xt^2(\rho^*L_{\cT/A},\rho^*\pi^*\cI)=\mathcal{E}xt^2(L_{\widetilde{T}/A},\rho^*\pi^*\cI)=0.
\]
We therefore have a flat deformation $\pi'\colon\cT'\to\Spec A'$ of $\pi$. We see it is syntomic by applying \cite[Tag 06AG]{stacks-project} to a smooth cover of $\cT'$. Furthermore, \cite[Proposition 3.1]{SatrianoUsatine4} shows that $\pi'$ is a warp. Lastly, the representable map $f$ deforms to a representable map $f'\colon\cT'\to\cX$ by \autoref{prop:coh-affine-smooth}.
\end{proof}

\begin{lemma}\label{lemmaQuasiCompactBasis}
Let $\cZ$ be an Artin stack. Then there is a basis for the topology of $|\cZ|$ consisting of quasi-compact sets.
\end{lemma}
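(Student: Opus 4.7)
The plan is to pull the basis back from a smooth cover by a scheme. Choose a smooth surjection $p\colon U\to\cZ$ with $U$ a scheme (which exists by the definition of Artin stack), and consider the induced map on topological spaces $|p|\colon |U|\to |\cZ|$. By standard properties of smooth morphisms, $|p|$ is continuous, surjective, and open; moreover, $|\cZ|$ carries the quotient topology induced by $|p|$. Since $U$ is a scheme, it admits a basis of open affines, and each affine open subscheme of $U$ has quasi-compact underlying topological space.

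I claim the collection
\[
	\cB=\{\,|p|(V)\mid V\subset U\text{ open affine}\,\}
\]
is a basis for $|\cZ|$ consisting of quasi-compact open sets. Each $|p|(V)$ is open because $|p|$ is open, and is quasi-compact as the continuous image of the quasi-compact space $|V|$. To verify the basis property, let $W\subset |\cZ|$ be open and $x\in W$. Choose $u\in |U|$ with $|p|(u)=x$, set $W'=|p|^{-1}(W)$, which is open in $|U|$, and choose an affine open $V\subset U$ with $u\in V\subset W'$. Then $x\in |p|(V)\subset |p|(W')\subset W$, as desired.

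There is no real obstacle here beyond invoking the (standard) fact that a smooth cover induces a continuous, open, surjective map on topological spaces. If one wants to avoid appealing to openness of $|p|$ directly, one can instead argue from the quotient topology: a subset of $|\cZ|$ is open iff its preimage in $|U|$ is open, and then $|p|(V)$ is open because $|p|^{-1}(|p|(V))$ equals the saturation of $V$ under the equivalence relation defined by the two projections $|U\times_\cZ U|\rightrightarrows |U|$, and this saturation is open since those projections are smooth (hence open) maps of schemes/algebraic spaces. Either route gives the result.
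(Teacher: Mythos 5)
Your proof is correct and follows essentially the same route as the paper: take a smooth cover by a scheme, use that the induced map on topological spaces is open, and push forward the affine opens, whose images are quasi-compact. The paper merely phrases the basis verification as a reduction to finding an open cover by quasi-compact sets (using that open subsets of $|\cZ|$ correspond to open substacks), while you check the basis property directly via preimages; the substance is identical.
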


\begin{proof}
Since open subsets of $|\cZ|$ are given by open substacks of $\cZ$, it is sufficient to show that $|\cZ|$ has an open cover by quasi-compact sets. Let $Z \to \cZ$ be a smooth cover by a scheme. The result follows from the fact that $Z \to \cZ$ is an open map and the fact that affine schemes are quasi-compact.
\end{proof}

\begin{proposition}\label{propositionClosureInSyntomicLocus}
Let $\cX$ be a smooth Artin stack over $k$ with affine diagonal. Then the closure of $|\cX|$ in $|\sW^{\synt}(\cX)|$ is an open subset of $|\sW^{\synt}(\cX)|$.
\end{proposition}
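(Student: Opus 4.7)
The plan is to use that $\sW^{\synt}(\cX)$ is smooth over $k$ (by \autoref{prop:syntomic-locus}) to conclude that its underlying topological space has open irreducible components, which will immediately give the proposition. Indeed, granted this openness, $|\cX| \subset |\sW^{\synt}(\cX)|$ is open via $\tau \colon \cX \hookrightarrow \sW(\cX)$, so each irreducible component of $|\sW^{\synt}(\cX)|$ meeting $|\cX|$ does so in a nonempty open subset, which is dense by irreducibility, placing the whole component inside $\overline{|\cX|}$; conversely, each component disjoint from $|\cX|$ is (being open) a neighborhood of each of its points, hence also disjoint from $\overline{|\cX|}$. Thus $\overline{|\cX|}$ is a union of open irreducible components, and so is itself open.

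To establish openness of irreducible components of $|\sW^{\synt}(\cX)|$, I would pick a smooth surjection $f \colon Z \to \sW^{\synt}(\cX)$ with $Z$ a scheme. Then $Z$ is smooth over $k$, so its local rings are regular and hence integral domains, forcing each point of $|Z|$ to lie in a unique irreducible component of $|Z|$; combined with local Noetherianity, this makes the irreducible components of $|Z|$ pairwise disjoint and open. To descend, given $z \in |\sW^{\synt}(\cX)|$ with lift $\tilde z \in |Z|$ lying in the unique component $D$ of $|Z|$ through $\tilde z$, the image $f(D)$ is open irreducible in $|\sW^{\synt}(\cX)|$ and contains $z$; any irreducible component $C$ of $|\sW^{\synt}(\cX)|$ through $z$ must contain $f(D)$ by density (since $f(D) \cap C$ is nonempty open in the irreducible set $f(D)$). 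Uniqueness of such a $C$ — equivalently, disjointness of components of $|\sW^{\synt}(\cX)|$ — would then follow by a groupoid-theoretic descent: if two distinct components $C_1, C_2$ both contained $z$, the preimages $f^{-1}(C_i)$ would be distinct closed $f$-saturated subsets of $|Z|$ both containing $D$, which can be shown incompatible with the smooth groupoid structure $Z \times_{\sW^{\synt}(\cX)} Z \rightrightarrows Z$ acting on components of $|Z|$.

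The main obstacle is precisely this last descent step — carefully showing that pairwise disjointness of components of $|Z|$ descends to pairwise disjointness of components of $|\sW^{\synt}(\cX)|$. This is a standard but somewhat delicate topological fact about smooth (equivalently, normal locally Noetherian) Artin stacks, parallel to the familiar scheme-theoretic result that normal locally Noetherian schemes have open irreducible components; it could alternatively be cited directly, bypassing the explicit descent. All remaining steps are routine topology.
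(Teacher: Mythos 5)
Your global strategy (show the irreducible components of $|\sW^{\synt}(\cX)|$ are open and pairwise disjoint, then identify $\overline{|\cX|}$ with the union of the components meeting the open subset $|\cX|$) would work, but the proof has a genuine gap at exactly the point you flag, and the one concrete justification you give there is wrong. An irreducible component $C$ of $|\sW^{\synt}(\cX)|$ is closed, so $f(D)\cap C$ is a \emph{closed} subset of $f(D)$, not an open one; nonemptiness therefore does not give density, and the claim that every component through $z$ contains $f(D)$ is essentially the uniqueness statement you are trying to prove (it would follow if you already knew components were open, which is circular). Consequently the assertion that the preimages $f^{-1}(C_i)$ both contain $D$ is unjustified, and the ``groupoid-theoretic descent'' that is supposed to finish the argument is left entirely unexecuted. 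So as written, the key lemma is neither proved nor correctly reduced to a statement you can cite for Artin stacks (the scheme-theoretic analogue is standard; a stack-level reference would need to be located).

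The fix is short, and it essentially collapses your argument into the paper's. You already have the crucial object: since $f$ is smooth, $|Z|\to|\sW^{\synt}(\cX)|$ is open, so $f(D)$ is an irreducible \emph{open} neighborhood of $z$; thus every point of $|\sW^{\synt}(\cX)|$ has an irreducible open neighborhood $U$. From this the component statement follows purely topologically: $\overline{U}$ is irreducible, hence contained in some component $C'$, so $U\subset C'$; if $C\neq C'$ were another component through $z$, then $C\setminus C'$ would be a nonempty open, hence dense, subset of $C$ missing the neighborhood $U$ of $z\in C$, a contradiction; so the component through $z$ is unique and contains $U$, hence is open. Better yet, once you have irreducible open neighborhoods you can skip components entirely: for $w\in\overline{|\cX|}$, an irreducible open neighborhood $U$ of $w$ meets the open set $|\cX|$, so $U\cap|\cX|$ is dense in $U$ and $U\subset\overline{|\cX|}$, which is the whole proposition. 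This is precisely the paper's proof; the only difference is how the irreducible neighborhood is produced there, namely by taking a quasi-compact open substack containing $w$ (\autoref{lemmaQuasiCompactBasis}), passing to its connected component (open, since the substack is finite type), and using that a connected smooth stack is irreducible (\autoref{prop:syntomic-locus}).
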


\begin{proof}
Let $w \in |\sW^{\synt}(\cX)|$ be in the closure of $|\cX|$. We will prove the proposition by finding an open neighborhood of $w$ that is contained in the closure of $|\cX|$. By \autoref{lemmaQuasiCompactBasis} there exists a quasi-compact open substack $\cV$ of $\sW^{\synt}(\cX)$ such that $w \in |\cV|$. Because $\cV$ is finite type, we may replace it with its connected component containing $w$ and therefore assume $\cV$ is connected. Since $\cV$ is smooth by \autoref{prop:syntomic-locus}, $\cV$ is irreducible. As $w$ is in the closure of $|\cX|$, we have that $|\cX| \cap |\cV|$ is nonempty and therefore dense in $|\cV|$. Thus $|\cV|$ is contained in the closure of $|\cX|$, and we are done.
\end{proof}

\autoref{propositionClosureInSyntomicLocus} allows us to 
introduce the following definition.

\begin{definition}
If $\cX$ is a smooth Artin stack over $k$ with affine diagonal, then 
 let $\widetilde{\sW}(\cX)$ denote the open substack of $\sW^{\synt}(\cX)$ such that $|\widetilde{\sW}(\cX)|$ is the closure of $|\cX|$ in $|\sW^{\synt}(\cX)|$.
\end{definition}

\begin{remark}\label{remarkSmoothnessAndDimensionOfClosureInLCILocus}
By construction $\widetilde{\sW}(\cX)$ is smooth over $k$, and if $\cX$ is equidimensional then $\widetilde{\sW}(\cX)$ is equidimensional of dimension $\dim \cX$.
\end{remark}

\section{Triviality of forms of (truncated) twisted discs}

The goal of this section is to prove the following proposition which shows there are no twisted forms of twisted (truncation) discs. This will play an important role in computing the fibers of the twisted-to-warp jet space map, see \autoref{thm:fibers-twisted->warped-aut-explicit}.

\begin{proposition}\label{prop:no-non-trivial-forms-truncated-twisted}
Let $L/K$ be a finite separable extension of fields and assume $\ell\geq2$ is prime to the characteristic of $K$. Let $n\in\bZ\cup\{\infty\}$ and assume $n\geq2$.

If $\cT_n\to D_{n,K}$ is a map from an Artin stack and we have a $D_{n,L}$-isomorphism $\cT_n\times_K L\xrightarrow{\simeq}\cD_{n,L}^{(\ell)}$, then there exists a $D_{n,K}$-isomorphism $\cT_n\xrightarrow{\simeq}\cD_{n,K}^{(\ell)}$; here $\cD_\infty^{(\ell)}:=\cD^{(\ell)}$.
\end{proposition}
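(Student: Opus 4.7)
The plan is to interpret $\cT_n$ as a twisted form of $\cD^{(\ell)}_{n,K}$ over $D_{n,K}$ and then to show that every such form is trivial; the key input is the computation of the automorphism sheaf of $\cD^{(\ell)}_{n,K}$ provided by \autoref{prop:auts-Dnell}, with the rest being formal étale descent.

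First I would enlarge $L$ to a finite Galois extension $M/K$ and base-change the given isomorphism to obtain $\cT_n\times_K M \simeq \cD^{(\ell)}_{n,M}$ over $D_{n,M}$. Consider the étale sheaf on $\Spec K$
\[
\cI := \uIsom_{D_{n,K}}\!\bigl(\cD^{(\ell)}_{n,K},\, \cT_n\bigr).
\]
By construction $\cI(M) \neq \varnothing$, so $\cI$ is an étale torsor under the automorphism sheaf
\[
\cA := \uAut_{D_{n,K}}\bigl(\cD^{(\ell)}_{n,K}\bigr).
\]
A $K$-point of $\cI$ is precisely the desired isomorphism over $D_{n,K}$.

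Next I would invoke \autoref{prop:auts-Dnell} to identify $\cA$ as a smooth unipotent group over $K$ (in fact trivial when $n=\infty$). Informally, an automorphism pulled back to the atlas $\Spec K[s]/(s^{\ell(n+1)})$ must be $\mu_\ell$-equivariant and fix $s^\ell$; writing $\phi^*(s) = \sum a_i s^i$, equivariance kills all $a_i$ with $i\not\equiv 1\!\pmod{\ell}$, and the relation $\phi^*(s)^\ell = s^\ell$ combined with tameness and $n\geq 2$ forces $a_1^\ell = 1$ and $a_{k\ell+1}=0$ for $1\leq k\leq n-1$, leaving only $a_1\in\mu_\ell$ and (for finite $n$) the coefficient $a_{n\ell+1}$ free. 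Quotienting by inner automorphisms absorbs the $\mu_\ell$-factor and leaves $\cA \cong \bG_a$ for finite $n$.

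Finally, since $\cA$ is smooth unipotent over $K$, additive Hilbert~90 gives $H^1_{\text{\'et}}(\Spec K,\cA)=0$, so the torsor $\cI$ is trivial; hence $\cI(K)\neq\varnothing$, yielding the required $D_{n,K}$-isomorphism $\cT_n \xrightarrow{\sim} \cD^{(\ell)}_{n,K}$. The main obstacle is the explicit Aut computation in the middle step, but this is handled separately by \autoref{prop:auts-Dnell}; modulo that input, the proposition is a formal consequence of étale descent.
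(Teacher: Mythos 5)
Your route (regard $\cT_n$ as an \'etale form of $\cD^{(\ell)}_{n,K}$, classified by a torsor under the automorphism sheaf, and kill the torsor by a Hilbert~90 type vanishing) is genuinely different from the paper's proof, which never computes any cohomology: it deforms $\cT_n$ to a flat family over the whole disc, uses coherent completeness to compare with $\cD^{(\ell)}$ after base change (\autoref{l:coh-complete-lift-->twisted-disc}), and then identifies the untruncated $K$-form directly as a root stack (\autoref{l:twisted-twisted-disc-->twisted-disc}). Your approach is attractive --- for finite $n$ it would not even need $n\geq2$ (for $n=0$ one has $\uAut_{D_0}(\cD^{(\ell)}_0)\simeq\bG_m$ by \autoref{prop:AutD0-is-Gm} and ordinary Hilbert~90) --- but as written it has concrete gaps.

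First, $\uIsom_{D_{n,K}}(\cD^{(\ell)}_{n,K},\cT_n)$ and $\uAut_{D_{n,K}}(\cD^{(\ell)}_{n,K})$ are a priori stacks, not sheaves: isomorphisms of stacks carry $2$-morphisms, and ``formal \'etale descent'' via ordinary torsors and $H^1$ is only legitimate once one knows the $2$-automorphisms vanish, so that these objects are honest (algebraic-space-valued) sheaves. That input is \autoref{cor:ResAut-alg-sp} (via \autoref{l:when-are-equiv-auts-Dnell-isomorphic}(2)), not the cited \autoref{prop:auts-Dnell}, and it is proved only for truncated discs. Second, the vanishing step is misstated: ``smooth unipotent $\Rightarrow H^1_{\mathrm{\acute et}}(K,\cA)=0$'' fails over imperfect fields of positive characteristic (wound unipotent groups), and the proposition allows any characteristic prime to $\ell$. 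What actually saves you is that $\cA$ is canonically the vector group of the one-dimensional space $\Ext^0(L_{\cD^{(\ell)}_{0,K}/K},\mcJ)$ (\autoref{prop:resaut-is-A1}); but that result assumes $\mu_\ell\subset K$ and identifies only the underlying space, not the group law, so you would still have to check that composition of automorphisms is addition in this vector space and then run semilinear additive Hilbert~90 over a Galois extension $M\supseteq\mu_\ell$ trivializing $\cT_n$. Third, and most seriously, the case $n=\infty$ is not covered by anything you invoke: \autoref{prop:auts-Dnell}, \autoref{l:when-are-equiv-auts-Dnell-isomorphic}, \autoref{cor:ResAut-alg-sp} and \autoref{prop:resaut-is-A1} all concern $\cD^{(\ell)}_{n}$ with $n$ finite, so the asserted triviality of $\uAut_{D_K}(\cD^{(\ell)}_K)$ (and of its $2$-automorphisms) needs its own argument; alternatively you would have to pass from the finite-level isomorphisms to the formal disc, which requires exactly the compatibility-of-truncations and coherent-completeness machinery (\autoref{prop:twisted-twisted-discs-defs-truncated-twisted}, \autoref{l:coh-complete-lift-->twisted-disc}) that the paper's proof is built on and that your sketch was meant to avoid.
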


For notational convenience throughout this section, when a ground field $M$ is understood from context, we let $\cD_n^{(\ell)}:=\cD_{n,M}^{(\ell)}$.

We begin by computing the deformation space of a truncated twisted disc.

\begin{lemma}\label{l:twisted-twisted-discs-def-sp}
Fix $\ell\in\bZ_{\geq1}$ and let $A$ be a ring such that $\mu_{\ell,A}$ is \'etale. 
Let $n\in\bZ_{\geq0}$, $\pi_n\colon\cD_{n,A}^{(\ell)}\to D_{n,A}$ be the coarse space map, and $J=(t^{n+1})/(t^{n+2})$. Then
\[
\Ext^i(L_{\cD^{(\ell)}_{n,A} / D_{n,A}},\pi_n^*J)=0
\]
for all $i\neq0,1$ and 
\[
\Ext^i(L_{\cD^{(\ell)}_{n,A} / D_{n,A}},\pi_n^*J)\simeq\pi_n^*J\ \textrm{for\ } i\in [0,1].
\]

More specifically, let $R=A[t]/(t^{n+1})$, $S=R[u]/I$ with $I=(u^\ell-t)$, and $\rho\colon D_{m,A}\to\cD^{(\ell)}_{n,A}$ the standard \'etale cover with $m=\ell(n+1)-1$ and $D_{m,A}=\Spec S$. Then we have natural isomorphisms
\[
\Ext^0(L_{\cD^{(\ell)}_{n,A} / D_{n,A}},\pi_n^*J)\simeq\Hom(S\otimes_{A[u]}\Omega^1_{A[u]/A},\pi_n^*J)^{\mu_\ell}
\]
and
\[
\Ext^1(L_{\cD^{(\ell)}_{n,A} / D_{n,A}},\pi_n^*J)\simeq\Hom(I/I^2,\pi_n^*J)^{\mu_\ell}.
\]
\end{lemma}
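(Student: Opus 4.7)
The plan is to transfer the computation to the \'etale $\mu_\ell$-torsor $\rho\colon D_{m,A}=\Spec S\to \cD^{(\ell)}_{n,A}$. Since $\mu_\ell$ is linearly reductive (as $\ell$ is invertible in $A$, because $\mu_{\ell,A}$ is \'etale), pushforward along $\rho$ is exact and identifies $\Ext$'s on the stack with the $\mu_\ell$-invariants of the corresponding $\Ext$'s on $D_{m,A}$. Since $\rho$ is \'etale, $\rho^*L_{\cD^{(\ell)}_{n,A}/D_{n,A}}\simeq L_{S/R}$, and $\pi_n^*J$ pulls back along $\rho$ to $\pi_m^*J=S\otimes_R J\simeq S/u^\ell S$ (using $J\simeq A$ and $t=u^\ell$). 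The task thus reduces to determining $\Ext^i_S(L_{S/R},S/u^\ell S)$ as a $\bZ/\ell$-graded $A$-module, where $u$ has weight $1$.

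Next, I would present $L_{S/R}$ as a two-term complex. The element $u^\ell-t\in R[u]$ is a non-zero-divisor --- a direct degree argument using $t^{n+1}=0$ in $R$ shows this. Hence $L_{S/R[u]}$ is concentrated in cohomological degree $-1$ with value $I/I^2\simeq S\cdot\overline{u^\ell-t}$. Combined with the transitivity triangle for $R\to R[u]\to S$ and the smoothness of $R[u]$ over $R$, this presents $L_{S/R}$ in degrees $-1,0$ as
\[
L_{S/R}\simeq\bigl[\, I/I^2 \xrightarrow{\;d\;} S\otimes_{A[u]}\Omega^1_{A[u]/A}\,\bigr],\qquad d(\overline{u^\ell-t})=\ell u^{\ell-1}\,\diff u,
\]
using the canonical identification $S\otimes_{R[u]}\Omega^1_{R[u]/R}\simeq S\otimes_{A[u]}\Omega^1_{A[u]/A}$. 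Vanishing of $\Ext^i$ for $i\notin[0,1]$ is then immediate from the length of this complex.

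For the descriptions of $\Ext^0$ and $\Ext^1$, apply $\Hom_S(-,\pi_m^*J)$ to the complex above to obtain a length-two complex in degrees $0,1$ with differential $d^*$ given by multiplication by $\ell u^{\ell-1}$. The crucial observation is that $d^*$ shifts $\bZ/\ell$-weight by $\ell-1$; since $\ell\geq2$, this shift is nonzero modulo $\ell$, so $d^*$ carries weight-$0$ elements into a nonzero weight and hence vanishes on $\mu_\ell$-invariants. Consequently
\[
\Ext^0\simeq\Hom(S\otimes_{A[u]}\Omega^1_{A[u]/A},\pi_n^*J)^{\mu_\ell}\quad\text{and}\quad\Ext^1\simeq\Hom(I/I^2,\pi_n^*J)^{\mu_\ell},
\]
which are the claimed natural isomorphisms. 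To identify each with $\pi_n^*J$, observe that both Hom spaces equal $S/u^\ell S$ up to a weight shift, and the weight-$0$ component in each case is a free rank-$1$ $A$-module that matches $\pi_n^*J$ via $(\pi_n)_*\pi_n^*J=J\simeq A$.

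The main obstacle is bookkeeping the $\bZ/\ell$-grading: $u$ and $\diff u$ each have weight $1$, while $\overline{u^\ell-t}$ has weight $0$, which makes $d$ weight-preserving but turns $d^*$ into a weight-$(\ell-1)$ operator on the dual. The hypothesis $\ell\geq2$ is precisely what ensures this shift is nontrivial modulo $\ell$, which is what makes $d^*$ vanish on $\mu_\ell$-invariants and delivers the clean formulas.
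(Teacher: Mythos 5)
Your overall strategy is the same as the paper's: pass to the \'etale $\mu_\ell$-torsor $\rho\colon\Spec S\to\cD^{(\ell)}_{n,A}$, use exactness of invariants (cohomological affineness) to identify the stacky $\Ext$-groups with $\mu_\ell$-invariants of $\Ext_S(L_{S/R},-)$, and resolve $L_{S/R}$ by the two-term complex $I/I^2\to S\otimes_{A[u]}\Omega^1_{A[u]/A}$ coming from the regular element $u^\ell-t$. (The paper first reduces to $n=0$ using that $J$ is $t$-torsion; working at level $n$ as you do is a cosmetic difference.) Everything through the vanishing of $\Ext^i$ for $i\notin[0,1]$ is fine.

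The step you call the ``crucial observation,'' however, is wrong as stated. The dual differential $d^*$ is $\mu_\ell$-equivariant, hence weight-preserving as a map $\Hom(S\,du,\rho^*\pi_n^*J)\to\Hom(I/I^2,\rho^*\pi_n^*J)$; it does not carry invariant (weight-zero) homomorphisms into nonzero weight. The apparent shift by $\ell-1$ arises only after identifying both $\Hom$-modules with $S/u^\ell S$ by evaluating at the generators $du$ and $\overline{u^\ell-t}$, which have \emph{different} weights ($1$ and $0$). Under that identification the invariants of the source correspond to the weight-one part $Au$ of $S/u^\ell S$ (not the weight-zero part), and multiplication by $\ell u^{\ell-1}$ sends weight $1$ to weight $\ell\equiv 0$, i.e.\ exactly into the weight where the target invariants sit -- so there is no weight obstruction at all. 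The map on invariants does vanish, but for a different reason: an invariant $\phi$ has $\phi(du)=cu$, and $(d^*\phi)(\overline{u^\ell-t})=\ell u^{\ell-1}\cdot cu=\ell c\,u^\ell=\ell c\,t$, which is zero because $\pi_n^*J$ is killed by $t$ (equivalently $u^\ell=0$ in $S/u^\ell S$). This is precisely the paper's computation (``the generator $u$ is sent to $\ell u^\ell=0$''). That the weight bookkeeping cannot be the true mechanism is clear from taking coefficients $S$ instead of $S/u^\ell S$: the gradings are identical, yet the induced map on invariants is multiplication by $\ell t$, which is injective. So replace the weight-shift sentence by this one-line evaluation; with that repair (and correcting ``the weight-$0$ component in each case'' to: weight-one part $Au$ for the source, weight-zero part $A$ for the target, both free of rank one over $A$), your identifications of $\Ext^0$ and $\Ext^1$ go through. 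Note also that your argument presupposes $\ell\geq2$, whereas the statement nominally allows $\ell=1$ (where the complex degenerates and the differential is an isomorphism), so that case would need a separate word if one insists on it.
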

\begin{proof}
We suppress the subscript $A$ throughout the proof. Since $J$ is annihilated by $t$, we have 
\[
R\Hom(L_{\cD^{(\ell)}_n / D_n},\pi_n^*J)\simeq R\Hom(L_{\cD^{(\ell)}_0 / D_0},\cO_{\cD^{(\ell)}_0})\otimes_A \pi_n^*J.
\]
We are therefore reduced to the case where $n=0$. Then $\cD^{(\ell)}_0=[D_{\ell-1}/\mu_\ell]$ where $D_{\ell-1}=\Spec S$ and $S=A[u]/(u^\ell)$. Let $\rho\colon D_{\ell-1}\to\cD^{(\ell)}_0$ be the quotient map. Since $\rho$ is a $\mu_\ell$-torsor, 
it is \'etale. Therefore, $\rho^*L_{\cD^{(\ell)}_0 / A}=L_{S/A}$. As $\cD^{(\ell)}_0$ is cohomologically affine, we see
\begin{align*}
\Ext^i(L_{\cD^{(\ell)}_0 / A},\cO_{\cD^{(\ell)}_0}) &= H^0\mathcal{E}xt^i(L_{\cD^{(\ell)}_0 / A},\cO_{\cD^{(\ell)}_0})\\
&=(H^0\rho^*\mathcal{E}xt^i(L_{\cD^{(\ell)}_0 / A},\cO_{\cD^{(\ell)}_0}))^{\mu_\ell} = \Ext^i(L_{S/A},S)^{\mu_\ell}.
\end{align*}

Next, we have a regular closed immersion $i\colon\Spec S\to\bA^1_A$ and hence an exact triangle
\[
I/I^2\to i^*\Omega^1_{A[u]/A}\to L_{S/A}
\]
where $I=(u^\ell)$. It follows that $\Ext^i(L_{S/A},A)=0$ for $i\neq0,1$ and we have an exact sequence
\[
0\to \Ext^0(L_{S/A},A)\to \Hom(i^*\Omega^1_{A[u]/A},S)\to \Hom(I/I^2,A)\to \Ext^1(L_{S/A},S)\to 0.
\]
Let $f=u^\ell$. Then $I/I^2=Sf$ and $i^*\Omega^1_{A[u]/A}=S du$ as $S$-modules. The map $I/I^2\to i^*\Omega^1_{A[u]/A}$ is given by $f\mapsto df=\ell u^{\ell-1} du$. So an element of $\Hom(i^*\Omega^1_{A[u]/A},S)^{\mu_\ell}$ is a choice of element in the weight $1$ piece $S_1=Au$. We see that under the map 
\[
Au=\Hom(i^*\Omega^1_{A[u]/A},S)^{\mu_\ell}\to \Hom(I/I^2,S)^{\mu_\ell}=S_0=A,
\]
the generator $u$ is sent to $\ell u^\ell=0$. Thus, we see $\Ext^0(L_{\cD^{(\ell)}_0 / A},\cO_{\cD^{(\ell)}_0})$ and $\Ext^1(L_{\cD^{(\ell)}_0 / A},\cO_{\cD^{(\ell)}_0})$ are both isomorphic to $A$.
%
\end{proof}

Using our calculation of the deformation space, we show that deformations of truncated twisted discs are again truncated twisted discs.

\begin{proposition}\label{prop:twisted-twisted-discs-defs-truncated-twisted}
Fix $\ell\in\bZ_{\geq1}$ and let $A$ be a ring such that $\mu_{\ell,A}$ is \'etale. 
Let $n\in\bZ_{\geq0}$, $\pi_n\colon\cD_{n,A}^{(\ell)}\to D_{n,A}$ be the coarse space map, and 
\[
\xymatrix{
\cD_{n,A}^{(\ell)}\ar[r]\ar[d]_-{\pi_n} & \cC\ar[d]^-{q}\\
D_{n,A}\ar[r] & D_{n+1,A}
}
\]
be a cartesian diagram with $q$ flat. If $n\geq2$, then we have a $D_{n+1,A}$-isomorphism $\sigma\colon\cC\xrightarrow{\simeq}\cD_{n+1,A}^{(\ell)}$ such that $\sigma\times_{D_{n+1,A}} D_{n-1,A}$ is the identity automorphism.
\end{proposition}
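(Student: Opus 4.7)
The plan is to realize $\cC$ as a quotient stack $[\Spec R/\mu_\ell]$ for a $\bZ/\ell$-graded flat $A[t]/(t^{n+2})$-algebra $R$, put $R$ into standard form using the Graded Structure Theorem (\autoref{prop:graded-Smith}), and then perform an explicit $\mu_\ell$-equivariant change of variable to match the coordinate ring of $\cD^{(\ell)}_{n+1,A}$. The hypothesis $n \geq 2$ will enter precisely in the final change-of-variable computation.

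First, I would establish that $\cC = [\Spec R/\mu_\ell]$ for some graded $R$. The closed immersion $\cD^{(\ell)}_{n,A} \hookrightarrow \cC$ is a nilpotent thickening, and the \'etale $\mu_\ell$-torsor $\Spec A[u]/(u^{\ell(n+1)}) \to \cD^{(\ell)}_{n,A}$ lifts uniquely to an \'etale $\mu_\ell$-torsor $\Spec R \to \cC$ (since \'etale morphisms and torsors under the \'etale group scheme $\mu_\ell$ are rigid under nilpotent thickenings). This equips $R$ with a $\bZ/\ell$-grading, flatness over $A[t]/(t^{n+2})$, and the reduction identification $R/(t^{n+1}) \cong A[u]/(u^{\ell(n+1)})$ as graded rings.

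Next, I would apply the Graded Structure Theorem to describe $R$ explicitly. Each graded piece $R_i$ is free of rank $1$ over $A[t]/(t^{n+2})$; choosing a weight-$1$ generator $w \in R_1$ lifting $u$, the powers $1, w, \dots, w^{\ell-1}$ form an $A[t]/(t^{n+2})$-basis of $R$. Since $w^\ell \in R_0 = A[t]/(t^{n+2})$ reduces to $t$ modulo $t^{n+1}$, we have $w^\ell = t + a t^{n+1}$ for some $a \in A$, giving $R \cong A[t]/(t^{n+2})[w]/(w^\ell - t - a t^{n+1})$.

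Finally, I would produce the isomorphism via the $\mu_\ell$-equivariant substitution $\phi\colon A[t]/(t^{n+2})[u]/(u^\ell - t) \to R$ sending $u \mapsto (1 - (a/\ell) t^n) w$ (using that $\ell$ is invertible in $A$, by $\mu_{\ell,A}$ being \'etale). For $\phi$ to be well-defined, I need $\phi(u)^\ell = t$; computing modulo $t^{n+2}$:
\[
\phi(u)^\ell = \bigl(1 - (a/\ell) t^n\bigr)^\ell (t + a t^{n+1}) = (1 - a t^n)(t + a t^{n+1}) = t,
\]
where the simplification $(1 - (a/\ell) t^n)^\ell = 1 - a t^n$ is where $n \geq 2$ enters, ensuring $t^{2n} = 0$ modulo $t^{n+2}$ to kill all higher binomial terms, and the last equality uses $t^{2n+1} = 0$ (valid for $n \geq 1$). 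The induced stack isomorphism $\sigma\colon \cC \to \cD^{(\ell)}_{n+1,A}$ restricts to the identity on $D_{n-1,A}$ since $\phi(u) \equiv w \pmod{t^n}$. The main obstacle is the first step---setting up the quotient structure via deformation theory of \'etale torsors in a way compatible with the given special-fiber identification---while the remaining steps reduce to a direct graded-algebra computation.
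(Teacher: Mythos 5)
Your proposal is correct in substance, and its second half is the same computation as the paper's: once $\cC$ is presented as $[\Spec\,A[t]/(t^{n+2})[w]/(w^\ell-t-at^{n+1})\,/\,\mu_\ell]$, you perform the equivariant substitution $u\mapsto (1-(a/\ell)t^n)w$, using invertibility of $\ell$ and $n\geq 2$ to kill the $t^{2n}$ terms (the paper does the identical rescaling, written as $u\mapsto u(1+c\ell^{-1}t^n)$, and draws the same conclusion about the restriction to $D_{n-1,A}$ from congruence mod $t^n$). Where you genuinely diverge is in how the quotient presentation of $\cC$ is obtained. The paper gets it from the deformation-theoretic computation of \autoref{l:twisted-twisted-discs-def-sp}: $\Ext^1(L_{\cD^{(\ell)}_{n,A}/D_{n,A}},\pi_n^*J)$ is identified with $\mu_\ell$-equivariant deformations of the standard cover, i.e.\ with deformations of the single equation $u^\ell-t$, which immediately pins down $\cC\simeq[C/\mu_\ell]$ with $C$ cut out by $u^\ell-t-ct^{n+1}$. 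You instead lift the standard $\mu_\ell$-torsor along the square-zero thickening $\cD^{(\ell)}_{n,A}\hookrightarrow\cC$ by rigidity of \'etale covers/torsors, and then recover the equation by a graded-algebra argument. This is a legitimate and arguably more elementary route (no cotangent-complex bookkeeping), but two points in it need to be shored up. First, you write the lifted torsor as $\Spec R\to\cC$: that its total space is an affine scheme (equivalently, that $\cC$ is again $[\Spec R/\mu_\ell]$) is not automatic and requires a short argument — e.g.\ that an algebraic stack which is a thickening of an affine scheme is an affine scheme, which here one can justify using that the inertia of $\cC$ is unramified (as $\ell$ is invertible), or by deforming the cohomologically affine/finite-diagonal structure as the paper does in the proof of \autoref{prop:no-non-trivial-forms-truncated-twisted}. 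Second, \autoref{prop:graded-Smith} is stated for graded modules over $\kappa\llbracket t\rrbracket$ with $\kappa$ a field, so it does not literally apply over a general base ring $A$; the statement you actually need — each graded piece $R_i$ is free of rank one over $A[t]/(t^{n+2})$ with $w^i$ a generator — should instead be deduced from flatness of $R$ together with Nakayama for the nilpotent ideal $(t^{n+1})$, which is exactly how your subsequent reasoning proceeds and is easy to make precise. With those two justifications supplied, your argument is a complete alternative proof; what the paper's route buys is that the same Ext computation is reused elsewhere (e.g.\ in \autoref{prop:no-non-trivial-forms-truncated-twisted} and \autoref{thm:fibers-twisted->warped-aut-explicit}), while yours buys a more hands-on, self-contained identification of $R$.
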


\begin{remark}\label{rmk:restriction-not-id}
At first glance one may think that \autoref{prop:twisted-twisted-discs-defs-truncated-twisted} contradicts \autoref{l:twisted-twisted-discs-def-sp} which tells us the deformation space is $1$-dimensional. However, (for $\ell>1$) the isomorphism in \autoref{prop:twisted-twisted-discs-defs-truncated-twisted} is of algebraic stacks, not an isomorphism of deformations, i.e., the isomorphism does not necessarily yield the identity automorphism on $\cD_{n,A}^{(\ell)}$.
\end{remark}

\begin{proof}[{Proof of \autoref{prop:twisted-twisted-discs-defs-truncated-twisted}}]
We suppress the subscript $A$ throughout the proof. Let $R:=A[t]/t^{n+2}$, i.e., the coordinate ring of $D_{n+1}$. By \autoref{l:twisted-twisted-discs-def-sp}, we necessarily have $\cC\simeq [C/\mu_\ell]$, where
\[
C=\Spec R[u]/(u^\ell - t - ct^{n+1})
\]
and $c\in A$. Indeed, letting $\rho\colon D_m\to\cD^{(\ell)}_n$ be the standard \'etale cover with $m=\ell(n+1)-1$, \autoref{l:twisted-twisted-discs-def-sp} and its proof show (following notation from the lemma) $\Ext^1(L_{\cD^{(\ell)}_n / D_n},\pi_n^*J)=\Ext^1(L_{D_m / D_n},\rho^*\pi_n^*J)^{\mu_\ell}=\Hom(I/I^2,\pi_n^*J)^{\mu_\ell}$. The first equality tells us that $\cC$ is given by an equivariant deformation of $D_m$, i.e., $\cC=[C/\mu_\ell]$ with $C$ an equivariant deformation of $D_m$; the second equality tells us $C$ is given by deforming the equation $u^\ell-t$ to one of the form $u^\ell - t - ct^{n+1}$ with $c\in A$.

Note that $t + ct^{n+1}=tv$ with $v:=1 + ct^n\in R^*$ as $n\geq1$. Since $n\geq2$,
\[
(1+c\ell^{-1}t^n)^\ell=v
\]
in $R$. As a result, the map $\sigma\colon u\mapsto u(1+c\ell^{-1}t^n)$ defines a $\mu_\ell$-equivariant isomorphism $C\to\Spec R[u]/(u^\ell-t)$. Lastly, note that $1+c\ell^{-1}t^n=1$ mod $t^n$, showing that $\sigma\times_{D_{n+1}} D_{n-1}$ is the identity automorphism.
\end{proof}

Ultimately, we prove \autoref{prop:no-non-trivial-forms-truncated-twisted} by lifting $\cT_n$ to $D$. The following technical lemma allows us to conclude that our resulting lift is a twisted form of a twisted disc.

\begin{lemma}\label{l:coh-complete-lift-->twisted-disc}
We work over a field $K$ whose characteristic is prime to $\ell$. Let $\cT\to D$ be a flat map and let $\cT_m\to D_m$ denote its base change for each $m$. Suppose $\cT$ is the completion of the adic family $\{\cT_m\}_m$ in the sense of \cite[Definition 1.9]{AHRetalelocal}. Let $n\geq2$ and $\sigma_n\colon\cT_n\xrightarrow{\simeq} \cD_n^{(\ell)}$ be a $D_n$-isomorphism. Then $\sigma_n \times_{D_n} D_{n-1}$ lifts to a $D$-isomorphism $\sigma\colon\cT\xrightarrow{\simeq} \cD^{(\ell)}$.
\end{lemma}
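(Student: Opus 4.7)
The plan is to promote $\sigma_n$ to a genuinely compatible tower of finite-level isomorphisms and then pass to the completion. The proposition just proved, \autoref{prop:twisted-twisted-discs-defs-truncated-twisted}, supplies one step of the extension, but only with compatibility one level below the truncation it produces; a re-indexing trick converts this into the kind of tower needed to algebraize.

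Starting from $\sigma_n$, I would inductively construct $\sigma_m\colon \cT_m\xrightarrow{\simeq}\cD_m^{(\ell)}$ for every $m\ge n$ by viewing $\cT_{m+1}\to D_{m+1}$ (via the identification supplied by $\sigma_m$) as a flat lift of $\cD_m^{(\ell)}\to D_m$ and invoking \autoref{prop:twisted-twisted-discs-defs-truncated-twisted} with its $n$ taken to be our $m$ — the hypothesis $m\ge 2$ holds since $m\ge n\ge 2$. The output is $\sigma_{m+1}$ satisfying the partial compatibility $\sigma_{m+1}|_{D_{m-1}}=\sigma_m|_{D_{m-1}}$.

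To upgrade this to a strictly compatible tower I would set $\tilde\sigma_m:=\sigma_{m+1}|_{D_m}$ for $m\ge n-1$, so $\tilde\sigma_{n-1}=\sigma_n|_{D_{n-1}}$ is precisely the datum to be lifted, and
\[
\tilde\sigma_{m+1}|_{D_m}=\sigma_{m+2}|_{D_m}=\sigma_{m+1}|_{D_m}=\tilde\sigma_m,
\]
where the middle equality comes from reapplying \autoref{prop:twisted-twisted-discs-defs-truncated-twisted} at level $m+1$. Since $\cT$ is by hypothesis the completion of the adic family $\{\cT_m\}$ (and $\cD^{(\ell)}$ is tautologically the completion of $\{\cD_m^{(\ell)}\}$), the compatible system $\{\tilde\sigma_m\}_{m\ge n-1}$ algebraizes to a $D$-morphism $\sigma\colon\cT\to\cD^{(\ell)}$ extending $\sigma_n|_{D_{n-1}}$ via formal GAGA and Tannaka duality (\cite[Theorem 1.6 and 1.7.6]{AHRetalelocal}); running the same construction on the inverse isomorphisms shows $\sigma$ is an isomorphism.

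The main obstacle — and the structural reason for the hypothesis $n\ge 2$ and for only lifting $\sigma_n|_{D_{n-1}}$ rather than $\sigma_n$ itself — is the one-degree loss in \autoref{prop:twisted-twisted-discs-defs-truncated-twisted}. As \autoref{rmk:restriction-not-id} records, a flat deformation of $\cD_n^{(\ell)}$ can admit an isomorphism to the standard $\cD_{n+1}^{(\ell)}$ whose restriction to $\cD_n^{(\ell)}$ is a nontrivial automorphism, so one cannot hope to extend $\sigma_n$ on the nose; the re-indexing $\tilde\sigma_m:=\sigma_{m+1}|_{D_m}$ is what converts this obstruction into genuine compatibility at each level.
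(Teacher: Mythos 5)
Your proposal is correct and matches the paper's proof in all essentials: the same inductive application of \autoref{prop:twisted-twisted-discs-defs-truncated-twisted}, the same shift-down-by-one re-indexing to convert the one-level compatibility loss into a strictly compatible tower starting at $\sigma_n\times_{D_n}D_{n-1}$, and the same algebraization via coherent completeness and Tannaka duality from \cite{AHRetalelocal}. The only cosmetic difference is that the paper explicitly cites \cite[Example 3.10]{AHRetalelocal} for the coherent completeness of $\cD^{(\ell)}$ along $\cD_0^{(\ell)}$ rather than calling it tautological.
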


\begin{proof}
First note that by \cite[Example 3.10]{AHRetalelocal}, $\cD^{(\ell)}$ is coherently complete along $\cD_0^{(\ell)}$ since $\cD^{(\ell)}\to D$ is flat and proper. Thus, by Tannaka duality (see \cite[\S1.7.6]{AHRetalelocal}), it suffices to construct a compatible family of isomorphisms $\sigma'_m:\cT_m\xrightarrow{\sim}\cD_m^{(\ell)}$ over $D_m$ with $\sigma'_{n-1} = \sigma_n \times_{D_n} D_{n-1}$. 

Suppose we have constructed a family of isomorphisms $\sigma'_m: \cT_m \xrightarrow{\sim} \cD^\ell_m$ for $m < r$ and $\tau_r: \cT_r \xrightarrow{\sim} \cD^\ell_r$ which are all compatible. Then by \autoref{prop:twisted-twisted-discs-defs-truncated-twisted}, there exists an isomorphism $\tau_{r+1}: \cT_{r+1} \xrightarrow{\sim} \cD^\ell_{r+1}$ over $D_{r+1}$ such that $\sigma'_{r-1} = \tau_{r+1} \times_{D_{r+1}} D_{r-1}$. Setting $\sigma'_r = \tau_{r+1} \times_{D_{r+1}} D_r$ and repeating this process completes our proof.
\end{proof}

The following is the last ingredient needed for the proof of \autoref{prop:no-non-trivial-forms-truncated-twisted}.

\begin{lemma}\label{l:twisted-twisted-disc-->twisted-disc}
\autoref{prop:no-non-trivial-forms-truncated-twisted} holds when $n=\infty$.
\end{lemma}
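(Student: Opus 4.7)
The plan is to deduce the $n=\infty$ case from the (independently-established) finite case of \autoref{prop:no-non-trivial-forms-truncated-twisted} by invoking \autoref{l:coh-complete-lift-->twisted-disc}. That lifting lemma has two hypotheses: (i) $\cT$ is the completion of the adic family $\{\cT_m\}_m$ in the sense of \cite[Definition 1.9]{AHRetalelocal}, and (ii) there exists a $D_{n,K}$-isomorphism $\sigma_n\colon \cT_n\xrightarrow{\simeq} \cD^{(\ell)}_{n,K}$ at some finite level $n\geq 2$.

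First I would verify (i). Since $\cD^{(\ell)}_L\to D_L$ is a proper flat good moduli space map with affine diagonal and $\cT\times_K L\simeq\cD^{(\ell)}_L$, faithfully flat descent along the finite separable extension $L/K$ transfers all of these properties to $\cT\to D_K$. An appeal to a coherent completeness result for flat proper stacks with affine diagonal over a complete local base (e.g., \cite[Example 3.10]{AHRetalelocal}, applied to $\cT$ in place of $\cD^{(\ell)}$) then shows $\cT$ is coherently complete along $\cT_0$, and hence is the completion of its adic family.

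Next I would establish (ii) by applying the finite case of \autoref{prop:no-non-trivial-forms-truncated-twisted} at $n=2$: base-changing the given $L$-isomorphism yields $\cT_2\times_K L\simeq\cD^{(\ell)}_{2,L}$, so the finite-level statement produces the desired $\sigma_2\colon \cT_2\xrightarrow{\simeq}\cD^{(\ell)}_{2,K}$ over $D_{2,K}$. Finally, invoking \autoref{l:coh-complete-lift-->twisted-disc} with $n=2$ and this $\sigma_2$ yields the sought $D_K$-isomorphism $\cT\xrightarrow{\simeq}\cD^{(\ell)}_K$.

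The main obstacle I anticipate is step (i): coherent completeness does not descend through $L/K$ as a black box, so one must descend the structural hypotheses (flatness, properness, affine diagonal, good moduli space) individually along the finite extension and then verify that the resulting stack $\cT\to D_K$ falls within the scope of the coherent completeness theorem being invoked. Once (i) is in hand, the rest of the argument is a formal composition of the finite case with \autoref{l:coh-complete-lift-->twisted-disc}.
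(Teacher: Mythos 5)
There is a genuine gap: your argument is circular relative to the way \autoref{prop:no-non-trivial-forms-truncated-twisted} is actually established. You treat the finite-level case of that proposition as ``independently established'' and invoke it at $n=2$ to produce $\sigma_2\colon\cT_2\xrightarrow{\simeq}\cD^{(\ell)}_{2,K}$. But in the paper the finite-level case has no independent proof: the proof of \autoref{prop:no-non-trivial-forms-truncated-twisted} proceeds by deforming $\cT_n$ to a flat family over all of $D_K$, completing, identifying the base change to $L$ with $\cD^{(\ell)}_L$ via \autoref{l:coh-complete-lift-->twisted-disc}, and then citing precisely \autoref{l:twisted-twisted-disc-->twisted-disc} --- the statement you are trying to prove --- to descend from $L$ to $K$ before truncating. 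So the only available proof of the finite case already depends on the $n=\infty$ case, and your deduction assumes what it is supposed to supply. To make your route work you would need a genuinely independent proof of the finite-level descent statement (say, a Galois-cohomological argument showing forms of $\cD^{(\ell)}_{n,K}$ over $D_{n,K}$ split over $D_{n,L}$ are trivial, using the automorphism computations of the following section --- which are themselves stated only over fields containing the $\ell$-th roots of unity), and no such argument is given. Your step (i) (descending flatness, properness, and the diagonal condition along the finite \'etale cover $D_L\to D_K$ to get coherent completeness of $\cT$ along $\cT_0$) is fine in itself, but it does not repair the circularity in step (ii).

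For comparison, the paper's proof of the lemma is direct and does not pass through the finite case or the lifting lemma at all: since $D_L\to\cD^{(\ell)}_L\to\cT$ are \'etale surjections, $\cT$ is regular, and it is integral because $\pi$ is an isomorphism over the generic point of $D_K$; one then checks (after the \'etale base change to $L$) that the ideal sheaf of $\cT^{\red}$ equals $(\pi^*\cI_{D_{0,K}/D_K})^\ell$, dualizes to obtain a line bundle with a section whose $\ell$-th power is $t$, and thereby produces a map $\cT\to\cD^{(\ell)}_K$ to the root stack which is an isomorphism because it becomes one over $L$. That descent-of-the-root-stack-structure argument is what breaks the logical cycle, and some replacement for it (or an independent finite-level argument) is what your proposal is missing.
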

\begin{proof}
We drop the subscript $\infty$. Let $\pi\colon\cT\to D_K$ denote the map and let $\pi_L\colon\cT_L\to D_L$ denote the base change to $L$. Note that $\pi$ is flat since it is after the \'etale base change $\Spec L\to\Spec K$. 
Since we have \'etale surjections $D_L\to\cD_L^{(\ell)}\to\cT$ and $D_L$ is regular, we see $\cT$ is regular. Letting $D^\circ_K$ denote the generic point of $D_K$, since $K((t))\otimes_{K} L=L((t))$ and $\pi_L$ is an isomorphism over the generic point, we see $\pi$ is an isomorphism over $D^\circ_K$. Thus, $\cT$ is integral.

Consider the ideal sheaf $\cI_{D_{0,K}/D_K}$ of $D_0\subset D$ and let $\cI_{\cT^{\red}/\cT}$ be the ideal sheaf of the reduction $\cT^{\red}\subset\cT$. Since $\pi$ is flat, we see $\pi^*\cI_{D_{0,K}/D_K}$ is an ideal sheaf and that $\pi^*\cI_{D_{0,K}/D_K}\subset\cI_{\cT^{\red}/\cT}\subset\cO_\cT$. We claim that $\cI_{\cT^{\red}/\cT}=(\pi^*\cI_{D_{0,K}/D_K})^\ell$. Indeed, this can be checked after base change to $L$; since $\Spec L\to\Spec K$ is \'etale, $\cT^{\red}\times_K L$ is the reduction of $\cD_L^{(\ell)}$ and the ideal sheaves over $K$ pull back to the corresponding ideal sheaves over $L$, and hence the statement holds by the construction of $\cD_L^{(\ell)}$ as a root stack.

Since $\cT$ is integral and regular, dualizing the inclusion $\cI_{\cT^{\red}/\cT}\subset\cO_\cT$ yields a line bundle $\cL$ with section $s$ on $\cT$. By construction, we have $\cL^{\otimes\ell}\simeq\pi^*\cI_{D_{0,K}/D_K}^\vee\simeq\cO_\cT$ such that $s^{\otimes\ell}$ maps to $t\in\cO_\cT$. This therefore defines a map to the root stack $\cT\to\cD_K^{(\ell)}$ over $D_K$. This is an isomorphism since it becomes so after base change to $L$.
\end{proof}

\begin{proof}[{Proof of \autoref{prop:no-non-trivial-forms-truncated-twisted}}]
Let $\pi_n\colon\cT_n\to D_{n,K}$ be the structure map. For $m<n$, let $\pi_m\colon\cT_m\to D_{m,K}$ denote the base change. We first construct a compatible family of flat deformations $\pi_m\colon\cT_m\to D_{m,K}$ for all $m\geq 0$. Suppose we have constructed such a family for $m\leq N$. Note that $\pi_n$ has finite diagonal since its base change $\pi_{n,L}=\pi_n\times_K L$ does. Therefore, by deformation theory, every $\pi_m$ has finite diagonal. It then follows from \cite[Lemma 3.2(1)]{SatrianoUsatine4} that each $\pi_m$ is a cohomologically affine. Letting $J_N=(t^{N+1})/(t^{N+2})$, the obstruction to deforming $\pi_N\colon\cT_N\to D_{N,K}$ to $D_{N+1,K}$ lives in $\Ext^2(L_{\cT_N / D_{N,K}},\pi_N^*J_N)=H^0\mathcal{E}xt^2(L_{\cT_N / D_{N,K}},\pi_N^*J_N)$, where the equality follows because $\pi_N$ is cohomologically affine. Then this $\Ext$-group vanishes since we may check this after base change to $L$, where it follows from \autoref{l:twisted-twisted-discs-def-sp}. We have therefore constructed our desired family of deformations $\pi_m\colon\cT_m\to D_{m,K}$ for all $m\geq n$.

Next, since $\pi_0$ has finite diagonal, \cite[Corollary 4.14(1)]{AHRLuna} shows $\cT_0$ is linearly fundamental in the sense of \cite[Definition 2.7]{AHRetalelocal}. Then Theorem 1.10 of (loc.~cit) shows that the completion $\cT$ of the adic sequence $\{\cT_n\}_n$ exists; furthermore, the proof shows that $\cT_n$ is the $n$-th infinitesimal thickening of $\cT_0$ in $\cT$. The argument in the second and third paragraphs of the proof of \cite[Proposition 4.5]{SatrianoUsatine4} then show that $\pi$ is flat and that the natural map $\cT_n\to\cT\times_{D_K} D_{n,K}$ is an isomorphism.

Let $\pi_L\colon\cT_L\to D_L$ be the base change to $L$. By \cite[Lemma 3.5(1)]{AHRetalelocal}, $(\cT_{0,L},\cT_L)$ is coherently complete, and so $\cT_L$ is the completion of the adic sequence $\{\cT_{m,L}\}_m$ in the sense of \cite[Definition 1.9]{AHRetalelocal}. Therefore, \autoref{l:coh-complete-lift-->twisted-disc} proves $\cT_L\simeq\cD^{(\ell)}_L$ over $D_L$. \autoref{l:twisted-twisted-disc-->twisted-disc} then shows $\cT\simeq\cD^{(\ell)}_K$ over $D_K$, and hence $\cT_n\simeq\cD^{(\ell)}_{n,K}$ over $D_{n,K}$.
\end{proof}



\section{The fibers of the twisted-to-warped jet space map}

In this section, we prove \autoref{thm:fibers-twisted->warped-aut-explicit} below, which is the key ingredient in the proof of \autoref{theoremTwistedToWarped}, which relates the motivic measure of twisted arcs of $\cX$ and untwisted arcs of the warping stack $\sW(\cX)$.

\begin{theorem}\label{thm:fibers-twisted->warped-aut-explicit}
Let $\cX$ be a finite type Artin stack with affine diagonal over $k$. 
Let $n\geq2$, let $L/k$ be a field extension, and consider an $L$-point of $\sL_n(\sW(\cX))$ given by the warped map $(\cD_n\to D_{n,L},f\colon\cD_n\to\cX)$.

If the fiber of $\cJ^{\ell}_n(\cX)\to \sL_n(\sW(\cX))$ is non-empty, then it is given by
\[
\cJ^{\ell}_n(\cX)\times_{\sL_n(\sW(\cX))} \Spec L\simeq \bA^1_L.
\]
More specifically, the fiber is a $\bG_a$-torsor, where $\bG_a$ is canonically the vector group associated to the $1$-dimensional vector space
\[
\Ext^0(L_{\cD^{(\ell)}_{0,L} / L},\mcJ);
\]
here $\mcJ$ denotes the pull back to $\cD^{(\ell)}_{0,L}$ of the ideal sheaf defining the closed immersion $D_{n,L}\hookrightarrow D_{n+1,L}$.
\end{theorem}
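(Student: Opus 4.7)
The plan is to reduce the fiber computation to a concrete calculation of automorphisms of the truncated twisted disc. First, since the fiber is a non-empty stack locally of finite type over $L$, it has a point over some finite separable extension $L'/L$, providing a $D_{n,L'}$-isomorphism $\cD_n\otimes_L L'\cong\cD^{(\ell)}_{n,L'}$. By \autoref{prop:no-non-trivial-forms-truncated-twisted}, this descends to $D_{n,L}$, so we may assume $\cD_n=\cD^{(\ell)}_{n,L}$.

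Unraveling the 2-fiber product, an $A$-point (for $A$ an $L$-algebra) is represented by a triple $(\varphi,\sigma,\beta)$ with $\varphi\colon\cD^{(\ell)}_{n,A}\to\cX$ representable, $\sigma\colon\cD^{(\ell)}_{n,A}\xrightarrow{\sim}\cD^{(\ell)}_{n,A}$ an isomorphism over $D_{n,A}$, and $\beta\colon\varphi\xrightarrow{\sim}f_A\circ\sigma$ a 2-isomorphism. Using $\beta$ to replace $\varphi$ by $f_A\circ\sigma$, the fiber is equivalent to the stack of $1$-isomorphisms $\sigma$ modulo $2$-isomorphisms. By the universal property of $\cD^{(\ell)}_{n,A}$ as the $\ell$th root stack of $t\in\cO_{D_{n,A}}$, each such $\sigma$ corresponds to a pair $(\cL,s)$ with $\cL^{\otimes\ell}\cong\cO$ and $s^\ell=t$. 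The graded structure theorem \autoref{prop:graded-Smith} forces $\cL\cong\cO(1)$ and $s=u\cdot a(t)$ for $a(t)\in A[t]/(t^{n+1})$ satisfying $a(t)^\ell\equiv 1\pmod{t^n}$; since $n\geq 2$ and $\ell$ is invertible, expanding this gives $a(t)=a_0+a_n t^n$ with $a_0\in\mu_\ell(A)$ and $a_n\in A$ free. The $2$-isomorphisms are the $\mu_\ell$-scalings of $\cL$ preserving the trivialization $\cL^{\otimes\ell}\cong\cO$, acting freely by $c\cdot(a_0,a_n)=(ca_0,ca_n)$ with quotient $\bA^1_L$ parametrized by the invariant $b=a_0^{-1}a_n$.

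For the canonical $\bG_a$-torsor structure, \autoref{l:twisted-twisted-discs-def-sp} identifies the parameter $a_n$ with a deformation of the identity automorphism from $\cD^{(\ell)}_{n-1,A}$ to $\cD^{(\ell)}_{n,A}$, which canonically lies in the $1$-dimensional $\Ext^0(L_{\cD^{(\ell)}_{0,L}/L},\mcJ)$; the identity automorphism trivializes the resulting $\bG_a$-torsor to yield $\bA^1_L$. The main obstacle is the $2$-categorical bookkeeping: a naive enumeration of $1$-automorphisms gives $\mu_\ell\times\bA^1$, and one must carefully identify the $\mu_\ell$ factor with $2$-automorphisms, which get quotiented out in the $1$-stack structure of $\sW(\cX)$ to recover the connected fiber $\bA^1_L$ rather than $\ell$ disjoint copies.
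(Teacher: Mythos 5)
Your outline follows the paper's skeleton for the reduction step: nonemptiness of the fiber gives a point over a finite extension, \autoref{prop:no-non-trivial-forms-truncated-twisted} kills the twisted form, and the normalization of the triples $(\varphi,\sigma,\beta)$ identifies the fiber with the groupoid of isomorphisms $\sigma$ with $2$-arrows between them; this is exactly \autoref{l:fibers-twisted->warped} and \autoref{prop:fibers-twisted->warped-aut} (the paper uses representability of $f$ to check that morphisms in the normalized groupoid are precisely $2$-arrows $\sigma\Rightarrow\sigma'$, a point worth recording). Where you genuinely diverge is the computation of $\Res_{D_{n,L}/L}\uAut_{D_{n,L}}(\cD^{(\ell)}_{n,L})$: the paper can only lift an automorphism to an equivariant automorphism of the cover after an \'etale base change (\autoref{prop:auts-Dnell}), so rather than gluing the explicit coordinates $(\zeta,a)$ of \autoref{l:when-are-equiv-auts-Dnell-isomorphic} it anchors everything at the identity via \autoref{cor:all-auts-Dnell-reduce-to-id} and Olsson's deformation theory, which yields the canonical, $A$-functorial identification with $\Ext^0(L_{\cD^{(\ell)}_{0,L}/L},\mcJ)\otimes_L A$ in \autoref{prop:resaut-is-A1}. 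Your route through the root-stack universal property avoids the \'etale localization at the level of maps (triples $(\cL,s,\phi)$ make sense over any $A$), and your $\mu_\ell$-quotient bookkeeping, including the invariant $b=a_0^{-1}a_n$, matches the $2$-isomorphism analysis in the paper's lemma; so the middle step is a legitimately different, more hands-on argument.

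Two points need repair before this is a proof. First, \autoref{prop:graded-Smith} cannot be cited to force $\cL\cong\cO(1)$: it classifies graded modules over $\kappa\llbracket t\rrbracket$ with $\kappa$ a field, while you must treat an arbitrary $L$-algebra $A$ to pin down the fiber as a scheme. What is true is that the relation $\phi(s^{\otimes\ell})=t$ forces the $\mu_\ell$-weight of $\cL$ at every geometric point to agree with that of $\cO(1)$ (weight $0$ or weight $\geq 2$ sections have $\ell$-th powers in $(t^{j})$ with $j\neq1$, contradicting $\mathrm{Ann}(t)=(t^n)$), and since the standard cover of $\cD^{(\ell)}_{n,A}$ is a nilpotent thickening of $\Spec A$ one only gets $\cL\cong q^*\cM\otimes\cO(1)$ with $\cM$ a possibly nontrivial $\ell$-torsion element of $\Pic(A)$; your normal form $s=u\,a(t)$ is then only Zariski-local on $\Spec A$. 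The fix inside your framework is that $a_0$ is an $\ell$-th root of unity \emph{section of} $\cM$, hence nowhere vanishing, so it canonically trivializes $\cM$ and $b=a_0^{-1}a_n$ is globally well defined; this must be argued, and it is exactly the gluing issue the paper's deformation-theoretic formulation is designed to bypass. Second, for the canonical torsor statement you need not only that every automorphism restricts at level $n-1$ to something $2$-isomorphic to the identity, but that this $2$-isomorphism is \emph{unique} (the content of \autoref{cor:ResAut-alg-sp} and \autoref{cor:all-auts-Dnell-reduce-to-id}); otherwise the comparison with deformations of the identity, i.e.\ with $\Ext^0(L_{\cD^{(\ell)}_{0,L}/L},\mcJ)$, is only a surjection. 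Your free-action observation does contain the triviality of $2$-automorphisms in coordinates, but it should be stated explicitly as part of the argument.
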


We prove \autoref{thm:fibers-twisted->warped-aut-explicit} in two main steps. In \autoref{prop:fibers-twisted->warped-aut} we show the fibers are given by the Weil restriction of the stack $\uAut_{D_{n,L}}(\cD^{(\ell)}_{n,L})$. In \autoref{prop:resaut-is-A1} we then prove that this Weil restriction is given by $\bA^1_L$.

\subsection{Initial computation of the fibers}

Keep the hypotheses of \autoref{thm:fibers-twisted->warped-aut-explicit}. In this brief subsection, we prove:

\begin{proposition}\label{prop:fibers-twisted->warped-aut}
Let $n\geq2$. Consider a $k'$-point of $\sL_n(\sW(\cX))$ corresponding to a warped map $(\cD_n\to D_{n,k'},f\colon\cD_n\to\cX)$. If the fiber of $\cJ^\ell_n(\cX)\to \sL_n(\sW(\cX))$ is non-empty, then it is given by
\[
\Spec k'\times_{\sL_n(\sW(\cX))} \cJ^\ell_n(\cX)\simeq \Res_{D_{n,k'}/k'}\uAut_{D_n}(\cD^{(\ell)}_{n,k'})
\]
where $\Res$ denotes Weil restriction.
\end{proposition}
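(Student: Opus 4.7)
The plan is to unwind the $2$-fiber product defining the fiber and then exploit non-emptiness to trivialize it.

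First, I would observe that for a $k'$-scheme $T'$, a $T'$-point of the fiber $\Spec k'\times_{\sL_n(\sW(\cX))}\cJ^\ell_n(\cX)$ consists, by definition, of a representable morphism $g\colon\cD^{(\ell)}_{n,T'}\to\cX$ together with an isomorphism $(\alpha,\tau)$ in the category of warped maps from $(\cD^{(\ell)}_{n,T'}\to D_{n,T'},g)$ to the base change $(\cD_{n,T'}\to D_{n,T'},f_{T'})$ of the given warped map. Here $\alpha\colon\cD^{(\ell)}_{n,T'}\xrightarrow{\sim}\cD_{n,T'}$ is an isomorphism of warps over $D_{n,T'}$ and $\tau\colon g\Rightarrow f_{T'}\circ\alpha$ is a $2$-isomorphism. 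Since any such triple is uniquely isomorphic, via $\tau$, to $(f_{T'}\circ\alpha,\alpha,\mathrm{id})$, the fiber is naturally equivalent, functorially in $T'$, to the Isom stack parametrizing $D_{n,T'}$-isomorphisms $\cD^{(\ell)}_{n,T'}\xrightarrow{\sim}\cD_{n,T'}$.

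Next, I would use the non-emptiness hypothesis. A $k'$-point of the fiber supplies in particular an isomorphism $\sigma_0\colon\cD^{(\ell)}_{n,k'}\xrightarrow{\sim}\cD_n$ of warps over $D_{n,k'}$; more generally, if non-emptiness is asserted only after a field extension, \autoref{prop:no-non-trivial-forms-truncated-twisted} (together with a reduction to a finite separable subextension, which is automatic in characteristic zero) descends such an isomorphism back to $k'$. Composing with the base change of $\sigma_0^{-1}$ then yields, functorially in $T'$, an equivalence between the Isom stack above and the Aut stack $\uAut_{D_{n,T'}}(\cD^{(\ell)}_{n,T'})$. By the universal property of Weil restriction (cf.\ \cite[Proposition 3.5(vii)]{Rydh}) applied to $D_{n,k'}\to\Spec k'$, $T'$-sections of the latter correspond precisely to $T'$-points of $\Res_{D_{n,k'}/k'}\uAut_{D_n}(\cD^{(\ell)}_{n,k'})$.

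Assembling these functorial equivalences yields the claimed isomorphism of stacks. The main obstacle will be the $2$-categorical bookkeeping---specifically, verifying that the passage from the triple $(g,\alpha,\tau)$ to the datum of $\alpha$ alone is strict enough (after fixing a cleavage for $\sW(\cX)$) that the construction assembles into a genuine isomorphism of stacks rather than merely an equivalence of fibered categories. Once that is handled, the identification with the Weil restriction is formal.
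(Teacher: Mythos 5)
Your proposal is correct and follows essentially the same route as the paper: its \autoref{l:fibers-twisted->warped} identifies the fiber with $\Res_{D_{n,k'}/k'}\uIsom_{D_n}(\cD^{(\ell)}_{n,k'},\cD_n)$, and the proof of the proposition then produces a $k'$-point exactly as you propose---pass to a finite extension of $k'$ over which the fiber has a point and descend the resulting isomorphism using \autoref{prop:no-non-trivial-forms-truncated-twisted}---before using that point to identify the Isom stack with the Aut stack. The ``$2$-categorical bookkeeping'' you defer is precisely what \autoref{l:fibers-twisted->warped} carries out, where the key input is representability of $f$ (making the $2$-isomorphism $\eta$ unique and determining $\xi$), rather than any cleavage/strictness issue.
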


We prove this after a preliminary lemma.

\begin{lemma}\label{l:fibers-twisted->warped}
Consider a $k'$-point of $\sL_n(\sW(\cX))$ corresponding to a warped map $(\cD_n\to D_{n,k'},f\colon\cD_n\to\cX)$. Then 
\[
\Spec k'\times_{\sL_n(\sW(\cX))} \cJ^\ell_n(\cX)\simeq \Res_{D_{n,k'}/k'}\uIsom_{D_n}(\cD^{(\ell)}_{n,k'},\cD_n)
\]
where $\Res$ denotes Weil restriction.
\end{lemma}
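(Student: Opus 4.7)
The plan is to unwind the $A$-valued points of both sides and exhibit a natural equivalence between them. On the left-hand side, an $A$-point is the data of a representable $k$-morphism $g\colon\cD^{(\ell)}_{n,A}\to\cX$ together with an isomorphism, in the warping stack $\sW(\cX)$, between the warped map $(\cD^{(\ell)}_{n,A}\to D_{n,A},g)$ induced by $g$ and the base change $(\cD_{n,A}\to D_{n,A},f_A)$ of the given $k'$-point, where $\cD_{n,A}:=\cD_n\times_{k'}A$ and $f_A$ is the base change of $f$.

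First I would unwind the definition of morphisms in $\sW(\cX)$ via \cite[Remark 1.4]{SatrianoUsatine4}: such an isomorphism of warped maps is a pair $(\alpha,\gamma)$ where $\alpha\colon\cD^{(\ell)}_{n,A}\xrightarrow{\sim}\cD_{n,A}$ is a $D_{n,A}$-isomorphism and $\gamma\colon f_A\circ\alpha\Rightarrow g$ is a $2$-isomorphism. Then I would observe that the triple $(g,\alpha,\gamma)$ is canonically determined by $\alpha$ alone, since given $\alpha$ we may take $g:=f_A\circ\alpha$ and $\gamma:=\mathrm{id}$. The resulting forgetful functor to the groupoid of $D_{n,A}$-isomorphisms $\cD^{(\ell)}_{n,A}\to\cD_{n,A}$ is therefore an equivalence of groupoids.

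Next I would match such $D_{n,A}$-isomorphisms with $A$-points of the Weil restriction. Using the base-change identifications $\cD^{(\ell)}_{n,A}=\cD^{(\ell)}_{n,k'}\times_{D_{n,k'}}D_{n,A}$ and $\cD_{n,A}=\cD_n\times_{D_{n,k'}}D_{n,A}$, such an $\alpha$ is precisely a $D_{n,A}$-point of $\uIsom_{D_n}(\cD^{(\ell)}_{n,k'},\cD_n)$; by the defining universal property of Weil restriction along the finite flat map $D_{n,k'}\to\Spec k'$, these $D_{n,A}$-points correspond bijectively to $A$-points of $\Res_{D_{n,k'}/k'}\uIsom_{D_n}(\cD^{(\ell)}_{n,k'},\cD_n)$. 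Finally I would verify naturality of this chain of identifications in $A$, and appeal to \cite[Proposition 3.5]{Rydh} to ensure the Weil restriction is algebraic in the first place.

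The main subtlety will be the bookkeeping of the $2$-isomorphisms inherent to the warped-map formalism — specifically, verifying that the forgetful functor $(g,\alpha,\gamma)\mapsto\alpha$ is an honest equivalence of groupoids rather than merely a bijection on isomorphism classes, and that the equivalence is compatible with pullback in $A$. This is not deep, as it amounts to transporting $g$ along $\alpha$ with $\gamma=\mathrm{id}$, but it must be tracked carefully because the left-hand side is a genuine $2$-fiber product of stacks and the right-hand side is defined via a universal property at the level of $A$-points.
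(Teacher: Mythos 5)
Your overall route is the same as the paper's: unwind the $A$-points of the $2$-fiber product, normalize the $2$-isomorphism to the identity so that only the warp-isomorphism $\alpha$ remains, and then read off the Weil restriction of the Isom-stack. However, the step you dismiss as ``not deep'' is in fact the entire content of the lemma, and as written your plan has a gap there. First, a morphism in $\sW(\cX)(A)$ between warped maps is not a literal pair $(\alpha,\gamma)$ but an \emph{equivalence class} of such pairs, well defined only up to replacing $\alpha$ by a $2$-isomorphic map; consequently the ``forgetful functor $(g,\alpha,\gamma)\mapsto\alpha$'' is not well defined on objects of the fiber product. The paper avoids this by going the other way: it shows every object is isomorphic to a normalized one $(f_A\alpha,\alpha,\id)$ and that $(\alpha,\id)$ is the \emph{unique} representative of its class with second coordinate $\id$, which is what pins down $\alpha$.

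Second, and more importantly, your plan gives no mechanism for full faithfulness. A morphism $(g,[\alpha,\gamma])\to(g',[\alpha',\gamma'])$ in the fiber product is a $2$-isomorphism $\xi\colon g\Rightarrow g'$ compatible with the classes, and one must show these biject with $2$-isomorphisms $\eta\colon\alpha\Rightarrow\alpha'$ (the latter being exactly the morphisms among $D_{n,A}$-points of $\uIsom_{D_n}(\cD^{(\ell)}_{n,k'},\cD_n)$). This is where the paper uses that $f$ (and $g$) are \emph{representable}, hence faithful: given $\xi$ the intertwining $\eta$ is unique, $\xi$ is recovered from $\eta$ as $(\gamma\text{-conjugate of }f_A(\eta))$, and $f_A(\eta)=f_A(\eta')$ forces $\eta=\eta'$. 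Without this input, ``transporting $g$ along $\alpha$ with $\gamma=\id$'' only yields a bijection on isomorphism classes, which is strictly weaker than the asserted isomorphism of stacks — and controlling the automorphisms is essential later, when these fibers are shown to be $\bA^1$ (an algebraic space) rather than a gerbe over it. So you should add: (i) the normalization argument replacing the ill-defined forgetful functor, and (ii) the faithfulness-from-representability argument establishing the bijection on morphisms. The final identification with the Weil restriction via its universal property, and the appeal to Rydh for algebraicity, are fine (the latter is not even needed for the statement).
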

\begin{proof}
The $T$-valued objects of the fiber product are given by an object of $\cJ^\ell_n(\cX)(T)$ and an isomorphism with our given warped map, namely $g\colon\cD_{n,T} ^{\ell}\to\cX$, $\phi\colon\cD_{n,T}^{\ell}\xrightarrow{\simeq}\cD_{n,T}$, and $\alpha\colon g\Rightarrow f_T\phi$. Importantly, recall that $(\phi,\alpha)$ is only an equivalence class of $2$-arrows, i.e., it is only well-defined up to replacing $\phi$ by a $2$-isomorphic map $\cD^{\ell}_{n,T}\to\cD_{n,T}$.

Then a morphism $(g,\phi,\alpha)\to(g',\phi',\alpha')$ is given by a morphism $g\to g'$ compatible with the identity morphism $f_T\to f_T$, i.e., we have $\xi\colon g\Rightarrow g'$ such that there exists a $2$-isomorphism $\eta\colon\phi\Rightarrow\phi'$ with the property that the diagram
\[
\xymatrix{
g\ar@{=>}[r]^-{\alpha}\ar@{=>}[d]_-{\xi} & f_T\phi\ar@{=>}[d]^-{\varphi_T(\eta)}\\
g'\ar@{=>}[r]^-{\alpha'} & f_T\phi'\\
}
\]
commutes. If such an $\eta$ exists, then it is unique since $f_T$ is representable. Thus, we see that $\eta$ uniquely characterizes $\xi$ as $(\alpha')^{-1}\circ f_T(\eta)\circ\alpha$ and if $f_T(\eta)=f_T(\eta')$, then $\eta=\eta'$ since $f_T$ is representable. Therefore, a morphism $(g,\phi,\alpha)\to(g',\phi',\alpha')$ is simply given by $\eta\colon\phi\Rightarrow\phi'$. In particular, $(g,\phi,\alpha)$ is isomorphic to $(f_T\phi,\phi,\id)$ and $(\phi,\id)$ is the unique element in its equivalence class whose second coordinate is $\id$. We have therefore normalized all objects to be of the form $(f_T\phi,\phi,\id)$ and morphisms $(f_T\phi,\phi,\id)\to(f_T\phi,\phi',\id)$ are then simply given by arrows $\phi\Rightarrow\phi'$. In other words, we have shown that $\Spec k'\times_{\sL_n(\sW(\cX))} \cJ^\ell_n(\cX)$ is canonically identified with $\Isom_{D_{n,T}}(\cD^{(\ell)}_{n,T},\cD_{n,T})$
\end{proof}

\begin{proof}[{Proof of \autoref{prop:fibers-twisted->warped-aut}}]
The fiber is given by $\cF:=\Res_{D_{n,k'}/k'}\uIsom_{D_n}(\cD^{(\ell)}_{n,k'},\cD_n)$ by \autoref{l:fibers-twisted->warped}. We may assume $\cF$ is non-empty. It suffices to show $\cF$ has a $k'$-point since, given such a point, we have an isomorphism $\cD^{(\ell)}_{n,k'}\to\cD_n$ which we can then use to identify $\cF$ with $\Res_{D_{n,k'}/k'}\uAut_{D_n}(\cD^{(\ell)}_{n,k'})$.

Since $\cF$ is non-empty and locally of finite type, it has a $T$-valued point for some finite type $k'$-scheme $T$. Since $T$ has a $k''$-point for some finite extension $k''/k'$, we see there is a $k''$-point of $\cF$. In other words, we have an isomorphism 
\[
\cD^{(\ell)}_{n,k''}\xrightarrow{\simeq}\cD_n\times_{k'}k'',
\]
over $D_{n,k''}$, which shows that $\cD_n$ is a twisted form of $\cD^{(\ell)}_{n,k'}$. By \autoref{prop:no-non-trivial-forms-truncated-twisted}, we see $\cD_n$ is in fact isomorphic to $\cD^{(\ell)}_{n,k'}$ over $D_{n,k'}$. Thus, $\cF$ has a $k'$-point.
\end{proof}

\subsection{Automorphisms of truncated twisted discs}

Throughout this section, $n$ is taken to be any non-negative integer unless otherwise specified. The following are the two main results of this subsection.

\begin{proposition}\label{prop:resaut-is-A1}
Fix $\ell\in\bZ_{>1}$. Let $K$ be a field with characteristic prime to $\ell$ that contains all $\ell$-th roots of unity. If $n>0$, then
\[
\Res_{D_{n,K}/K}\uAut_{D_{n,K}}(\cD^{(\ell)}_{n,K})\simeq\bA^1_K.
\]
More specifically, the left hand side is canonically identified with the affine space associated to the $1$-dimensional vector space 
\[
\Ext^0(L_{\cD^{(\ell)}_{0,K} / K},\mcJ),
\]
where $\mcJ$ denotes the pull back to $\cD^{(\ell)}_{0,K}$ of the ideal sheaf defining the closed immersion $D_{n,K}\hookrightarrow D_{n+1,K}$
\end{proposition}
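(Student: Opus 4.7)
The plan is to compute $\Aut_{D_{n,A}}(\cD^{(\ell)}_{n,A})$ directly as a functor in the $K$-algebra $A$, show the resulting functor is the additive group $\bA^1_K$, and then canonically identify it with the Ext group via deformation theory and \autoref{l:twisted-twisted-discs-def-sp}. The $A$-points of the Weil restriction are $\Aut_{D_{n,A}}(\cD^{(\ell)}_{n,A})$; using the \'etale $\mu_\ell$-cover $\Spec A[u]/(u^{\ell(n+1)})\to\cD^{(\ell)}_{n,A}$ (with $t = u^\ell$ and $\zeta\in\mu_\ell$ acting by $u\mapsto\zeta u$), such an automorphism will correspond to a $\mu_\ell$-equivariant $A[t]/(t^{n+1})$-algebra automorphism of $A[u]/(u^{\ell(n+1)})$ modulo composition with the $\mu_\ell$-action on the cover. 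By $\mu_\ell$-weight considerations any such equivariant map must have the form $u\mapsto f(u) = c_0 u + c_1 u^{1+\ell}+\cdots+c_n u^{1+n\ell}$ with $c_i\in A$; writing $f(u) = u\, g(t)$ with $g(t) = \sum_i c_i t^i$, the condition $f(u)^\ell = u^\ell$ becomes $g(t)^\ell\equiv 1\pmod{t^n}$ in $A[t]/(t^{n+1})$.

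The key technical step will be solving this equation. Expanding $g(t)^\ell$ and matching coefficients of $t^0, t^1,\ldots, t^{n-1}$, an induction using that $\ell\in A^\times$ (since $\mathrm{char}\,K$ is coprime to $\ell$) should yield $c_0^\ell = 1$ and $c_1 = \cdots = c_{n-1} = 0$, while $c_n$ remains unconstrained. Thus the equivariant automorphisms will be represented by the scheme $\mu_\ell\times_K\bA^1_K$ in coordinates $(c_0, c_n)$. A short calculation using $\zeta^{1+n\ell} = \zeta$ then shows that precomposition with $u\mapsto\zeta u$ sends $(c_0, c_n)\mapsto(\zeta c_0, \zeta c_n)$, and since $K\supseteq\mu_\ell$ acts freely on the $c_0$ factor, the quotient will be the affine line $\bA^1_K$ parameterized by the invariant $a := c_n c_0^{-1}$. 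Composing two normalized automorphisms $\phi_a: u\mapsto u(1+at^n)$ and $\phi_b$ (using $t^{2n} = 0$ in $A[t]/(t^{n+1})$ since $n\geq 1$) will give $\phi_a\circ\phi_b = \phi_{a+b}$, so the group structure on $\Res_{D_{n,K}/K}\uAut_{D_{n,K}}(\cD^{(\ell)}_{n,K})\cong\bA^1_K$ is additive, i.e.~it is a vector group.

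To finish, I will use that a vector group is canonically its own Lie algebra. The tangent space at the identity of the Weil restriction corresponds to first-order deformations of the identity automorphism of $\cD^{(\ell)}_{n,K}$ over $D_{n,K}$, which by standard deformation theory are classified by an Ext group of the relative cotangent complex against a suitable coefficient sheaf. Invoking \autoref{l:twisted-twisted-discs-def-sp} and the fact that the relevant coefficient sheaf is annihilated by $t$ (and hence supported on $\cD^{(\ell)}_{0,K}\subset\cD^{(\ell)}_{n,K}$), that Ext group is canonically identified with $\Ext^0(L_{\cD^{(\ell)}_{0,K}/K},\mcJ)$, yielding the asserted canonical description.

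The hard part will be the coefficient-by-coefficient analysis of $g(t)^\ell\equiv 1\pmod{t^n}$ uniformly in $A$, since $A$ may have nilpotents and we must ensure at each inductive step that $\ell c_0^{\ell-1}\in A^\times$ really forces the next coefficient to vanish. Once that is in place, the passage from the explicit $\bA^1_K$ to the canonical Ext description is formal via \autoref{l:twisted-twisted-discs-def-sp} and the uniqueness of the vector-group structure.
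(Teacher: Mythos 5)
Your explicit computation of the equivariant automorphisms and of the $2$-isomorphism relation is fine as far as it goes (and the coefficient analysis of $g(t)^\ell\equiv 1\pmod{t^n}$, which you flag as the hard part, is actually the easy part: since $\mu_\ell$ is \'etale and $t$ is nilpotent, $g\bmod t^n$ is forced to be a constant root of unity). The genuine gap is earlier: you assert that for \emph{every} $K$-algebra $A$, an automorphism of $\cD^{(\ell)}_{n,A}$ over $D_{n,A}$ "corresponds to" a $\mu_\ell$-equivariant automorphism of the standard cover $\Spec A[u]/(u^{\ell(n+1)})$ modulo deck transformations. An automorphism $\sigma$ is induced by an equivariant automorphism of the cover only if the $\mu_\ell$-torsor $\sigma^*\rho$ (pullback of the standard cover) is isomorphic to $\rho$, and for general $A$ (which may carry nontrivial $\mu_\ell$-torsors, e.g.\ $A=K[x^{\pm1}]$) this is exactly what has to be proved; the paper only establishes the lifting \'etale-locally on $\Spec A$ (\autoref{prop:auts-Dnell}). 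To promote your pointwise description to an identification of functors you would need a descent step: that the comparison map $\bA^1_K\to\Res_{D_{n,K}/K}\uAut_{D_{n,K}}(\cD^{(\ell)}_{n,K})$ is injective on points and an \'etale-local surjection, \emph{and} that the target is a sheaf rather than a genuine stack, i.e.\ that $2$-automorphisms are trivial — the content of \autoref{cor:ResAut-alg-sp}, which itself rests on \autoref{l:when-are-equiv-auts-Dnell-isomorphic}(2). Your proposal never addresses either the torsor obstruction or the $2$-categorical issue, so as written the computation of the functor of points does not go through.

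There is also a flaw in your route to the canonical $\Ext$ description. First-order deformations of the identity automorphism (the tangent space at $e$) are governed, via \cite[Theorem 1.5]{OlssonDefRep}, by $\Ext^0(L_{\cD^{(\ell)}_{n,K}/D_{n,K}},\cO_{\cD^{(\ell)}_{n,K}})$ — the coefficient is the full structure sheaf, which is \emph{not} annihilated by $t$, so your claimed reduction to a sheaf supported on $\cD^{(\ell)}_{0,K}$ does not hold in that setup (the space is still $1$-dimensional, but the canonical comparison with $\Ext^0(L_{\cD^{(\ell)}_{0,K}/K},\mcJ)$ is precisely what is missing). The paper's proof avoids both problems at once: \autoref{cor:all-auts-Dnell-reduce-to-id} shows any automorphism over $D_{n,A}$ restricts, uniquely up to $2$-isomorphism, to the identity over $D_{n-1,A}$, so the \emph{entire} set of $A$-points (not merely the tangent space) is identified by deformation theory with an $\Ext^0$ whose coefficient module is the square-zero ideal of the truncation, which is $t$-torsion; flat base change and \autoref{l:twisted-twisted-discs-def-sp} then give the stated $1$-dimensional vector space functorially in $A$. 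If you want to salvage your more explicit approach, you should either supply the torsor-triviality/descent argument together with the algebraic-space property, or adopt the deformation-theoretic identification of all $A$-points as in the paper.
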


\begin{proposition}\label{prop:AutD0-is-Gm}
If $\ell>1$ and $k$ is our algebraically closed field of characteristic $0$, then $\uAut_{D_{0,k}}(\cD^{(\ell)}_{n,k})\simeq\bG_{m,k}$.
\end{proposition}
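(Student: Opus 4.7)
The plan is to interpret $\uAut_{D_{0,k}}(\cD^{(\ell)}_{n,k})$ as the base change to $D_{0,k}$ of the $D_{n,k}$-group sheaf $\uAut_{D_{n,k}}(\cD^{(\ell)}_{n,k})$ (the one featured in \autoref{prop:resaut-is-A1}). Since $\cD^{(\ell)}_{n,k}\times_{D_{n,k}}D_{0,k}\cong\cD^{(\ell)}_{0,k}$ and $\uAut$ commutes with base change along closed immersions, the problem reduces to computing $\uAut_{\Spec k}(\cD^{(\ell)}_{0,k})$; in particular, the parameter $n$ drops out of the final answer.

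For this, I would work via the standard \'etale atlas $\rho\colon\Spec k[u]/(u^\ell)\to\cD^{(\ell)}_{0,k}$. For a $k$-algebra $T$, a $T$-point of $\uAut(\cD^{(\ell)}_{0,T})$ lifts, uniquely up to the $2$-morphism action of $\mu_\ell$, to a $\mu_\ell$-equivariant $T$-algebra automorphism $g^*$ of $T[u]/(u^\ell)$. The constraint that $g^*$ be an isomorphism forces the twist $\sigma\in\Aut(\mu_\ell)$ appearing in the equivariance to be the identity (since $g^*(u)$ must generate the maximal ideal modulo its square, as in the proof of \autoref{prop:no-non-trivial-forms-truncated-twisted}); the $\mu_\ell$-weight constraint in the range $0\leq i<\ell$ then leaves only $i=1$, giving $g^*(u)=cu$ with $c\in T^*$. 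The $2$-morphism action of $\mu_\ell$ sends $cu$ to $\xi cu$, i.e., scales $c$ by $\xi\in\mu_\ell$, so as an fppf sheaf on $k$-algebras the automorphism functor is $T\mapsto T^*/\mu_\ell$.

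Finally, since we are in characteristic zero, the Kummer sequence $1\to\mu_\ell\to\bG_m\xrightarrow{[\ell]}\bG_m\to 1$ is exact in the fppf topology, so the assignment $c\mapsto c^\ell$ induces an isomorphism of fppf sheaves $\bG_m/\mu_\ell\xrightarrow{\sim}\bG_m$. Combining with the previous step yields $\uAut_{D_{0,k}}(\cD^{(\ell)}_{n,k})\simeq\bG_{m,k}$. The main subtlety is pinning down the base-change interpretation of the notation $\uAut_{D_{0,k}}$, together with the $\mu_\ell$-weight analysis for the equivariant lift; once those two points are in place, the rest is a short calculation plus the standard identification $\bG_m/\mu_\ell\simeq\bG_m$.
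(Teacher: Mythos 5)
Your overall route coincides with the paper's: reduce to automorphisms of $\cD^{(\ell)}_{0}$ over $\Spec k$ (the paper's proof works directly with $\cD^{(\ell)}_{0}$, so your base-change reduction is consistent with the intended statement), realize automorphisms via $\mu_\ell$-equivariant automorphisms of the atlas $\Spec T[u]/(u^\ell)$, use the weight analysis to force $u\mapsto cu$ with $c\in T^*$, quotient by the $2$-isomorphism action $c\mapsto\xi c$ of $\mu_\ell$ (this is the paper's \autoref{l:when-are-equiv-auts-Dnell-isomorphic}), and identify $\bG_m/\mu_\ell\simeq\bG_m$. Those parts are correct and match the paper.

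The gap is the lifting claim. You assert that for \emph{every} $k$-algebra $T$, a $T$-point of $\uAut(\cD^{(\ell)}_{0,T})$ lifts, uniquely up to $\mu_\ell$, to an equivariant automorphism of $T[u]/(u^\ell)$. Globally this is false: an automorphism of $\cD^{(\ell)}_{0,T}$ corresponds to a $\mu_\ell$-torsor on $\cD^{(\ell)}_{0,T}$ together with an equivariant isomorphism to the standard atlas, and when that torsor (equivalently its restriction to $\Spec T$) is nontrivial there is no global equivariant lift. Concretely, under the eventual identification $\uAut\simeq\bG_m/\mu_\ell$, the image of $\uAut(T)$ in $H^1_{\mathrm{fppf}}(T,\mu_\ell)$ is the kernel of $H^1_{\mathrm{fppf}}(T,\mu_\ell)\to\Pic(T)$, i.e.\ $T^*/(T^*)^\ell$; e.g.\ for $T=k[x,x^{-1}]$ the class of $x$ gives an automorphism (it is $u\mapsto zu$ over $T[z]/(z^\ell-x)$, glued via the $\mu_\ell$ $2$-action) that is not induced by any automorphism $u\mapsto cu$ with $c\in T^*$. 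What your argument actually needs is only the \'etale-local version — after a finite \'etale cover of $T$ trivializing that torsor, the automorphism is induced by a strictly equivariant automorphism of the atlas, so that $\bG_m\to\uAut$ is an epimorphism of sheaves — but this local statement is precisely the technical heart of the paper's proof: it is \autoref{prop:auts-Dnell}, whose proof requires the analysis of the two commuting $\mu_\ell$-actions on the pulled-back torsor in \autoref{l:strange-actions-muell-torsor}. Your proposal assumes it without argument; the weight computation you give only rules out a twist by $\Aut(\mu_\ell)$ \emph{after} an equivariant lift is already in hand, and does not produce the lift. Supplying that local lifting result (or citing \autoref{prop:auts-Dnell}) is what is missing.
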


We begin with the following technical lemma.

\begin{lemma}\label{l:strange-actions-muell-torsor}
Fix $\ell\in\bZ_{\geq1}$ and let $S$ be a scheme such that $\mu_{\ell,S}\simeq\bZ/\ell$, e.g., a scheme over a field $L$ with characteristic prime to $\ell$ containing all $\ell$-th roots of unity. Let $n\in\bZ_{\geq0}$ and $m=\ell(n+1)-1$. Let $P=\mu_{\ell,S}\times_S D_{m,S}$, $q\colon P\to D_{m,S}$ be the projection map, and $p\colon P\to D_{m,S}$ a morphism.

Equip $P$ with two $\mu_\ell$-actions, the first being the standard one acting on the $\mu_\ell$-component, and the second unspecified. Assume
\begin{enumerate}
\item $q$ is equivariant for second action when the target of $q$ is given the standard $\mu_\ell$-action,
\item $p$ is equivariant for the first action when the target of $p$ is given the standard $\mu_\ell$-action, and
\item $p$ is a $\mu_\ell$-torsor for the second action (where the target of $p$ is given the trivial action).
\end{enumerate}
Then the second $\mu_\ell$-action on $P=\Spec_S \cO_S[u,v]/(u^{m+1},v^\ell-1)$ is uniquely determined, given by assigning $u$ weight $1$ and $v$ weight $-1$.
\end{lemma}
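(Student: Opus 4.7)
The plan is to determine the second action by pinning down how it acts on the algebra generators $u$ and $v$ separately, using the three assumptions in turn.

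First I would establish $\xi\star u=\xi u$: since $q^*(u)=u$ and the standard target action on $D_{m,S}$ sends $u\mapsto \xi u$, $q$-equivariance (assumption (1)) immediately forces $\xi\star u=q^*(\xi u)=\xi u$. Next, to constrain $p^*u$, I would expand $p^*u=\sum_{j=0}^{\ell-1}h_j(u)v^j$ with $h_j\in R_0:=\cO_S[u]/(u^{m+1})$ and use assumption (2) that $p$ is equivariant for the first action (which fixes $u$ and sends $v\mapsto \xi v$). This forces $\xi h_j(u)=h_j(u)\xi^j$ for every $\xi\in\mu_\ell$, so $h_j=0$ unless $j=1$, yielding $p^*u=h_1(u)\,v$.

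Second, I would determine $\xi\star v$ via the $\ell$-th root of unity condition together with assumption (3). Since $(\xi\star v)^\ell=1$ in $R:=\cO_S[u,v]/(u^{m+1},v^\ell-1)$, and since $\mu_{\ell,S}\simeq\bZ/\ell$ yields a CRT decomposition $R\simeq\prod_{\eta\in\mu_\ell(\cO_S)}R_0$, an application of Hensel's lemma (valid because $\ell$ is invertible on $S$ and $u$ is nilpotent in $R_0$) shows that the $\ell$-th roots of unity in $R_0$ are exactly the constants in $\mu_\ell(\cO_S)$. Hence in the basis $\{v^j\}$ one has $\xi\star v=\sum_{j=0}^{\ell-1}a_j(\xi)v^j$ with each $a_j(\xi)\in\cO_S$ a constant. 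The invariance $\xi\star(h_1(u)v)=h_1(u)v$ from assumption (3) then gives $h_1(\xi u)\sum_j a_j(\xi)v^j=h_1(u)v$, whose $v^j$-coefficients read
\[
h_1(\xi u)\,a_j(\xi)=0 \ \text{for } j\neq 1, \qquad h_1(\xi u)\,a_1(\xi)=h_1(u).
\]

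Finally, I would reduce to a geometric point of $S$ (the weights of a group action on a fixed set of algebra generators are locally constant) and exploit the torsor structure of $p$. Over a field, $p$ being a $\mu_\ell$-torsor forces $h_1\neq 0$, so writing $h_1(u)=c_{k^*}u^{k^*}+c_{k^*+1}u^{k^*+1}+\cdots$ with $k^*$ minimal and $c_{k^*}\neq 0$, the coefficient $c_{k^*}$ is a unit and $(h_1)=(u^{k^*})$ in $R_0$. The $j\neq 1$ equations then force $a_j(\xi)=0$, so $\xi\star v=a_1(\xi)v$; the equation $a_1(\xi)=h_1(u)/h_1(\xi u)\in\cO_S$ being constant in $u$ forces all nonzero coefficients of $h_1$ to have degree $\equiv k^*\pmod{\ell}$, and gives $a_1(\xi)=\xi^{-k^*}$. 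The $\mu_\ell$-torsor condition demands that the fiber of $p$ at the closed point of $D_{m,S}$ has rank $|\mu_\ell|=\ell$; but this fiber is $R/(u^{k^*})$ of rank $k^*\ell$, forcing $k^*=1$ and therefore $\xi\star v=\xi^{-1}v$. The hard part is this last torsor-rank calculation, which is what pins down $k^*=1$ and thereby uniquely fixes the weight of $v$.
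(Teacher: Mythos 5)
Your route is genuinely different from the paper's: you work directly with the coordinate ring $\cO_S[u,v]/(u^{m+1},v^\ell-1)$, pin down $\xi\star u=\xi u$ from (1), use (2) to force $p^*u=h_1(u)v$, use a Hensel/CRT argument to see $\xi\star v=\sum_j a_j(\xi)v^j$ with constant coefficients, and finally use the rank-$\ell$ (fiber length) consequence of the torsor condition to force the lowest degree of $h_1$ to be $1$. The paper instead views $P$ as $\ell$ labelled copies of $D_{m,S}$, shows the second action permutes the components transitively, and identifies the permutation by compatibility with the first action. Your computations up through the field case are correct, and the torsor-rank argument is a legitimate alternative to the paper's transitivity bookkeeping.

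However, there is a genuine gap at the step ``reduce to a geometric point of $S$.'' The identities you must prove, $a_j(\xi)=0$ for $j\neq 1$ and $a_1(\xi)=\xi^{-1}$, are identities among sections of $\cO_S$, and these cannot be verified at geometric points when $S$ is non-reduced (a nonzero nilpotent vanishes at every geometric point); the parenthetical justification about weights being locally constant does not address this, since whether the action is even diagonal on $v$ over all of $S$ is part of what is being proved. The lemma is stated, and later applied (via \autoref{prop:auts-Dnell} and \autoref{l:when-are-equiv-auts-Dnell-isomorphic}), for arbitrary $S$, including $\Spec A$ with $A$ non-reduced, and over such bases your intermediate structural claims fail: $h_1$ need not satisfy $(h_1)=(u^{k^*})$ with unit lowest coefficient (e.g.\ $h_1=u+\epsilon$ over $A=L[\epsilon]/(\epsilon^2)$ passes the rank-one test $A[u]/(u^{m+1},h_1)\cong A$). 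The gap is repairable within your framework: the torsor condition makes $p$ finite locally free of rank $\ell$, so at every residue field the fiber has length $\ell$, which forces the coefficient $c_1$ of $u$ in $h_1$ to be nonvanishing at every point of $S$, hence a unit in $\cO_S$; then comparing the $u^1$-coefficients in $h_1(\xi u)a_j(\xi)=0$ and $h_1(\xi u)a_1(\xi)=h_1(u)$ gives $a_j(\xi)=0$ and $a_1(\xi)=\xi^{-1}$ directly over any $S$, with no passage to geometric points. As written, though, the final step does not establish the lemma in the generality in which it is used.
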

\begin{proof}
We supress the subscript $S$ throughout this proof. To begin, fix an isomorphism $\mu_{\ell,S}\simeq\bZ/\ell$ and let $\zeta\in \mu_{\ell,S}$ correspond to the generator $1\in\bZ/\ell$. Then $P$ consists of $\ell$ disjoint copies of $D_m$ labeled by the elements of $\bZ/\ell$. Now via the second $\mu_\ell$-action, $\zeta$ determines an automorphism of $P$; every automorphism of $P$ is given by a permutation of the connected components composed with automorphisms of the individual components. Since $q$ is equivariant for the second $\mu_\ell$-action, we see $\zeta$ must act in a simple manner:~it permutes the connected components via a permutation $\tau$ and on each connected component $D_m$, it acts via the standard $\mu_\ell$-action. 
Furthermore, $p$ is a torsor for this $\mu_\ell$-action over the connected base $D_m$, so $\tau$ must be transitive.

We next compute the map $p$. Fix a permutation $\epsilon$ such that $\epsilon\tau\epsilon^{-1}$ is the $\ell$-cycle $(\ell,\ell-1,\dots,1)$. Using that $p$ is the quotient by the second $\mu_\ell$-action, we will show that it is given as follows:~the map $D_m\to D_m$ from the $i$-th component of $P$ is given by the standard action of $\zeta^{\epsilon(i)}$. To see this, note that $\epsilon$ gives a way to reorder to components of $P$ so that second action is simpler, i.e., $\epsilon$ defines an automorphism $\psi$ of $P$ which maps the $i$-th component to the $\epsilon(i)$-th component via the identity map. Via $\psi$, our second $\mu_\ell$-action corresponds to the coaction on $\cO_S[u,v]/(u^{m+1},v^\ell-1)$ where $u$ has weight $1$ (since we act via $\zeta$ on each component) and $v$ has weight $-1$ (since the $i$-th component maps to $(i-1)$-th component after conjugation by $\psi$). Thus, the invariant ring is $\cO_S[uv]/(uv)^{m+1}$ where the map $D_m\to D_m$ from the $i$-th component is given by the action of $\zeta^i$. Using the isomorphism $\psi$, we then see the quotient map $p$ is given as stated above:~the map $D_m\to D_m$ from $i$-th component is given by the action of $\zeta^{\epsilon(i)}$.

We now determine the second $\mu_\ell$-action. Recall that $p$ is also equivariant where the source is given the first $\mu_\ell$-action and the target is given the standard $\mu_\ell$-action. This first $\mu_\ell$-action maps the $i$-th component to the $(i+1)$-th component via the identity map. Thus, we obtain a commutative square
\[
\xymatrix{
D_m\ar[r]^-{\id}\ar[d]_-{\zeta^{\epsilon(i)}} & D_m\ar[d]^-{\zeta^{\epsilon(i+1)}}\\
D_m\ar[r]^-{\zeta} & D_m
}
\]
and so $\epsilon(i+1)=\epsilon(i)+1$. It follows that $\tau=\epsilon^{-1}(\ell,\ell-1,\dots,1)\epsilon=(\ell,\ell-1,\dots,1)$. Therefore, the second $\mu_\ell$-action is given by the coaction on $\cO_S[u,v]/(u^{m+1},v^\ell-1)$ where $u$ has weight $1$ and $v$ has weight $-1$.
%
\end{proof}

We next show that \'etale locally, automorphisms of $\cD_n^{(\ell)}$ lift to equivariant automorphisms of its standard cover.

\begin{proposition}\label{prop:auts-Dnell}
Fix $\ell\in\bZ_{\geq1}$ and let $S$ be a scheme such that $\mu_{\ell,S}\simeq\bZ/\ell$. If $\sigma$ is an $S$-automorphism of $\cD_{n,S}^{(\ell)}$, then there is a finite \'etale cover $h\colon S'\to S$ such that $\sigma\times_S S'$ is induced by an equivariant automorphism on the cover $D_{\ell(n+1)-1,S'}$ of $\cD_{n,S'}^{(\ell)}$.

Furthermore, if every $\mu_\ell$-torsor on $S$ is trivial (e.g., $S=\Spec L$ with $L$ algebraically closed), then we may take $h=\id$.
\end{proposition}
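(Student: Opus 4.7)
The plan is to encode equivariant lifts of $\sigma$ as sections of an auxiliary $\mu_\ell$-torsor built from $\sigma$, and then reduce to Lemma~\ref{l:strange-actions-muell-torsor} after trivializing this torsor by a finite \'etale cover of $S$. Let $m = \ell(n+1)-1$ and let $q_0\colon D_{m,S}\to \cD_{n,S}^{(\ell)}$ denote the standard $\mu_\ell$-torsor cover. Consider the $2$-fiber product
\[
P := D_{m,S}\times_{\sigma\circ q_0,\,\cD_{n,S}^{(\ell)},\,q_0} D_{m,S},
\]
with projections $p_1,p_2\colon P\to D_{m,S}$. Each $p_i$ is a $\mu_\ell$-torsor, and $P$ carries two commuting $\mu_\ell$-actions coming from the two factors of the fiber product; I will call these the first and second actions.

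Next I would argue that $p_2\colon P\to D_{m,S}$ is a $\mu_\ell$-torsor pulled back from $S$. Since the assumption $\mu_{\ell,S}\simeq\bZ/\ell$ forces $\mu_\ell$ to be \'etale on $S$, invariance of \'etale cohomology under the nilpotent thickening $S=D_{0,S}\hookrightarrow D_{m,S}$ yields an isomorphism $H^1_\et(D_{m,S},\mu_\ell)\simeq H^1_\et(S,\mu_\ell)$, establishing the claim. Let $Q\to S$ be the resulting $\mu_\ell$-torsor on $S$ and set $h\colon S':=Q\to S$, which is finite \'etale; pulling back to $S'$ trivializes $p_2$, producing an identification $P\times_S S'\simeq \mu_\ell\times D_{m,S'}$ over $D_{m,S'}$ under which $p_2$ is the projection to $D_{m,S'}$ and the first action becomes the standard translation on $\mu_\ell$.

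The hypotheses of Lemma~\ref{l:strange-actions-muell-torsor} are now satisfied (with $q = p_2$ and $p = p_1$), and applying the lemma describes both the second action explicitly and forces $p_1$ to have the form $p_1(i,x) = i\cdot f(x)$ for a unique $\mu_\ell$-equivariant automorphism $f\colon D_{m,S'}\to D_{m,S'}$. I would then set $\widetilde{\sigma} := f^{-1}$, which is an equivariant automorphism of $D_{m,S'}$; evaluating the fiber-product identity $\sigma\circ q_0\circ p_1 = q_0\circ p_2$ at $(1,\widetilde{\sigma}(x))\in\mu_\ell\times D_{m,S'}$ and using $\mu_\ell$-invariance of $q_0$ then gives $\sigma\circ q_0 = q_0\circ\widetilde{\sigma}$, so $\widetilde{\sigma}$ descends to $\sigma\times_S S'$. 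For the final assertion, if every $\mu_\ell$-torsor on $S$ is trivial, then $Q$ is trivial and we may run the argument directly over $S$ with $h = \id_S$.

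The main subtlety is the careful bookkeeping between the two $\mu_\ell$-actions on $P$: one must determine which projection plays the role of $q$ versus $p$ in Lemma~\ref{l:strange-actions-muell-torsor}, and verify that trivializing $p_2$ (rather than $p_1$) is what realigns the first action with the standardized action in the lemma's hypotheses. Once this is set up correctly, the existence and equivariance of $\widetilde{\sigma}$ are formal consequences of the explicit description of $p_1$ afforded by the lemma.
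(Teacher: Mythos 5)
Your construction runs parallel to the paper's proof: you form the same fiber product $P$ (the paper's outer rectangle, with its two commuting $\mu_\ell$-actions), trivialize one projection after a finite \'etale base change on $S$ coming from the restriction to $S=D_{0,S}$, and invoke \autoref{l:strange-actions-muell-torsor} to pin down the unknown action; trivializing $p_2$ rather than $p_1$ (the paper's $q$) is a harmless symmetry. The genuine gap is in your final inference. What the proposition must deliver — and what \autoref{l:when-are-equiv-auts-Dnell-isomorphic}, \autoref{cor:ResAut-alg-sp} and \autoref{cor:all-auts-Dnell-reduce-to-id} later use — is that $\sigma\times_S S'$ is $2$-isomorphic to the automorphism of $\cD^{(\ell)}_{n,S'}$ determined by an equivariant automorphism of the cover, equivalently that the torsor $\sigma^*q_0$ is isomorphic to $q_0$ \emph{as a torsor over} $\cD^{(\ell)}_{n,S'}$ (the paper's ``$\rho'\simeq\rho$''). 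You only establish a $2$-commutative square $\sigma\circ q_0\simeq q_0\circ\widetilde{\sigma}$ over the cover and then assert that $\widetilde{\sigma}$ ``descends to'' $\sigma\times_S S'$. That deduction is not automatic: a $2$-isomorphism of maps out of $\cD^{(\ell)}_{n,S'}$ produced after precomposing with $q_0$ descends only if it satisfies a cocycle condition on $D_{m,S'}\times_{\cD^{(\ell)}_{n,S'}}D_{m,S'}\simeq\mu_\ell\times D_{m,S'}$, and the obstruction is genuinely nontrivial here: the pullback map on $\mu_\ell$-torsors from $\cD^{(\ell)}_{n,S'}$ to $D_{m,S'}$ has nontrivial kernel (it kills the class of $q_0$ itself), so an identification valid only after pullback to the cover does not identify $\sigma^*q_0$ with $q_0$ over the stack, hence does not show $\sigma$ is induced by $\widetilde{\sigma}$ in the sense the later arguments require.

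The fix is already latent in your own setup, and it is exactly the role \autoref{l:strange-actions-muell-torsor} plays in the paper: the lemma tells you that your trivialization of $p_2$ is automatically equivariant for the second action, and that action is precisely the descent datum for descending along $q_0$ (the model $\xi\cdot(\zeta,x)=(\zeta\xi^{-1},\xi x)$ whose quotient is the standard torsor). Keeping this equivariance, the trivialization descends to an isomorphism of torsors $(\sigma^{-1})^*q_0\simeq q_0$ over $\cD^{(\ell)}_{n,S'}$, which is the torsor-level statement needed (for $\sigma^{-1}$, hence for $\sigma$); instead you discard it and pass to the pointwise evaluation at $(1,\widetilde{\sigma}(x))$, which retains only the weak square. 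A secondary, smaller issue: the claim that the lemma ``forces'' $p_1(i,x)=i\cdot f(x)$ with $f$ an equivariant automorphism is not literally part of the lemma's conclusion and needs a short supplementary argument — equivariance of $p_1$ for the first action gives the shape $v\cdot g(u)$, invariance under the now-identified second action gives equivariance of $g$, and the torsor property of $p_1$ (or the relation $(g^*u)^\ell=u^\ell$ forced by working over $D_{n,S'}$) gives that $g$ is invertible.
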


\begin{remark}\label{rmk:auts-Dnell}
Let $m=\ell(n+1)-1$, $\rho\colon D_{m,S}\to\cD_{n,S}^{(\ell)}$ be the standard \'etale cover, and $\iota\colon S=D_{0,S}\hookrightarrow D_{m,S}$ the closed immersion. Let $q_0\colon P_0\to S$ be the $\mu_\ell$-torsor given by the cartesian diagram
\[
\xymatrix{
P_0\ar[r]\ar[d]_-{q_0} & D_{m,S}\ar[d]^-{\rho}\\
S\ar[r]^-{\sigma\rho\iota} & \cD_{n,S}^{(\ell)}
}
\]
The proof of \autoref{prop:auts-Dnell} shows that we may take $h\colon S'\to S$ to be any finite \'etale cover trivializing the torsor $q_0$, e.g., we may choose $S'=P_0$ and $h=q_0$.
\end{remark}

\begin{proof}[{Proof of \autoref{prop:auts-Dnell}}]
We supress the subscript $S$ throughout this proof. Let $m=\ell(n+1)-1$ and $\rho\colon D_m\to\cD_n^{(\ell)}$ be standard \'etale cover. Then $\sigma$ is induced by a $\mu_\ell$-torsor $\rho'\colon\cP\to\cD_n^{(\ell)}$ and a $\mu_\ell$-equivariant isomorphism $f\colon\cP\to D_m$. It suffices to prove that $\rho$ and $\rho'$ are isomorphic as torsors over $\cD_n^{(\ell)}$, for then $\sigma$ is induced by the equivariant automorphism $f$.

Consider the cartesian diagram
\[
\xymatrix{
P
\ar[d]_-{q}\ar[r]^-{p} & \cP\ar[r]^-{f}_-{\simeq}\ar[d]^-{\rho'} & D_m\ar[d]^-\rho\\
D_m\ar[r]^-{\rho} & \cD^{(\ell)}_n\ar[r]^-{\sigma}_{\simeq} & \cD^{(\ell)}_n
}
\]
Then $P$ comes equipped with two $\mu_\ell$-actions coming from the pullbacks of the $\mu_\ell$-actions on vertical (respectively horozontal) maps $\rho$. We refer to these as the first and second actions. The map $q$ is a torsor for the first action and equivariant for the second action; similarly, $p$ is a torsor for the second action and equivariant for the first action. 

Note that $\rho'$ is obtained by quotienting the torsor $q$ by the second $\mu_\ell$-action. Similarly the torsor $\rho$ is obtained by quotienting the trivial torsor $\mu_\ell\times D_m\to D_m$ by the action $\xi\cdot(\zeta,x)=(\zeta\xi^{-1},\xi x)$; let us say the first action on $\mu_\ell\times D_m$ is the one where it acts through the $\mu_\ell$-factor, and let us refer to $\xi\cdot(\zeta,x)=(\zeta\xi^{-1},\xi x)$ as the second action on $\mu_\ell\times D_m$. We must therefore construct an isomorphism $\mu_\ell\times D_m\simeq P$ of first-action torsors which is equivariant with respect to the second actions.

Pulling back $q$ via the closed immersion $D_0\hookrightarrow D_m$, we obtain a $\mu_\ell$-torsor $q_0\colon P_0\to D_0=S$. Thus, $q_0$ is a finite \'etale cover. So, after replacing $S$ by any finite \'etale cover which trivializes $q_0$ (e.g., base changing by $q_0$ itself), we may assume $P_0$ is the trivial $\mu_\ell$-torsor. It then follows from invariance of the \'etale site that $q$ is also the trivial torsor. Thus, there exists an isomorphism $\psi\colon\mu_\ell\times D_m\xrightarrow{\simeq} P$ of torsors for the first action. We claim that $\psi$ is automatically equivariant for the second actions. Indeed, the second action on $P$ induces a new action on $\mu_\ell\times D_m$ which we will call the third action; we must show the second and third actions are the same. Note that $fp\psi$ is a torsor for the third action and so the hypotheses of \autoref{l:strange-actions-muell-torsor} are met. As a result, the second and third actions on $\mu_\ell\times D_m$ are the same.
\end{proof}

We next classify equivariant automorphisms of the standard cover of $\cD_{n,S}^{(\ell)}$ as well as their $2$-automorphisms.

\begin{lemma}\label{l:when-are-equiv-auts-Dnell-isomorphic}
Fix $\ell\in\bZ_{>1}$ and let $A$ be a ring such that $\mu_{\ell,A}\simeq\bZ/\ell$. Let $m=\ell(n+1)-1$ and $\sigma$ be a $D_{n,A}$-automorphism of $\cD_{n,A}^{(\ell)}$ induced by an equivariant automorphism $f$ of $D_{m,A}=\Spec A[u]/(u^{m+1})$. Then
\begin{enumerate}
\item\label{l:when-are-equiv-auts-Dnell-isomorphic::fzetaa} we have
\[
f(u) = 
\begin{cases}
\zeta u + au^{\ell n+1},& n>0\\
bu,& n=0
\end{cases}
\]
with $a\in A$, $b\in A^*$, and $\zeta\in\mu_\ell(A)$. Furthermore, if $n>0$, then after replacing $\sigma$ by a $2$-isomorphic $D_{n,A}$-automorphism of $\cD_{n,A}^{(\ell)}$, we may assume $\zeta=1$. In particular, the $D_{n-1,A}$-automorphism of $\cD_{n-1,A}^{(\ell)}$ induced by $\sigma$ is $2$-isomorphic to $\id$.

\item\label{l:when-are-equiv-auts-Dnell-isomorphic::n-id-2auts} 
if $\alpha\colon\sigma\Rightarrow\sigma$ is a $2$-automorphism and $\Spec A$ has no non-trivial $\mu_\ell$-torsors (e.g., $A$ is an algebraically closed field), then $\alpha=\id$. 
\end{enumerate}
\end{lemma}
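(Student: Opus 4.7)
For part (i), my plan is to combine two constraints on the $\mu_\ell$-equivariant automorphism $f$ of $D_{m,A}=\Spec A[u]/(u^{m+1})$. Equivariance with $u$ of weight $1$ forces $f(u)=\sum_{i=0}^n c_i u^{\ell i+1}$ with $c_0\in A^*$, and the descent of $f$ to a $D_{n,A}$-automorphism of $\cD_{n,A}^{(\ell)}$ is equivalent to $f(u)^\ell\equiv u^\ell \pmod{u^{m+1}}$. Expanding $f(u)^\ell$ and matching coefficients degree by degree (using that $\ell$ is invertible in $A$, which follows from $\mu_{\ell,A}\simeq\bZ/\ell$), the leading term gives $c_0=\zeta\in\mu_\ell(A)$, and an induction on $k$ (whose key input is the invertibility of $\ell\zeta^{\ell-1}$) yields $c_1=\dots=c_{n-1}=0$; the top coefficient $c_n=:a$ is unconstrained because $u^{(n+1)\ell}$ vanishes modulo $u^{m+1}$. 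When $n=0$ the degree bound $\ell i+1\leq m=\ell-1$ already isolates $i=0$, giving $f(u)=bu$ with $b\in A^*$ and no descent condition. The normalization $\zeta=1$ for $n>0$ follows by replacing $f$ with $\zeta^{-1}f$, since $\zeta\in\mu_\ell(A)\subset\Aut(D_{m,A})$ descends to the identity on $\cD_{n,A}^{(\ell)}$ up to a canonical $2$-isomorphism. The final assertion of (i) is then immediate: $f(u)=u+au^{\ell n+1}$ restricts to the identity modulo $u^{\ell n}$ (since $u\cdot u^{\ell n}=0$), so the induced $D_{n-1,A}$-automorphism of $\cD_{n-1,A}^{(\ell)}$ is $2$-isomorphic to $\id$.

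For part (ii), my plan is to pull $\alpha$ back along the standard \'etale cover $\rho\colon D_{m,A}\to\cD_{n,A}^{(\ell)}$ and show the pullback is trivial. Since $\sigma$ is induced by $f$, the composition $\sigma\circ\rho$ is represented, as an object of $\cD_{n,A}^{(\ell)}(D_{m,A})$, by the trivial torsor $\mu_\ell\times D_{m,A}\to D_{m,A}$ equipped with the equivariant structure map $(\xi,u)\mapsto\xi f(u)$. A $2$-automorphism of this object amounts to an element $\eta\in\mu_\ell(D_{m,A})$ satisfying $(\eta-1)f(u)=0$ in $A[u]/(u^{m+1})$. Because $\ell$ is invertible in $A$, every $\ell$-th root of unity in $A[u]/(u^{m+1})$ already lies in $A$, so $\eta\in\mu_\ell(A)$; combined with $m\geq\ell-1\geq 1$ and the fact that $f(u)$ has a unit leading coefficient, this forces $\eta=1$. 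Thus the pullback of $\alpha$ along $\rho$ is trivial, and $\alpha=\id$ by \'etale descent. The hypothesis that $\Spec A$ carries no nontrivial $\mu_\ell$-torsors enters by way of \autoref{prop:auts-Dnell}:~it ensures the analysis can be carried out over $\Spec A$ directly, without passing to a further \'etale cover to trivialize the relevant $\mu_\ell$-gerbe data.

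I expect the main obstacle to be the bookkeeping in part (i)'s coefficient analysis --- carefully tracking the multinomial expansion of $f(u)^\ell$ modulo $u^{m+1}$ and verifying inductively that each intermediate $c_k$ vanishes. Part (ii), by contrast, collapses to a single linear relation in $A[u]/(u^{m+1})$ once the $2$-automorphism is identified with the datum $\eta$ on the cover.
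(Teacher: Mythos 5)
Your proposal is correct. For part (1) it is essentially the paper's argument: both reduce to the standard cover, use equivariance to write $f(u)=u\,g(t)$ with $g$ a polynomial in $t=u^\ell$, and extract the constraint from $f(u)^\ell=u^\ell$ in $A[u]/(u^{m+1})$; where the paper uses that the annihilator of $u^\ell$ is $(u^{\ell n})$ together with the infinitesimal lifting criterion for the \'etale group $\mu_\ell$ to get $g\equiv\zeta \pmod{t^n}$ with $\zeta\in\mu_\ell(A)$, you reprove this step by an explicit coefficient induction using $\ell\in A^*$ --- a legitimate, if more computational, substitute. Your normalization of $\zeta$ by replacing $f$ with $\zeta^{-1}f$ is equivalent to the paper's replacement $f(u)\mapsto f(\zeta^{-1}u)$, since $\xi^{\ell n+1}=\xi$ for $\xi\in\mu_\ell(A)$; both rest on the fact that the $\mu_\ell(A)$-action on the cover induces stack automorphisms $2$-isomorphic to the identity. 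For part (2) your route is mildly different: the paper identifies $2$-automorphisms of $\sigma$ with torsor automorphisms of $\rho$ over $\cD^{(\ell)}_{n,A}$, i.e.\ constants $\xi\in\mu_\ell(A)$ satisfying $f(\xi u)=f(u)$, quoting \autoref{prop:auts-Dnell} along the way, whereas you whisker $\alpha$ along the \'etale cover $\rho$, compute the automorphism group of the resulting object of $\cD^{(\ell)}_{n,A}(D_{m,A})$ explicitly (an $\eta\in\mu_\ell(A)$ with $(\eta-1)f(u)=0$, hence $\eta=1$ since the leading coefficient of $f$ is a unit and $u\neq0$), and conclude by injectivity of restriction of the $\uAut(\sigma)$-sheaf along an \'etale cover. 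This is sound, and it in fact never uses the hypothesis that $\Spec A$ has no nontrivial $\mu_\ell$-torsors, so it proves a marginally stronger statement; the only inaccuracy is your closing sentence, which attributes a role to that hypothesis via \autoref{prop:auts-Dnell} --- that describes the paper's proof rather than yours, and in your argument the hypothesis can simply be dropped.
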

\begin{proof}
We suppress the subscript $A$ throughout the proof. Let $f$ be a $D_n$-equivariant automorphism of $D_m$. We may write $D_n=\Spec R$ and $D_m=\Spec S$ where $R=A[t]/(t^{n+1})$ and $S=R[u]/(u^\ell-t)$. Since $f$ is equivariant over $R$, we must have $f(u)=ug(t)$ with $(ug(t))^\ell=t=u^\ell$. If $n=0$, then $g(t)=b\in A\subset R$; requiring $f$ to be an automorphism is equivalent to requiring $b\in A^*$. Next assume $n>0$. Since the annihilator of $u^\ell$ in $S$ is $(u^{\ell n})$, and $u^\ell(g(t)^\ell-1)=0$, we see $g(t)^\ell=1$ in $S/(u^{\ell n})$. Using the fact that $\Spec A=D_0\to D_{\ell n-1}$ is a nilpotent thickening (this uses $n>0$) and $\mu_{\ell,A}$ is \'etale, the infinitesimal lifting criterion tells us $\mu_\ell(S/(u^{\ell n}))=\mu_\ell(A)$. Recalling that $u^{\ell n}=t^n$, we therefore find
\[
f(u)=u(\zeta+t^n h(t))
\]
with $\zeta\in\mu_\ell(A)$. Since $t^{n+1}=0$ in $S$, we see $t^nh(t)=t^n a$ for some $a\in A$. This proves $f(u)$ has the form as specified in (\ref{l:when-are-equiv-auts-Dnell-isomorphic::fzetaa}).

Next, recall the moduli interpretation for $\cD_n^{(\ell)}$ as $\mu_\ell$-torsors with equivariant maps to $D_m$. Thus, if $\sigma$ (resp.~$\sigma'$) is a $D_n$-automorphism of $\cD_n^{(\ell)}$ given by an equivariant automorphism $f$ (resp.~$f'$) of $D_m$, then $2$-isomorphisms $\sigma'\Rightarrow\sigma$ are given by isomorphisms $g\colon D_m\to D_m$ 
of torsors over $\cD_n^{(\ell)}$ such that $f'=fg$. Isomorphisms of $\mu_\ell$-torsors over $\cD_n^{(\ell)}$ are given by the action of an element $\xi\in\mu_\ell(\cD_n^{(\ell)})=\mu_\ell(D_n)=\mu_\ell(A)$.
%
Note that the action of $\xi\in\mu_\ell(A)$ is given by $u\mapsto\xi u$, so we see $f'(u)=f(\xi u)$.


From this description, (\ref{l:when-are-equiv-auts-Dnell-isomorphic::n-id-2auts}) as well as the ``furthermore'' sentence in (\ref{l:when-are-equiv-auts-Dnell-isomorphic::fzetaa}) follow easily. For the latter, note that when $n>0$, we know $f(u)=\zeta u+au^{\ell n+1}$, so choosing $\xi=\zeta^{-1}$, we may assume $\zeta=1$. For (\ref{l:when-are-equiv-auts-Dnell-isomorphic::n-id-2auts}), (the proof of) \autoref{prop:auts-Dnell} shows that every $\mu_\ell$-torsor on $\cD_n^{(\ell)}$ is isomorphic (as torsors) to $\rho$. Thus, $\alpha$ must be induced by an automorphism of $\rho$, i.e., an equivariant automorphism of $f$. Now note that (even when $n=0$) if $f(\xi u)=f(u)$, then $\xi=1$.

Lastly, to finish the proof of (\ref{l:when-are-equiv-auts-Dnell-isomorphic::fzetaa}), notice that the induced $D_{n-1}$-automorphism of $\cD_{n-1}^{(\ell)}$ is given by the reduction of $f(u)$ mod $t^n=u^{\ell n}$. Then simply observe that $u+au^{\ell n+1}=u$ modulo $u^{\ell n}$. 
\end{proof}

\begin{corollary}\label{cor:ResAut-alg-sp}
Let $\ell>1$, $n\geq0$, and $K$ be a field with characteristic prime to $\ell$. Then $\Res_{D_{n,K}/K}\uAut_{D_{n,K}}(\cD^{(\ell)}_{n,K})$ is an algebraic space.
\end{corollary}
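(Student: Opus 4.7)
The plan is to check, after verifying that $\cF:=\Res_{D_{n,K}/K}\uAut_{D_{n,K}}(\cD^{(\ell)}_{n,K})$ is an Artin stack, that its inertia is trivial by reducing to the triviality statement already proved in \autoref{l:when-are-equiv-auts-Dnell-isomorphic}(\ref{l:when-are-equiv-auts-Dnell-isomorphic::n-id-2auts}).

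First I would verify that $\cF$ is Artin. Since $\cD^{(\ell)}_{n,K}\to D_{n,K}$ is proper, flat, of finite presentation, and has finite (in particular affine) diagonal, standard Hom stack representability results exhibit $\uAut_{D_{n,K}}(\cD^{(\ell)}_{n,K})$ as an open substack of the algebraic Hom stack $\uHom_{D_{n,K}}(\cD^{(\ell)}_{n,K},\cD^{(\ell)}_{n,K})$. Applying Weil restriction along the finite flat morphism $D_{n,K}\to\Spec K$ then yields that $\cF$ is an Artin stack locally of finite type over $K$.

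An Artin stack is an algebraic space if and only if its inertia is trivial, which in turn reduces to checking, for every $K$-algebra $A$ and every object $\sigma\in\cF(A)$ (that is, a $D_{n,A}$-automorphism of $\cD^{(\ell)}_{n,A}$), that the automorphism sheaf $\uAut_A(\sigma)$ on the \'etale site of $\Spec A$ is trivial. This triviality may be checked on stalks at geometric points. Fix a geometric point $\bar x\to\Spec A$ and let $A^{\mathrm{sh}}=\cO_{A,\bar x}^{\mathrm{sh}}$. Because $\ell$ is invertible in $K$, Hensel's lemma applied to $x^\ell-a$ shows that every unit of $A^{\mathrm{sh}}$ is an $\ell$-th power; consequently $\mu_{\ell,A^{\mathrm{sh}}}\simeq\bZ/\ell$ and every $\mu_\ell$-torsor on $\Spec A^{\mathrm{sh}}$ is trivial.

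The final sentence of \autoref{prop:auts-Dnell} therefore shows that $\sigma$ pulled back to $\Spec A^{\mathrm{sh}}$ is induced by an equivariant automorphism of $D_{\ell(n+1)-1,A^{\mathrm{sh}}}$, and \autoref{l:when-are-equiv-auts-Dnell-isomorphic}(\ref{l:when-are-equiv-auts-Dnell-isomorphic::n-id-2auts}) then forces every $2$-automorphism of this pullback to be the identity. Since the stalk of $\uAut_A(\sigma)$ at $\bar x$ is the filtered colimit of $2$-automorphism groups over \'etale neighborhoods of $\bar x$, coinciding with the $2$-automorphism group of $\sigma$ over $A^{\mathrm{sh}}$, this stalk is trivial; hence $\uAut_A(\sigma)$ is trivial and $\cF$ is an algebraic space. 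The main technical nuisance I anticipate is justifying the algebraicity of $\cF$ cleanly---since $\cD^{(\ell)}_{n,K}$ is proper over $D_{n,K}$ but not over $K$, one must invoke Hom stack representability at the correct level before Weil restricting---whereas the descent to stalks and the two-line application of the preceding results is essentially formal.
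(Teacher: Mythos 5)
Your proof is correct, and its core ingredients are the same as the paper's: algebraicity of the Weil restriction via Hom-stack representability, openness of $\uAut$ in the Hom stack, and Weil restriction along the finite flat map $D_{n,K}\to\Spec K$, followed by killing $2$-automorphisms using \autoref{prop:auts-Dnell} together with \autoref{l:when-are-equiv-auts-Dnell-isomorphic}(\ref{l:when-are-equiv-auts-Dnell-isomorphic::n-id-2auts}). Where you genuinely diverge is the formal reduction in the second half. The paper first records that the Weil restriction is of finite type with separated diagonal (via Rydh) precisely so it can invoke \cite[Theorem 2.2.5(1)]{ConradKM} and then only needs to check triviality of automorphism groups at geometric points over algebraically closed fields. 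You instead verify trivial inertia directly, for every $K$-algebra $A$ and every object $\sigma$, by passing to strict henselizations: Hensel's lemma (using that $\ell$ is invertible) gives $\mu_{\ell,A^{\mathrm{sh}}}\simeq\bZ/\ell$ and triviality of all $\mu_\ell$-torsors on $\Spec A^{\mathrm{sh}}$, so both cited results apply verbatim over $A^{\mathrm{sh}}$, and then an algebraic stack with trivial inertia is an algebraic space. Your route buys independence from the Conrad--Keel--Mori citation and, because it checks automorphisms over arbitrary rings rather than only over algebraically closed fields, it also sidesteps any concern about non-reduced automorphism group schemes in positive characteristic; the cost is the identification of the stalk of the automorphism sheaf with the $2$-automorphism group over $A^{\mathrm{sh}}$, which you assert but should justify by noting that the Weil restriction is locally of finite presentation over $K$ (being of finite type over a field), hence limit preserving, so automorphism groups commute with the filtered colimit defining $A^{\mathrm{sh}}$. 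With that one sentence made explicit, your argument is complete.
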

\begin{proof}
We first show $\cR:=\Res_{D_{n,K}/K}\uAut_{D_{n,K}}(\cD^{(\ell)}_{n,K})$ is of finite type with separated diagonal. By \cite[Theorem 3.12 and Remark 3.13]{Rydh}, $\cH:=\uHom_{D_{n,K}}(\cD^{(\ell)}_{n,K})$ is of finite type and has separated diagonal. Since $\uAut_{D_{n,K}}(\cD^{(\ell)}_{n,K})$ is open in $\cH$ (see e.g.,~\cite[Proposition 4.4]{SatrianoUsatine4}) it is also finite type with separated diagonal; to see the finite type statement, note that finite type stacks over fields are Noetherian, hence every open substack is quasi-compact. Lastly, we apply \cite[Proposition 3.8]{Rydh}.

Then by \cite[Theorem 2.2.5(1)]{ConradKM} (which assumes all stacks have separated finite type diagonal), it suffices to show that the geometric points of $\cR$ have trivial automorphism groups. Fix an algebraically closed field $L$, let $\sigma$ be an $L$-valued point of $\cR$, and let $\alpha\colon\sigma\Rightarrow\sigma$ be a $2$-automorphism. \autoref{prop:auts-Dnell} tells us $\sigma$ comes from an equivariant automorphism of the standard \'etale cover of $\cD_{n,L}^{(\ell)}$. Hence, 
\autoref{l:when-are-equiv-auts-Dnell-isomorphic}(\ref{l:when-are-equiv-auts-Dnell-isomorphic::n-id-2auts}) shows $\alpha=\id$.
\end{proof}

\begin{corollary}\label{cor:all-auts-Dnell-reduce-to-id}
Fix $\ell>1$ and $n>0$. Let $K$ be a field with characteristic prime to $\ell$ that contains all $\ell$-th roots of unity, and let $S$ be a $K$-scheme. Let $\sigma$ be a $D_{n,S}$-automorphism of $\cD_{n,S}^{(\ell)}$ and let $\sigma':=\sigma\times_{D_{n,S}} D_{n-1,S}$ be the induced $D_{n-1,S}$-automorphism of $\cD_{n-1,S}^{(\ell)}$. Then there exists a unique $2$-isomorphism $\alpha\colon\sigma'\Rightarrow\id$.
\end{corollary}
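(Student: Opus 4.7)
My plan is to dispose of uniqueness first via \autoref{cor:ResAut-alg-sp}, and then use uniqueness to reduce existence to a descent problem that is handled by the explicit computation in \autoref{l:when-are-equiv-auts-Dnell-isomorphic}. Concretely, for uniqueness I invoke \autoref{cor:ResAut-alg-sp} at level $n-1$: the Weil restriction $\cR_{n-1}:=\Res_{D_{n-1,K}/K}\uAut_{D_{n-1,K}}(\cD^{(\ell)}_{n-1,K})$ is an algebraic space over $K$, so every $T$-point of $\cR_{n-1}$ has trivial $2$-automorphism group. Hence if $\alpha_1,\alpha_2\colon\sigma'\Rightarrow\id$ are two $2$-isomorphisms, then $\alpha_2^{-1}\circ\alpha_1$ lies in $\Aut(\sigma')=\{\id\}$, forcing $\alpha_1=\alpha_2$.

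For existence, uniqueness shows that the sheaf of $2$-isomorphisms $\sigma'\Rightarrow\id$ is an \'etale sheaf on $S$, so I may construct $\alpha$ after passing to an \'etale cover. Zariski-covering $S$ by affines, I may assume $S=\Spec A$ is affine. Since $S$ lies over $K$ and $K$ contains all $\ell$-th roots of unity, $\mu_{\ell,S}\simeq\bZ/\ell$, so \autoref{prop:auts-Dnell} provides a finite \'etale cover $h\colon S''\to S$ (automatically affine) such that $\sigma\times_S S''$ is induced by a $\mu_\ell$-equivariant automorphism $f$ of the standard \'etale cover $D_{\ell(n+1)-1,S''}\to\cD^{(\ell)}_{n,S''}$. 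The ``furthermore'' clause of \autoref{l:when-are-equiv-auts-Dnell-isomorphic}(\ref{l:when-are-equiv-auts-Dnell-isomorphic::fzetaa}), which crucially uses $n>0$, then produces a $2$-isomorphism $\alpha''\colon\sigma'\times_S S''\Rightarrow\id$.

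To descend $\alpha''$ to $S$, I observe that the two pullbacks of $\alpha''$ to $S''\times_S S''$ are $2$-isomorphisms between the same source and target and hence coincide by the uniqueness already proved; so $\alpha''$ satisfies the cocycle condition and descends to the desired $\alpha\colon\sigma'\Rightarrow\id$ on $S$. I do not expect a serious obstacle; the substantive content is the explicit reduction $f(u)\equiv u\pmod{u^{\ell n}}$ (after adjusting $\zeta$ to $1$ by a $2$-isomorphism) recorded in \autoref{l:when-are-equiv-auts-Dnell-isomorphic}, and the remainder is standard algebraic-space descent.
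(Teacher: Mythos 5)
Your proposal is correct and follows essentially the same route as the paper: uniqueness of $2$-isomorphisms from \autoref{cor:ResAut-alg-sp}, reduction to an \'etale-local statement, and then \autoref{prop:auts-Dnell} together with the ``furthermore'' clause of \autoref{l:when-are-equiv-auts-Dnell-isomorphic}(\ref{l:when-are-equiv-auts-Dnell-isomorphic::fzetaa}) (where $n>0$ is used) to trivialize $\sigma'$ locally. The only difference is that you spell out the descent step explicitly (note the sheaf property of $2$-isomorphisms is automatic from descent for stacks rather than a consequence of uniqueness, which instead supplies the cocycle compatibility), whereas the paper compresses this into the remark that it suffices to check the identity of two points of the algebraic space \'etale-locally.
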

\begin{proof}
Note that $\sigma$ is an $S$-point of $\Res_{D_{n,K}/K}\uAut_{D_{n,K}}(\cD^{(\ell)}_{n,K})$ and $\sigma'$ its image under the map $\Res_{D_{n,K}/K}\uAut_{D_{n,K}}(\cD^{(\ell)}_{n,K})\to\Res_{D_{n-1,K}/K}\uAut_{D_{n-1,K}}(\cD^{(\ell)}_{n-1,K})$. By \autoref{cor:ResAut-alg-sp}, $2$-isomorphisms in these spaces are unique when they exist. Thus, to prove $\sigma'$ is (uniquely) isomorphic to $\id$, it suffices to look \'etale locally on $S$, where by \autoref{prop:auts-Dnell}, we may assume that $S=\Spec A$ and that $\sigma$ is induced by an equivariant automorphism of $D_{m,A}$ (where $m=\ell(n+1)-1$). \autoref{l:when-are-equiv-auts-Dnell-isomorphic}(\ref{l:when-are-equiv-auts-Dnell-isomorphic::fzetaa}) then tells us that 
$\sigma'$ is isomorphic to $\id$.
\end{proof}

We turn now to the main results of this subsection.

\begin{proof}[{Proof of \autoref{prop:resaut-is-A1}}]
Let $J$ denote the ideal sheaf defining the closed immersion $D_{n,K}\hookrightarrow D_{n+1,K}$; then $J=\iota_*\iota^*J$ where $\iota\colon\Spec K=D_{0,K}\to D_{n,K}$ is the closed immersion. 

To prove the result, it suffices to consider the $A$-valued points for every $K$-scheme $A$. Let $S=\Spec A$ be a scheme, $p\colon S\to\Spec K$ the structure map,
$q\colon\cD_{0,A}^{(\ell)}\to\cD_{0,K}^{(\ell)}$ be the induced map, and let $\jmath\colon\cD_{0,A}^{(\ell)}\to\cD_{n,A}^{(\ell)}$ denote the base change of $\iota$. If $\cK$ is the ideal sheaf of $\jmath$, then by flat base change, it is the pullback of $J$ and $\cK=\jmath_*\jmath^*\cK$.

By \autoref{cor:all-auts-Dnell-reduce-to-id} and \cite[Theorem 1.5]{OlssonDefRep} (combined with \autoref{cor:ResAut-alg-sp} which shows $2$-isomorphisms are unique when they exist), the $A$-valued points of $\cR:=\Res_{D_{n,K}/K}\uAut_{D_{n,K}}(\cD^{(\ell)}_{n,K})$ are in canonical bijection with
\[
\Ext^0(L_{\cD^{(\ell)}_{n,A} / D_{n,A}},\cK)=\Ext^0(L_{\cD^{(\ell)}_{0,A} / A},\jmath^*\cK)
\]
where the equality uses that $L\jmath^*L_{\cD^{(\ell)}_{n,A} / D_{n,A}}=L_{\cD^{(\ell)}_{0,A} / A}$ since $\cD_{n,K}^{(\ell)}\to D_{n,K}$ is flat. 
Letting $\pi\colon\cD_{0,K}^{(\ell)}\to D_{0,K}$ be the coarse space map which is flat, we see $L_{\cD^{(\ell)}_{0,A} / A}=q^*L_{\cD^{(\ell)}_{0,K} / K}$, and so
\[
\Ext^0(L_{\cD^{(\ell)}_{0,A} / A},\jmath^*\cK)=\Ext^0(q^*L_{\cD^{(\ell)}_{0,K} / K},q^*\pi^*\iota^*J).
\]
Since $\cD^{(\ell)}_{0,A}$ is cohomologically affine, the global $\Ext^0$-group is given by global sections of the local $\Ext^0$-group. Then by flatness of $q$, we have
\[
\Ext^0(q^*L_{\cD^{(\ell)}_{0,K} / K},q^*\pi^*\iota^*J)=\Ext^0(L_{\cD^{(\ell)}_{0,K} / K},\pi^*\iota^*J)\otimes_K A;
\]
this can be seen, e.g., by applying \cite[Tag 0ATN]{stacks-project} to $D_{\ell-1,A}\to D_{\ell-1,K}$ and using that $\Ext^0(q^*L_{\cD^{(\ell)}_{0,K} / K},q^*\pi^*\iota^*J)$ is the $\mu_\ell$-invariants of $\Ext^0(\rho^*q^*L_{\cD^{(\ell)}_{0,K} / K},\rho^*q^*\pi^*\iota^*J)$, where $\rho\colon D_{\ell-1,K}\to\cD^{(\ell)}_{0,K}$ is the \'etale cover. 
This completes the proof.
\end{proof}

\begin{proof}[{Proof of \autoref{prop:AutD0-is-Gm}}]
We suppress the subscript $k$. Note that $\uAut_{D_0}(\cD^{(\ell)}_0)=\Res_{D_0/k}\uAut_{D_0}(\cD^{(\ell)}_0)$ is a group algebraic space by \autoref{cor:ResAut-alg-sp}. Consider the map
\[
F\colon\bG_m\to\uAut_{D_0}(\cD^{(\ell)}_0)
\]
sending $a\in A^*=\bG_m(A)$ to the automorphism of $\cD^{(\ell)}_{0,A}$ coming from the equivariant automorphism $f_a$ of $D_{\ell-1,A}=\Spec A[u]/u^\ell$ with $f_a(u)=au$. Note that $F$ is a homomorphism of group algebraic spaces.

\autoref{prop:auts-Dnell} shows that \'etale locally, every automorphism of $\cD^{(\ell)}_0$ is induced by an equivariant one. \autoref{l:when-are-equiv-auts-Dnell-isomorphic}(\ref{l:when-are-equiv-auts-Dnell-isomorphic::fzetaa}) then shows all equivariant automorphisms are of the form $f_a$ for $a\in A^*$. Hence, $F$ is a surjective map of sheaves on the \'etale topology. Furthermore, from our description of when equivariant automorphisms of $D_m$ yield isomorphic maps of $\cD_n^{(\ell)}$ given in the proof of \autoref{l:when-are-equiv-auts-Dnell-isomorphic}, we see
$f_a$ is $2$-isomorphic to $\id$ if and only if $a\in\mu_\ell(A)$. As a result, $F$ induces an injective map of sheaves
\[
\overline{F}\colon\bG_m\simeq\bG_m/\mu_\ell\to\uAut_{D_0}(\cD^{(\ell)}_0).
\]
Surjectivity of $\overline{F}$ follows from that of $F$, hence $\overline{F}$ is an isomorphism.
%
\end{proof}


\section{Relating measures for twisted and warped arcs}

The goal of this section is to prove \autoref{theoremTwistedToWarped} below, which relates the motivic measure $\nu_\cX$ on twisted arcs to the motivic measure $\mu_{\widetilde{\sW}(\cX)}$ on untwisted arcs of $\widetilde{\sW}(\cX)$, i.e., the motivic measure on (certain) warped arcs. We begin with the following minor generalization of \cite[Theorems 1.9(3) and 6.1]{SatrianoUsatine4}, which will allow us to consider the motivic measure $\mu_{\widetilde{\sW}(\cX)}$ in the desired level of generality.

\begin{proposition}\label{propositionQuasiAffineInertia}
Let $\cX$ be a locally finite type Artin stack over $k$ with affine diagonal, and assume that $\cX$ has a good moduli space. Then the inertia map $I_{\sW(\cX)} \to \sW(\cX)$ is quasi-affine. In particular, $\sW(\cX)$ has separated diagonal and affine geometric stabilizers.
\end{proposition}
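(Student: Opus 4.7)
My plan is to reduce to the finite type case, where the statement is \cite[Theorems 1.9(3) and 6.1]{SatrianoUsatine4}, via a locality-on-target argument, and then to deduce the two ``in particular'' assertions from standard properties of quasi-affine morphisms and of algebraic groups.

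For the reduction, I would first observe that quasi-affineness is local on the target, so it suffices to check it after pullback along any map $T\to\sW(\cX)$ with $T$ a quasi-compact scheme. Such a map corresponds to a warped map $(\cT\xrightarrow{\pi}T,\,\phi\colon\cT\to\cX)$; since $\pi$ is finitely presented, $\cT$ is quasi-compact, so $\phi$ factors set-theoretically through a quasi-compact open substack $\cX'\subset\cX$. Such an $\cX'$ is of finite type over $k$, has affine diagonal, and inherits a good moduli space (good moduli spaces restrict to open substacks).

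The next step is to show that the natural map $\sW(\cX')\to\sW(\cX)$ is an open immersion, and that it pulls $I_{\sW(\cX)}$ back to $I_{\sW(\cX')}$. For openness, given a test warped map $(\cT'\to T',\,\psi\colon\cT'\to\cX)$, the preimage $\cU:=\psi^{-1}(\cX')\subset\cT'$ is open, and since the good moduli space map $\cT'\to T'$ is universally closed, the complement in $T'$ of the image of $\cT'\setminus\cU$ is open and is exactly the locus over which $\psi$ factors through $\cX'$. Our given $T\to\sW(\cX)$ therefore factors through $\sW(\cX')$, reducing the claim to quasi-affineness of the inertia of $\sW(\cX')$, which is \cite[Theorems 1.9(3) and 6.1]{SatrianoUsatine4}.

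Finally, for the ``in particular'' assertions, any quasi-affine morphism is separated, so the tautological section $\sW(\cX)\to I_{\sW(\cX)}$ is a closed immersion, which is equivalent to separatedness of the diagonal of $\sW(\cX)$. For affine geometric stabilizers, each stabilizer is a quasi-affine group scheme of finite type over an algebraically closed field, and any such group is affine by the Chevalley--Rosenlicht structure theorem, since an abelian variety cannot embed into a quasi-affine scheme. The main obstacle I anticipate is the verification that $\sW(\cX')\hookrightarrow\sW(\cX)$ is an open immersion, as this requires combining the moduli-theoretic description of $\sW(\cX)$ with universal closedness of good moduli space maps.
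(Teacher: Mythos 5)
Your reduction has a genuine gap at the step where you pass from $\cX$ to a quasi-compact open substack $\cX'\subset\cX$. The parenthetical justification ``good moduli spaces restrict to open substacks'' is false: only \emph{saturated} open substacks, i.e.\ those of the form $\cX\times_X U$ with $U\subset X$ open ($X$ the good moduli space of $\cX$), inherit good moduli spaces, and a quasi-compact open containing the image of $\cT$ need not be saturated, so your $\cX'$ may admit no good moduli space at all. More importantly, you treat \cite[Theorems 1.9(3) and 6.1]{SatrianoUsatine4} as ``the finite type case,'' but the structure of the paper's own proof shows that finite type with affine diagonal cannot by itself be what those theorems require: since good moduli space morphisms are quasi-compact, the moment the paper reduces to $X$ quasi-compact the stack $\cX$ is already of finite type, yet the proof does not stop there. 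Instead it takes a Nisnevich cover $X'\to X$ by a noetherian affine scheme, base changes to $\cX'=\cX\times_X X'$ (whose good moduli space is the affine scheme $X'$), applies the cited theorem to that stack, identifies $I_{\sW(\cX')/X'}$ with $I_{\sW(\cX')}$ because $X'$ is an algebraic space, uses the cartesian square $\sW(\cX')\cong\sW(\cX)\times_X X'$ from \cite[Proposition 5.2]{SatrianoUsatine4}, and then descends quasi-affineness along the resulting \'etale surjection onto $\sW(\cX)$, finally using $I_{\sW(\cX)/X}=I_{\sW(\cX)}$ since $X$ is an algebraic space. Even if you repair your construction by taking a saturated quasi-compact open, its good moduli space is a quasi-compact open sub-algebraic-space of $X$, which need not be affine or even a scheme, so the citation still does not obviously apply; the essential content of the proposition --- \'etale-localizing the good moduli space to an affine scheme and descending quasi-affineness --- is absent from your argument.

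The parts of your proposal that do work are peripheral: quasi-affineness can indeed be checked after base change to (quasi-compact, or affine) schemes over $\sW(\cX)$; the image of $\cT$ lands in a quasi-compact open; your argument that $\sW(\cX')\to\sW(\cX)$ is an open immersion via universal closedness of the warp is sound, and inertia restricts along open immersions; and the two ``in particular'' assertions follow as you say (a section of a separated morphism is a closed immersion, so separated inertia gives separated diagonal, and quasi-affine algebraic groups over a field are affine, which the paper cites as \cite[VIB.11.1]{SGA3} rather than deducing from the Chevalley structure theorem). None of this, however, substitutes for the missing descent argument through the good moduli space.
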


\begin{proof}
Since quasi-affine algebraic groups over a field are affine by \cite[VIB.11.1]{SGA3}, $I_{\sW(\cX)} \to \sW(\cX)$ being quasi-affine would imply that $\sW(\cX)$ has affine geometric stabilizers. Since an Artin stack having separated inertia is equivalent to it having separated diagonal, $I_{\sW(\cX)} \to \sW(\cX)$ being quasi-affine would imply that $\sW(\cX)$ has separated diagonal. Thus we only need to show that $I_{\sW(\cX)} \to \sW(\cX)$ is quasi-affine.

Let $X$ be the good moduli space of $\cX$. Then $X$ is locally noetherian by \cite[Theorem 4.16(x)]{Alper}. Since it suffices to check Zariski-locally on $X$, we may assume that $X$ is quasi-compact by \autoref{lemmaQuasiCompactBasis}. Therefore there exists an \'{e}tale cover $X' \to X$ where $X'$ is a noetherian affine scheme. Since every open subset of $X'$ is quasi-compact, the map $X' \to X$ is quasi-separated. Therefore $X'$ being separated over $k$ and the valuative criterion for algebraic spaces together imply that $X' \to X$ is separated. Now let $\cX' = \cX \times_X X'$. Since $\cX' \to \cX$ is locally finite type, separated, and representable and $\cX \to \Spec(k)$ is locally finite type and has affine diagonal, we have that $\cX' \to \Spec(k)$ is locally finite type and has affine diagonal. Therefore $\sW(\cX')$ is a locally finite type Artin stack over $k$ \cite[Theorem 1.9(1)]{SatrianoUsatine4} with quasi-affine inertia \cite[Theorem 6.1]{SatrianoUsatine4}. Since $X'$ is an algebraic space, the natural map $I_{\sW(\cX')/X'} \to I_{\sW(\cX')}$ is an isomorphism, so $I_{\sW(\cX')/X'} \to \sW(\cX')$ is quasi-affine.

Now \cite[Proposition 5.2]{SatrianoUsatine4} implies that the diagram
\[
\xymatrix{
\sW(\cX')\ar[r]^{}\ar[d]_-{} & \sW(\cX)\ar[d]^-{}\\
X'\ar[r]^-{} & X
}
\]
is cartesian. Thus $I_{\sW(\cX)/X} \to \sW(\cX)$ is quasi-affine by \cite[Lemma 06PQ and 02L7]{stacks-project}. Because $X$ is an algebraic space, this implies that $\sW(\cX)$ has quasi-affine inertia, and we are done.
\end{proof}

Since $\widetilde{\sW}(\cX)$ may only be \emph{locally} finite type, we need to recall the following definition for the measure $\mu_{\widetilde{\sW}(\cX)}$.

\begin{definition}[{\cite[Definition 5.1]{SatrianoUsatine5}}]
If $\cY$ is a locally finite type Artin stack over $k$, we call a subset $\cE \subset |\sL(\cY)|$ a \emph{bounded\footnote{We used the terminology ``small'' in that paper, but we have since decided that ``bounded'' is more appropriate.} cylinder} if there exists some $n \in \Z_{\geq 0}$ and quasi-compact locally constructible subset $\cE_n \subset |\sL_n(\cY)|$ such that $\cE = \theta_n^{-1}(\cE_n)$. In the special case where $\cY$ is equidimensional and smooth over $k$ and has affine geometric stabilizers, the bounded cylinder $\cE$ has a \emph{motivic volume} defined by
\[
	\mu_\cY(\cE) = \lim_{m \to \infty} \e(\theta_m(\cE))\bL^{-(m+1)\dim\cY} = \e(\cE_n)\bL^{-(n+1)\dim\cY}.
\]
Measurable sets and their motivic volumes are defined in terms of bounded cylinders and their motivic volumes exactly as in \autoref{definitionMeasurableSetOfTwistedArcs}. Alternatively, see \cite[Definition 5.5]{SatrianoUsatine5} in the special case where $d = \dim\cY$.
\end{definition}

\begin{remark}\label{remarkTildeWarpedHasWellDefinedMeasure}
By \autoref{remarkSmoothnessAndDimensionOfClosureInLCILocus} and \autoref{propositionQuasiAffineInertia}, $\mu_{\widetilde{\sW}(\cX)}(\cE)$ is well defined for any bounded cylinder $\cE \subset |\sL(\widetilde{\sW}(\cX))|$.
\end{remark}

\begin{remark}
Let $\ell \in \Z_{\geq 0}$. Since $\cD^\ell \to D$ is syntomic and an isomorphism over the generic point of $D$, we have that for all $n \in \Z_{> 0}$ the map $\sJ^\ell_n(\cX) \to \sL_n(\sW(\cX))$ factors through $\sL_n(\widetilde{\sW}(\cX))$. In particular, $\sJ^\ell(\cX) \to \sL(\sW(\cX))$ factors through $\sL(\widetilde{\sW}(\cX))$.
\end{remark}

\begin{theorem}\label{theoremTwistedToWarped}
Let $\cX$ be an equidimensional smooth finite type Artin stack over $k$ with affine diagonal, and assume that $\cX$ has a good moduli space. Let $\ell \in \Z_{>0}$, let $\cC \subset |\sJ^\ell(\cX)|$, and let $\cE \subset |\sL(\widetilde{\sW}(\cX))|$ denote its image along $\sJ^\ell(\cX) \to \sL(\widetilde{\sW}(\cX))$
\begin{enumerate}[label=(\alph*)]

\item For any field extension $k'$ of $k$, the induced map $\overline{\cC}(k') \to \overline{\cE}(k')$ is a bijection.

\item The set $\cC$ is a cylinder if and only if $\cE$ is a cylinder if and only if $\cE$ is a bounded cylinder, and in that case
\[
	\nu_\cX(\cC) = \begin{cases}  \mu_{\widetilde{\sW}(\cX)}(\cE), & \ell =1\\ \bL \mu_{\widetilde{\sW}(\cX)}(\cE), & \ell > 1 \end{cases}.
\]

\item The set $\cC$ is measurable if and only if $\cE$ is measurable, and in that case
\[
	\nu_\cX(\cC) = \begin{cases}  \mu_{\widetilde{\sW}(\cX)}(\cE), & \ell =1\\ \bL \mu_{\widetilde{\sW}(\cX)}(\cE), & \ell > 1 \end{cases}.
\]

\end{enumerate}
\end{theorem}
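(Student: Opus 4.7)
The strategy is to reduce parts (a)--(c) to the truncation level via \autoref{thm:fibers-twisted->warped-aut-explicit}, which identifies the fiber of $\Phi_n \colon \sJ^\ell_n(\cX) \to \sL_n(\sW(\cX))$ as a $\bG_a$-torsor on $\bA^1_L$ when $n \geq 2$ and $\ell > 1$, and then to bootstrap to the arc level via \autoref{cor:all-auts-Dnell-reduce-to-id} together with Tannaka duality (\autoref{remarkFormalGAGAandTannakaDuality}). The pivotal consequence of \autoref{cor:all-auts-Dnell-reduce-to-id} is that the restriction $\theta_n^{n+1}\colon \Phi_{n+1}^{-1}(e_{n+1}) \to \Phi_n^{-1}(\theta_n^{n+1}(e_{n+1}))$ between $\bG_a$-torsor fibers is a constant map---every automorphism of $\cD^\ell_{n+1,k'}$ over $D_{n+1,k'}$ restricts to the identity on $\cD^\ell_{n,k'}$---so $\theta_n(c)$ depends only on $\theta_{n+1}(\Phi(c))$ for any twisted arc $c$.

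For part (a), injectivity of $\overline{\cC}(k') \to \overline{\cE}(k')$ reduces to showing $\Aut_{D_{k'}}(\cD^\ell_{k'})$ is trivial up to 2-isomorphism at the arc level: for any $\psi$, \autoref{cor:all-auts-Dnell-reduce-to-id} produces a unique 2-isomorphism $\psi|_{\cD^\ell_{n,k'}} \Rightarrow \id$ at each truncation, uniqueness makes these compatible across $n$, and Tannaka lifts them to $\psi \Rightarrow \id$. For surjectivity, given $[(\cT, f)] \in \overline{\cE}(k')$, the condition that the topological class lies in the image of $\Phi$ yields $\cT \times_{k'} L \cong \cD^\ell_L$ for some extension $L$; applying the arc-level extension of \autoref{prop:no-non-trivial-forms-truncated-twisted} (obtained from iterated applications of \autoref{prop:twisted-twisted-discs-defs-truncated-twisted} followed by Tannaka) descends this to $\cT \cong \cD^\ell_{k'}$, providing the desired lift.

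For part (b), given $\cC = \theta_n^{-1}(\cC_n)$ with $n \geq 2$, we identify $\cE$ as a bounded cylinder at level $n+1$. Define $\cE_{n+1} := \Phi_{n+1}((\theta_n^{n+1})^{-1}(\cC_n))$, which is constructible by Chevalley \cite[Theorem 5.1]{HallRydh} and quasi-compact since $\cC_n$ is. The inclusion $\cE \subseteq \theta_{n+1}^{-1}(\cE_{n+1})$ is immediate; the reverse uses the principle from paragraph one: if $e_1 \in \theta_{n+1}^{-1}(\cE_{n+1})$ and $e_2 \in \cE$ share their $(n+1)$-truncation, then setting $c_i := \Phi^{-1}(e_i)$ (both existing in the image by the argument in (a)) we find $\theta_n(c_1) = \theta_n(c_2) \in \cC_n$, so $c_1 \in \cC$ and $e_1 \in \cE$. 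For the volume, combining \autoref{propositionFibersOfTruncation} for $\theta_n^{n+1}\colon \sJ^\ell_{n+1} \to \sJ^\ell_n$ (contributing a factor $\bL^{\dim \cX}$) with \autoref{propFibrationClass} applied to the $\bA^1_L$-torsor fibers of $\Phi_{n+1}$ gives $\e(\cE_{n+1}) = \bL^{\dim\cX-1}\e(\cC_n)$ for $\ell > 1$ and $\e(\cE_{n+1}) = \bL^{\dim\cX}\e(\cC_n)$ for $\ell = 1$; applying the definitions of $\nu_\cX$ and $\mu_{\widetilde{\sW}(\cX)}$ with $\dim \cX = \dim \widetilde{\sW}(\cX)$ (\autoref{remarkSmoothnessAndDimensionOfClosureInLCILocus}) yields the stated formula.

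For part (c), the $\Phi$-image of a bounded cylindrical $\varepsilon$-approximation of $\cC$ is, by (b), a bounded cylindrical approximation of $\cE$ with each volume rescaled by a factor of $\bL^{\delta}$ where $\delta \in \{0,1\}$; absorbing $\|\bL\|^{\delta}$ into $\varepsilon$ gives a bounded cylindrical approximation of $\cE$, so measurability transfers in either direction and the volume formula extends to measurable sets via the uniqueness property of \autoref{definitionMeasurableSetOfTwistedArcs}. The principal obstacle is the cylinder level-shift in (b): a naive cylinder structure for $\cE$ at level $n$ fails because distinct elements of the $\bA^1$-fiber of $\Phi_n$ correspond to genuinely distinct arcs of $\widetilde{\sW}(\cX)$ by (a), and overcoming this requires passing to level $n+1$ and invoking the fiber-collapsing property from \autoref{cor:all-auts-Dnell-reduce-to-id}.
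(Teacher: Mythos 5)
Your proposal is essentially correct and follows the same global architecture as the paper's proof: the fiber computation \autoref{thm:fibers-twisted->warped-aut-explicit}, the absence of nontrivial forms (\autoref{prop:no-non-trivial-forms-truncated-twisted}), the shift from level $n$ to level $n+1$ to make the image a cylinder, Chevalley's theorem, and \autoref{propFibrationClass} are exactly the inputs the paper uses. Where you genuinely deviate is in the mechanism for the two ``rigidity'' steps. For injectivity in (a), the paper does not argue through truncations at all: it observes that the automorphism $\phi$ of $\cD^\ell_{k'}$ arising from an isomorphism of warped arcs is generically $2$-isomorphic to the identity and then quotes \cite[Proposition A.1]{FantechiMannNironi}; your route via \autoref{cor:all-auts-Dnell-reduce-to-id}, uniqueness of $2$-isomorphisms (\autoref{cor:ResAut-alg-sp}), and Tannaka duality also works and has the virtue of staying inside the paper's own lemmas (though you should say ``$2$-isomorphic to the identity'' rather than ``restricts to the identity''; cf.\ \autoref{rmk:restriction-not-id}). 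For the reverse inclusion $\theta_{n+1}^{-1}(\cE_{n+1})\subset\cE$ in (b), the paper explicitly lifts the level-$(n{+}1)$ trivialization of the warp via \autoref{l:coh-complete-lift-->twisted-disc} and tracks that the lift agrees one level down, whereas you instead use the collapsing principle (any isomorphism of warped $(n{+}1)$-jets forces the level-$n$ twisted jets to be $2$-isomorphic); both are correct, and yours isolates the saturation statement $\cC_{n+1}=\Phi_{n+1}^{-1}(\cE_{n+1})$ that is really what makes the level shift work. Your volume computation at level $n+1$ is equivalent to the paper's computation at level $n$ with $\cC_n$ re-chosen as a preimage.

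A few items are missing and should be supplied, though none of them would make the argument fail. First, part (b) asserts a three-way equivalence, and you never prove the direction ``$\cE$ a cylinder $\Rightarrow$ $\cC$ a cylinder''; this is easy (by (a), $\cC$ is the full preimage of $\cE$, so $\cC=\theta_n^{-1}$ of the preimage of $\cE_n$ in $|\sJ^\ell_n(\cX)|$), but it must be said. Second, in (a)-surjectivity you invoke \autoref{prop:no-non-trivial-forms-truncated-twisted} with ``some extension $L$,'' but that proposition requires a finite separable extension; the paper gets finiteness by truncating first, where the fiber of $\sJ^\ell_2(\cX)\to\sL_2(\sW(\cX))$ is of finite type and hence has a point over a finite extension, and you need the same reduction. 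You also stop at ``providing the desired lift'': one must still check that the constructed twisted arc lies in $\cC(k')$ and not merely in $\sJ^\ell(\cX)(k')$, which follows from injectivity over a larger field exactly as in the paper. Third, in the class computation you apply \autoref{propFibrationClass} to $\cC_{n+1}\to\cE_{n+1}$ with $\bA^1$-fibers; this is only legitimate once you record that $\cC_{n+1}$ is the \emph{full} $\Phi_{n+1}$-preimage of $\cE_{n+1}$ (so the fibers are entire $\bA^1_L$'s), which is an immediate consequence of your collapsing principle but is nowhere stated, and for $\ell=1$ the fibers are points (the map is an open immersion), not $\bA^1$-torsors, as your displayed formula implicitly acknowledges.
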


\begin{proof}
\begin{enumerate}[label=(\alph*)]

\item We will first show that $\overline{\sJ^\ell(\cX)}(k') \to \overline{\sL(\sW(\cX))}(k')$ is injective. Let $\varphi, \varphi': \cD^\ell_{k'} \to \cX$ be twisted arcs whose images in $\sL(\sW(\cX))(k')$ are isomorphic. Then by the definition of the category $\sW(\cX)(k'\llbracket t \rrbracket)$, see e.g., \cite[Remark 1.4]{SatrianoUsatine4}, there exists an isomorphism $\phi: \cD^\ell_{k'} \xrightarrow{\sim} \cD^\ell_{k'}$ over $k'\llbracket t \rrbracket$ and a 2-isomorphism $\alpha: \varphi \circ \phi \Rightarrow \varphi'$. Because $\phi$ is over $k'\llbracket t \rrbracket$ and $\cD^\ell \to \Spec(k'\llbracket t \rrbracket)$ is an isomorphism over $\Spec(k'\llparenthesis t \rrparenthesis)$, we have that $\phi$ is generically 2-isomorphic to the identity. Therefore $\phi$ is 2-isomorphic to the identity by \cite[Proposition A.1]{FantechiMannNironi}, so $\varphi$ and $\varphi'$ are 2-isomorphic. Therefore $\overline{\sJ^\ell(\cX)}(k') \to \overline{\sL(\sW(\cX))}(k')$ is injective, so its restriction $\overline{\cC}(k') \to \overline{\cE}(k')$ is injective.

Now let $(\pi: \cD \to \Spec(k'\llbracket t \rrbracket), \psi: \cD \to \cX)$ be a warped arc in $\cE(k')$. We will first show that there exists an isomorphism $\phi: \cD^\ell_{k'} \xrightarrow{\sim} \cD$ over $k'\llbracket t \rrbracket$. Set $\cD_2 = \cD \otimes_{k'\llbracket t \rrbracket} k'[t]/(t^{3})$, and let $\Spec(k') \to \sL_2(\sW(\cX))$ be the $k'$-point corresponding to the warped jet $(\cD_2 \to \Spec(k'[t]/(t^3)), \cD_2 \hookrightarrow \cD \xrightarrow{\psi} \cX)$. Since $\cE$ is the image of $\cC$, the fiber of $\sJ^\ell_{2}(\cX) \to \sL_2(\sW(\cX))$ over $\Spec(k') \to \sL_2(\sW(\cX))$ is nonempty. Since this fiber is finite type over $k'$, it contains a $k''$ point for some finite field extension $k''$ of $k'$. Thus by \autoref{l:fibers-twisted->warped}, there exists an isomorphism $\cD^\ell_{2, k''} \xrightarrow{\sim} \cD_2 \otimes_{k'[t]/(t^3)} k''[t]/(t^3)$ over $k''[t]/(t^3)$. The existence of the desired isomorphism $\phi: \cD^\ell_{k'} \xrightarrow{\sim} \cD$ then follows from \autoref{prop:no-non-trivial-forms-truncated-twisted} and \autoref{l:coh-complete-lift-->twisted-disc}. Now consider the twisted arc
\[
	\varphi = \psi \circ \phi: \cD^\ell_{k'} \to \cX.
\]
Then the warped arc $(\cD^\ell_{k'} \to \Spec(k'\llbracket t \rrbracket), \varphi)$ is isomorphic to the warped arc $(\pi, \psi)$, so we only need to show that $\varphi$ is in $\cC(k')$. Since $\cE$ is the image of $\cC$, there exists some field extension $k'''$ of $k'$ and a twisted arc $\varphi': \cD^\ell_{k'''} \to \cX$ in $\cC(k''')$ whose image in $\sL(\sW(\cX))(k''')$ is isomorphic to $(\pi \otimes_{k'\llbracket t \rrbracket} k'''\llbracket t \rrbracket, \cD \otimes_{k'\llbracket t \rrbracket} k'''\llbracket t \rrbracket \to \cD \xrightarrow{\psi} \cX)$. Thus $\varphi'$ is 2-isomorphic to $\varphi \otimes_{k'\llbracket t \rrbracket} k'''\llbracket t \rrbracket$ by the injectivity of $\overline{\sJ^\ell(\cX)}(k''') \to \overline{\sL(\sW(\cX))}(k''')$, and therefore $\varphi \in \cC(k')$ as desired.

\item The case where $\ell = 1$ follows directly from \cite[Theorem 1.9(2)]{SatrianoUsatine4}, so we assume $\ell > 1$.

We will first assume that $\cE$ is a cylinder and show that $\cC$ is a cylinder. Let $\cE_n$ be a locally constructible subset of $|\sL_n(\widetilde{\sW}(\cX))|$ such that $\cE = \theta_n^{-1}(\cE_n)$, and set $\cC_n \subset |\sJ^\ell_n(\cX)|$ to be the peimage of $\cE_n$. We have already shown that $\sJ^\ell(\cX)(k') \to \sL(\sW(\cX))(k')$ is injective for all fields $k'$. Thus $\cC$ is the preimage of $\cE$, so $\cC = \theta_n^{-1}(\cC_n)$ is a cylinder.

We will now assume that $\cC$ is a cylinder and show that $\cE$ is a bounded cylinder. Let $\cC_n$ be a constructible subset of $|\sJ^\ell_n(\cX)|$ such that $\cC = \theta_n^{-1}(\cC_n)$. By possibly replacing with a preimage along $\theta^1_0$, we may assume $n \geq 1$. Set $\cC_{n+1} = (\theta^{n+1}_n)^{-1}(\cC_n) \subset |\sJ^\ell_{n+1}(\cX)|$, and let $\cE_{n+1}$ be the image of $\cC_{n+1}$ in $|\sL_{n+1}(\widetilde{\sW}(\cX))|$. Then by \autoref{remarkJetStackLocallyFiniteTypeAffineDiagonalFiniteType} and Chevalley's theorem for Artin stacks \cite[Theorem 5.1]{HallRydh}, $\cE_{n+1}$ is a quasi-compact locally constructible subset of $|\sL_{n+1}(\widetilde{\sW}(\cX))|$. It is therefore sufficient to show that $\cE = \theta_{n+1}^{-1}(\cE_{n+1})$. Clearly $\cE \subset \theta_{n+1}^{-1}(\cE_{n+1})$. For the reverse inclusion, suppose that $(\pi: \cD \to \Spec(k'\llbracket t \rrbracket), \psi: \cD \to \cX)$ is a warped arc in $\theta_{n+1}^{-1}(\cE_{n+1})$, and set $\cD_{n+1} = \cD \otimes_{k'\llbracket t \rrbracket} k'[t]/(t^{n+2})$, $\pi_{n+1} = \pi \otimes_{k'\llbracket t \rrbracket} k'[t]/(t^{n+2})$, and $\psi_{n+1} = \psi \otimes_{k'\llbracket t \rrbracket} k'[t]/(t^{n+2})$. After possibly replacing $k'$ with a field extension, there exists a twisted jet $\varphi_{n+1}: \cD^\ell_{n+1,k'} \to \cX$ in $\cC_{n+1}$ such that the warped jet $(\pi_{n+1}, \psi_{n+1})$ is isomorphic to the warped jet $(\cD^\ell_{n+1,k'} \to \Spec(k'[t]/(t^{n+2})), \varphi_{n+1})$. Therefore there exists an isomorphism $\phi_{n+1}: \cD^\ell_{n+1,k'} \xrightarrow{\sim} \cD_{n+1}$ over $k'[t]/(t^{n+2})$ and a 2-isomorphism $\alpha_{n+1}: \psi_{n+1} \circ \phi_{n+1} \Rightarrow \varphi_{n+1}$. Noting that $n+1 \geq 2$, \autoref{l:coh-complete-lift-->twisted-disc} implies that there exists an isomorphism $\phi': \cD^\ell_{k'} \xrightarrow{\sim} \cD$ over $k'\llbracket t \rrbracket$ such that $\phi' \otimes_{k'\llbracket t \rrbracket} k'[t]/(t^{n+1}) \cong \phi_{n+1} \otimes_{k'[t]/(t^{n+2})} k'[t]/(t^{n+1})$. Now consider the twisted arc
\[
	\varphi' = \psi \circ \phi': \cD^\ell_{k'} \to \cX.
\]
Then the warped arc $(\cD^\ell_{k'} \to \Spec(k'\llbracket t \rrbracket), \varphi')$ is isomorphic to $(\pi, \psi)$, so it is sufficient to prove that $\varphi'$ is in $\cC(k')$. We have
\begin{align*}
	\varphi' &\otimes_{k'\llbracket t \rrbracket} k'[t]/(t^{n+1}) \\
	&= \left(\psi \otimes_{k'\llbracket t \rrbracket} k'[t]/(t^{n+1})\right) \circ \left(\phi' \otimes_{k'\llbracket t \rrbracket} k'[t]/(t^{n+1})\right) \\
	 &\cong\left(\psi \otimes_{k'\llbracket t \rrbracket} k'[t]/(t^{n+1})\right) \circ \left(\phi_{n+1} \otimes_{k'[t]/(t^{n+2})} k'[t]/(t^{n+1})\right) \\
	 &= \left(\psi_{n+1} \circ \phi_{n+1}\right) \otimes_{k'[t]/(t^{n+2})} k'[t]/(t^{n+1})\\
	 &\cong \varphi_{n+1} \otimes_{k'[t]/(t^{n+2})} k'[t]/(t^{n+1}),
\end{align*}
so $\varphi' \otimes_{k'\llbracket t \rrbracket} k'[t]/(t^{n+1})$ is in $\cC_n(k')$. Therefore $\varphi'$ is in $\cC(k')$, and we have finished showing that $\cE$ is a bounded cylinder.

All that remains is to assume that $\cC$ is a cylinder and $\cE$ is a bounded cylinder and prove that
\[
	\nu_\cX(\cC) = \bL\mu_{\widetilde{\sW}(\cX)}(\cE).
\]
Let $\cE_n$ be a quasi-compact locally constructible subset of $|\sL_n(\widetilde{\sW}(\cX))|$ such that $\cE = \theta_n^{-1}(\cE_n)$, and set $\cC_n \subset |\sJ^\ell_n(\cX)|$ to be the peimage of $\cE_n$. We have already seen that $\cC = \theta_n^{-1}(\cC_n)$. By \autoref{remarkSurjectiveTruncationMorphisms}, $\cC_n$ is the image of $\cC$. Since $\widetilde{\sW}(\cX)$ is smooth by \autoref{remarkSmoothnessAndDimensionOfClosureInLCILocus}, we also have $\cE_n$ is the image of $\cE$, so $\cE_n$ is the image of $\cC_n$. Therefore by \autoref{propFibrationClass} and \autoref{thm:fibers-twisted->warped-aut-explicit},
\[
	\e(\cC_n) = \bL\e(\cE_n).
\]
Finally,
\[
	\nu_\cX(\cC) = \e(\cC_n)\bL^{-(n+1)\dim\cX} = \bL\e(\cE_n)\bL^{-(n+1)\dim\cX} =  \bL\mu_{\widetilde{\sW}(\cX)}(\cE),
\]
where the last equality follows from \autoref{remarkTildeWarpedHasWellDefinedMeasure}.

\item This follows immediately from the definitions and the previous parts.\qedhere
\end{enumerate}
\end{proof}

\section{Thin subsets of twisted arcs}

As an application of \autoref{theoremTwistedToWarped}, we show the measure of twisted arcs factoring through a non-trivial closed substack vanishes. This is used in the proof of \autoref{maintheorem} to discard certain subsets of arcs in the computation of our motivic integrals.

\begin{theorem}\label{theoremThinSubsetOfTwistedJets}
Let $\ell \in \Z_{\geq 0}$, let $\cX$ be an equidimensional smooth finite type Artin stack over $k$ with affine diagonal, and assume that $\cX$ has a good moduli space. If $\cZ\subset\cX$ is a closed substack with $\dim\cZ<\dim\cX$, then $|\cJ^\ell(\cZ)|$ is a measurable subset of $|\cJ^\ell(\cX)|$, and
\[
	\nu_{\cX}(|\cJ^\ell(\cZ)|) = 0.
\]
\end{theorem}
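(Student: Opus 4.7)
The plan is to reduce to the untwisted analog \autoref{theoremThinSubsetsAreNegligible} via the twisted-to-warped comparison \autoref{theoremTwistedToWarped}. Let $\cE \subset |\sL(\widetilde{\sW}(\cX))|$ denote the image of $|\sJ^\ell(\cZ)| \subset |\sJ^\ell(\cX)|$ under the map of \autoref{theoremTwistedToWarped}. By part (c) of that theorem (applied with $\cC = |\sJ^\ell(\cZ)|$), measurability and vanishing of $\nu_\cX(|\sJ^\ell(\cZ)|)$ are equivalent to measurability and vanishing of $\mu_{\widetilde{\sW}(\cX)}(\cE)$. So it suffices to produce a closed substack $\cY \subset \widetilde{\sW}(\cX)$ of dimension strictly less than $\dim \widetilde{\sW}(\cX)$ such that $\cE \subset |\sL(\cY)|$, and then invoke \autoref{theoremThinSubsetsAreNegligible}.

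To build $\cY$, consider the universal warp $\pi\colon \cT^{\univ} \to \sW(\cX)$ together with its tautological representable evaluation $F\colon \cT^{\univ} \to \cX$. Since $\pi$ is flat (it is a warp) hence open, and $\cX \setminus \cZ$ is open in $\cX$, the image $\pi(F^{-1}(\cX \setminus \cZ))$ is open in $|\sW(\cX)|$; its complement is the closed subset whose points parametrize warped maps $(\cT \to T, f\colon \cT \to \cX)$ for which $f$ factors through $\cZ$. Intersecting with $|\widetilde{\sW}(\cX)|$ and endowing with the reduced induced closed substack structure produces $\cY \subset \widetilde{\sW}(\cX)$. A twisted arc $\cD^\ell_{k'} \to \cZ$ in $|\sJ^\ell(\cZ)|$ has image in $|\sL(\widetilde{\sW}(\cX))|$ represented by a warped arc whose second coordinate factors through $\cZ$, and therefore lies in $|\sL(\cY)|$; this gives the inclusion $\cE \subset |\sL(\cY)|$.

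For the dimension bound, recall that $\cX \hookrightarrow \widetilde{\sW}(\cX)$ is an open immersion with dense image by the definition of $\widetilde{\sW}(\cX)$ as the closure of $|\cX|$ in $|\sW^{\synt}(\cX)|$, and that $\widetilde{\sW}(\cX)$ is equidimensional of dimension $\dim\cX$ by \autoref{remarkSmoothnessAndDimensionOfClosureInLCILocus}. Restricting along the open immersion, $\cX \cap \cY$ parametrizes maps $T \to \cX$ whose associated trivial warp factors through $\cZ$, which is precisely $\cZ$. Since $\dim\cZ < \dim\cX$ and $\cX$ is equidimensional, every irreducible component of $\widetilde{\sW}(\cX)$ meets $\cX$ in an open dense subset of an irreducible component of $\cX$ of dimension $\dim\cX$, so no such component can be contained in $\cY$. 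Thus $\dim\cY < \dim\widetilde{\sW}(\cX)$, and \autoref{theoremThinSubsetsAreNegligible} applied to $\cY \subset \widetilde{\sW}(\cX)$ yields that $|\sL(\cY)|$, and hence $\cE$, is a measurable subset of $|\sL(\widetilde{\sW}(\cX))|$ of motivic measure zero.

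The principal technical subtlety I foresee lies in justifying that the ``factors through $\cZ$'' locus assembles into a bona fide closed substack of $\sW(\cX)$ whose $k'$-points are the expected warped maps, rather than merely a closed subset of the underlying topological space; since only measurability is at stake, taking the reduced induced structure on the closed subset suffices, but verifying that the topological description above genuinely captures the warped arcs coming from $\sJ^\ell(\cZ)$ is where one must most carefully unpack the twisted-to-warped dictionary of \autoref{theoremTwistedToWarped}. Once this identification is established, the reduction to \autoref{theoremThinSubsetsAreNegligible} and the dimension count via density of $\cX \hookrightarrow \widetilde{\sW}(\cX)$ both proceed cleanly.
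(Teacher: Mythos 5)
Your overall strategy coincides with the paper's: push $|\sJ^\ell(\cZ)|$ into $|\sL(\widetilde{\sW}(\cX))|$, use \autoref{theoremTwistedToWarped}(c) to transfer measurability and the vanishing of the measure, and then trap the image $\cE$ inside the arc space of a closed substack of dimension strictly less than $\dim\widetilde{\sW}(\cX)$ so that \autoref{theoremThinSubsetsAreNegligible} applies. Where you differ is in the choice of that closed substack: you build the locus $\cY$ of warps whose evaluation factors through $\cZ$ out of the universal warp, whereas the paper takes the much simpler choice $\cZ'$, the closure of $|\cZ|$ inside (a suitable open substack of) $\widetilde{\sW}(\cX)$, using that $\cD^\ell_{k'}\to\Spec(k'\llbracket t\rrbracket)$ is an isomorphism over the generic point: the generic point of each relevant warped arc therefore already lies in $\cZ\subset\cX\subset\widetilde{\sW}(\cX)$, so the whole arc factors through $\cZ'$. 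Your construction does work (flatness of the universal warp makes the complement of $\cY$ open, and $\cX\cap\cY=\cZ$ together with density of $|\cX|$ in $|\widetilde{\sW}(\cX)|$ gives the dimension bound), but it is heavier than necessary and requires the algebraicity of the universal warp; the generic-point argument makes all of that superfluous.

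There is, however, one genuine gap as written: \autoref{theoremThinSubsetsAreNegligible} is stated for a \emph{finite type} ambient stack, while $\widetilde{\sW}(\cX)$ --- and with it your $\cY$ --- is only locally of finite type, and the measure on $|\sL(\widetilde{\sW}(\cX))|$ is built from \emph{bounded} cylinders, so quasi-compactness cannot simply be ignored. The paper repairs exactly this point by choosing a quasi-compact (hence finite type) open substack $\cU\subset\widetilde{\sW}(\cX)$ whose underlying space contains $|\cX|$ together with the image of $|\sJ^\ell_0(\cX)|$; since the $0$-truncation of every relevant arc lands in $|\cU|$ and $\Spec(k'\llbracket t\rrbracket)$ is local, all of $\cE$ lies in $|\sL(\cU)|$, and the thin-set theorem is then applied to the closure of $|\cZ|$ inside the finite type stack $\cU$. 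Your argument should be amended in the same way: intersect $\cY$ with such a $\cU$, check that $\cE\subset|\sL(\cY\cap\cU)|$, and apply \autoref{theoremThinSubsetsAreNegligible} inside $\cU$; with that insertion the proof goes through.
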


We prove \autoref{theoremThinSubsetOfTwistedJets} by applying \autoref{theoremTwistedToWarped} to relate our motivic measure of twisted arcs to the motivic measure of certain untwisted arcs of the warping stack $\sW(\cX)$. Thus, our first goal is to prove an untwisted version of \autoref{theoremThinSubsetOfTwistedJets}:

\begin{theorem}\label{theoremThinSubsetsAreNegligible}
Let $\cX$ be an equidimensional smooth finite type Artin stack over $k$ with affine geometric stabilizers. If $\cZ\subset\cX$ is a closed substack with $\dim \cZ < \dim \cX$, then $|\sL(\cZ)|$ is a measurable subset of $|\sL(\cX)|$, and
\[
	\mu_\cX(|\sL(\cZ)|) = 0.
\]
\end{theorem}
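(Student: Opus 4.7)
The plan is to exhibit $|\sL(\cZ)|$ as a decreasing intersection of bounded cylinders whose motivic measures tend to zero in $\widehat{\sM}_k$. Since $\sL(\cY)$ has the same underlying set as $\sL(\cY_\red)$, I would first replace $\cZ$ by its reduction and assume $\cZ$ is reduced. For each $n \geq 0$, set $\cC_n := \theta_n^{-1}(|\sL_n(\cZ)|) \subset |\sL(\cX)|$. Because $\cZ \hookrightarrow \cX$ is a closed immersion of finite type Artin stacks, $\sL_n(\cZ) \hookrightarrow \sL_n(\cX)$ is as well; hence $|\sL_n(\cZ)|$ is closed in the finite type stack $\sL_n(\cX)$, so it is quasi-compact and locally constructible, and each $\cC_n$ is a bounded cylinder with
\[
  \mu_\cX(\cC_n) = \e(\sL_n(\cZ))\,\bL^{-(n+1)\dim\cX}.
\]
Moreover, an arc of $\cX$ factors scheme-theoretically through the closed substack $\cZ$ if and only if every truncation does (the pullback of the ideal of $\cZ$ vanishes if and only if it vanishes modulo $t^{n+1}$ for all $n$), so $|\sL(\cZ)| = \bigcap_n \cC_n$.

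The core estimate is a dimension bound $\dim \sL_n(\cZ) \leq (n+1)\dim\cZ + C_0$ for some constant $C_0$ independent of $n$. I would obtain this by choosing a smooth surjective atlas $p\colon U \to \cZ$ with $U$ a reduced finite type $k$-scheme of relative dimension $r$, so $\dim U = \dim\cZ + r$. A direct deformation-theoretic argument using the infinitesimal lifting criterion applied to $p$ shows that $\sL_n(p)\colon \sL_n(U) \to \sL_n(\cZ)$ is smooth surjective of relative dimension $(n+1)r$. Combining this with the classical Greenberg-type bound $\dim \sL_n(U) \leq (n+1)\dim U + C_0$ for reduced finite type $k$-schemes then yields the claimed inequality $\dim \sL_n(\cZ) \leq (n+1)\dim\cZ + C_0$. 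Via the standard bound $\|\e(\cY)\| \leq \bL^{\dim \cY}$ for finite type Artin stacks $\cY$ with affine stabilizers, this gives $\|\e(\sL_n(\cZ))\| \leq \bL^{(n+1)\dim\cZ + C_0}$ in $\widehat{\sM}_k$. (As an alternative to the atlas approach, one could invoke Kresch's stratification \cite[Proposition 3.5.9]{Kresch} to reduce to the case $\cZ = [W/\GL_n]$ with $W$ a reduced scheme, and bound each stratum separately.)

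Combining the two estimates, $\|\mu_\cX(\cC_n)\|$ is bounded by $\bL$ to the power $(n+1)(\dim\cZ - \dim\cX) + C_0$, which tends to zero as $n \to \infty$ since $\dim\cZ < \dim\cX$. Thus for every $\varepsilon > 0$, choosing $n$ large enough that $\|\mu_\cX(\cC_n)\| < \varepsilon$, the pair $(\emptyset, \{\cC_n\})$ is a bounded cylindrical $\varepsilon$-approximation of $|\sL(\cZ)|$. This proves $|\sL(\cZ)|$ is measurable and $\|\mu_\cX(|\sL(\cZ)|)\| < \varepsilon$ for every $\varepsilon > 0$, whence $\mu_\cX(|\sL(\cZ)|) = 0$. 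The main obstacle is the dimension bound in the second paragraph; the passage from schemes to Artin stacks requires either carefully verifying that $\sL_n$ of a smooth morphism of stacks is smooth of the expected relative dimension, or handling stabilizer-related contributions through Kresch stratification.
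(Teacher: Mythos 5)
Your overall skeleton matches the paper's: the cylinders $\cC_n = \theta_n^{-1}(|\sL_n(\cZ)|)$ with $|\sL(\cZ)| = \bigcap_n \cC_n$, the fact that a smooth cover of constant relative dimension $r$ induces a smooth cover of jet stacks of relative dimension $(n+1)r$, and the reduction of everything to showing $\dim\sL_n(\cZ) - (n+1)\dim\cX \to -\infty$. However, the core estimate you invoke is false: there is no "classical Greenberg-type bound" $\dim \sL_n(U) \leq (n+1)\dim U + C_0$ for arbitrary reduced finite type $k$-schemes, and consequently your intermediate claim $\dim\sL_n(\cZ) \leq (n+1)\dim\cZ + C_0$ also fails in general. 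Jet schemes of singular varieties can exceed the expected dimension by an amount growing linearly in $n$: if $U \subset \bA^3_k$ is the cone over a smooth plane curve of degree $d \geq 4$ (so $\dim U = 2$), then every $n$-jet congruent to $0$ modulo $t^{\lceil (n+1)/d \rceil}$ lies in $\sL_n(U)$, giving $\dim\sL_n(U) \geq 3(n+1)(1 - 1/d) - O(1)$, which outgrows $2(n+1) + C_0$. Greenberg's theorem and the Denef--Loeser bound concern the truncations $\theta_n(\sL(U))$ of actual arcs, which do have dimension at most $(n+1)\dim U$; but your measure computation $\mu_\cX(\cC_n) = \e(\sL_n(\cZ))\bL^{-(n+1)\dim\cX}$ uses the full jet scheme, whose growth is governed by the log canonical threshold of the embedding, not by $\dim U$. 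The same issue persists in your Kresch-stratification variant, which again needs the false intrinsic bound for a singular scheme.

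The repair is to exploit the ambient smooth stack rather than bounding $\dim\sL_n(\cZ)$ intrinsically, which is exactly what the paper does: choose a smooth cover $V \to \cX$ by a scheme, padded (by replacing each component $V_i$ with $V_i \times \bA^{d-d_i}$) so that it has constant relative dimension $r$ --- a normalization you also need for your atlas and should make explicit --- and set $W = V \times_\cX \cZ$, a closed subscheme of the smooth scheme $V$ with $\dim W = \dim\cZ + r < \dim V$. Your relative-dimension statement for jets of smooth covers, applied to both $V \to \cX$ and $W \to \cZ$, gives $\dim\sL_n(\cZ) - \dim\sL_n(\cX) = \dim\sL_n(W) - \dim\sL_n(V)$, and the required divergence to $-\infty$ is then precisely the well-known scheme-theoretic fact that a closed subscheme of smaller dimension in a smooth scheme has arc space of measure zero (whose standard proof does not proceed via an intrinsic bound on $\dim\sL_n(W)$ in terms of $\dim W$). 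With that substitution, the rest of your argument --- the norm estimate, the $\varepsilon$-approximation $(\emptyset, \{\cC_n\})$, and the conclusion --- goes through and coincides with the paper's proof.
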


We turn to the proof of \autoref{theoremThinSubsetsAreNegligible} after a couple preliminary results.

\begin{lemma}\label{lemmaMeasure0SameAsLimitOfDifferences}
Let $\cX$ be an equidimensional smooth finite type Artin stack over $k$ with affine geometric stabilizers, and let $\cZ$ be a closed substack of $\cX$. Then $|\sL(\cZ)|$ is a measurable subset of $|\sL(\cX)|$ with measure 0 if and only if
\[
	\lim_{n\to\infty}(\dim\sL_n(\cZ) - \dim\sL_n(\cX))=-\infty.
\]
\end{lemma}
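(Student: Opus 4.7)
Set $d=\dim\cX$. Since $\cX$ is smooth and equidimensional, $\dim\sL_n(\cX)=(n+1)d$, so the stated condition becomes $\dim\sL_n(\cZ)-(n+1)d\to-\infty$. The key objects are the bounded cylinders $\cC_n=\theta_n^{-1}(|\sL_n(\cZ)|)\subset|\sL(\cX)|$, which are well defined since $\sL_n(\cZ)$ is a closed substack of the finite type stack $\sL_n(\cX)$. By definition
\[
\mu_\cX(\cC_n)=\e(\sL_n(\cZ))\,\bL^{-(n+1)d},
\]
so $\|\mu_\cX(\cC_n)\|=2^{\dim\sL_n(\cZ)-(n+1)d}$ in the dimension filtration on $\widehat{\sM}_k$. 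The sequence $\{\cC_n\}$ is decreasing, and since $\cZ$ is locally defined by the vanishing of an ideal sheaf $\cI$ and an arc $\varphi$ satisfies $\varphi^*\cI=0$ if and only if each of its truncations pulls $\cI$ back to zero in $k'[t]/(t^{n+1})$, we have $|\sL(\cZ)|=\bigcap_n\cC_n$.

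For $(\Leftarrow)$, I would use this directly. If $\dim\sL_n(\cZ)-(n+1)d\to-\infty$, then $\|\mu_\cX(\cC_n)\|\to 0$. For any $\varepsilon>0$ I choose $n$ with $\|\mu_\cX(\cC_n)\|<\varepsilon$; the pair $(\emptyset,\{\cC_n\})$ is then a bounded cylindrical $\varepsilon$-approximation of $|\sL(\cZ)|$, since $|\sL(\cZ)|\mathbin{\triangle}\emptyset=|\sL(\cZ)|\subset\cC_n$. Hence $|\sL(\cZ)|$ is measurable, and $\|\mu_\cX(|\sL(\cZ)|)\|<\varepsilon$ for every $\varepsilon>0$, so $\mu_\cX(|\sL(\cZ)|)=0$.

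For $(\Rightarrow)$ I would argue the contrapositive. Suppose the dimension condition fails; since $\cC_{n+1}\subset\cC_n$ forces the integer-valued sequence $\dim\sL_n(\cZ)-(n+1)d$ to be non-increasing, failure means this sequence stabilizes at some integer $-c$ for $n\gg 0$, and in particular $\|\mu_\cX(\cC_n)\|\geq 2^{-c}$ for all $n$. Set $\varepsilon=2^{-c-1}$. If $|\sL(\cZ)|$ were measurable of measure zero, there would exist a bounded cylindrical $\varepsilon$-approximation $(\cC^0,\{\cC^i\}_{i\in I})$ with $\|\mu_\cX(\cC^0)\|<\varepsilon$ (using $\mu_\cX(|\sL(\cZ)|)=0$) and $\|\mu_\cX(\cC^i)\|<\varepsilon$ for every $i$, and with $|\sL(\cZ)|\subset\cC^0\cup\bigcup_{i\in I}\cC^i$.

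The main obstacle is converting this cover into a single bounded cylinder $\cD\supset|\sL(\cZ)|$ of norm less than $\varepsilon$. I would use that $|\sL(\cZ)|=\varprojlim_n|\sL_n(\cZ)|$ is quasi-compact (as an inverse limit of quasi-compact topological spaces of the finite type stacks $\sL_n(\cZ)$) to pass to a finite subcover; a finite union of bounded cylinders is a bounded cylinder, and by the ultrametric inequality $\|x+y\|\leq\max(\|x\|,\|y\|)$ in $\widehat{\sM}_k$ this union $\cD$ has norm less than $\varepsilon$. A Greenberg-type stabilization for the closed immersion $\cZ\hookrightarrow\cX$ (asserting that for any bounded cylinder $\cD=\theta_m^{-1}(B)$ containing $|\sL(\cZ)|$, the image $\theta^n_m(|\sL_n(\cZ)|)$ stabilizes to $\theta_m(|\sL(\cZ)|)\subset B$ for $n\gg m$, so that $\cC_n\subset\cD$) then yields $\|\mu_\cX(\cC_n)\|\leq\|\mu_\cX(\cD)\|<\varepsilon<2^{-c}$ for all sufficiently large $n$, contradicting the stabilization. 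Justifying the Greenberg-type stabilization and the quasi-compactness-based reduction to a finite subcover is the principal technical difficulty.
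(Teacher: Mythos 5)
Your $(\Leftarrow)$ direction is correct and is essentially the paper's easy direction: both use the cylinders $\cC_n=\theta_n^{-1}(|\sL_n(\cZ)|)$, the identity $\mu_\cX(\cC_n)=\e(\sL_n(\cZ))\bL^{-(n+1)\dim\cX}$, and the fact (which you assert, but which itself is the content of the cited norm--dimension comparison, \cite[Proposition 4.4]{SatrianoUsatine5}) that $\Vert\e(\cC)\Vert$ is computed exactly by $\dim$, so that the dimension condition forces $\Vert\mu_\cX(\cC_n)\Vert\to0$ and the pairs $(\emptyset,\{\cC_n\})$ give the required approximations. The computation $\dim\sL_n(\cX)=(n+1)\dim\cX$ also matches the paper.

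The $(\Rightarrow)$ direction, however, has a genuine gap, and you have located it yourself: the two steps you defer are not minor. First, the finite-subcover step is applied to $|\sL(\cZ)|$, which is \emph{not} a cylinder; to run it you would need both that $|\sL(\cZ)|$ is identified with $\varprojlim_n|\sL_n(\cZ)|$ at the level of points (an algebraization/Tannaka-type statement for arcs of Artin stacks, not addressed) and a compactness statement in a topology in which cylinders are open (they are not open in the Zariski-type topology; one must work with constructible topologies), none of which you justify. Second, and more seriously, the ``Greenberg-type stabilization'' for the closed substack $\cZ\hookrightarrow\cX$ (that $\theta^n_m(|\sL_n(\cZ)|)$ stabilizes to $\theta_m(|\sL(\cZ)|)$, so that $|\sL(\cZ)|\subset\cD$ forces $\cC_n\subset\cD$ for $n\gg0$) is a nontrivial theorem: $\cZ$ is an arbitrary, possibly very singular, closed substack of an Artin stack, and no Greenberg theorem in that generality is proved or cited in this paper. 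The paper avoids both issues: it only uses that for the \emph{decreasing cylinders} $\cC_n$ in the arc space of the \emph{smooth ambient} $\cX$, measurability of $\bigcap_n\cC_n$ with measure $0$ is equivalent to $\Vert\mu_\cX(\cC_n)\Vert\to0$; the nontrivial implication there rests on the finite-subcover property for covers of cylinders by cylinders in $|\sL(\cX)|$ (established in the authors' earlier work and invoked as standard, CLNS-style), not on any statement about $|\sL(\cZ)|$ or about jets of $\cZ$. Your argument can be repaired along those lines: once you have the single small cylinder $\cD$, apply cylinder-level compactness to the decreasing cylinders $\cC_n\setminus\cD$ in $|\sL(\cX)|$, whose intersection is empty, to conclude $\cC_N\subset\cD$ for some $N$; this replaces both the compactness claim about $|\sL(\cZ)|$ and the Greenberg-type input. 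As written, though, the hard direction is not proved.
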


\begin{proof}
Consider the cylinders $\cC^{(n)} = \theta_n^{-1}(\sL_n(\cZ))$ for all $n \in \Z_{\geq 0}$. Then $\cC^{(n)} \supset \cC^{(n+1)}$ and $|\sL(\cZ)|=\bigcap_{n\geq0}\cC^{(n)}$, so $|\sL(\cZ)|$ is measureable with measure $0$ if and only if 
\[
	\lim_{n\to\infty}\mu_\cX(\cC^{(n)})=0
\]
if and only if 
\[
	\lim_{n\to\infty}\Vert\mu_\cX(\cC^{(n)})\Vert=0.
\] 
By \cite[Proposition 4.4]{SatrianoUsatine5} and the fact that $\mu_\cX(\cC^{(n)})=\e(\sL_n(\cZ))\bL^{-(n+1)\dim\cX}$, this is equivalent to

\[
\lim_{n\to\infty}(\dim\sL_n(\cZ) - (n+1)\dim\cX)=-\infty.
\]
Furthermore \cite[Lemmas 3.34 and 3.35]{SatrianoUsatine} imply $\e(\sL_n(\cX)) = \e(\cX)\bL^{(n+1)\dim\cX}$, so \cite[Proposition 4.4]{SatrianoUsatine5} implies
\[
	\dim\sL_n(\cX) = (n+1)\dim\cX,
\]
and we are done.
\end{proof}

The next proposition, along with the criterion proved in \autoref{lemmaMeasure0SameAsLimitOfDifferences}, will  allow us to recover \autoref{theoremThinSubsetsAreNegligible} from the well-known scheme case.

\begin{proposition}\label{prop:sm-cover-const-rel-dim-preserved}
Let $\cZ$ be an Artin stack, $V$ be a scheme, and $\rho\colon V\to\cZ$ be a smooth cover of constant relative dimension $r$. Then $\sL_n(\rho)\colon\sL_n(V)\to\sL_n(\cZ)$ is a smooth cover of constant relative dimension $(n+1)r$.
\end{proposition}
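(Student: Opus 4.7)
The plan is to identify the base change of $\sL_n(\rho)$ along an arbitrary test scheme with a Weil restriction, then invoke standard properties. A morphism from a scheme $T$ to $\sL_n(\cZ)$ corresponds by adjunction to a morphism $\varphi\colon T_n\to\cZ$, where $T_n:=T\times_k\Spec(k[t]/(t^{n+1}))$. I claim that
\[
T\times_{\sL_n(\cZ)}\sL_n(V)\;\simeq\;\Res_{T_n/T}\bigl(V\times_{\varphi,\cZ}T_n\bigr).
\]
Indeed, an $S$-valued point of the left-hand side is a triple $(f\colon S\to T,\,\psi\colon S_n\to V,\,\alpha\colon\rho\psi\cong\varphi f_n)$, and since $\rho$ is representable, the universal property of fiber products makes this equivalent to specifying $f\colon S\to T$ together with a lift of $f_n\colon S_n\to T_n$ through $V\times_\cZ T_n\to T_n$. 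The latter is precisely an $S$-point of the Weil restriction.

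With this reduction in hand, smoothness and the dimension formula follow from standard properties of Weil restriction. Since $\rho$ is representable and smooth of constant relative dimension $r$, the base change $V\times_\cZ T_n\to T_n$ is a smooth morphism of algebraic spaces of constant relative dimension $r$. The map $T_n\to T$ is finite locally free of constant degree $n+1$, and Weil restriction along such a map preserves smoothness and multiplies relative dimension by the degree, as can be verified Zariski-locally by noting that the Weil restriction of $\mathbb A^r_{T_n}$ is $\mathbb A^{(n+1)r}_T$. Hence $\Res_{T_n/T}(V\times_\cZ T_n)\to T$ is smooth of constant relative dimension $(n+1)r$, giving the smoothness and dimension claims for $\sL_n(\rho)$.

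For surjectivity, given a geometric $n$-jet $\varphi_n\colon\Spec k'[t]/(t^{n+1})\to\cZ$ over an algebraically closed field $k'$, I construct a lift to $V$. First, surjectivity of $\rho$ on geometric points produces a lift $\psi_0\colon\Spec k'\to V$ of the underlying point $\varphi_0:=\varphi_n|_{t=0}$. Then, using formal smoothness of $\rho$ applied to the square-zero thickenings $\Spec k'[t]/(t^{m+1})\hookrightarrow\Spec k'[t]/(t^{m+2})$, I inductively extend a lift $\psi_m$ of $\varphi_n\bmod t^{m+1}$ to a lift $\psi_{m+1}$ of $\varphi_n\bmod t^{m+2}$, eventually obtaining a lift of $\varphi_n$. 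I do not anticipate a serious obstacle: the identification with a Weil restriction is essentially bookkeeping, and the preservation of smoothness and relative dimension under Weil restriction along finite locally free maps is classical.
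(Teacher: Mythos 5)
Your proposal is correct, but it reaches the conclusion by a genuinely different route than the paper. The paper quotes Rydh's results to get that $\sL_n(\rho)$ is a smooth cover, and then computes the relative dimension by induction on $n$: using deformation theory it shows that the fiber of $\sL_n(V)\times_{\sL_n(\cZ)}\Spec K\to\sL_{n-1}(V)\times_{\sL_{n-1}(\cZ)}\Spec K$ over a field-valued point is the affine space $H^0(\beta_0^*T_\rho)\otimes(t^n)/(t^{n+1})$ of dimension $r$. You instead identify the base change of $\sL_n(\rho)$ along an arbitrary $T\to\sL_n(\cZ)$ with the Weil restriction $\Res_{T_n/T}(V\times_\cZ T_n)$ --- a correct identification, using representability of $\rho$, and note that this fiber product is automatically an algebraic space since $T$ and $\sL_n(V)$ are and $\sL_n(\cZ)$ is an Artin stack --- and then invoke the classical facts that Weil restriction along the finite free degree-$(n+1)$ map $T_n\to T$ preserves smoothness and multiplies relative dimension by the degree; your separate jet-lifting argument for surjectivity is fine and is what the paper's citation encapsulates. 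This packages the paper's induction into one global statement, which is arguably cleaner, at the cost of leaning on Weil-restriction generalities. One step you should tighten: the dimension claim is not literally checked ``Zariski-locally,'' since Weil restriction does not turn a Zariski cover of the source into a cover of the restriction in general. It does work after base change to a point of $T$, because there the base of the restriction is an artinian local scheme with a single point (the spectrum of $\kappa[t]/(t^{n+1})$ with $\kappa$ the residue field), so every section factors through a chart on which the smooth map is \'etale over $\bA^r$, and Weil restriction preserves \'etale morphisms and sends $\bA^r_{T_n}$ to $\bA^{(n+1)r}_T$; alternatively, compute the tangent space of the fiber at a section $\sigma$, namely $H^0(\sigma^*T_{V\times_\cZ T_n/T_n})$, of dimension $(n+1)r$, which is essentially the paper's deformation-theoretic step in disguise.
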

\begin{proof}
By \cite[Proposition 3.5(v)]{Rydh}, $\sL_n(\rho)$ is a smooth cover, so we need only show it is of constant relative dimension $(n+1)r$. We prove this by induction on $n$. When $n=0$, this is simply the hypothesis that $\rho$ is a smooth cover of constant relative dimension $r$.

For $n>0$, let $\Spec K\to\sL_n(\cZ)$ be a field-valued point; note we have an induced map $\Spec K\to\sL_n(\cZ)\to\sL_{n-1}(\cZ)$. Let $\cF_n=\sL_n(V)\times_{\sL_n(\cZ)} K$ and $\cF_{n-1}=\sL_{n-1}(V)\times_{\sL_{n-1}(\cZ)} K$. By induction, $\dim\cF_{n-1}=rn$, so it suffices to show that $\cF_n\to\cF_{n-1}$ has fibers of constant relative dimension $r$.

Let $\Spec L\to\cF_{n-1}$ be a field-valued point. This corresponds to a diagram
\[
\xymatrix{
D_{n-1,L}\ar@{^{(}->}[r]\ar[d]\ar@/^1.1pc/[rr]^-{\beta} & D_{n,L}\ar[d]^-{p} & V\ar[d]^-{\rho}\\
D_{n-1,K}\ar@{^{(}->}[r] & D_{n,K}\ar[r]^-{\alpha} & \cZ
}
\]
where the square is cartesian. Letting $\beta_0\colon \Spec L=D_{0,L}\to D_{n-1,L}\xrightarrow{\beta} V$ be the induced map, we show that $\cF_n\times_{\cF_{n-1}}L$ is the affine space given by
\[
H^0(\beta_0^*T_\rho)\otimes_L (t^n)/(t^{n+1});
\]
in particular, it has relative dimension $r$ since the tangent bundle $T_\rho$ is locally free of rank $r$.

To show this, let $A$ be an $L$-algebra. Giving a map $\Spec A\to\cF_n\times_{\cF_{n-1}}L$ is equivalent to choosing a dotted arrow
\[
\xymatrix{
D_{n-1,A}\ar@{^{(}->}[r]^-{\iota}\ar[d]_-{q'} & D_{n,A}\ar[d]^-{q}\ar@{-->}[dr]^-{\gamma} & \\
D_{n-1,L}\ar@{^{(}->}[r]\ar@/_1.1pc/[rr]_-{\beta} & D_{n,L} & V
}
\]
where $\rho\gamma=\alpha pq$ and $\gamma\iota=\beta q'$. Rewriting this condition, we see it is equivalent to giving a dotted arrow making the diagram
\[
\xymatrix{
D_{n-1,A}\ar@{^{(}->}[d]_-{\iota}\ar[r]^-{\beta q'} & V\ar[d]^-{\rho}\\
D_{n,A}\ar[r]^-{\alpha pq}\ar@{-->}[ur]^-{\gamma} & \cZ
}
\]
commute. Since $\rho$ is smooth, such a $\gamma$ exists; in particular, we may fix a choice $\gamma_L\colon D_{n,L}\to V$ (by considering the case $A=L$) which then induces a map $\gamma_A\colon D_{n,A}\to V$ for all $L$-algebras $A$. With this choice, the set of all possible $\gamma\colon D_{n,A}\to V$ as above is identified with
\[
\Hom(q^*\beta^*\Omega^1_\rho, t^nA/t^{n+1}A)=(H^0(\beta_0^*T_\rho)\otimes_L t^nL/t^{n+1}L) \otimes_L A.
\]
This proves the result.
\end{proof}

\begin{proof}[{Proof of \autoref{theoremThinSubsetsAreNegligible}}]
Let $\rho\colon V\to\cX$ be a smooth cover by a scheme. Since $V$ is finite type and smooth, there are finitely many irreducible components $V_i$ of $V$, and the $V_i$ are also the connected components of $V$. Let $d_i=\dim V_i$, and let $d=\max d_i$. Then replacing $V$ with $\coprod_i (V_i\times\bA^{d-d_i})$, we may assume that $\rho$ is smooth, finite type, and has constant relative dimension $r$. Let $W = V \times_\cX \cZ$, so that $W\subset V$ is closed and $p\colon W\to\cZ$ is a finite type smooth cover of constant relative dimension $r$. By \autoref{prop:sm-cover-const-rel-dim-preserved}, $\sL_n(\rho)$ and $\sL_n(p)$ are smooth covers of constant relative dimension $(n+1)r$, so
\[
\dim\sL_n(\cZ)-\dim\sL_n(\cX)=\dim\sL_n(W)-\dim\sL_n(V).
\]
The theorem now follows from \autoref{lemmaMeasure0SameAsLimitOfDifferences} and the well-known case where $\cX$ is a scheme applied to $V$.
\end{proof}

As a consequence, we may now prove \autoref{theoremThinSubsetOfTwistedJets}.

\begin{proof}[{Proof of \autoref{theoremThinSubsetOfTwistedJets}}]
Let $\cE$ be the image of $|\cJ^\ell(\cZ)|$ in $|\sL(\widetilde{\sW}(\cX))|$. Then by \autoref{theoremTwistedToWarped}, it suffices to show that $\cE$ is measurable with measure $0$. Let $\cU$ be an open substack of $\widetilde{\sW}(\cX)$ that is finite type over $k$ and such that $|\cU|$ contains $|\cX|$ and the image of $|\sJ^\ell_0(\cX)|$ in $|\widetilde{\sW}(\cX)|$. Then the image of $|\sJ^\ell(\cX)|$ in $|\sL(\widetilde{\sW}(\cX))|$ is contained in $|\sL(\cU)|$. In particular, $\cE \subset |\sL(\cU)|$ and it suffices to show that $\cE$ is measurable in $|\sL(\cU)|$ and $\mu_\cU(\cE) = 0$.

Let $\cZ'$ be a closed substack of $\cU$ such that $|\cZ'|$ is the closure of $|\cZ|$ in $|\cU|$. Since 
\[
\dim\cZ'=\dim\cZ<\dim\cX= \dim\widetilde{\sW}(\cX) = \dim\cU,
\]
\autoref{theoremThinSubsetsAreNegligible} implies that $|\sL(\cZ')|$ is measurable in $|\sL(\cU)|$ and $\mu_\cU(|\sL(\cZ')|) = 0$. Therefore it is sufficient to show that
\[
	\cE \subset |\sL(\cZ')|.
\]
Let $\Spec(k'\llbracket t \rrbracket) \to \cU$ represent a point of $\cE$. Then possibly after extending the field $k'$, this untwisted arc of $\sW(\cX)$ is 2-isomorphic to a warped arc of the form $(\cD^\ell_{k'} \to \Spec(k'\llbracket t \rrbracket), \varphi: \cD^\ell_{k'} \to \cX)$ with $\varphi$ factoring through $\cZ$. Since $\cD^\ell_{k'} \to \Spec(k'\llbracket t \rrbracket)$ is an isomorphism over the generic point, the composition $\Spec(k'\llparenthesis t \rrparenthesis)\to \Spec(k'\llbracket t \rrbracket) \to \cU$ factors through $\cZ$. Therefore $\Spec(k'\llbracket t \rrbracket) \to \cU$ factors through $\cZ'$, and we are done.
\end{proof}

\section{Moduli interpretation for the warping stack height}
\label{appendix:htWY-using-X-data}

The results of this section concern warped arcs to singular stacks, as opposed to twisted arcs to smooth stacks (which have been the primary focus of this paper). Throughout, $k$ denotes an algebraically closed field of characteristic 0. We use the notation
\[
\cZ_T:=\cZ\times_k T
\]
to denote the base change of a $k$-stack $\cZ$ by a morphism $T\to\Spec k$.

Our motivic change of variables formula for warped arcs \cite[Theorem 1.18]{SatrianoUsatine4} involves the relative height function $\het_{\sW(\cX)/Y}$ whose definition we recall below. Since $\sW(\cX)$ is a moduli stack, it is desirable to have a moduli-theoretic interpretation for $\het_{\sW(\cX)/Y}$.

We give such an interpretation in \autoref{thm:moduli-interp-hetWY}, which we state after recalling the definitions of the height functions and relative height functions.

\begin{definition}[{\cite[Definition 6.2]{SatrianoUsatine5}}]
\label{def:relhet}
Let $q\colon\cZ \to \cY$ be a morphism of locally finite type Artin stacks over $k$. If $\psi\colon\Spec k'[[t]]\to\cZ$ is an arc with $k'$ a field, and if $\het^{(1)}_{L_{\cZ/\cY}}(\psi) \neq \infty$, we let
\begin{align*}
	\het_{\cZ/\cY}(\psi) = &\het^{(0)}_{L_{\cZ/\cY}}(\psi) - \het^{(1)}_{L_{\cZ/\cY}}(\psi)\\
	&- \dim_{k'}\coker\left(H^0(L\psi^*Lq^*L_\cY)_{\tors} \to H^0(L\psi^* L_\cZ)_{\tors}\right),
\end{align*}
\end{definition}

\begin{theorem}\label{thm:moduli-interp-hetWY}
Let $p\colon\cX\to Y$ be a morphism over $k$ with $Y$ an algebraic space and $\cX$ a locally finite type Artin stack with affine diagonal. Assume $p$ is weakly birational, i.e., there exists a dense open substack $\cU\subset\cX$ such that $\cU\to Y$ is an open immersion. 
Let $k'$ be field, $D=\Spec k'[[t]]$, 
\[
\psi\colon D\to\sW(\cX)
\]
be an arc corresponding to a warped arc $\gamma=(\cD\xrightarrow{\pi} D,\cD\xrightarrow{\varphi}\cX)$. 
If the generic fiber of $\pi$ factors through $\cU$, then 
\[
\het_{\sW(\cX)/Y}(\psi)=\dim_{k'}\coker\pi_*\Lambda_\gamma-\dim_{k'}
\ker\pi_*\Lambda_\gamma
\]
where $\Lambda_\gamma$ is the canonical map
\[
\mathcal{E}xt^1(L_{\cD/\cX_D},\cO)\xrightarrow{\Lambda_\gamma} \mathcal{E}xt^1(L\pi^*L_{D/Y_D},\cO)\simeq\mathcal{E}xt^0(L(\overline{\varphi}\pi)^*L_Y,\cO),
\]
and $\overline{\varphi}\colon D\to Y$ is the arc induced by $\gamma$.
\end{theorem}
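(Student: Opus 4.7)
The plan is to exploit the moduli interpretation of $\sW(\cX)$: deformation theory identifies $L\psi^*L_{\sW(\cX)/Y}$ with derived data on $\cD$, which one then relates to $\Lambda_\gamma$ via a distinguished triangle coming from an explicit fiber product description. First, the structural map $q\colon\sW(\cX)\to Y$ exists by the good moduli space universal property: given any warped map $(\cT\xrightarrow{\pi_T} T,\cT\xrightarrow{f}\cX)$, the composition $\cT\to\cX\xrightarrow{p} Y$ factors uniquely through $T$ (since $Y$ is an algebraic space), yielding a morphism $T\to Y$. Applied to our arc, this gives $q\psi=\overline{\varphi}$.

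Next, I would introduce $\cX':=\cX\times_Y D$, where $D\to Y$ is $\overline{\varphi}$; the warped map $\gamma$ then canonically factors through a morphism $\cD\to\cX'$ over $D$. By deformation theory of morphisms to Artin stacks, first-order deformations of $\psi$ relative to $q$ correspond exactly to flat deformations of $\cD$ over $D[\epsilon]$ together with an extension of the map $\cD\to\cX'$, since the constraint that the induced $D[\epsilon]\to Y$ be the trivial extension of $\overline{\varphi}$ amounts to the deformed map $\cD_\epsilon\to\cX_{D[\epsilon]}$ factoring through the base change of $\cX'$. Using that $\pi$ is cohomologically affine (so $R\pi_*=\pi_*$ on quasi-coherents and $\pi_*\cO_\cD=\cO_D$), one obtains a canonical identification of $L\psi^*L_{\sW(\cX)/Y}$ with (a shift of) $R\pi_*L_{\cD/\cX'}$.

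Since $\cX'=\cX_D\times_{Y_D}D$ is a fiber product, there is a natural distinguished triangle
\[
L\pi^*L_{D/Y_D}\to L_{\cD/\cX_D}\to L_{\cD/\cX'}
\]
on $\cD$. The section $D\to Y_D=Y\times_k D$ given by $(\overline{\varphi},\id)$ yields $L_{D/Y_D}\simeq L\overline{\varphi}^*L_Y[1]$, establishing the stated identification $\mathcal{E}xt^1(L\pi^*L_{D/Y_D},\cO)\simeq\mathcal{E}xt^0(L(\overline{\varphi}\pi)^*L_Y,\cO)$. Applying $\pi_*R\mathcal{H}om(-,\cO_\cD)$ (with $\pi_*$ exact) produces a long exact sequence of coherent sheaves on $D$ whose central map between the two $\mathcal{E}xt^1$-terms is $\pi_*\Lambda_\gamma$; the generic fiber hypothesis forces $\cD\to\cX'$ to be a generic isomorphism, and so all of the relevant sheaves are torsion of finite length over $k'[[t]]$.

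Finally, combining the preceding two paragraphs, one reads off $\het^{(0)}_{L_{\sW(\cX)/Y}}(\psi)$ and $\het^{(1)}_{L_{\sW(\cX)/Y}}(\psi)$ from the long exact sequence, and the finite-length Euler characteristic computation collapses to $\dim_{k'}\coker\pi_*\Lambda_\gamma-\dim_{k'}\ker\pi_*\Lambda_\gamma$ modulo boundary contributions from $\pi_*\mathcal{E}xt^0(L_{\cD/\cX_D},\cO_\cD)$ and $\pi_*\mathcal{E}xt^2(L_{\cD/\cX'},\cO_\cD)$. The main obstacle will be verifying that these boundary terms precisely match the torsion correction $\dim_{k'}\coker(H^0(L\psi^*Lq^*L_Y)_\tors\to H^0(L\psi^*L_{\sW(\cX)})_\tors)$ subtracted in the definition of $\het_{\sW(\cX)/Y}(\psi)$; this would be done via the long exact sequence associated to $L\overline{\varphi}^*L_Y=L\psi^*Lq^*L_Y\to L\psi^*L_{\sW(\cX)}\to L\psi^*L_{\sW(\cX)/Y}$ on $D$, together with a careful identification of the torsion parts of the relevant $H^0$ modules.
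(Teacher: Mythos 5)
There is a genuine gap, and it sits exactly where your proposal defers the work. Your central step asserts a "canonical identification of $L\psi^*L_{\sW(\cX)/Y}$ with (a shift of) $R\pi_*L_{\cD/\cX'}$" on the strength of a first-order deformation computation. Deformation theory only identifies the deformation space $\Ext^0(L\psi^*L_{\sW(\cX)/Y},\cO)$ with $\Ext^1(L_{\cD/\cX'},\cO)$ (note: the \emph{dual} of the relative cotangent complex of $\cD$, not $R\pi_*L_{\cD/\cX'}$ itself); it does not identify the complexes, nor even the individual cohomologies $H^0(L\psi^*L_{\sW(\cX)/Y})$ and $H^1(L\psi^*L_{\sW(\cX)/Y})$ that enter $\het^{(0)}$ and $\het^{(1)}$, nor the map $H^0(L\psi^*Lq^*L_Y)_{\tors}\to H^0(L\psi^*L_{\sW(\cX)})_{\tors}$ appearing in the correction term of \autoref{def:relhet}. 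The paper deliberately avoids any such identification of complexes: it pulls back the transitivity triangle $L\underline{p}^*L_Y\to L_{\sW(\cX)}\to L_{\sW(\cX)/Y}$ to $D$, uses the generic-triviality hypothesis to see that $L^i\psi^*L_{\sW(\cX)/Y}$ is torsion, converts heights to $\Ext$-groups against $\cO$ via \autoref{l:ext-vs-height}, identifies the torsion correction with $\ker\Lambda'$ via \autoref{l:ext1-ker-vs-coker}, and only at the very end uses deformation theory (\autoref{prop:ExtWY-equals-ExtDX}, via \cite{OlssonDefRep} applied to $D'=D\times_{k'}D_1$) to match the \emph{single} map $\Ext^0(L\psi^*L_{\sW(\cX)},\cO)\to\Ext^0(L\overline{\varphi}^*L_Y,\cO)$ with $\pi_*\Lambda_\gamma$. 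Your own final paragraph concedes that matching the boundary terms with the torsion correction is "the main obstacle"; but that matching is the substantive content of the proof, and nothing in your outline supplies it.

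Two further technical problems with your route through $\cX'=\cX\times_Y D$: first, the triangle $L\pi^*L_{D/Y_D}\to L_{\cD/\cX_D}\to L_{\cD/\cX'}$ is only available if the base-change map $Lh^*L_{D/Y_D}\to L_{\cX'/\cX_D}$ is an isomorphism, which requires Tor-independence of the square defining $\cX'$; neither $\cX\to Y$ nor the section $D\to Y_D$ is flat, so this needs justification (the paper sidesteps this entirely — its canonical map $L\pi^*L_{D/Y_D}\to L_{\cD/\cX_D}$ comes from a morphism of transitivity triangles, with no fiber product). Second, even granting a derived identification of $L\psi^*L_{\sW(\cX)/Y}$ with a dual of $R\pi_*R\mathcal{H}om(L_{\cD/\cX'},\cO)$, extracting $\het^{(0)}-\het^{(1)}$ and the torsion correction from it would still require the torsion/duality bookkeeping over $k'\llbracket t\rrbracket$ that the paper packages in \autoref{l:ext-vs-height} and \autoref{l:ext1-ker-vs-coker}. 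As written, the proposal is an outline of a plausible alternative strategy whose two load-bearing steps (the identification of the relative cotangent complex of the warping stack, and the torsion-correction matching) are unproven.
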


\begin{remark}\label{rmk:Lambda-simplify-target}
The canonical isomorphism $\mathcal{E}xt^1(L\pi^*L_{D/Y_D},\cO)\simeq\mathcal{E}xt^0(L(\overline{\varphi}\pi)^*L_Y,\cO)$ arises as follows. The map $\overline{\varphi}\colon D\to Y$ induces a map $\alpha\colon D\to Y_D$ and hence a transitivity triangle
\[
L\alpha^*L_{Y_D/D}\to L_{D/D}\to L_{D/Y_D}.
\]
Since $L_{D/D}=0$, we see $L_{D/Y_D}[-1]\xrightarrow{\simeq}L\alpha^*L_{Y_D/D}$. Since $Y$ is flat over $k$, $L_{Y_D/D}$ is the pullback of $L_Y$, and hence $L\alpha^*L_{Y_D/D}\simeq L\overline{\varphi}^*L_Y$.
\end{remark}


The next result relates height functions to $\Ext$-groups.

\begin{lemma}\label{l:ext-vs-height}
Let $\cY$ be a finite type Artin stack over $k$ and let $E\in D^-_{\coh}(\cY)$. Let $\beta\colon D\to\cY$ be an arc with $D=\Spec k'[[t]]$ and $k'$ a field. Then there is a canonical isomorphism
\begin{equation}\label{eqn:Extn-tors}
\Ext^1(L^{1-n}\beta^*E,\cO)\xrightarrow{\simeq} \Ext^n(L\beta^*E,\cO)_{\tors}.
\end{equation}
In particular, if $L^{1-n}\beta^*E$ is torsion, then 
\begin{equation}\label{eqn:Extn-tors-height}
\het^{(1-n)}_E(\beta) = \dim_{k'} \Ext^n(L\beta^*E,\cO)_{\tors}
\end{equation}
and if $L^{-n}\beta^*E$ is also torsion, then $\Ext^n(L\beta^*E,\cO)=\Ext^n(L\beta^*E,\cO)_{\tors}$.

Furthermore, if $\beta$ factors through the standard cover $D\to\cD^\ell_{k'}$ of an $\ell$-th twisted disc, then \eqref{eqn:Extn-tors} holds as graded $\bZ/\ell$-modules and we have a short exact sequence of graded modules
\begin{equation}\label{eqn:Extn-tors-ss-degen}
0\to \Ext^1(L^{1-n}\beta^*E,\cO)\to \Ext^n(L\beta^*E,\cO)\to \Hom(L^{-n}\beta^*E,\cO)\to 0.
\end{equation}
\end{lemma}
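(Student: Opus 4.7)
The plan is to derive everything from the standard hypercohomology spectral sequence
\[
E_2^{p,q} = \Ext^p(L^{-q}\beta^*E,\cO) \;\Longrightarrow\; \Ext^{p+q}(L\beta^*E,\cO),
\]
obtained in the usual way from the canonical truncation filtration of $L\beta^*E$. Convergence is standard: $L\beta^*E \in D^-_{\coh}(D)$ since $E \in D^-_{\coh}(\cY)$. The key input is that $D = \Spec k'\llbracket t \rrbracket$ is regular of Krull dimension $1$, so every $k'\llbracket t \rrbracket$-module has projective dimension at most $1$, and hence $\Ext^p(M,\cO) = 0$ for all $p \geq 2$. This forces $E_2^{p,q} = 0$ outside the two rows $p \in \{0,1\}$; all higher differentials $d_r$ $(r\geq 2)$ then vanish because their targets lie outside this support. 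The spectral sequence therefore degenerates at $E_2$, and the induced two-step filtration on the abutment is precisely the desired short exact sequence \eqref{eqn:Extn-tors-ss-degen}.

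Next I would exploit that over the DVR $k'\llbracket t \rrbracket$, $\Ext^1(-,\cO)$ sees only torsion. Concretely, $\Ext^1(M,\cO) \cong \Ext^1(M_{\tors},\cO)$, and by the structure theorem $M_{\tors}$ splits as a sum of modules $k'\llbracket t \rrbracket/(t^{a})$; applying $\Hom(-,\cO)$ to the resolution $0 \to k'\llbracket t \rrbracket \xrightarrow{t^a} k'\llbracket t \rrbracket \to k'\llbracket t \rrbracket/(t^a) \to 0$ yields $\Ext^1(k'\llbracket t \rrbracket/(t^a),\cO) \cong k'\llbracket t \rrbracket/(t^a)$. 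Thus $\Ext^1(M,\cO)$ is torsion with $\dim_{k'}$ equal to $\dim_{k'} M_{\tors}$, while $\Hom(M,\cO) \cong \Hom(M/M_{\tors},\cO)$ is torsion-free. Consequently, in \eqref{eqn:Extn-tors-ss-degen} the left term is torsion and the right term torsion-free, so the left term is canonically identified with the torsion submodule of $\Ext^n(L\beta^*E,\cO)$, proving \eqref{eqn:Extn-tors}. The identity \eqref{eqn:Extn-tors-height} then follows by combining $\het^{(1-n)}_E(\beta) = \dim_{k'} L^{1-n}\beta^*E$ (finite because $L^{1-n}\beta^*E$ is assumed torsion and coherent) with the length computation above. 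Finally, if $L^{-n}\beta^*E$ is also torsion, then $\Hom(L^{-n}\beta^*E,\cO) = 0$ and \eqref{eqn:Extn-tors-ss-degen} collapses to $\Ext^n(L\beta^*E,\cO) = \Ext^n(L\beta^*E,\cO)_{\tors}$.

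For the graded refinement, factor $\beta = \beta'\circ\rho$ with $\rho\colon D \to \cD^\ell_{k'}$ the standard $\mu_\ell$-torsor and $\beta'\colon\cD^\ell_{k'}\to\cY$ the induced map. Then $L\beta^*E = L\rho^* L\beta'^* E$, and pullback along a $\mu_\ell$-torsor endows each cohomology sheaf with a $\mu_\ell$-equivariant structure, i.e., a $\bZ/\ell$-grading as a $k'\llbracket t \rrbracket$-module. Since the hypercohomology spectral sequence is natural in all inputs, every piece of its construction (terms, differentials, filtration on the abutment) is $\mu_\ell$-equivariant, and therefore both \eqref{eqn:Extn-tors-ss-degen} and \eqref{eqn:Extn-tors} upgrade verbatim to statements of $\bZ/\ell$-graded modules. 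I anticipate no serious obstacle; the only mildly delicate point is confirming that the inclusion in \eqref{eqn:Extn-tors-ss-degen} really identifies the torsion submodule (so that \eqref{eqn:Extn-tors} is a canonical isomorphism, not merely an equality of lengths), and this follows from the functoriality of the truncation filtration.
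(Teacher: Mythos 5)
Your proposal is correct and follows essentially the same route as the paper: the hyperext spectral sequence $E_2^{p,q}=\Ext^p(L^{-q}\beta^*E,\cO)\Rightarrow\Ext^{p+q}(L\beta^*E,\cO)$ degenerates because $k'\llbracket t\rrbracket$ has global dimension one, the resulting two-term filtration gives \eqref{eqn:Extn-tors-ss-degen}, and the identification \eqref{eqn:Extn-tors} follows since $\Ext^1(-,\cO)$ of a coherent module is torsion while $\Hom(-,\cO)$ is torsion-free, with the graded refinement obtained by $\mu_\ell$-equivariance exactly as in the paper's parenthetical remark. The dimension count $\dim_{k'}\Ext^1(M,\cO)=\dim_{k'}M$ for torsion $M$ yielding \eqref{eqn:Extn-tors-height}, and the vanishing of $\Hom(L^{-n}\beta^*E,\cO)$ in the doubly-torsion case, also match the paper's argument.
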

\begin{proof}
Since $k'[[t]]$ is a PID, the spectral sequence (which consists of graded maps if $k'[[t]]$ has the aforementioned $\bZ/\ell$-grading)
\[
E_2^{p,q}=\Ext^p(L^{-q}\beta^*E,\cO)\Longrightarrow \Ext^{p+q}(L\beta^*E,\cO)
\]
degenerates, yielding the short exact sequence \eqref{eqn:Extn-tors-ss-degen}. 
This yields an injection 
\[
	\Ext^1(L^{1-n}\beta^*E,\cO)_{\tors}\hookrightarrow \Ext^n(L\beta^*E,\cO)_{\tors}.
\] 
This map is an isomorphism since $\cO$ is torsion-free, hence $\Hom(L^{-n}\beta^*E,\cO)$ is as well. Since $\Ext^1(\cF,\cO)_{\tors}=\Ext^1(\cF,\cO)$ for any coherent sheaf $\cF$, we have proved that the map in \eqref{eqn:Extn-tors} is an isomorphism. 

For any torsion sheaf $k'[[t]]$-module, $\dim M=\dim\Ext^1(M,k'[[t]])$. So, \eqref{eqn:Extn-tors-height} follows by taking $k'$-dimensions in \eqref{eqn:Extn-tors}. Lastly, if $L^{-n}\beta^*E$ is torsion, then $\Hom(L^{-n}\beta^*E,\cO)$ vanishes, showing that $\Ext^n(L\beta^*E,\cO)= \Ext^1(L^{1-n}\beta^*E,\cO)$ is torsion.
\end{proof}

\begin{lemma}\label{l:ext1-ker-vs-coker}
Let $R$ be a PID and $f\colon M\to N$ be a morphism of torsion $R$-modules. Then
\[
\len\coker(f)=\len\ker\left(\Ext^1(N,R)\to\Ext^1(M,R)\right).
\]
\end{lemma}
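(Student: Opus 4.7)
The plan is to show that $\ker\bigl(\Ext^1(N,R)\to\Ext^1(M,R)\bigr)$ is canonically isomorphic to $\Ext^1(\coker f,R)$, and then invoke the standard fact that lengths are preserved when applying $\Ext^1(-,R)$ to torsion $R$-modules over a PID.

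First I would decompose $f$ via the two short exact sequences
\[
0\to\ker f\to M\to\im f\to 0\quad\text{and}\quad 0\to\im f\to N\to\coker f\to 0.
\]
Applying $\Hom(-,R)$ to the second sequence and using that $\Hom(T,R)=0$ for any torsion $R$-module $T$ (since $R$ is a domain), the long exact sequence collapses to
\[
0\to\Ext^1(\coker f,R)\to\Ext^1(N,R)\to\Ext^1(\im f,R)\to 0.
\]
Similarly, applying $\Hom(-,R)$ to the first sequence gives
\[
0=\Hom(\ker f,R)\to\Ext^1(\im f,R)\to\Ext^1(M,R),
\]
so $\Ext^1(\im f,R)\hookrightarrow\Ext^1(M,R)$. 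Factoring the map $\Ext^1(N,R)\to\Ext^1(M,R)$ through $\Ext^1(\im f,R)$ therefore identifies its kernel with $\Ext^1(\coker f,R)$.

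It remains to show $\len\coker f=\len\Ext^1(\coker f,R)$. By the structure theorem, $\coker f\simeq\bigoplus_i R/(a_i)$ with each $a_i\ne0$, and a free resolution $0\to R\xrightarrow{a_i} R\to R/(a_i)\to 0$ shows $\Ext^1(R/(a_i),R)\simeq R/(a_i)$. Taking direct sums and lengths finishes the argument.

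There is no real obstacle here; the only subtlety is checking that the factorization of $\Ext^1(N,R)\to\Ext^1(M,R)$ through $\Ext^1(\im f,R)$ is injective on the second map, which is exactly where the torsion hypothesis on $\ker f$ (and hence $M$) is used via the vanishing of $\Hom(\ker f,R)$.
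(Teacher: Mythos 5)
Your argument is correct and is essentially the paper's own proof: both decompose $f$ through $0\to\ker f\to M\to\im f\to 0$ and $0\to\im f\to N\to\coker f\to 0$, use vanishing of $\Hom$ on torsion modules to identify $\ker\bigl(\Ext^1(N,R)\to\Ext^1(M,R)\bigr)$ with $\Ext^1(\coker f,R)$ via the injection $\Ext^1(\im f,R)\hookrightarrow\Ext^1(M,R)$, and conclude with $\len\Ext^1(Q,R)=\len Q$ for torsion $Q$. The only difference is that you spell out this last length equality via the structure theorem, which the paper simply asserts.
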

\begin{proof}
Let $K=\ker(f)$, $Q=\coker(f)$, and $P=\textrm{im}(f)$. Then we have short exact sequences
\[
0\to K\to M\to P\to 0\quad\textrm{and}\quad 0\to P\to N\to Q\to 0.
\]
Since all modules are torsion, we obtain 
short exact sequences
\[
0\to \Ext^1(P,R)\to \Ext^1(M,R)\to \Ext^1(K,R)\to 0,
\]
\[
0\to \Ext^1(Q,R)\to \Ext^1(N,R)\xrightarrow{g} \Ext^1(P,R)\to 0,
\]
from which we see that the kernel of $\Ext^1(N,R)\to\Ext^1(M,R)$ equals $\ker(g)$, i.e., $\Ext^1(Q,R)$. As $Q$ is torsion, $\len\Ext^1(Q,R)=\len Q$.
\end{proof}

\begin{proposition}\label{prop:ExtWY-equals-ExtDX}
Use the notation and hypotheses of \autoref{thm:moduli-interp-hetWY} however one may drop the assumption that $p$ is weakly birational. We have a commutative diagram
\[
\xymatrix{
\Ext^0(L\psi^*L_{\sW(\cX)},\cO)\ar[r]\ar[d]_-{\simeq} & \Ext^0(L\overline{\varphi}^*L_Y,\cO)\ar[d]^-{\simeq}\\
\Ext^1(L_{\cD/\cX_D},\cO)\ar[r]^-{\pi_*\Lambda_\gamma} & \Ext^1(L_{D/Y_D},\cO)
}
\]
\end{proposition}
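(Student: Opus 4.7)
The plan is to construct both vertical isomorphisms via tangent-space identifications, and check commutativity by naturality of the cotangent formalism. The right vertical isomorphism follows directly from \autoref{rmk:Lambda-simplify-target}:~applying the cotangent transitivity triangle to the composition $D\xrightarrow{\alpha}Y_D\xrightarrow{\mathrm{pr}_D}D$, which is the identity, and using $L_{D/D}=0$ together with the flat base change $L_{Y_D/D}\simeq\mathrm{pr}_Y^*L_Y$ gives $L_{D/Y_D}\simeq L\overline{\varphi}^*L_Y[1]$, so that $\Ext^1(L_{D/Y_D},\cO_D)\simeq\Ext^0(L\overline{\varphi}^*L_Y,\cO_D)$.

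For the left vertical isomorphism, the key observation will be that both sides classify first-order deformations of the warped arc $\gamma$ over $D[\epsilon]$. Indeed, $\Ext^0(L\psi^*L_{\sW(\cX)},\cO_D)$ is by definition the tangent space to $\sW(\cX)$ at $\psi$, which by the moduli interpretation of the warping stack parametrizes warped deformations of $\gamma$. Viewing such a deformation as the data of a flat deformation of $\pi\colon\cD\to D$ together with an extension of the representable morphism $g:=(\varphi,\pi)\colon\cD\to\cX_D$ to $\cX_{D[\epsilon]}$, Olsson's classification of deformations of representable morphisms (\cite[Theorem 1.5]{OlssonDefRep}) identifies these deformations with $\Ext^1_\cD(L_{\cD/\cX_D},\cO_\cD)$, where the coefficient sheaf arises as the pullback to $\cD$ of the ideal of $D\subset D[\epsilon]$. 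One must verify that the additional warp conditions (cohomologically affine good moduli space with affine diagonal) are preserved under infinitesimal deformations, which will follow from arguments analogous to \cite[Proposition 3.1]{SatrianoUsatine4}.

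The bottom horizontal map comes from the canonical morphism $L\pi^*L_{D/Y_D}\to L_{\cD/\cX_D}$ obtained by composing the maps arising from the cotangent transitivity triangles for the two factorizations $\cD\xrightarrow{\pi}D\xrightarrow{\alpha}Y_D$ and $\cD\xrightarrow{g}\cX_D\xrightarrow{p_D}Y_D$ of the common morphism $\cD\to Y_D$, namely $L\pi^*L_{D/Y_D}\to L_{\cD/Y_D}\to L_{\cD/\cX_D}$. Then $\Lambda_\gamma$ is the resulting map on $\mathcal{E}xt^1$, and the projection formula applied to $R\pi_*\cO_\cD=\cO_D$ (valid since $\pi$ is a cohomologically affine good moduli space map) identifies its pushforward target with $\Ext^1_D(L_{D/Y_D},\cO_D)$. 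Commutativity of the square will then be a naturality check:~given a warped deformation of $\gamma$ corresponding to $\xi\in\Ext^1(L_{\cD/\cX_D},\cO_\cD)$, the induced deformation $\overline{\varphi}_\epsilon\colon D[\epsilon]\to Y$ is obtained by factoring $p\circ\varphi_\epsilon\colon\cD_\epsilon\to Y$ through the good moduli space $\cD_\epsilon\to D[\epsilon]$, and on cotangent complexes this corresponds precisely to following the composition through the two transitivity triangles above. The hardest part will be the left vertical identification:~carefully matching Olsson's classification with the moduli interpretation of $\sW(\cX)$ and confirming that warp conditions deform, so that Olsson's tangent space really computes the tangent space of $\sW(\cX)$.
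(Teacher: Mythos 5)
Your proposal follows essentially the same route as the paper's proof: identify $\Ext^0(L\psi^*L_{\sW(\cX)},\cO)$ with lifts of $\psi$ to $D\times_{k'}D_1$ via \cite[Theorem 1.5]{OlssonDefRep} (with the trivial lift as basepoint), translate these through the moduli interpretation of $\sW(\cX)$ into flat deformations of $\cD\to D$ with a map to $\cX_{D'}$ classified by $\Ext^1(L_{\cD/\cX_D},\cO)$, note that the warp condition deforms automatically by \cite[Proposition 3.1]{SatrianoUsatine4}, use exactness of $\pi_*$ and adjunction ($\pi_*\cO=\cO$) to identify the target of $\pi_*\Lambda_\gamma$, and conclude commutativity from the definition of $\sW(\cX)\to Y$. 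The only cosmetic difference is that you obtain the right vertical isomorphism from the transitivity-triangle identification of \autoref{rmk:Lambda-simplify-target} rather than repeating the deformation-theoretic argument, which amounts to the same canonical map.
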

\begin{proof}
We first show the target space of $\pi_*\Lambda_\gamma$ is naturally isomorphic to $\Ext^1(L_{D/Y_D},\cO)$. Since $\pi$ is a good moduli space, $\pi_*$ is exact, so $\pi_*\mathcal{E}xt^1(E,\cO)=\Ext^1(E,\cO)$ for any $E\in D^-_{\coh}(\cX)$. Next, using adjunction combined with the fact that $\pi_*\cO=\cO$, we see
\[
\pi_*\mathcal{E}xt^1(L\pi^*L_{D/Y_D},\cO)\simeq
\Ext^1(L_{D/Y_D},\pi_*\cO)\simeq 
\Ext^1(L_{D/Y_D},\cO).
\]

Fix $D'=D\times_{k'} D_1$. Since the closed immersion $D\to D'$ has a section, we have a canonical extension of $\psi\colon D\to\sW(\cX)$ to a map $D'\to\sW(\cX)$ over $k$. Thus, by \cite[Theorem 1.5]{OlssonDefRep}, we may canonically identify $\Ext^0(L\psi^*L_{\sW(\cX)},\cO)$ with the set of isomorphism classes of maps $D'\to\sW(\cX)$ over $k$ which extend $\psi$. By the moduli interpretation of $\sW(\cX)$, 
this is identified with isomorphism classes of cartesian diagrams
\[
\xymatrix{
\cD\ar@{-->}[r]\ar[d]_-{\alpha} & \cD'\ar@{-->}[d]^-{\alpha'}\\
\cX_D\ar[r]\ar[d] & \cX_{D'}\ar[d]\\
D\ar[r] & D'
}
\]
where $\cD'$ and the dotted arrows can vary and $\pi'\colon\cD'\to D'$ is flat; \emph{a priori} one also needs to impose the condition that $\pi'$ is a warp, but this is, in fact, not necessary in light of \cite[Proposition 3.1]{SatrianoUsatine4}. Isomorphism classes of such diagrams are classified by the deformation space $\Ext^1(L_{\cD/\cX_D},\cO)$, so we obtain the left isomorphism in the statement of the proposition. The other isomorphism follows by the same argument, and the diagram commutes by the definition of the map $\sW(\cX) \to Y$.
\end{proof}

\begin{proof}[{Proof of \autoref{thm:moduli-interp-hetWY}}]
Throughout this proof, all dimensions are taken over $k'$. The map $p\colon\cX\to Y$ induces a map $\underline{p}\colon\sW(\cX)\to Y$ such that $\overline{\varphi}=\underline{p}\psi$, see \cite[Section 5]{SatrianoUsatine4}. Applying $L\psi^*$ to the transitivity triangle
\[
L\underline{p}^*L_Y\to L_{\sW(\cX)}\to L_{\sW(\cX)/Y}
\]
yields a long exact sequence
\begin{align*}
0\to &\Ext^0(L\psi^*L_{\sW(\cX)/Y},\cO)\to \Ext^0(L\psi^*L_{\sW(\cX)},\cO)\xrightarrow{\Lambda} \Ext^0(L\overline{\varphi}^*L_Y,\cO)\to \\
&\Ext^1(L\psi^*L_{\sW(\cX)/Y},\cO)\to \Ext^1(L\psi^*L_{\sW(\cX)},\cO)\xrightarrow{\Lambda'} \Ext^1(L\overline{\varphi}^*L_Y,\cO).
\end{align*}
Let $\pi^\circ\colon\cD^\circ\to D^\circ$ be the generic fiber of $\pi$. Since $\cD^\circ\to\cU$ is representable and $\cU\to Y$ is an open immersion, we see $\cD^\circ$ is an algebraic space. Hence, $\pi^\circ$ is an isomorphism, i.e., $D^\circ\to D\xrightarrow{\psi}\sW(\cX)$ factors through $\cU\to\cX\xrightarrow{\tau}\sW(\cX)$ where $\tau$ is the locus of trivial warps, see \cite[Theorem 1.9(2)]{SatrianoUsatine4}. Thus, the pullback of $L\psi^*L_{\sW(\cX)/Y}$ to $D^\circ$ agrees with the pullback of $L_{\cU/Y}=0$ to $D^\circ$. 
As a result, $L^i\psi^*L_{\sW(\cX)/Y}$ is torsion for all $i$. So, \autoref{l:ext-vs-height} implies
\[
\het^{(1-n)}_{L_{\sW(\cX)/Y}}(\psi) = \dim \Ext^n(L\psi^*L_{\sW(\cX)/Y},\cO)
\]
for all $n$. Note then that
\[
\het^{(1)}_{L_{\sW(\cX)/Y}}(\psi) = \dim \ker\Lambda
\]
and
\[
\het^{(0)}_{L_{\sW(\cX)/Y}}(\psi) = \dim \coker\Lambda + \dim \ker\Lambda'.
\]
Since $\Ext^1(L\psi^*L_{\sW(\cX)/Y},\cO)$ is torsion, $\ker\Lambda'$ is as well, and so $\ker\Lambda'=\ker(\Lambda'_{\tors})$. We then see
\begin{align*}
\dim\ker(\Lambda'_{\tors}) &=\dim\ker\!\big(\Ext^1(L^0\psi^*L_{\sW(\cX)},\cO)\to \Ext^1(L^0\overline{\varphi}^*L_Y,\cO)\big)\\
&=\dim\coker\!\big((L^0\overline{\varphi}^*L_Y)_{\tors}\to (L^0\psi^*L_{\sW(\cX)})_{\tors}\big)
\end{align*}
where the first equality comes from \autoref{l:ext-vs-height} equation \eqref{eqn:Extn-tors} and the second equality comes from \autoref{l:ext1-ker-vs-coker}. Thus,
\begin{align*}
\het_{\sW(\cX)/Y}(\psi) &=\het^{(0)}_{L_{\sW(\cX)/Y}}(\psi)-\het^{(1)}_{L_{\sW(\cX)/Y}}(\psi)-\dim\ker\Lambda'\\
&=\dim \coker\Lambda-\dim \ker\Lambda.
\end{align*}
The result then follows from \autoref{prop:ExtWY-equals-ExtDX}.
\end{proof}

\section{Warped heights and the weight function}


\autoref{theoremTwistedToWarped} and the untwisted change of variables formula \cite{SatrianoUsatine2} immediately imply a change of variables formula for twisted arcs involving the correction term $\het_{\sW(\cX)/Y}$. The goal of this section is to compute $\het_{\sW(\cX)/Y}$ on warped arcs arising from twisted arcs.

The first main theorem of this section is:

\begin{theorem}\label{thm:LWY-Ext1}
Let $p\colon\cX\to Y$ be a morphism over $k$ with $Y$ an algebraic space and $\cX$ a smooth finite type Artin stack with affine diagonal. Assume $p$ is weakly birational, i.e., there exists a dense open substack $\cU\subset\cX$ such that $\cU\to Y$ is an immersion. 
Let $k'$ be field, $D=\Spec k'[[t]]$, $\ell \in \Z_{>1}$ and $\varphi\colon\cD\to\cX$ be an $\ell$th-twisted arc such that the generic point of $\cD$ maps into $U$. Letting $\psi\colon D\to\sW(\cX)$ be the corresponding arc, we have
\[
\het_{\sW(\cX)/Y}(\psi)=\dim\Ext^1(L\varphi^*L_{\cX/Y},\cO)-\dim\Ext^0(L\varphi^*L_\cX,\cO)_{\tors}-1.
\]
\end{theorem}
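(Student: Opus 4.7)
The plan is to apply \autoref{thm:moduli-interp-hetWY} to reduce the problem to computing the ``Euler characteristic''
\[
\chi(\pi_*\Lambda_\gamma):=\dim_{k'}\coker(\pi_*\Lambda_\gamma)-\dim_{k'}\ker(\pi_*\Lambda_\gamma),
\]
and then to carefully decompose this using transitivity triangles. Using that $p_D\varphi=\alpha\pi$ where $\alpha\colon D\to Y_D$ is induced by $\overline\varphi$, the map $\Lambda_\gamma$ factors as the composition
\[
\mathcal{E}xt^1(L_{\cD/\cX_D},\cO)\xrightarrow{a}\mathcal{E}xt^1(L_{\cD/Y_D},\cO)\xrightarrow{b}\mathcal{E}xt^1(L\pi^*L_{D/Y_D},\cO),
\]
where $a$ and $b$ are the connecting maps in the long exact $\mathcal{E}xt$-sequences for the transitivity triangles $L\varphi^*L_{\cX/Y}\to L_{\cD/Y_D}\to L_{\cD/\cX_D}$ (call this (T1)) and $L\pi^*L_{D/Y_D}\to L_{\cD/Y_D}\to L_{\cD/D}$ (call this (T2)). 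Since the generic fiber of $\pi$ factors through $\cU$ and $\cU\to Y$ is an open immersion, the maps $L_{\cD/Y_D}\to L_{\cD/\cX_D}$ and $L\pi^*L_{D/Y_D}\to L_{\cD/Y_D}$ are both generic isomorphisms, so $\pi_*a$ and $\pi_*b$ are generic isomorphisms of finitely generated $\cO_D$-modules. The invariant $\chi$ is additive under composition of such maps (it computes the $t$-valuation of the determinant), giving $\chi(\pi_*\Lambda_\gamma)=\chi(\pi_*a)+\chi(\pi_*b)$.

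I would then compute the two pieces. For $\chi(\pi_*b)=-1$: the (T2) long exact sequence together with the vanishings $\mathcal{E}xt^0(L_{\cD/D},\cO)=\mathcal{E}xt^0(L\pi^*L_{D/Y_D},\cO)=\mathcal{E}xt^2(L_{\cD/D},\cO)=0$ (the first since $L_{\cD/D}$ is a torsion sheaf in degree zero, the second by adjunction and the fact that $L_{D/Y_D}\simeq L\overline\varphi^*L_Y[1]$ sits in degrees $\leq-1$, the third by projective dimension one on the regular cover) yields a short exact sequence
\[
0\to\mathcal{E}xt^1(L_{\cD/D},\cO)\to\mathcal{E}xt^1(L_{\cD/Y_D},\cO)\xrightarrow{b}\mathcal{E}xt^1(L\pi^*L_{D/Y_D},\cO)\to0.
\]
Since $\pi_*$ is exact, $\chi(\pi_*b)=-\dim_{k'}\Ext^1(L_{\cD/D},\cO_\cD)$. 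Using \autoref{prop:graded-Smith} on the \'{e}tale cover $\Spec k'[[t^{1/\ell}]]\to\cD^\ell$, one finds that $L_{\cD/D}$ pulls back to $k'[[u]]/(u^{\ell-1})$ concentrated in degree $0$ with $du$ in $\mu_\ell$-grade $1$; a direct resolution computation then gives $\mathcal{E}xt^1\simeq k'[[u]]/(u^{\ell-1})$ with generator in grade $0$, whose $\mu_\ell$-invariants form a one-dimensional $k'$-vector space. The hypothesis $\ell>1$ is essential here. Analogously, the (T1) long exact sequence combined with the vanishings $\mathcal{E}xt^0(L_{\cD/\cX_D},\cO)=\mathcal{E}xt^0(L_{\cD/Y_D},\cO)=\mathcal{E}xt^2(L_{\cD/\cX_D},\cO)=0$ (verified via the spectral sequence $E_2^{p,q}=\mathcal{E}xt^p(H^{-q},\cO)$, using that the triangle $L\varphi^*L_\cX\to L_{\cD/D}\to L_{\cD/\cX_D}$ forces $H^{-1}(L_{\cD/\cX_D})$ to be torsion-free and $H^0(L_{\cD/\cX_D})$ torsion) gives $\chi(\pi_*a)=\dim_{k'}\Ext^1(L\varphi^*L_{\cX/Y},\cO)-\dim_{k'}\Ext^0(L\varphi^*L_{\cX/Y},\cO)$.

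It remains to identify $\dim_{k'}\Ext^0(L\varphi^*L_{\cX/Y},\cO)=\dim_{k'}\Ext^0(L\varphi^*L_\cX,\cO)_{\tors}$. Applying $L\varphi^*$ to the transitivity triangle $Lp^*L_Y\to L_\cX\to L_{\cX/Y}$ for $p\colon\cX\to Y$ yields a triangle $L(\overline\varphi\pi)^*L_Y\to L\varphi^*L_\cX\to L\varphi^*L_{\cX/Y}$ on $\cD$; its long exact $\mathcal{E}xt$-sequence, together with the vanishing $\mathcal{E}xt^{-1}(L(\overline\varphi\pi)^*L_Y,\cO)=0$, gives an injection
\[
\mathcal{E}xt^0(L\varphi^*L_{\cX/Y},\cO)\hookrightarrow\mathcal{E}xt^0(L\varphi^*L_\cX,\cO)
\]
whose image is the kernel of the ``derivative of $p$'' map into $\mathcal{E}xt^0(L(\overline\varphi\pi)^*L_Y,\cO)$. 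After applying $\pi_*$, the adjunction identifies the target with $\Hom_D(L\overline\varphi^*L_Y,\cO_D)$, which is torsion-free since homomorphisms from any finitely generated $\cO_D$-module to $\cO_D$ kill the torsion part; consequently the kernel of the map is precisely $\Ext^0(L\varphi^*L_\cX,\cO_\cD)_{\tors}$. Combining the three Euler-characteristic computations gives the formula. The main obstacle will be carefully verifying the $\mathcal{E}xt$-vanishings and the additivity of $\chi$ along $\pi_*\Lambda_\gamma=\pi_*b\circ\pi_*a$; both are technical but follow from standard analysis on the regular \'{e}tale cover of $\cD$.
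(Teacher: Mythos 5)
Your proposal is correct, but it takes a genuinely different route from the paper. The paper also starts from \autoref{thm:moduli-interp-hetWY}, but instead of factoring $\Lambda_\gamma$ it \emph{precomposes} with the map $\Phi\colon\mathcal{E}xt^0(L\varphi^*L_\cX,\cO)\to\mathcal{E}xt^1(L_{\cD/\cX_D},\cO)$ coming from the transitivity triangle for $\cD\to\cX_D\to D$: \autoref{l:kerPsi} identifies $\ker\Lambda$ with $\mathcal{E}xt^1(L_{\cD/\cX_D},\cO)_{\tors}$ and, via the four-term sequence of \autoref{l:4termsequence} and the $1$-dimensionality of $\Ext^1(L_{\cD/D},\cO)$ (\autoref{l:ExtLsDoverD}), computes $\dim\ker\pi_*\Lambda$ in terms of $\Ext^0(L\varphi^*L_\cX,\cO)_{\tors}$ and $\coker\pi_*\overline{\Phi}$, while \autoref{l:cokerseq-and-ordjac} shows $\coker(\Lambda\circ\Phi)\simeq\mathcal{E}xt^1(L\varphi^*L_{\cX/Y},\cO)$ so that the $\coker\overline{\Phi}$ terms cancel. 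You instead write $\Lambda_\gamma=b\circ a$ through $\mathcal{E}xt^1(L_{\cD/Y_D},\cO)$ and exploit additivity of the index $\chi=\dim\coker-\dim\ker$ under composition of generic isomorphisms (this is exactly the six-term kernel–cokernel exact sequence, which is the cleanest justification; the determinant argument needs care for modules with torsion), computing $\chi(\pi_*b)=-1$ from the same $\ell>1$ computation of $\Ext^1(L_{\cD/D},\cO)$ and $\chi(\pi_*a)$ from a single transitivity triangle. Your route avoids the auxiliary $\Phi/\overline{\Phi}$ bookkeeping and the $3\times 3$ torsion diagram, at the cost of having to convert $\Ext^0(L\varphi^*L_{\cX/Y},\cO)$ into $\Ext^0(L\varphi^*L_\cX,\cO)_{\tors}$ at the end (the paper does the analogous conversion in \eqref{eqn:4-equal-vs} within the proof of \autoref{theoremHeightWarpingAndHeightWeight}); the paper's route produces intermediate statements ($\ker\Lambda$, $\coker\overline{\Phi}$, etc.) that it keeps explicit. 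Two spots in your sketch need one extra sentence each: the equality $\ker=\Ext^0(L\varphi^*L_\cX,\cO)_{\tors}$ requires not only that the target $\mathcal{H}om(L\overline{\varphi}^*L_Y,\cO)$ is torsion-free (which gives only ``$\supseteq$'') but also that the kernel $\Ext^0(L\varphi^*L_{\cX/Y},\cO)$ is torsion, which follows since $L\varphi^*L_{\cX/Y}$ vanishes generically; and the torsionness of $H^0(L_{\cD/\cX_D})$ is not forced by the triangle alone but also uses that $H^1(L\varphi^*L_\cX)$ is torsion, again because the generic point of $\cD$ lands in $\cU$. Both are the same generic-vanishing fact you already invoke, so these are minor.
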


As a consequence of \autoref{thm:LWY-Ext1}, we will prove: 


\begin{theorem}\label{theoremHeightWarpingAndHeightWeight}
Let $\cX$ be an equidimensional smooth finite type Artin stack over $k$ with affine diagonal, and assume that $\cX$ has a good moduli space. Let $\ell \in \Z_{>1}$, and let $\eta: |\sJ^\ell(\cX)| \to |\sL(\sW(\cX))|$ denote the canonical map. Let $Y$ be a locally finite type scheme over $k$, let $\cX \to Y$ be a morphism over $k$, and let $\cU$ be an open substack of $\cX$ such that the composition $\cU \hookrightarrow \cX \to Y$ is an open immersion. Then
\[
	\het_{\sW(\cX)/Y} \circ \eta + 1 = \het_{\cX/Y} + \wt_\cX
\]
on $|\sJ^\ell(\cX)| \setminus |\sJ^\ell(\cX \setminus \cU)|$.
\end{theorem}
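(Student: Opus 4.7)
By \autoref{thm:LWY-Ext1} applied to the weakly birational map $\cX\to Y$, the identity $\het_{\sW(\cX)/Y}\circ\eta + 1 = \het_{\cX/Y} + \wt_\cX$ is equivalent to showing, for each twisted arc $\varphi\colon\cD^\ell_{k'}\to\cX$ whose generic point maps into $\cU$, that
\begin{equation}\label{eqn:plan-key}
\dim_{k'}\Ext^1(L\varphi^*L_{\cX/Y}, \cO) - \dim_{k'}\Ext^0(L\varphi^*L_\cX, \cO)_{\tors} = \het_{\cX/Y}(\varphi) + \wt_\cX(\varphi).
\end{equation}
The plan is to reduce both sides of \eqref{eqn:plan-key} to explicit dimension counts coming from the graded module structure of $L\varphi^*L_\cX$ on the standard \'etale cover $\Spec R\to\cD^\ell_{k'}$, where $R=k'\llbracket t^{1/\ell}\rrbracket$.

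For the structure of $L\varphi^*L_\cX$, \autoref{lemmaPullBackCotangentComplexBecomesTwoVectorBundles} provides an exact triangle $L\varphi^*L_\cX\to\cF\to\cG$ with $\cF,\cG$ locally free of ranks $r,s$ satisfying $r-s=\dim\cX$, and \autoref{prop:graded-Smith}\ref{prop:graded-Smith::smith} puts the map $\cF\to\cG$ in Smith normal form as graded $R$-modules. Because $\varphi$ sends the generic point of $\cD^\ell_{k'}$ into $\cU$---which is smooth as an open substack of $\cX$ and meets $Y$ along an open subscheme, so $L_\cX|_\cU=\Omega^1_\cU$ is locally free of rank $d:=\dim\cX$ and $L_{\cX/Y}|_\cU=0$---one finds that $L^0\varphi^*L_\cX$ is locally free of rank $d$ and $L^1\varphi^*L_\cX\cong\bigoplus_j R(b_j)/t^{c_j/\ell}R(b_j)$ is torsion with $c_j\geq 1$.

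To simplify the LHS of \eqref{eqn:plan-key}, apply the graded short exact sequence of \autoref{l:ext-vs-height}. Combined with freeness of $L^0\varphi^*L_\cX$, this gives $\Ext^0(L\varphi^*L_\cX,\cO)_{\tors}=\Ext^1(L^1\varphi^*L_\cX,\cO)$ and $\Ext^1(L\varphi^*L_\cX,\cO)=0$. The long exact cohomology sequence for the triangle $L\varphi^*L_Y\to L\varphi^*L_\cX\to L\varphi^*L_{\cX/Y}$, combined with $L^{\geq 1}\varphi^*L_Y=0$ (since $Y$ is a scheme) and $L^{\leq -1}\varphi^*L_\cX=0$, identifies $L^1\varphi^*L_{\cX/Y}$ with $L^1\varphi^*L_\cX$ and shows that every $L^i\varphi^*L_{\cX/Y}$ is torsion. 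A second application of \autoref{l:ext-vs-height}, using that $\Hom$ from a torsion module into $\cO$ vanishes, gives $\Ext^1(L\varphi^*L_{\cX/Y},\cO)=\Ext^1(L^0\varphi^*L_{\cX/Y},\cO)$. Thus the LHS of \eqref{eqn:plan-key} reduces to $\dim_{k'}\Ext^1(L^0\varphi^*L_{\cX/Y},\cO) - \dim_{k'}\Ext^1(L^1\varphi^*L_\cX,\cO)$, each being the dimension of the $\mu_\ell$-invariants of a graded torsion $R$-module.

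For the RHS, tensoring the triangle $L\varphi^*L_\cX\to\cF\to\cG$ with $k'$ over $R$ yields character decompositions of $L^i\varphi_0^*L_\cX=L^i(\varphi^*L_\cX)\otimes^L_R k'$, from which $\wt_\cX(\varphi)$ is read off directly in terms of the Smith data. Since $L^0\varphi^*L_\cX$ is torsion-free, the correction term in \autoref{def:relhet} vanishes, so $\het_{\cX/Y}(\varphi)=(1/\ell)[\dim_{k'}L^0\varphi^*L_{\cX/Y}-\dim_{k'}L^1\varphi^*L_\cX]$, an ungraded $k'$-length of the same torsion modules appearing in the LHS. The identity then follows by summand-by-summand bookkeeping, using that for a simple torsion piece $R(b)/t^{c/\ell}$, both $\dim_{k'}(R(b)/t^{c/\ell})^{\mu_\ell}$ and $\dim_{k'}(\Ext^1_R(R(b)/t^{c/\ell},R))^{\mu_\ell}$ are counts of multiples of $\ell$ in length-$c$ intervals, and the discrepancy between the $\Ext^1$-count and $(1/\ell)$ times the total length recovers the weight contribution of that piece. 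The main obstacle will be this bookkeeping, especially reconciling the fact that $L^0\varphi^*L_{\cX/Y}$ is built from the coarse-level map $L^0\bar\varphi^*L_Y\to L^0\varphi^*L_\cX$ (data coming from $Y$) yet must contribute exactly the character decomposition terms in $\wt_\cX$ (data intrinsic to $\cX$); this is reconciled by the fact that $L\varphi^*L_Y$ is pulled back from the coarse space $D$ and so is concentrated in grading degree $0$.
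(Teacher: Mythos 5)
Your plan is correct and follows essentially the same route as the paper's proof: reduce via \autoref{thm:LWY-Ext1} to the displayed Ext-identity, pass to the standard cover and use \autoref{lemmaPullBackCotangentComplexBecomesTwoVectorBundles} together with graded Smith normal form (\autoref{prop:graded-Smith}) and \autoref{l:ext-vs-height} to express both Ext-invariants and $\het_{\cX/Y}+\wt_\cX$ in the same Smith data, then compare. The only cosmetic difference is that you compute $\Ext^1(L\varphi^*L_{\cX/Y},\cO)$ through $\cH^0(L\varphi^*L_{\cX/Y})$ (the cokernel of the weight-zero map coming from $L_Y$) while the paper uses the dual four-term sequence, and the deferred ``bookkeeping'' is precisely the floor-function count of degree-zero pieces that the paper carries out.
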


Throughout this section, we fix the notation as in \autoref{thm:LWY-Ext1} where $\cD$ is an $\ell$-th twisted disc and $\pi\colon\cD\to D$ is the coarse space map. Let $\overline{\varphi}\colon D\to Y$ be the arc induced by $\varphi$. Unless otherwise specified $\dim$ will mean dimension as $k'$-vector space. We use the notation
\[
\cZ_T:=\cZ\times_k T
\]
to denote the base change of a $k$-stack $\cZ$ by a morphism $T\to\Spec k$ and consider the maps
\[
\Phi\colon\mathcal{E}xt^0(L\varphi^*L_\cX,\cO)\to\mathcal{E}xt^1(L_{\cD/\cX_D},\cO)
\]
and
\[
\Lambda\colon\mathcal{E}xt^1(L_{\cD/\cX_D},\cO)\to\mathcal{E}xt^1(L\pi^*L_{D/Y_D},\cO).
\]
Note that we have canonical isomorphisms
\begin{equation}\label{eqn:target-of-Lambda}
\mathcal{E}xt^1(L\pi^*L_{D/Y_D},\cO)\simeq\mathcal{E}xt^0(L(\overline{\varphi}\pi)^*L_Y,\cO)\simeq\mathcal{H}om((\overline{\varphi}\pi)^*\Omega^1_Y,\cO),
\end{equation}
see e.g., \autoref{rmk:Lambda-simplify-target}. Since $\mathcal{H}om((\overline{\varphi}\pi)^*\Omega^1_Y,\cO)$ is torsion free, we have the following commutative diagram
\[
\xymatrix{
\mathcal{E}xt^0(L\varphi^*L_\cX,\cO)\ar[r]^-{\Phi}\ar@{->>}[d] & \mathcal{E}xt^1(L_{\cD/\cX_D},\cO)\ar[dr]^-{\Lambda}\ar@{->>}[d] & \\
\mathcal{E}xt^0(L\varphi^*L_\cX,\cO)/(\tors)\ar[r]^-{\overline{\Phi}} & \mathcal{E}xt^1(L_{\cD/\cX_D},\cO)\ar[r]^-{\overline{\Lambda}}/(\tors) & \mathcal{E}xt^1(L\pi^*L_{D/Y_D},\cO).
}
\]

\autoref{thm:moduli-interp-hetWY} expresses $\het_{\sW(\cX)/Y}(\psi)$ in terms of the dimension of the kernel and cokernel of $\pi_*\Lambda$. Thus, our goal is to compute these quantities in the case of twisted discs. We require several preliminary results.

\begin{lemma}\label{l:ExtLsDoverD}
Letting $\pi\colon\cD\to D=\Spec k'[[t]]$ be our $\ell$-th twisted disc, and $\Spec S=D\xrightarrow{\rho}\cD$ the standard \'etale cover where $S=k'[[t]][u]/(u^\ell-t)$, we have
\[
\rho^*\mathcal{E}xt^1(L_{\cD/D},\cO)\simeq S/u^{\ell-1}\quad\textrm{and\ otherwise}\quad\mathcal{E}xt^i(L_{\cD/D},\cO)=0.
\]
In particular, $\dim\Ext^1(L_{\cD/D},\cO)=1$.
\end{lemma}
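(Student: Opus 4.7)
The plan is to reduce the computation to the étale cover $\rho\colon\Spec S\to\cD$, where we have a very explicit presentation of the cotangent complex. Since $\rho$ is étale, $\rho^*L_{\cD/D}\simeq L_{S/k'[[t]]}$ and formation of $\mathcal{E}xt$ against $\cO$ commutes with $\rho^*$. Moreover $\Spec S\hookrightarrow \Spec k'[[t]][u]=\bA^1_{k'[[t]]}$ is a regular closed immersion cut out by $I=(u^\ell-t)$, so the transitivity triangle
\[
I/I^2\xrightarrow{d} \Omega^1_{k'[[t]][u]/k'[[t]]}\otimes_{k'[[t]][u]} S\to L_{S/k'[[t]]}
\]
identifies $L_{S/k'[[t]]}$ with the two-term complex $[S\xrightarrow{\cdot\,\ell u^{\ell-1}} S]$ (concentrated in degrees $-1,0$), since $d(u^\ell-t)=\ell u^{\ell-1}du$ and both $I/I^2$ and the restriction of $\Omega^1_{\bA^1/k'[[t]]}$ are free of rank one on the generators $u^\ell-t$ and $du$.

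Dualizing, $R\mathcal{H}om(L_{S/k'[[t]]},S)$ is quasi-isomorphic to $[S\xrightarrow{\cdot\,\ell u^{\ell-1}}S]$ in degrees $0,1$. The first step is to observe that multiplication by $u$ is injective on $S$ (any relation $u\cdot\sum a_i u^i=0$ with $a_i\in k'[[t]]$ forces $ta_{\ell-1}=0$ and $a_i=0$ for $i<\ell-1$), hence $\mathcal{E}xt^i(L_{S/k'[[t]]},S)$ vanishes for $i\neq 1$. The second step is to compute
\[
\mathcal{E}xt^1(L_{S/k'[[t]]},S)\simeq S/(\ell u^{\ell-1})=S/(u^{\ell-1}),
\]
using that $\ell\in(k')^*$. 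Using the relation $u^\ell=t$ in $S$, killing $u^{\ell-1}$ forces $t=0$, so this cokernel is isomorphic to $k'[u]/(u^{\ell-1})$.

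Finally, to obtain the statement about $\dim\Ext^1(L_{\cD/D},\cO)$, I would use that $\cD$ is cohomologically affine so global $\mathrm{Ext}$ is computed as the $\mu_\ell$-invariants of $\mathcal{E}xt^1$ on the cover. Under the $\mu_\ell$-action $u\mapsto\xi u$, the element $du$ has weight $1$ and $u^\ell-t$ has weight $0$, so in $k'[u]/(u^{\ell-1})$ the monomial $u^i$ lives in weight $i$ for $i=0,\dots,\ell-2$. The invariants are therefore spanned by the single element $1$, giving $\dim_{k'}\Ext^1(L_{\cD/D},\cO)=1$.

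The calculation is essentially mechanical; the only point requiring care is tracking the $\mu_\ell$-grading through the dualization in order to identify the invariants at the end (equivalently, being careful that the weight shifts on $I/I^2$ and $\Omega^1|_S$ cancel in the formula $\mathcal{E}xt^1\simeq k'[u]/(u^{\ell-1})$ as a graded module with $u$ in weight $1$). I don't anticipate any serious obstacle beyond this bookkeeping.
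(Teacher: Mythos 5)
Your proof is correct and follows essentially the same route as the paper's: pull back along the \'etale cover, use the regular closed immersion $\Spec S\hookrightarrow\bA^1_{k'[[t]]}$ to identify the dual of $L_{S/k'[[t]]}$ with $[S\xrightarrow{\cdot\,\ell u^{\ell-1}}S]$, and then take $\mu_\ell$-invariants (via cohomological affineness) of the graded cokernel $S/(u^{\ell-1})$ to get $\dim\Ext^1(L_{\cD/D},\cO)=1$. Your grading bookkeeping at the end is also right, since the generator of $\mathcal{H}om(I/I^2,\cO)$ has weight $0$.
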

\begin{proof}
Note that the result concerning $\dim\Ext^1(L_{\cD/D},\cO)$ follows from the assertion about $\mathcal{E}xt^1(L_{\cD/D},\cO)$ since then $\Ext^1(L_{\cD/D},\cO)$ is the degree $0$ piece of $S/u^{\ell-1}$.

The rest of the proof is nearly identical to that of \autoref{l:twisted-twisted-discs-def-sp}. Let $R=k'[[t]]$. Since $\rho$ is \'etale, $\rho^*\mathcal{E}xt^i(L_{\cD/D},\cO)=\Ext^1(L_{\pi\rho},S)$, where we have identified local and global $\Ext$ since $D$ is affine. Note that $\pi\rho$ factors as a regular closed immersion $i\colon D\to\bA^1_D$ followed by the projection to $D$, where $i$ is cut out by the ideal $I=(u^\ell-t)$. Then we have an exact triangle
\[
I/I^2\to \Omega^1_{R[u]/R}\otimes_{R[u]}S\to L_{S/R}.
\]
This shows vanishing of $\rho^*\mathcal{E}xt^i(L_{\cD/D},\cO)$ for all $i\neq0,1$ and yields an exact sequence
\[
0\to\Ext^0(L_{S/R},S)\to S\xrightarrow{\cdot\ell u^{\ell-1}} S\to \Ext^1(L_{S/R},S)\to 0,
\]
from which the result follows.
\end{proof}

\begin{lemma}\label{l:4termsequence}
We have an exact sequence
\[
0\to \mathcal{E}xt^0(L\varphi^*L_\cX,\cO)_{\tors}\xrightarrow{\Phi_{\tors}} \mathcal{E}xt^1(L_{\cD/\cX_D},\cO)_{\tors}\to \mathcal{E}xt^1(L_{\cD/D},\cO)\to \coker\overline{\Phi}\to 0.
\]
\end{lemma}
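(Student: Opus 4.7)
The plan is to deduce the desired four-term sequence from a short exact sequence of $\mathcal{E}xt$-sheaves together with the snake lemma. Write $A = \mathcal{E}xt^0(L\varphi^*L_\cX,\cO)$, $B = \mathcal{E}xt^1(L_{\cD/\cX_D},\cO)$, and $C = \mathcal{E}xt^1(L_{\cD/D},\cO)$. The first step is to produce a short exact sequence
\[
0 \to A \xrightarrow{\Phi} B \xrightarrow{\Psi} C \to 0.
\]
Applying $R\mathcal{H}om(-,\cO)$ to the transitivity triangle
\[
L\varphi^*L_\cX \to L_{\cD/D} \to L_{\cD/\cX_D}
\]
coming from the factorization $\cD \to \cX_D \to D$ (where flat base change identifies $L_{\cX_D/D}$ with the pullback of $L_\cX$) gives a long exact sequence of $\mathcal{E}xt$-sheaves. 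By \autoref{l:ExtLsDoverD}, $\mathcal{E}xt^i(L_{\cD/D},\cO)$ vanishes for $i \neq 1$. By \autoref{lemmaPullBackCotangentComplexBecomesTwoVectorBundles}, $L\varphi^*L_\cX$ fits in a triangle with two locally free sheaves; since locally free sheaves have vanishing positive $\mathcal{E}xt$, the long exact sequence of that triangle forces $\mathcal{E}xt^i(L\varphi^*L_\cX,\cO) = 0$ for $i \geq 1$. Substituting these vanishings into the previous long exact sequence truncates it to the displayed short exact one.

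Next, I would apply the snake lemma to the commutative diagram whose rows are the torsion--torsion-free short exact sequences for $A$ and $B$, with vertical maps $\Phi_{\tors}$, $\Phi$, and $\overline{\Phi}$. Since $\Phi$ is injective and $\cO_\cD$ is torsion-free, one has $\Phi^{-1}(B_{\tors}) = A_{\tors}$, which forces $\overline{\Phi}$ to also be injective. The snake lemma therefore produces a short exact sequence
\[
0 \to \coker\Phi_{\tors} \to \coker\Phi \to \coker\overline{\Phi} \to 0.
\]

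Finally, the surjection $\Psi\colon B \to C$ with $\ker\Psi = \Phi(A)$ identifies $\coker\Phi$ with $C$, and under this identification $\coker\Phi_{\tors} = B_{\tors}/\Phi_{\tors}(A_{\tors})$ corresponds to the image $\Psi(B_{\tors}) \subset C$. Splicing the injection $A_{\tors} \hookrightarrow B_{\tors}$, the natural map $B_{\tors} \to C$, and the quotient $C \to C/\Psi(B_{\tors}) = \coker\overline{\Phi}$ yields exactly the four-term exact sequence in the statement. The only nontrivial input is the $\mathcal{E}xt$-vanishing analysis in the first step; the snake lemma bookkeeping in the second and third steps is routine diagram chasing.
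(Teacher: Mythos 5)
Your proposal is correct and takes essentially the same route as the paper: both first extract the short exact sequence $0\to\mathcal{E}xt^0(L\varphi^*L_\cX,\cO)\to\mathcal{E}xt^1(L_{\cD/\cX_D},\cO)\to\mathcal{E}xt^1(L_{\cD/D},\cO)\to0$ from the transitivity triangle for $\cD\to\cX_D\to D$ using \autoref{l:ExtLsDoverD} and the vanishing of higher $\mathcal{E}xt$ of $L\varphi^*L_\cX$, and then the torsion/torsion-free bookkeeping you do via the snake lemma is exactly the paper's $3\times3$ diagram plus splicing. The only cosmetic difference is that the paper deduces $\mathcal{E}xt^1(L\varphi^*L_\cX,\cO)=0$ directly from smoothness of $\cX$ (i.e., perfectness of $L_\cX$ in amplitude $[0,1]$), which avoids the equidimensionality and good-moduli-space hypotheses of \autoref{lemmaPullBackCotangentComplexBecomesTwoVectorBundles} that are not literally assumed in \autoref{thm:LWY-Ext1}, and it gets injectivity of $\overline{\Phi}$ from $\Phi$ being generically an isomorphism, whereas your observation that injectivity of $\Phi$ alone forces $\Phi^{-1}\bigl(\mathcal{E}xt^1(L_{\cD/\cX_D},\cO)_{\tors}\bigr)=\mathcal{E}xt^0(L\varphi^*L_\cX,\cO)_{\tors}$ is equally valid.
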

\begin{proof}
Let $\varphi'\colon\cD\to\cX_D$ be the map induced by $\varphi$ and $\pi$. Since $\cX$ is flat over $k$, $L_{\cX_D/D}$ is the pullback of $L_\cX$; thus, considering the transitivity triangle of cotangent complexes for the maps $\cD\to\cX_D\to D$, we obtain an exact sequence
\[
0\to\mathcal{E}xt^0(L\varphi^*L_\cX,\cO)\xrightarrow{\Phi} \mathcal{E}xt^1(L_{\cD/\cX_D},\cO)\to \mathcal{E}xt^1(L_{\cD/D},\cO)\to 0;
\]
note we have used vanishing of $\mathcal{E}xt^0(L_{\cD/D},\cO)$ from \autoref{l:ExtLsDoverD} and vanishing of $\mathcal{E}xt^1(L\varphi^*L_\cX,\cO)$ from smoothness of $\cX$. 

Since $\pi\colon\cD\to D$ is generically an isomorphism, 
we see $\Phi$ is generically an isomorphism. Thus, \[
\mathcal{E}xt^0(L\varphi^*L_\cX,\cO)_{\tors}=\Phi^{-1}(\mathcal{E}xt^1(L_{\cD/\cX_D},\cO)_{\tors}).
\]
As a result, we obtain a diagram
\[
\xymatrix{
& 0\ar[d] & 0\ar[d] & 0\ar[d] & \\
0\ar[r]& \mathcal{E}xt^0(L\varphi^*L_\cX,\cO)_{\tors}\ar[r]^-{\Phi_{\tors}}\ar[d] & \mathcal{E}xt^1(L_{\cD/\cX_D},\cO)_{\tors}\ar[r]\ar[d] & \mathcal{Q}\ar[r]\ar[d]& 0\\
0\ar[r]& \mathcal{E}xt^0(L\varphi^*L_\cX,\cO)\ar[r]^-{\Phi}\ar[r]\ar[d] & \mathcal{E}xt^1(L_{\cD/\cX_D},\cO)\ar[r]\ar[d] & \mathcal{E}xt^1(L_{\cD/D},\cO)\ar[r]\ar[d]& 0\\
0\ar[r]& \mathcal{E}xt^0(L\varphi^*L_\cX,\cO)/(\tors)\ar[r]^-{\overline{\Phi}}\ar[d] & \mathcal{E}xt^1(L_{\cD/\cX_D},\cO)/(\tors)\ar[r]\ar[d] & \coker\overline{\Phi}\ar[r]\ar[d] & 0\\
& 0 & 0 & 0 &
}
\]
where all rows and columns are short exact. Splicing together the top row and last column, we obtain the desired result.
\end{proof}

The following two lemmas gives us our desired reformulations of the kernel and cokernel of $\pi_*\Lambda$.

\begin{lemma}\label{l:kerPsi}
We have
\[
\ker\Lambda=\mathcal{E}xt^1(L_{\cD/\cX_D},\cO)_{\tors}
\]
and
\[
\dim\ker\pi_*\Lambda=\dim\Ext^0(L\varphi^*L_\cX,\cO)_{\tors}+1-\coker\pi_*\overline{\Phi}.
\]
\end{lemma}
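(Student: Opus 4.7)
The lemma has two assertions, which I address in order. For the identification $\ker\Lambda=\mathcal{E}xt^1(L_{\cD/\cX_D},\cO)_{\tors}$, the inclusion $\supseteq$ is automatic: the target of $\Lambda$ is identified via \eqref{eqn:target-of-Lambda} with $\mathcal{H}om((\overline{\varphi}\pi)^*\Omega^1_Y,\cO)$, which is torsion-free on the integral stack $\cD$, so any torsion in the source must map to zero.

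For the reverse inclusion, my plan is to reduce to proving that the induced map $\overline{\Lambda}$ on the torsion-free quotient is injective. Since $\cD$ is a regular one-dimensional stack, torsion-free coherent sheaves on it are locally free by the graded structure theorem \autoref{prop:graded-Smith}, so $\overline{\Lambda}$ is injective as soon as it is injective at the generic point $D^\circ$ of $\cD$. I would establish generic injectivity by exploiting that $\varphi$ factors through $\cU$ at $D^\circ$ together with $L_{\cU/Y}=0$ (since $\cU\to Y$ is an open immersion). At $D^\circ$, $\pi$ is an isomorphism, so $L_{\cD/D}|_{D^\circ}=0$; transitivity for $\cD\to D\to Y_D$ then yields a canonical isomorphism $L\pi^*L_{D/Y_D}|_{D^\circ}\simeq L_{\cD/Y_D}|_{D^\circ}$. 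Likewise, $L_{\cX/Y}|_\cU=L_{\cU/Y}=0$ causes the transitivity triangle for $\cD\to\cX_D\to Y_D$ to produce a canonical isomorphism $L_{\cD/Y_D}|_{D^\circ}\simeq L_{\cD/\cX_D}|_{D^\circ}$. Combining these identifies the source and target of $\Lambda|_{D^\circ}$, and \autoref{prop:ExtWY-equals-ExtDX}---which realizes $\pi_*\Lambda$ as the differential of $\underline{p}\colon\sW(\cX)\to Y$ at $\psi$, restricting over $D^\circ$ to the open immersion $\cU\hookrightarrow Y$---confirms that $\Lambda|_{D^\circ}$ is the identity under these identifications, hence an isomorphism.

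Granted the first assertion, the second follows by a direct computation. The functor $\pi_*$ is exact because $\pi$ is cohomologically affine, so applying it to the four-term sequence of \autoref{l:4termsequence} and taking global sections on $D$ yields
\[
0\to\Ext^0(L\varphi^*L_\cX,\cO)_{\tors}\to\Ext^1(L_{\cD/\cX_D},\cO)_{\tors}\to\Ext^1(L_{\cD/D},\cO)\to\coker(\pi_*\overline{\Phi})\to 0,
\]
where global sections agree with global $\Ext$ on the cohomologically affine $\cD$. The first assertion together with exactness of $\pi_*$ also identifies $\ker\pi_*\Lambda$ with $\Ext^1(L_{\cD/\cX_D},\cO)_{\tors}$. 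Taking $k'$-dimensions of the exact sequence and substituting $\dim_{k'}\Ext^1(L_{\cD/D},\cO)=1$ from \autoref{l:ExtLsDoverD} yields the claimed formula.

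The main obstacle is the generic-fiber analysis: one must confirm that, under the chain of canonical isomorphisms extracted from transitivity and the vanishing $L_{\cU/Y}=0$, the canonical map $\Lambda$ genuinely corresponds to the identity on the generic stalks, rather than merely to an automorphism. The hypothesis that $\cU\to Y$ is an open immersion (and not merely, say, birational) is crucial here, as this is what forces both $L_{\cU/Y}$ and the differential of $\underline{p}$ over $D^\circ$ to vanish simultaneously.
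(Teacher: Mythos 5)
Your proof is correct and follows essentially the same route as the paper: torsion-freeness of $\mathcal{H}om((\overline{\varphi}\pi)^*\Omega^1_Y,\cO)$ gives one inclusion, the fact that $\Lambda$ is generically an isomorphism (which the paper deduces tersely from $\pi$ being generically an isomorphism and the arc landing generically in $\cU$, and which your transitivity-triangle argument spells out) gives the other, and the dimension formula comes from applying the exact functor $\pi_*$ to the four-term sequence of \autoref{l:4termsequence} together with the dimension count of \autoref{l:ExtLsDoverD}. Your closing worry is moot: to prove $\overline{\Lambda}$ injective you only need $\Lambda|_{D^\circ}$ to be injective, which your two generic identifications already supply, so neither the ``identity versus automorphism'' issue nor the appeal to \autoref{prop:ExtWY-equals-ExtDX} is actually needed.
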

\begin{proof}
To prove the assertion about $\ker\Lambda$, note first that $\mathcal{H}om((\overline{\varphi}\pi)^*\Omega^1_Y,\cO)$ is torsion-free since $\cO$ is. Then using \eqref{eqn:target-of-Lambda}, we see $\mathcal{E}xt^1(L_{\cD/\cX_D},\cO)_{\tors}\subset\ker\Lambda$. Since $\pi\colon\cD\to D$ is generically an isomorphism, $\Lambda$ is as well. As a result, $\ker\Lambda$ is torsion, hence $\ker\Lambda=\mathcal{E}xt^1(L_{\cD/\cX_D},\cO)_{\tors}$.

For the computation of $\ker\pi_*\Lambda$, we apply the exact functor $\pi_*$ to the exact sequence in \autoref{l:4termsequence}. \autoref{l:ExtLsDoverD} tells us $\Ext^1(L_{\cD/D},\cO)=\pi_*\mathcal{E}xt^1(L_{\cD/D},\cO)$ has dimension $1$. The result then follows from the fact that $\Ext^0(L\varphi^*L_\cX,\cO)_{\tors}=\pi_*(\mathcal{E}xt^0(L\varphi^*L_\cX,\cO)_{\tors})$.
\end{proof}


\begin{lemma}\label{l:cokerseq-and-ordjac}
The maps $\overline{\Phi}$ and $\overline{\Lambda}$ are injective, and we have an exact sequence
\[
0\to\coker\overline{\Phi}\to \coker(\overline{\Lambda}\circ\overline{\Phi})\to \coker\overline{\Lambda}\to 0.
\]
Furthermore,
\[
\coker(\overline{\Lambda}\circ\overline{\Phi})=\coker(\Lambda\circ\Phi)\simeq\mathcal{E}xt^1(L\varphi^*L_{\cX/Y},\cO).
\]
\end{lemma}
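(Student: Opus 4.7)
The plan is to verify the statement in four tightly-linked steps: (1) injectivity of $\overline{\Phi}$ and $\overline{\Lambda}$; (2) the short exact sequence of cokernels; (3) the equality $\coker(\overline{\Lambda}\circ\overline{\Phi})=\coker(\Lambda\circ\Phi)$; and (4) identification of that cokernel with $\mathcal{E}xt^1(L\varphi^*L_{\cX/Y},\cO)$.

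For (1): Lemma~\ref{l:4termsequence} already records $\Phi$ as injective; combined with the fact that $\pi$---hence $\Phi$---is generically an isomorphism, any element mapped to torsion by $\Phi$ must itself be torsion, which passes to give injectivity of $\overline{\Phi}$. For $\overline{\Lambda}$, Lemma~\ref{l:kerPsi} identifies $\ker\Lambda$ with the torsion submodule of the source, so the induced map on torsion-free quotients is tautologically injective. Step (2) is then formal: whenever $A\hookrightarrow B\hookrightarrow C$ are composable injections of modules, the third isomorphism theorem provides $0\to B/A\to C/A\to C/B\to 0$, which specializes to the desired sequence.

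For (3): the target $\mathcal{E}xt^1(L\pi^*L_{D/Y_D},\cO)$ of $\Lambda\circ\Phi$ is torsion-free, since via \eqref{eqn:target-of-Lambda} it is identified with $\mathcal{H}om((\overline{\varphi}\pi)^*\Omega^1_Y,\cO)$, a sheaf of homomorphisms into the torsion-free $\cO$. Consequently $\Lambda\circ\Phi$ annihilates the torsion of its source, factors through the torsion-free quotient as $\overline{\Lambda}\circ\overline{\Phi}$, and has the same image; hence the cokernels agree.

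The heart of the argument, and the main obstacle, is (4). I would pull back the transitivity triangle $Lp^*L_Y\to L_\cX\to L_{\cX/Y}$ along $\varphi$, apply $R\mathcal{H}om(-,\cO)$, and exploit that $\mathcal{E}xt^1(L\varphi^*L_\cX,\cO)=0$---this vanishing follows from \autoref{lemmaPullBackCotangentComplexBecomesTwoVectorBundles} combined with the vanishing of higher $\mathcal{E}xt$'s from locally free sheaves. The resulting long exact sequence yields
\[
\mathcal{E}xt^0(L\varphi^*L_\cX,\cO)\xrightarrow{\Xi}\mathcal{E}xt^0(L(p\varphi)^*L_Y,\cO)\to\mathcal{E}xt^1(L\varphi^*L_{\cX/Y},\cO)\to 0,
\]
and since $p\varphi=\overline{\varphi}\pi$ the middle term identifies via \eqref{eqn:target-of-Lambda} with $\mathcal{E}xt^1(L\pi^*L_{D/Y_D},\cO)$. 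The remaining (and most delicate) task is to verify $\Xi=\Lambda\circ\Phi$ under this identification; this compares the connecting map for the transitivity triangle of the composition $\cD\to\cX\to Y$ with the composition of connecting maps coming from the intermediate triangles for $\cD\to\cX_D\to D$ and for $L\pi^*L_{D/Y_D}\to L_{\cD/Y_D}\to L_{\cD/\cX_D}$. This should follow from the octahedral axiom applied to the tower $\cD\to\cX_D\to Y_D$ together with the base-change identifications of cotangent complexes ($L_{\cX_D}\simeq Lq^*L_\cX$ and $L_{\cX_D/Y_D}\simeq Lq^*L_{\cX/Y}$, where $q\colon\cX_D\to\cX$ is the projection), which hold without any further flatness hypotheses.
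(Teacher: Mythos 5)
Your proposal is correct and follows essentially the same route as the paper: injectivity of $\overline{\Phi},\overline{\Lambda}$ from generic isomorphy/torsion considerations, the formal cokernel sequence, torsion-freeness of the target to pass from $\Lambda\Phi$ to $\overline{\Lambda}\,\overline{\Phi}$, and then the long exact sequence from the pulled-back triangle $Lp^*L_Y\to L_\cX\to L_{\cX/Y}$ together with $\mathcal{E}xt^1(L\varphi^*L_\cX,\cO)=0$. The one compatibility you flag as delicate, namely that the first map of that long exact sequence agrees with $\Lambda\circ\Phi$ under the identification \eqref{eqn:target-of-Lambda}, is exactly what the paper settles by citing a morphism of transitivity triangles (Webb's thesis, Lemma 2.2.12(2)) attached to the commutative square relating $\cD\to\cX_D\to D$ and $D\to Y_D\to D$; your octahedral-axiom argument is the same functoriality in different packaging and does close the gap.
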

\begin{proof}
From the commutative diagram
\[
\xymatrix{
\cD\ar[r]^-{\varphi'}\ar[d]_-{\pi} & \cX_D\ar[r]\ar[d] & D\ar@{=}[d]\\
D\ar[r]^-{\overline{\varphi}'} & Y_D\ar[r] & D
}
\]
and \cite[Lemma 2.2.12(2)]{WebbThesis}, we obtain a morphism
\[
\xymatrix{
L\pi^*L(\overline{\varphi}')^*L_{Y_D/D}\ar[r]\ar[d] & 0\ar[r]\ar[d] & L\pi^*L_{D/Y_D}\ar[d]\\
L(\varphi')^*L_{\cX_D/D}\ar[r] & L_{\cD/D}\ar[r] & L_{\cD/\cX_D}
}
\]
of exact triangles. As discussed earlier, $L_{\cX_D/D}$ (resp.~$L_{Y_D/D}$) is the pullback of $L_\cX$ (resp.~$L_Y$). We therefore have a commutative diagram
\[
\xymatrix{
\mathcal{E}xt^0(L\varphi^*L_\cX,\cO)\ar[r]^-{\Phi}\ar[d] & \mathcal{E}xt^1(L_{\cD/\cX},\cO)\ar[d]^-{\Lambda}\\
\mathcal{E}xt^0(L\varphi^*Lp^*L_Y,\cO)\ar[r]^-{\simeq} & \mathcal{E}xt^1(L\pi^*L_{D/Y_D},\cO)
}
\]
Thus, we obtain an exact sequence 
\[
\mathcal{E}xt^0(L\varphi^*L_{\cX},\cO)\xrightarrow{\Lambda\Phi} \mathcal{E}xt^0(L(\overline{\varphi}\pi)^*L_Y,\cO)\to \mathcal{E}xt^1(L\varphi^*L_{\cX/Y},\cO)\to \mathcal{E}xt^1(L\varphi^*L_{\cX},\cO).
\]
Since $\cX$ is smooth, $\mathcal{E}xt^1(L\varphi^*L_{\cX},\cO)=0$ and so
\[
\coker(\Lambda\Phi)\simeq\mathcal{E}xt^1(L\varphi^*L_{\cX/Y},\cO).
\]
We see $\coker(\overline{\Lambda}\circ\overline{\Phi})=\coker(\Lambda\Phi)$ directly from the definitions. 
Lastly, since $\Phi$ and $\Lambda$ are generically isomorphisms, $\overline{\Phi}$ and $\overline{\Lambda}$ are injective, from which the short exact sequence in the statement of the lemma follows.
\end{proof}

We may now prove the main theorems of this section.

\begin{proof}[{Proof of \autoref{thm:LWY-Ext1}}]
By \autoref{thm:moduli-interp-hetWY} and \autoref{l:kerPsi}, we have
\[
\het_{\sW(\cX)/Y}(\psi)=\dim\coker\pi_*\Lambda+\dim\coker\pi_*\overline{\Phi}-\dim\Ext^0(L\varphi^*L_\cX,\cO)_{\tors}-1.
\]
Since $\coker\pi_*\Lambda=\coker\pi_*\overline{\Lambda}$, \autoref{l:cokerseq-and-ordjac} tells us 
\[
\dim\coker\pi_*\Lambda+\dim\coker\pi_*\overline{\Phi}=\dim\coker\pi_*(\overline{\Lambda}\circ\overline{\Phi})=\dim\Ext^1(L_{\cX/Y},\cO)
\]
and the result follows.
\end{proof}


\begin{proof}[{Proof of \autoref{theoremHeightWarpingAndHeightWeight}}]
Let $\varphi\colon\cD\to\cX$ be an $\ell$-th twisted disc and let $\rho\colon \Spec S=D\to\cD$ be the standard cover. For ease of notation, given $E\in D^-_{\coh}(\cX)$, we write $E|_D$ to mean $L(\varphi\rho)^*E$ and $E|_{D_0}$ to mean $L(\varphi\rho\iota)^*E$ where $\iota\colon D_0\to D$ is the natural closed immersion. By \autoref{thm:LWY-Ext1}, we must prove
\[
\dim\Ext^1(L_{\cX/Y}|_D,\cO)_0-\dim(\Ext^0(L_\cX|_D,\cO)_{\tors})_0=\het_{\cX/Y}(\varphi)+\wt_\cX(\varphi)
\]
where the zero subscript denotes the degree $0$ part.

First note that from the exact triangle
\begin{equation}\label{eqn:XYktriangle}
L_Y|_D \to L_{\cX}|_D \to L_{\cX/Y}|_D,
\end{equation}
we see $\cH^1(L_{\cX}|_D)=\cH^1(L_{\cX/Y}|_D)$ which is torsion. Thus, by \autoref{lemmaPullBackCotangentComplexBecomesTwoVectorBundles} combined with \autoref{prop:graded-Smith}, we have an exact triangle
\[
L_\cX|_D\to \bigoplus_{i=1}^I S(a_i)\oplus\bigoplus_{j=1}^J S(b_j-n_j)\xrightarrow{f} \bigoplus_{j=1}^J S(b_j)
\]
where $f$ maps the generator of $S(a_i)$ to $0$ and maps the generator of $S(b_j-n_j)$ to $u^{n_j}$ times the generator for $S(b_j)$; here $u$ a uniformizer, $n_j>0$, and $a_i,b_j\in(0,\ell]$. We see then that
\[
\cH^0(L_\cX|_D)=\bigoplus_{i=1}^I S(a_i)\quad\textrm{and}\quad \cH^1(L_\cX|_D)=\cH^1(L_{\cX/Y}|_D)=\bigoplus_{j=1}^J S/u^{n_j}(b_j).
\]
Then we have an exact triangle
\[
L_\cX|_{D_0}\to \bigoplus_{i=1}^I S/u(a_i)\oplus\bigoplus_{j=1}^J S/u(b_j-n_j)\xrightarrow{0} \bigoplus_{j=1}^J S/u(b_j).
\]
So, letting $q_j$ and $r_j\in(0,\ell]$ be integers with
\[
b_j-n_j=q_j\ell+r_j,
\]
we see
\[
\wt_\cX=\dim\cX+\frac{1}{\ell}\sum b_j-\frac{1}{\ell}\sum a_i-\frac{1}{\ell}\sum r_j.
\]

Our next goal is to compute $(\Ext^0(L_\cX|_D, \cO)_{\tors})_0$. We have
\begin{equation}\label{eqn:4-equal-vs}
\Ext^0(L_\cX|_D, \cO)_{\tors} = \Ext^1(\cH^1(L_\cX|_D),\cO)_{\tors}= \Ext^1(\cH^1(L_{\cX/Y}|_D), \cO)=    \Ext^0(L_{\cX/Y}|_D, \cO),
\end{equation}
where the first and last equalities are by \autoref{l:ext-vs-height} and the middle equality uses that $\cH^1(L_\cX|_D) = \cH^1(L_{\cX/Y}|_D)$ is torsion. From the resolution 
\[
0\to \bigoplus_{j=1}^J S(b_j-n_j)\xrightarrow{u^{n_j}} \bigoplus_{j=1}^J S(b_j)\to \cH^1(L_{\cX/Y}|_D)\to 0
\] 
we see
\[
\Ext^0(L_\cX|_D, \cO)_{\tors} =\Ext^1(\cH^1(L_{\cX/Y}|_D), \cO) = \bigoplus_{j=1}^J S/u^{n_j} (n_j- b_j);
\]
so by \eqref{eqn:4-equal-vs} and \autoref{l:ext-vs-height}, we see
\[
\het^{(1)}_{L_{\cX/Y}}(\varphi)=\frac{1}{\ell}\sum_j n_j.
\]
Note $n_j - b_k = -q_j\ell-r_j$ and that $\ell- r_j \in [0,\ell)$. A straightforward computation shows 
\begin{align*}
    \dim(S/u^n(c))_0 = 1 + \lfloor\frac{n-c-1}{\ell} \rfloor \quad \text{if }c \in [0,\ell)
\end{align*}
and so
\begin{align*}
    \dim (\Ext^0(L_\cX|_D, \cO)_{\tors})_0 &= \sum_{j}\left( 1+ \lfloor \frac{n_j-\ell+r_j-1}{\ell}\rfloor\right)\\
&= \sum_{j}\left( \lfloor \frac{(n_j+r_j-b_j)+(b_j-1)}{\ell}\rfloor\right)\\
    &= \frac{1}{\ell}\sum_j n_j - \frac{1}{\ell}\sum_j b_j + \frac{1}{\ell} \sum_j r_j\\
    &= \het^{(1)}_{L_{\cX/Y}}(\varphi) - \frac{1}{\ell}\sum_j b_j + \frac{1}{\ell} \sum_j r_j.
\end{align*}

We next compute $\dim\Ext^1(L_{\cX/Y}|_D,\cO)_0$. Again using the exact triangle \eqref{eqn:XYktriangle}, we have an exact sequence 
\begin{align*}
    0 \to \Ext^0(L_{\cX/Y}|_D,\cO) \to \Ext^0(L_{\cX}|_D,\cO) \to \Hom(\Omega^1_Y,\cO) \to \Ext^1(L_{\cX/Y}|_D,\cO) \to 0.
\end{align*}
Since $\Hom(\Omega^1_Y,\cO)$ is torsion free, and $\Ext^0(L_{\cX/Y}|_D,\cO) = \Ext^0(L_{\cX}|_D,\cO)_{\tors}$ by \eqref{eqn:4-equal-vs}, we see 
\begin{align*}
    0 \to \Ext^0(L_{\cX}|_D,\cO)/(\tors) \to \Hom(\Omega^1_Y,\cO) \to \Ext^1(L_{\cX/Y}|_D,\cO) \to 0
\end{align*}
is exact. We may rewrite this sequence as 
\begin{align*}
    0 \to \bigoplus_k S(c_k) \xrightarrow{u^{m_k}} S^{\dim(\cX)} \to \bigoplus_k S/u^{m_k} \to 0
\end{align*}
with $c_k\in[0,\ell)$. Note $c_k\equiv m_k\pmod{\ell}$. 
We see 
\begin{align*}
    \dim \Ext^1(L_{\cX/Y}|_D,\cO)_0 &= \sum_k \left(1+\lfloor\frac{m_k-1}{\ell}\rfloor\right)
    = \sum_k \left(1+\lfloor\frac{m_k-c_k + (c_k+1)}{\ell}\rfloor\right)\\
    &=\sum_k \left(\frac{m_k + c_k}{\ell} \right)
   = \het^{(0)}_{L_{\cX/Y}}(\varphi) + \frac{1}{\ell}\sum_k c_k
\end{align*}
where the last equality comes from \autoref{l:ext-vs-height} which shows
\[
\frac{1}{\ell}\sum_k m_k=\dim\Ext^1(L_{\cX/Y}|_D,\cO) = \het^{(0)}_{L_{\cX/Y}}(\varphi).
\]
It remains to relate the $c_k$ to the $a_i$. From Equation \eqref{eqn:Extn-tors-ss-degen} we see
\begin{align*}
     0 \to \Ext^1(\cH^1(L_{\cX}|_D),\cO) \to  \Ext^0(L_{\cX}|_D,\cO) \to \Hom(\cH^0(L_{\cX}|_D),\cO) \to 0 
\end{align*}
is exact. So using that $\Ext^1(\cH^1(L_{\cX}|_D),\cO) = \Ext^0(L_{\cX}|_D,\cO)_{\tors}$ from \eqref{eqn:4-equal-vs}, we see
\[
\Ext^0(L_{\cX}|_D,\cO)/(\tors) = \Hom(\cH^0(L_{\cX}|_D),\cO) = \bigoplus_i S(-a_i).
\]
Thus the multiset $\{c_k\}$ agrees with the multiset $\{ \ell-a_i\}$. 
It follows that
\begin{align*}
    \dim \Ext^1(L_{\cX/Y}|_D,\cO)_0 = \het^{(0)}_{L_{\cX/Y}}(\varphi) - \frac{1}{\ell}\sum_{i=1}^I a_i + I.
\end{align*}
Taking the difference of $\dim (\Ext^0(L_\cX|_D, \cO)_{\tors})_0$ with $\dim \Ext^1(L_{\cX/Y}|_D,\cO)_0$ and using that $I = \rank \cH^0 (L_{\cX}|_{D_0}) - \rank\cH^1(L_{\cX}|_{D_0})=\dim\cX$ by \autoref{lemmaPullBackCotangentComplexBecomesTwoVectorBundles}, we conclude
\begin{align*}
    (\het_{\sW(\cX)/Y}\circ\eta)(\varphi) + 1 &= \het_{L_{\cX/Y}}(\varphi) + \wt_{\cX}(\varphi).\qedhere
\end{align*}
\end{proof}

\section{Stringy Hodge numbers via crepant resolutions by Artin stacks}

This section is dedicated to proving \autoref{mainTheoremCrepantResolutionImpliesLogTerminal} and \autoref{maintheorem}. We begin with the following general change of variables formula for motivic integration over twisted arcs.

\begin{theorem}\label{corollaryTwistedChangeOfVariables}
Let $\cX$ be an equidimensional smooth finite type Artin stack over $k$ with affine diagonal, and assume that $\cX$ has a good moduli space. Let $Y$ be an irreducible finite type scheme over $k$, let $\cX \to Y$ be a morphism over $k$, let $\cU$ be an open substack of $\cX$ such that $\cU \hookrightarrow \cX \to Y$ is an open immersion, let $\cC \subset |\sJ(\cX)|$ be a bounded cylinder that is disjoint from $|\sJ(\cX \setminus \cU)|$, and let $E \subset \sL(Y)$ be a cylinder such that $\cX \to Y$ induces a bijection $\overline{\cC}(k') \to E(k')$ for every field extension $k'$ of $k$.
\begin{enumerate}[label=(\alph*)]

\item The restriction of $\het_{\cX/Y} + \wt_\cX$ to $\cC$ is integer valued and takes only finitely many values.

\item For all $n \in \Z$, the set $(\het_{\cX/Y} + \wt_\cX)^{-1}(n) \cap \cC$ is a bounded cylinder in $|\sJ(\cX)|$.

\item We have
\[
	\mu_Y(E) = \int_{\cC} \bL^{-\het_{\cX/Y} - \wt_\cX} \diff\nu_\cX.
\]

\end{enumerate}
\end{theorem}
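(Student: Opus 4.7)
The plan is to bootstrap from the untwisted change of variables formula of \cite{SatrianoUsatine2} applied to the morphism $\widetilde{\sW}(\cX)\to Y$, using \autoref{theoremTwistedToWarped} to translate between twisted arcs of $\cX$ and untwisted arcs of $\widetilde{\sW}(\cX)$, and using \autoref{theoremHeightWarpingAndHeightWeight} to translate the warped relative height into $\het_{\cX/Y}+\wt_\cX$. Since $\cC$ is bounded, $\cC=\bigsqcup_\ell\cC_\ell$ with $\cC_\ell=\cC\cap|\sJ^\ell(\cX)|$ and only finitely many $\ell$ nonempty, so I may argue one $\ell$ at a time and sum at the end.

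For a fixed $\ell$, let $\eta\colon|\sJ^\ell(\cX)|\to|\sL(\widetilde{\sW}(\cX))|$ be the canonical map and let $\cE_\ell=\eta(\cC_\ell)$. \autoref{theoremTwistedToWarped} guarantees that $\cE_\ell$ is a bounded cylinder, that $\eta$ is bijective on field-valued points between $\cC_\ell$ and $\cE_\ell$, and that $\nu_\cX(\cC_\ell)=\bL^{\epsilon}\mu_{\widetilde{\sW}(\cX)}(\cE_\ell)$ with $\epsilon=0$ for $\ell=1$ and $\epsilon=1$ for $\ell>1$. The same bijection identifies $\cE_\ell$ with the subset $E_\ell\subset E$ obtained as the image of $\cC_\ell$ in $\sL(Y)$. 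Since $\widetilde{\sW}(\cX)\to Y$ is weakly birational (because $\cU\hookrightarrow\cX\hookrightarrow\widetilde{\sW}(\cX)\to Y$ identifies $\cU$ with an open subscheme of $Y$), the untwisted change of variables formula from \cite{SatrianoUsatine2} yields
\[
\mu_Y(E_\ell)=\int_{\cE_\ell}\bL^{-\het_{\widetilde{\sW}(\cX)/Y}}\,d\mu_{\widetilde{\sW}(\cX)}.
\]

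The key identity is \autoref{theoremHeightWarpingAndHeightWeight}, which for $\ell>1$ reads $\het_{\sW(\cX)/Y}\circ\eta+1=\het_{\cX/Y}+\wt_\cX$ on $\cC_\ell$; for $\ell=1$ one verifies directly that $\wt_\cX$ vanishes on $\sL(\cX)$ (a short computation using that $\mu_1$ is trivial and $\cX$ is smooth) and that $\het_{\sW(\cX)/Y}\circ\eta=\het_{\cX/Y}$, since $\eta$ factors through the open immersion $\cX\hookrightarrow\widetilde{\sW}(\cX)$. In both cases the combined identity is $\het_{\sW(\cX)/Y}\circ\eta+\epsilon=\het_{\cX/Y}+\wt_\cX$. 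Parts (a) and (b) follow at once from this identity combined with the analogous standard statements for $\het_{\widetilde{\sW}(\cX)/Y}$ on the bounded cylinder $\cE_\ell$, transported along the bijection of \autoref{theoremTwistedToWarped}. For part (c), substituting the height identity into the displayed integral and applying \autoref{theoremTwistedToWarped}(b) level-set by level-set produces a factor $\bL^\epsilon$ from the measure comparison that exactly cancels the $\bL^{-\epsilon}$ coming from the shift in the exponent, yielding $\mu_Y(E_\ell)=\int_{\cC_\ell}\bL^{-\het_{\cX/Y}-\wt_\cX}\,d\nu_\cX$. Summing over the finitely many nonempty $\ell$ completes the proof.

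The main obstacle in the entire argument is producing the two inputs, namely the measure comparison \autoref{theoremTwistedToWarped} and the height-to-weight identity \autoref{theoremHeightWarpingAndHeightWeight}; once they are in hand, the change of variables formula follows by the bookkeeping above. A small point of care is handling the $\ell=1$ case of the height identity separately (\autoref{theoremHeightWarpingAndHeightWeight} is only stated for $\ell>1$), but this reduces to the vanishing of $\wt_\cX$ on untwisted arcs and the compatibility of $\het_{\cdot/Y}$ with the open immersion $\cX\hookrightarrow\widetilde{\sW}(\cX)$.
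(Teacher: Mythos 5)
Your overall strategy is the same as the paper's: combine \autoref{theoremTwistedToWarped} (to pass between twisted arcs of $\cX$ and untwisted arcs of $\widetilde{\sW}(\cX)$, with the factor $\bL^\epsilon$), the untwisted change of variables applied to $\widetilde{\sW}(\cX)\to Y$, and \autoref{theoremHeightWarpingAndHeightWeight} to rewrite the correction term; your separate treatment of $\ell=1$ (vanishing of $\wt_\cX$ on untwisted arcs and $\het_{\sW(\cX)/Y}\circ\eta=\het_{\cX/Y}$ on the open substack $\cX\subset\widetilde{\sW}(\cX)$) is correct and in fact spells out a point the paper leaves implicit.

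However, there is a genuine gap in the way you organize the reduction: you apply the untwisted change of variables formula separately to each pair $(\cE_\ell, E_\ell)$, where $E_\ell$ is the image of $\cC_\ell$ in $\sL(Y)$. The hypotheses give you only that the total set $E$ is a cylinder; the pieces $E_\ell$ (arcs of $Y$ whose twisted lift has order exactly $\ell$) are not known to be cylinders, and nothing in your argument shows they are even measurable, so $\mu_Y(E_\ell)$ is not defined and the untwisted formula cannot be invoked for $(\cE_\ell,E_\ell)$ as stated; the final step $\sum_\ell\mu_Y(E_\ell)=\mu_Y(E)$ has the same problem. The paper avoids this by never decomposing downstairs: it applies the untwisted change of variables once to the full pair $(\cE,E)$ (here $E$ is a cylinder by hypothesis and $\cE=\bigsqcup_\ell\cE^\ell$ is a bounded cylinder disjoint from $|\sL(\widetilde{\sW}(\cX)\setminus\cU)|$), obtaining integrality, finiteness of values, and bounded-cylinder level sets for $\het_{\sW(\cX)/Y}$ on $\cE$ together with $\mu_Y(E)=\int_{\cE}\bL^{-\het_{\sW(\cX)/Y}}\diff\mu_{\widetilde{\sW}(\cX)}$. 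Only then does it decompose \emph{upstairs} into the bounded cylinders $\cE^{\ell,n}=\het_{\sW(\cX)/Y}^{-1}(n)\cap\cE^\ell$ and their preimages $\cC^{\ell,n}$, applying \autoref{theoremTwistedToWarped} piece by piece so that the extra factor $\bL$ for $\ell>1$ is absorbed by the $+1$ in \autoref{theoremHeightWarpingAndHeightWeight}, exactly the cancellation you describe. Your argument becomes correct if you reorder it in this way (or, alternatively, if you supply a separate proof that each $E_\ell$ is a cylinder or at least measurable with the expected additivity, which is an additional nontrivial input you have not provided).
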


\begin{proof}
We have that $\sW(\cX)$ is a locally finite type Artin stack over $k$ \cite[Theorem 1.9(1)]{SatrianoUsatine4} with affine geometric stabilizers and separated diagonal by \autoref{propositionQuasiAffineInertia}. For each $\ell \in \Z_{>0}$, set $\cC^\ell = \cC \cap |\sJ^
\ell(\cX)|$, and set $\cE^\ell \subset |\sL(\widetilde{\sW}(\cX))|$ to be the image of $\cC^\ell$. Since $\cD^{\ell}_{k'} \cong \cD^{\ell'}_{k'}$ for some $k'$ implies $\ell = \ell'$, we have that the $\cE^\ell$ are pairwise disjoint. Since $\cC$ is a bounded cylinder, each $\cC^\ell$ is a bounded cylinder and there are only finitely many nonempty $\cC^\ell$. By \autoref{theoremTwistedToWarped}, each $\cE^\ell$ is a bounded cylinder and $\overline{\cC^\ell}(k') \to \overline{\cE^\ell}(k')$ is a bijection for all field extensions $k'$ of $k$. Set $\cE \subset |\sL(\widetilde{\sW}(\cX)|$ to be the image of $\cC$. Then $\overline{\cC}(k') \to \overline{\cE}(k')$ is a bijection for all field extensions $k'$ of $k$. Because $\overline{\cC}(k') \to E(k')$ is a bijection for all $k'$, this implies that $\overline{\cE}(k') \to E(k')$ is a bijection for all $k'$. Since there are only finitely many nonempty $\cE^\ell$ and their union is $\cE$, we have that $\cE$ is a bounded cylinder. Because $\cC$ is disjoint from $|\sJ(\cX \setminus \cU)|$, we have that $\cE$ is disjoint from $|\sL(\widetilde{\sW}(\cX) \setminus \cU)|$. Therefore \cite[Theorem 7.1]{SatrianoUsatine5} implies that the restriction of $\het_{\sW(\cX)/Y}$ to $\cE$ is integer valued and takes only finitely many values, each $\het_{\sW(\cX)/Y}^{-1}(n) \cap \cE$ is a bounded cylinder in $|\sL(\widetilde{\sW}(\cX)|$, and
\[
	\mu_Y(E) = \int_{\cE} \bL^{-\het_{\sW(\cX)/Y}} \diff\mu_{\widetilde{\sW}(\cX)}.
\]
For each $\ell, n$, set $\cE^{\ell, n} = \het_{\sW(\cX)/Y}^{-1}(n) \cap \cE^{\ell}$ and 
\[
	\cC^{\ell, n} = \eta^{-1}(\cE^{\ell, n}) \cap \cC = (\het_{\sW(\cX)/Y}  \circ \eta)^{-1}(n) \cap \cC^{\ell},
\] 
where $\eta: |\sJ(\cX)| \to |\sL(\sW(\cX))|$ is the canonical map. It is straightforward to verify that each $\cE^{\ell, n}$ and $\cC^{\ell, n}$ is a bounded cylinder and that $\cE^{\ell,n}$ is the image of $\cC^{\ell,n}$. Then for all $n$,
\[
	\nu_\cX(\cC^{1, n}) = \mu_\cX(\cC^{1,n}) = \mu_{\widetilde{\sW}(\cX)}(\cE^{1,n}),
\]
and for all $\ell >1$ and all $n$, \autoref{theoremTwistedToWarped} implies
\[
	\nu_\cX(\cC^{\ell, n}) = \bL \mu_{\widetilde{\sW}(\cX)}(\cE^{\ell,n}).
\]
Set $f: |\sJ(\cX)| \to \Z \cup \{\infty\}$ be such that $f|_{|\sJ^1(\cX)|} = (\het_{\sW(\cX)/Y}  \circ \eta) = \het_{\cX/Y}$ and $f|_{|\sJ^\ell(\cX)|} = (\het_{\sW(\cX)/Y}  \circ \eta) + 1$ for $\ell > 1$. Because $\cC^{\ell, n}$ are bounded cylinders and there are only finitely many nonzero ones, we have that $f|_\cC$ is integer valued and takes only finitely many values and each $f^{-1}(n) \cap \cC$ is a bounded cylinder. Then
\begin{align*}
	\mu_Y(E) &= \sum_{\ell, n} \bL^{-n} \mu_{\widetilde{\sW}(\cX)}(\cE^{\ell, n}) \\
	&= \sum_{n} \bL^{-n} \nu_\cX(\cC^{1,n}) + \sum_{\ell > 1} \sum_n \bL^{-n-1} \nu_\cX(\cC^{\ell, n})\\
	&= \int_\cC \bL^{-f} \diff\nu_\cX.
\end{align*}
Finally $f|_\cC = (\het_{\cX/Y} + \wt_\cX)|_\cC$ by \autoref{theoremHeightWarpingAndHeightWeight}, and we are done.
\end{proof}

We will eventually use \autoref{corollaryTwistedChangeOfVariables} in the special case where $Y$ has $\Q$-Gorenstein singularities to obtain a formula for $\mu^\Gor_Y$ in terms of $\nu_\cX$ and the set $\cA_\cX \subset |\sJ(\cX)|$ defined in the introduction. The next sequence of results culminates in \autoref{corollaryLogTerminalConvergenceExpressionForGorensteinWithCanonical}, which gives that desired formula.

\begin{proposition}\label{propComparingRelativeCanonicalAndHeight}
Let $\cX$ be a smooth finite type Artin stack over $k$ with affine diagonal, let $Y$ be a finite type scheme over $k$, let $m \in \Z_{>0}$ be such that $Y$ is $m$-Gorenstein, let $\cX \to Y$ be a morphism over $k$, and let $\cU$ be an open substack of $\cX$ such that the composition $\cU \hookrightarrow \cX \to Y$ is an open immersion. Then
\[
	\ord_{mK_{\cX/Y}} = m\het_{\cX/Y} - \ord_{\sI_{Y,m}} \circ \zeta
\]
on $|\sJ(\cX)| \setminus |\sJ(\cX \setminus \cU)|$, where $\zeta: |\sJ(\cX)| \to \sL(Y)$ is the canonical map induced by $\cX \to Y$, and $\sI_{Y,m}$ is the unique ideal sheaf such that the image of $(\Omega^{\dim Y}_Y)^{\otimes m} \to \omega_{Y,m}$ is $\sI_{Y,m}\omega_{Y,m}$.
\end{proposition}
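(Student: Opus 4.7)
The plan is to reduce the twisted statement to an analogous equality on untwisted arcs of $\cX$, and then to prove the untwisted version by comparing ideal sheaves at the arc level.

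\emph{Step 1 (reduction to untwisted arcs).} Given an $\ell$-th order twisted arc $\varphi\colon\cD^\ell_{k'}\to\cX$, let $\omega(\varphi)\colon\Spec k'\llbracket u\rrbracket\to\cX$ denote the associated untwisted arc via the standard cover (with $u=t^{1/\ell}$). Directly from the definitions of these order and height functions on twisted arcs, both $\ord_{mK_{\cX/Y}}(\varphi)$ and $\het_{\cX/Y}(\varphi)$ equal $(1/\ell)$ times the corresponding values on $\omega(\varphi)$. Meanwhile the composition $p\circ\omega(\varphi)\colon\Spec k'\llbracket u\rrbracket\to Y$ is $\zeta(\varphi)$ precomposed with the $\ell$-fold cover $\Spec k'\llbracket u\rrbracket\to\Spec k'\llbracket t\rrbracket$, $t\mapsto u^\ell$, so that $\ord_{\sI_{Y,m}}(p\circ\omega(\varphi))=\ell\cdot\ord_{\sI_{Y,m}}(\zeta(\varphi))$. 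Clearing $1/\ell$'s, the claim for $\varphi$ reduces to the same statement for the untwisted arc $\omega(\varphi)$.

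\emph{Step 2 (the untwisted equality).} Given an untwisted arc $\gamma\colon\Spec R\to\cX$ whose generic point maps into $\cU$, set $n=\dim\cX=\dim Y$ and consider the natural map
\[
\phi_m\colon p^*(\Omega^n_Y)^{\otimes m}\to\omega_{\cX,m}
\]
given by the $m$-th tensor power of the top exterior power of $dp\colon p^*\Omega^1_Y\to\Omega^1_\cX$. Since $\gamma$ lands generically in $\cU$, $\phi_m$ is generically an isomorphism along $\gamma$, so the image of $\gamma^*\phi_m$ in the rank-one free $R$-module $\gamma^*\omega_{\cX,m}$ is $(t^N)$ times that module for a well-defined $N\in\Z_{\geq 0}$. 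We compute $N$ in two ways. First, $\phi_m$ factors as
\[
p^*(\Omega^n_Y)^{\otimes m}\xrightarrow{p^*\psi_m} p^*\omega_{Y,m}\xrightarrow{\iota_m}\omega_{\cX,m},
\]
where $\psi_m\colon(\Omega^n_Y)^{\otimes m}\to\omega_{Y,m}$ has image $\sI_{Y,m}\omega_{Y,m}$, and $\iota_m$ is the comparison map arising from the canonical identification $\omega_{\cX,m}\cong p^*\omega_{Y,m}\otimes\cO(mK_{\cX/Y})$ (a priori only a rational map when $mK_{\cX/Y}$ is not effective). After pulling back to $\gamma$ and tracking valuations, this gives $N=\ord_{\sI_{Y,m}}(p\circ\gamma)+\ord_{mK_{\cX/Y}}(\gamma)$. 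Second, writing $\phi_m=\phi_1^{\otimes m}$ and observing that the image of $\phi_1\colon p^*\Omega^n_Y\to\omega_\cX$ is of the form $\sJ_p\cdot\omega_\cX$ for the Jacobian ideal $\sJ_p\subset\cO_\cX$, we have $N=m\cdot\ord_{\sJ_p}(\gamma)$. The identification $\ord_{\sJ_p}(\gamma)=\het_{\cX/Y}(\gamma)$ for smooth $\cX$ reduces to the scheme case (e.g.\ as in \cite{SatrianoUsatine2}) via a smooth cover of $\cX$, since both $\sJ_p$ and the cotangent-complex-based height function behave well under smooth pullback. Equating the two expressions for $N$ yields the desired equality.

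The main obstacle lies in Step 2. One delicate point is that the comparison map $\iota_m$ is only a rational map when $mK_{\cX/Y}$ is not effective; however, after pullback to the arc $\gamma$ all line bundles trivialize, so the relevant statement becomes an elementary comparison of $t$-adic valuations. A second subtlety is the identification $\het_{\cX/Y}=\ord_{\sJ_p}$ in the stacky setting, which we expect to handle by smooth descent from the well-known scheme case, using that both sides of the identity are local in the smooth topology on $\cX$.
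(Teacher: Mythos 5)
Your Step 1 is exactly the reduction the paper makes: by the very definitions of the twisted order and height functions, each side evaluated on an $\ell$-twisted arc $\varphi$ is $1/\ell$ times the corresponding untwisted quantity along the composite $\Spec(k'\llbracket t\rrbracket)\to\cD^\ell_{k'}\xrightarrow{\varphi}\cX$, while the induced arc of $Y$ factors through $t\mapsto t^\ell$, so $\ord_{\sI_{Y,m}}$ also scales by $\ell$. This reduces the claim to the untwisted statement. At that point the paper simply cites the untwisted case, which is \cite[Theorem 1.13]{SatrianoUsatine3}, one of the main results of that earlier paper; your proof would be complete, and essentially identical to the paper's, if you did the same.

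The genuine gap is in Step 2, where you attempt to reprove that untwisted input. Your argument is written as if $\cX$ were a smooth scheme or Deligne--Mumford stack: you define $\phi_1$ as the top exterior power of $p^*\Omega^1_Y\to\Omega^1_\cX$ landing in $\omega_\cX$, and you identify $\het_{\cX/Y}$ with the order of the resulting Jacobian ideal $\sJ_p$. For a smooth Artin stack with positive-dimensional stabilizers --- precisely the situation this paper needs, since crepant resolutions of log-terminal $Y$ are generally not Deligne--Mumford --- $H^0(L_\cX)$ is not locally free of rank $\dim\cX$, $\omega_\cX=\det L_\cX$ is not $\Lambda^{\dim\cX}\Omega^1_\cX$, and $\het_{\cX/Y}$ involves both $H^0$ and $H^1$ of $L\gamma^*L_{\cX/Y}$; so your $\phi_1$, and hence $\sJ_p$, is not defined by the recipe you give, and the key identity $\het_{\cX/Y}=\ord_{\sJ_p}$ is exactly what needs proof. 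Your proposed repair by smooth descent does not work as stated: if $\rho\colon V\to\cX$ is a smooth cover and $\tilde\gamma$ a lift of $\gamma$, then $V\to Y$ is no longer birational (it has positive relative dimension over $\cU$), $L_{V/Y}$ differs from $L\rho^*L_{\cX/Y}$ by the exact triangle with $\Omega^1_{V/\cX}$, and $K_{V/Y}\neq\rho^*K_{\cX/Y}$; in fact $L\tilde\gamma^*L_{V/Y}$ acquires a free summand of rank equal to $\dim\rho$, so the classical birational Jacobian computation does not even apply to $V\to Y$. Both sides of the identity change under passage to the cover by correction terms coming from $\Omega^1_{V/\cX}$, and showing these corrections cancel is essentially the content of the theorem rather than a formality. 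So either cite \cite[Theorem 1.13]{SatrianoUsatine3} for the untwisted case (as the paper does), or replace Step 2 by a genuine argument carried out at the level of the perfect complex $L_{\cX/Y}$ and its determinant.
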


\begin{proof}
Let $\ell \in \Z_{> 0}$, let $k'$ be a field extension of $k$, let $\varphi: \cD^\ell_{k'} \to \cX$ be a $k'$-valued point of $|\sJ(\cX)| \setminus |\sJ(\cX \setminus \cU)|$, and let $\psi: \Spec(k'\llbracket t \rrbracket) \to \cX$ be the composition of the usual covering map $\Spec(k'\llbracket t \rrbracket) \to \cD^\ell_{k'}$ with $\varphi$. Then $\psi$ is a $k'$-valued point of $|\sL(\cX)| \setminus |\sL(\cX \setminus \cU)|$, and by definition
\[
	\ord_{mK_{\cX/ Y}}(\varphi) = \frac{\ord_{mK_{\cX/Y}}(\psi)}{\ell},
\]
and
\[
	\het_{\cX/Y}(\varphi) = \frac{\het_{\cX/Y}(\psi)}{\ell}.
\]
Let $\Spec(k'\llbracket t \rrbracket) \to Y$ be the image of $\varphi$ under the canonical map $\sJ(\cX) \to \sL(Y)$. Then we have a commuting diagram
\[
\xymatrix{
\Spec(k'\llbracket t \rrbracket) \ar[r] \ar[dr] & \cD^\ell_{k'} \ar[r]^{}\ar[d]_-{} & \cX\ar[d]^-{}\\
&\Spec(k'\llbracket t \rrbracket) \ar[r]^-{} & Y
}
\]
where the diagonal arrow $\Spec(k'\llbracket t \rrbracket) \to \Spec(k'\llbracket t \rrbracket)$ is given by $t \mapsto t^\ell$. Thus
\[
	(\ord_{\sI_{Y,m}} \circ \zeta)(\varphi) = \frac{(\ord_{\sI_{Y,m}} \circ \zeta)(\psi)}{\ell},
\]
where we note that $\zeta$ restricted to $|\sJ^1(\cX)| = |\sL(\cX)|$ is the canonical map $|\sL(\cX)| \to \sL(Y)$. Now the proposition follows immediately from the untwisted case, which is \cite[Theorem 1.13]{SatrianoUsatine3}.
\end{proof}

\begin{proposition}\label{propositionCrepantResolutionImpliesStableGMS}
Let $\cX$ be a finite type Artin stack over $k$, let $Y$ be a finite type algebraic space over $k$, and let $\cX \to Y$ be a strongly birational map that factors as a good moduli space morphism followed by a tame proper morphism. Then $\cX$ admits a properly stable good moduli space $X$ and the induced map $X \to Y$ is proper.

Furthermore if $V \hookrightarrow Y$ is an open sub-algebraic-space over which $\cX \to Y$ is an isomorphism, then the good moduli space map $\cX \to X$ is an isomorphism over the preimage of $V$ in $X$, and in particular, the stable locus of $\cX$ contains the preimage of $V$ in $\cX$.
\end{proposition}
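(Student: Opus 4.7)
Write the given factorization as $\cX \xrightarrow{p} X \xrightarrow{q} Y$, where $p$ is a good moduli space morphism and $q$ is tame and proper. This immediately identifies $X$ as the good moduli space of $\cX$ and witnesses that $X \to Y$ is proper, so the only thing left to prove is the proper stability together with the ``furthermore'' statement. I will prove the latter first, for arbitrary open $V \hookrightarrow Y$ over which $\cX \to Y$ is an isomorphism; applying the result to the dense $V$ provided by strong birationality will then yield proper stability.

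Let $V \hookrightarrow Y$ be an open subspace over which $\cX \to Y$ is an isomorphism, and let $U := q^{-1}(V) \subset X$. The key step is to show that $p|_{p^{-1}(U)} \colon p^{-1}(U) \to U$ is an isomorphism. Using $U = V \times_Y X$, compute
\[
p^{-1}(U) = U \times_X \cX = V \times_Y \cX,
\]
so the hypothesis on $V$ gives an isomorphism $s \colon V \xrightarrow{\sim} p^{-1}(U)$ whose composite with $q\circ p$ is the identity on $V$. Since $p^{-1}(U)$ is saturated with respect to $p$, the restriction $p|_{p^{-1}(U)}$ remains a good moduli space morphism. But its source is the algebraic space $V$, and a good moduli space morphism out of an algebraic space is necessarily an isomorphism (by uniqueness of the good moduli space of a space — both $s$ and $p|_{p^{-1}(U)}\circ s$ present $U$ as such a good moduli space). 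Hence $p|_{p^{-1}(U)}$ is an isomorphism, proving the first claim of the ``furthermore''.

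The isomorphism $p^{-1}(U) \xrightarrow{\sim} U$ makes every fiber of $p|_{p^{-1}(U)}$ a single point with trivial stabilizer, so every geometric point of $p^{-1}(U)$ has finite stabilizer and closed orbit in its $p$-fiber; thus $p^{-1}(U)$ lies in the stable locus of $\cX$. This establishes the remaining assertion of the ``furthermore''. Applying all of the above to the dense open $V \subset Y$ furnished by strong birationality, we conclude that the stable locus of $\cX$ contains the nonempty open saturated substack $p^{-1}(U)$ which maps isomorphically to the open $U \subset X$. These properties give $\cX \to X$ the structure of a properly stable good moduli space.

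\textbf{Main obstacle.} The only substantive step is the isomorphism $p|_{p^{-1}(U)} \colon p^{-1}(U) \xrightarrow{\sim} U$; once this is secured, everything else (properness of $X\to Y$, nonemptiness of the stable locus, finiteness of stabilizers on $p^{-1}(U)$) is immediate from the factorization hypothesis and the definition of ``stable''. This step rests on the standard fact that good moduli space morphisms out of algebraic spaces are isomorphisms, combined with the saturation of $p^{-1}(U)$.
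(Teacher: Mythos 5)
There is a genuine gap, and it sits in your very first step. You read the hypothesis as saying that $\cX \to Y$ factors as $\cX \xrightarrow{p} X \xrightarrow{q} Y$ with $X$ already an algebraic space, so that the existence of the good moduli space and the properness of $X \to Y$ are ``immediate.'' But in the paper's definition the intermediate object of the factorization is in general an Artin stack: the hypothesis is a factorization $\cX \to \cY \to Y$ with $\cX \to \cY$ a good moduli space morphism and $\cY \to Y$ tame and proper. (This is the only reading under which the tameness hypothesis has any content — tameness is vacuous for a map of algebraic spaces — and it is exactly what the footnote to the definition of ``resolution'' alludes to: tameness is what makes the definition \emph{equivalent} to ``$\cX$ admits a good moduli space $X$ with $X \to Y$ proper,'' and that equivalence is the substance of the first paragraph of the proposition.) So the main assertion — that $\cX$ admits an algebraic-space good moduli space $X$ at all, with $X \to Y$ proper — is precisely what your argument assumes rather than proves. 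The paper supplies it as follows: since $\cY \to Y$ is tame and proper, $\cY$ admits a good moduli space $X$ which is also its coarse space, with $\cY \to X$ proper; hence the composite $\cX \to \cY \to X$ is a good moduli space morphism, and $X \to Y$ is proper by Keel--Mori.

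Once $X$ exists, your treatment of the ``furthermore'' statement is correct and essentially identical to the paper's: base change $\cX \to X \to Y$ along $V \hookrightarrow Y$, note that the preimage of $q^{-1}(V)$ in $\cX$ is $V \times_Y \cX \cong V$, an algebraic space, and use that an algebraic space is its own good moduli space (together with uniqueness of good moduli spaces, or Alper's result that representable cohomologically affine Stein morphisms are isomorphisms) to conclude that $\cX \to X$ is an isomorphism over the preimage of $V$ in $X$; the deduction that this locus consists of stable points with finite stabilizers, and hence that $\cX \to X$ is properly stable once $V$ is taken to be the dense open furnished by strong birationality, is also how the paper argues. So the ``furthermore'' part is fine; the gap is the unproved existence of $X$ and the properness of $X \to Y$, which is where the tameness hypothesis must actually be used.
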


\begin{proof}
We assumed $\cX \to Y$ factors as a good moduli space morphism $\cX \to \cY$ followed by a tame proper map $\cY \to Y$. Because $\cY \to Y$ is tame, $\cY$ admits a good moduli space $X$ with the good moduli space map $\cY \to X$ proper and also a coarse space map. In particular, $\cX \to X$ is a good moduli space morphism. Since $\cY \to Y$ is proper, by Keel--Mori the induced map $X \to Y$ is proper. Note that because $\cX \to Y$ is strongly birational, the last sentence of the proposition would imply that $\cX \to X$ is properly stable. It thus remains to verify that last sentence.

Suppose $V \hookrightarrow Y$ is an open sub-algebraic-space over which $\cX \to Y$ is an isomorphism. By base changing $\cX \to X \to Y$ along $V \hookrightarrow Y$, noting that an algebraic space is its own good moduli space, we see that $\cX \to X$ is an isomorphism over the preimage of $V$ along $X \to Y$.
\end{proof}

\begin{lemma}\label{lemmaLiftTwistedArcsProperRepresentable}
Let $\cX$ and $\cY$ be Artin stacks, let $\cX \to \cY$ be a proper representable morphism, and let $\cV$ be an open substack of $\cY$ over which $\cX \to \cY$ is an isomorphism. Let $k'$ be a field, let $\ell \in \Z_{>0}$, and let $\cD^\ell_{k'} \to \cY$ be a morphism such that the composition $\Spec(k'\llparenthesis t \rrparenthesis) \to \cD^\ell_{k'} \to \cY$ factors through $\cV$. Then there exists a morphism $\cD^\ell_{k'} \to \cX$, unique up to 2-isomorphism, such that the diagram
\[
\xymatrix{
& \cX \ar[d] \\
\cD^\ell_{k'} \ar[r]\ar[ur] & \cY
}	
\]
is 2-commutative. Furthermore, if $\cD^\ell_{k'} \to \cY$ is representable, then so is $\cD^\ell_{k'} \to \cX$.
\end{lemma}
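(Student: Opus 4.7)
The plan is to reduce the lifting problem from the twisted disc to its standard \'etale cover $D = \Spec(k'\llbracket t^{1/\ell}\rrbracket) \to \cD^\ell_{k'}$, apply the valuative criterion for properness of the representable map $\cX \to \cY$ to obtain a lift on $D$, and then use $\mu_\ell$-equivariance to descend. The hypothesis that the generic fiber factors through $\cV$ is crucial because it ensures that before extending we already have a canonical lift over $\Spec(k'\llparenthesis t \rrparenthesis)$, namely the one coming from the isomorphism $\cV\xrightarrow{\sim}\cV\times_\cY\cX$.

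More precisely, the composition $f\colon D\to\cD^\ell_{k'}\to\cY$ is a map from a DVR whose generic restriction $f|_{D^\circ}$ factors through the open immersion $\cV\hookrightarrow\cY$, hence lifts canonically (and uniquely up to unique 2-isomorphism) to a map $D^\circ\to\cX$ via $\cV\times_\cY\cX\xrightarrow{\sim}\cV$. The valuative criterion for properness, applied to the representable proper morphism $\cX\to\cY$, then produces a unique lift $g\colon D\to\cX$ together with the required 2-isomorphism with $f$ and the specified generic lift.

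To descend $g$ along the $\mu_\ell$-torsor $D\to\cD^\ell_{k'}$, I would note that for each $\xi\in\mu_\ell$ the automorphism $\sigma_\xi$ of $D$ satisfies $f\sigma_\xi\cong f$ via a canonical 2-isomorphism coming from the torsor structure, so $g\sigma_\xi$ is another lift of $f$ whose restriction to $D^\circ$ agrees (canonically) with $g|_{D^\circ}$, since both factor through $\cV\times_\cY\cX$. The uniqueness clause in the valuative criterion then yields a unique 2-isomorphism $g\sigma_\xi\Rightarrow g$. These 2-isomorphisms automatically satisfy the cocycle condition (again by uniqueness), giving descent data. Since $D\to\cD^\ell_{k'}$ is an \'etale $\mu_\ell$-torsor, this descent datum produces the desired morphism $\cD^\ell_{k'}\to\cX$, and uniqueness up to 2-isomorphism of the descended map follows in the same way from uniqueness on $D$. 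The main delicate point of the argument is verifying that the $\mu_\ell$-descent data one writes down really does form a coherent 2-descent datum, but this is forced by the unique-up-to-unique-2-isomorphism strength of the valuative criterion.

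For the last sentence, assume $\cD^\ell_{k'}\to\cY$ is representable and let $h\colon\cD^\ell_{k'}\to\cX$ be the lift. By the standard characterization of representability in terms of injectivity on automorphism groups of geometric points (e.g.\ \cite[Tag 04YY]{stacks-project}), the composition $\cX\to\cY$ being representable together with $\cD^\ell_{k'}\to\cY=(\cX\to\cY)\circ h$ being representable forces $h$ itself to be injective on automorphism groups of geometric points, hence representable.
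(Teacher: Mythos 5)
Your proposal is correct in outline but takes a genuinely different route from the paper. You lift along the standard \'etale cover $D=\Spec(k'\llbracket t^{1/\ell}\rrbracket)\to\cD^\ell_{k'}$ and then descend along the $\mu_\ell$-torsor; the paper instead builds a global object over $\cD^\ell_{k'}$ from the start, namely the relative normalization $\cD$ of $\cX\times_\cY\cD^\ell_{k'}$ in $\Spec(k'\llparenthesis t\rrparenthesis)$, shows $\cD\to\cX\times_\cY\cD^\ell_{k'}$ is finite (Nagata), and proves $\cD\to\cD^\ell_{k'}$ is an isomorphism by base changing to $D$ and using ``finite birational onto a normal integral scheme is an isomorphism''; composing with $\cD\to\cX$ then gives the lift, and uniqueness comes from the universal property of relative normalization rather than from descent. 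What your route buys is familiarity and economy: because $\cX\to\cY$ is representable, lifts of $f$ over $D$ are literally sections of the algebraic space $\cX\times_\cY D\to D$, so the 2-isomorphisms $g\sigma_\xi\Rightarrow g$ you need are unique when they exist (two sections of a separated algebraic space over the integral $D$ that agree at the generic point are equal), the cocycle condition is automatic, and ordinary \'etale descent of morphisms into a stack finishes the construction and the uniqueness statement; your representability argument at the end (injectivity on automorphism groups of geometric points) is also fine and different from the paper's, which gets representability from $\cD\to\cX\times_\cY\cD^\ell_{k'}$. The one place where you are looser than the paper, and which is exactly where the paper spends its effort, is the sentence ``the valuative criterion for properness\dots produces a unique lift $g\colon D\to\cX$'': after base change this is the existence part of the valuative criterion for a proper morphism of \emph{algebraic spaces} over the DVR $k'\llbracket t^{1/\ell}\rrbracket$, and the off-the-shelf statements for algebraic spaces allow a finite extension of the DVR, unlike the scheme case. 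It is not a fatal gap, because the no-extension statement is true in your situation and is easy to supply: since the generic point lands in $\cV$, the generic fiber of $\cX\times_\cY D\to D$ is a single $K$-point, and the scheme-theoretic closure of the generic section is proper, integral with function field $K$, hence quasi-finite and therefore finite over $D$, so by normality of $k'\llbracket t^{1/\ell}\rrbracket$ it is a section. That supplementary argument is essentially the paper's normalization step transplanted to the cover, so you should either include it or cite a no-extension form of the criterion; with that in place your proof goes through.
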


\begin{proof}
Let $\cD$ be the normalization of $\cX \times_\cY \cD^\ell_{k'}$ in $\cX \times_\cY \Spec(k'\llparenthesis t \rrparenthesis) \cong \Spec(k'\llparenthesis t \rrparenthesis)$. We then have the following 2-commutative diagram.
\[
\xymatrix{
\cD \ar[r]\ar[d]& \cX \ar[d] \\
\cD^\ell_{k'} \ar[r] & \cY
}	
\]
We will prove that $\cD \to \cD^\ell_{k'}$ is an isomorphism. Let $\cD^\circ = \cD \times_{\cD^\ell_{k'}} \Spec(k'\llparenthesis t \rrparenthesis)$. Note that $\cD^\circ \to \cD$ and $\Spec(k'\llparenthesis t \rrparenthesis) \to \cD^\ell_{k'}$ are open immersions and $\cD^\circ \to \Spec(k'\llparenthesis t \rrparenthesis)$ is an isomorphism. By \cite[035L]{stacks-project}, $\cD$ is normal and $\cD^\circ$ is dense in $\cD$. By \cite[Lemma 032W and Proposition 0334]{stacks-project}, $\cX \times_\cY \cD^\ell_{k'}$ is Nagata, so \cite[Lemma 03GR]{stacks-project} implies that $\cD \to \cX \times_\cY \cD^\ell_{k'}$ is finite. Therefore $\cD \to \cD^\ell_{k'}$ is proper and representable. Now let $T \to \Spec(k'\llbracket t^{1/\ell} \rrbracket)$ be the base change of $\cD \to \cD^\ell_{k'}$ along the \'{e}tale cover $\Spec(k'\llbracket t^{1/\ell} \rrbracket) \to \cD^\ell_{k'}$. By the valuative criterion for algebraic spaces, $T \to \Spec(k'\llbracket t^{1/\ell} \rrbracket)$ has a section $\Spec(k'\llbracket t^{1/\ell} \rrbracket) \to T$. Since $T \to \Spec(k'\llbracket t^{1/\ell} \rrbracket)$ is separated, the section $\Spec(k'\llbracket t^{1/\ell} \rrbracket) \to T$ is a closed immersion. Thus $\Spec(k'\llbracket t^{1/\ell} \rrbracket) \to T$ is a finite birational morphism of integral schemes with normal target, so is an isomorphism by \cite[Lemma 0AB1]{stacks-project}. Therefore $T \to \Spec(k'\llbracket t^{1/\ell} \rrbracket)$ is an isomorphism, so $\cD \to \cD^\ell_{k'}$ is an isomorphism.

The composition $\cD^\ell_{k'} \xrightarrow{\sim} \cD \to \cX$, where $\cD^\ell_{k'} \xrightarrow{\sim} \cD$ is the inverse of $\cD \xrightarrow{\sim} \cD^\ell_{k'}$, is the map as desired by the proposition. Furthermore since $\cD \to \cX \times_\cY \cD^\ell_{k'}$ is representable, the constructed map $\cD^\ell_{k'} \to \cX$ is representable if $\cD^\ell_{k'} \to \cY$ is.

It remains to show uniqueness of $\cD^\ell_{k'} \to \cX$. Suppose $\varphi: \cD^\ell_{k'} \to \cX$ is another such map, and let $\varphi': \cD^\ell_{k'} \to \cX \times_\cY \cD^\ell_{k'}$ be the map induced by $\varphi$ and the identity map on $\cD^\ell_{k'}$. Since $\cX \to \cY$ is separated and representable, $\varphi'$ is a section of a separated and representable map and is therefore a closed immersion. Then the universal property of relative normalization, see \cite[Lemma 035I]{stacks-project}, gives a map $\psi: \cD \to \cD^\ell_{k'}$ fitting into the following 2-commutative diagram.
\[
\xymatrix{
\Spec(k'\llparenthesis t \rrparenthesis) \ar[r]\ar[d]& \cD^{\ell}_{k'} \ar[d]^-{\varphi'} \\
\cD \ar[r] \ar[ur]^-{\psi} & \cX \times_\cY \cD^\ell_{k'}
}
\]
This diagram shows that $\psi$ is 2-isomorphic to the isomorphism $\cD \xrightarrow{\sim} \cD^\ell_{k'}$ from earlier. In particular $\psi$ is an isomorphism. Thus $\varphi'$ is 2-isomorphic to the composition $\cD^{\ell}_{k'} \xrightarrow{\psi^{-1}} \cD \to \cX \times_{\cY} \cD^{\ell}_{k'}$, so $\varphi$ is 2-isomorphic to the composition $\cD^{\ell}_{k'} \xrightarrow{\psi^{-1}} \cD \to \cX$, which is 2-isomorphic to the first map $\cD^\ell_{k'} \to \cX$ that we constructed. We are thus done.
\end{proof}

\begin{lemma}\label{lemmaLiftingToEdidinRydh}
Let $\cX$ be an equidimensional smooth finite type Artin stack over $k$ with affine diagonal, and assume that $\cX$ has a properly stable good moduli space. Let $\cX^s$ be the stable locus of $\cX$, let $\cX' \to \cX$ be the canonical reduction of stabilizers of Edidin--Rydh, let $\cU'$ be the preimage of $\cX^s$ in $\cX'$, let $\cC' \subset |\sJ(\cX')| \setminus |\sJ(\cX' \setminus \cU')|$, and let $\cC$ be the image of $\cC'$ in $|\sJ(\cX)|$.
\begin{enumerate}[label=(\alph*)]

\item For any field extension $k'$ of $k$, the map
\[
	\overline{\cC'}(k') \to \overline{\cC}(k')
\]
is surjective.

\item If $\cC'$ is a cylinder, then $\cC$ is a bounded cylinder.

\end{enumerate}
\end{lemma}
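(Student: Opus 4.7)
The approach is to leverage that the Edidin--Rydh canonical reduction $\cX'\to\cX$ is proper and restricts to an isomorphism over the stable locus $\cX^s$, with $\cU'\xrightarrow{\sim}\cX^s$ (by \cite[Theorem 2.11]{EdidinRydh} combined with \autoref{propositionCrepantResolutionImpliesStableGMS}).

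For part (a), fix a twisted arc $\varphi\colon\cD^\ell_{k'}\to\cX$ whose class lies in $\cC$. Since $\cC'$ avoids $|\sJ(\cX'\setminus\cU')|$ and $\cC$ is its image, and since $\cD^\ell_{k'}$ is irreducible, the generic point of $\varphi$ factors through $\cX^s$. Using the inverse isomorphism $\cX^s\xrightarrow{\sim}\cU'$, the generic fiber of $\varphi$ lifts uniquely to a morphism $\Spec k'\llparenthesis t\rrparenthesis\to\cU'\subset\cX'$. To extend across $\cD^\ell_{k'}$, I form the base change $\cZ:=\cD^\ell_{k'}\times_\cX\cX'$, which is proper over $\cD^\ell_{k'}$ and an isomorphism over the generic point, and take the scheme-theoretic closure $\cZ^\circ$ of the generic section together with its normalization in the function field. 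By an argument analogous to that used in the proof of \autoref{lemmaLiftTwistedArcsProperRepresentable}, $\cZ^\circ\to\cD^\ell_{k'}$ is proper, birational, and representable (even though $\cX'\to\cX$ is not), so the computation from that proof shows $\cZ^\circ\simeq\cD^{\ell'}_{k'}$ for some $\ell'$. Composing the inverse of this isomorphism with the map $\cZ^\circ\to\cX'$ produces a representable twisted arc $\varphi'\colon\cD^{\ell'}_{k'}\to\cX'$. The canonicity of the construction guarantees that $\varphi'$ is defined over $k'$, and since its generic point lies in $\cU'$, its class lies in $\cC'$.

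For part (b), write $\cC'=\theta_n^{-1}(\cC'_n)$ for a locally constructible $\cC'_n\subset|\sJ_n(\cX')|$, and let $\cC_n\subset|\sJ_n(\cX)|$ denote the image of $\cC'_n$ under $\sJ_n(\cX')\to\sJ_n(\cX)$. The canonicity of the lift from part (a), applied to jets in place of arcs (after increasing $n$ so the lift of $\varphi$ is determined by that of its $n$-truncation), implies $\cC=\theta_n^{-1}(\cC_n)$. Chevalley's theorem for Artin stacks \cite[Theorem 5.1]{HallRydh} applied to the locally finite type map $\sJ_n(\cX')\to\sJ_n(\cX)$ (see \autoref{remarkJetStackLocallyFiniteTypeAffineDiagonalFiniteType}) shows $\cC_n$ is locally constructible, so $\cC$ is a cylinder. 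For boundedness, each $\varphi\in\cC$ has its $0$-truncation landing in the image of $I_\mu(\cX')$ in $I_\mu(\cX)$ under the map induced by $\cX'\to\cX$; since $\cC'_n$ (hence its image) can be taken quasi-compact in each bounded region and the stabilizers of $\cX^s$ are finite of uniformly bounded order on quasi-compact opens (as $\cX$ has a properly stable good moduli space), only finitely many twist orders $\ell$ can occur in $\cC$.

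The main obstacle is the construction of the canonical lift in part (a). Because $\cX'\to\cX$ is not representable, the Zariski's main theorem argument in the proof of \autoref{lemmaLiftTwistedArcsProperRepresentable} does not apply directly; the hard part is showing that the ``main component'' $\cZ^\circ$ of the base change is again a truncated twisted disc, so that the section across the special point can be identified with a representable twisted arc of some order. I expect to handle this by combining properness with the explicit description of the Edidin--Rydh reduction as an iterated stacky blowup together with the rigidity result \autoref{prop:no-non-trivial-forms-truncated-twisted} to identify $\cZ^\circ$.
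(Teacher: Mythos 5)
Your argument for (a) rests on a false claim: the base change $\cZ=\cD^\ell_{k'}\times_\cX\cX'$ is \emph{not} proper over $\cD^\ell_{k'}$. You have the properties of the Edidin--Rydh reduction backwards: $\cX'\to\cX$ \emph{is} representable (and separated, finite type), but it is not proper, since each stage $\cX_{i+1}\to\cX_i$ of the algorithm is only an open immersion into a proper representable modification $\cZ_{i+1}\to\cX_i$. Consequently a closure-plus-normalization argument in the style of \autoref{lemmaLiftTwistedArcsProperRepresentable} cannot by itself produce an extension of the generic lift across the special point: a twisted arc of $\cX$ that generically lands in $\cX^s$ need not lift to $\cX'$ at all (it may hit the unstable locus removed at some stage), so ``generic point factors through $\cX^s$'' is not enough. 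The content of (a) is a descent-of-field statement: by definition of the image $\cC$, a lift exists after \emph{some} field extension $k''$, and that datum must be used to show a lift exists over $k'$ itself. The paper's proof does exactly this, lifting stage by stage along the proper representable maps $\cZ_{i+1}\to\cX_i$ via \autoref{lemmaLiftTwistedArcsProperRepresentable} and then using the \emph{uniqueness} clause of that lemma, together with the known $k''$-lift, to see that the canonical lift to $\cZ_{i+1}$ factors through the open substack $\cX_{i+1}$. Your proposal never invokes the $k''$-lift in the existence step, so the step fails. A second gap in (a): even granting a lift $\varphi'$ over $k'$, ``its generic point lies in $\cU'$, so its class lies in $\cC'$'' is unjustified, because $\cC'$ is an \emph{arbitrary} subset of $|\sJ(\cX')|\setminus|\sJ(\cX'\setminus\cU')|$; you must identify the class of $\varphi'$ with that of the given lift in $\cC'$ (the paper gets this from the uniqueness statements, which force the base change of the constructed lift to $k''$ to be $2$-isomorphic to the given one).

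Part (b) also has an unproved key step. The equality $\cC=\theta_n^{-1}(\cC_n)$ amounts to showing that every twisted arc of $\cX$ whose $n$-truncation lifts into $\cC'_n$ actually lifts to a twisted \emph{arc} of $\cX'$ landing in $\cC'$; this is precisely the nontrivial point (again because $\cX'\to\cX$ is not proper), and asserting it by ``canonicity of the lift from part (a), applied to jets'' is circular, since (a) as you set it up does not apply to jets and, as noted, is itself incomplete. The paper avoids this by reducing to a single $\ell$, transporting $\cC'$ to a bounded cylinder $\cE'$ of untwisted arcs of $\widetilde{\sW}(\cX')$ via \autoref{theoremTwistedToWarped}, applying the already-established result \cite[Proposition 3.5]{SatrianoUsatine5} on images of bounded cylinders under a map that is an isomorphism over $\cU'\to\cX^s$, and then transporting back. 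If you want a proof independent of that proposition you would need to supply an analogous statement about images of cylinders, not just cite canonicity; your boundedness remark, by contrast, is essentially fine, since the map $|\sJ^\ell(\cX')|\to|\sJ^\ell(\cX)|$ preserves the order $\ell$ and only finitely many $\ell$ occur for the tame stack $\cX'$.
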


\begin{proof}
\begin{enumerate}[label=(\alph*)]

\item Let $\cX' = \cX_n \to \cX_{n-1} \to \dots \to \cX_0 = \cX$ be the sequence of maps in the Edidin-Rydh canonical reduction of stabilizers algorithm, and let $\cU_i$ be the preimage of $\cX^s$ in $\cX_i$. \cite[Remark 2.12]{EdidinRydh} implies that over $\cU_i$, the map $\cX_{i+1} \to \cX_i$ is an isomorphism $\cU_{i+1} \to \cU_i$. Also by \cite[Remark 2.12]{EdidinRydh}, each $\cX_{i+1} \to \cX_{i}$ factors as an open immersion $\cX_{i+1} \to \cZ_{i+1}$ followed by a proper representable morphism $\cZ_{i+1} \to \cX_i$ that is an isomorphism over $\cU_i$.

Let $\varphi: \cD^\ell_{k'} \to \cX$ be a twisted arc with equivalence class in $\cC$. Then there exists a field extension $k''$ of $k'$ such that $\varphi': \cD^\ell_{k''} \to \cD^\ell_{k'} \xrightarrow{\varphi} \cX$ lifts to a twisted arc $\varphi'_n: \cD^\ell_{k''} \to \cX_n$ with equivalence class in $\cC'$. Let each $\varphi'_i$ be the composition $\cD^\ell_{k''} \xrightarrow{\varphi'_n} \cX_n \to \cX_i$, so in particular, $\varphi'_0$ is 2-isomorphic to $\varphi'$. We will prove that we have a sequence of twisted arcs $\{\varphi_i: \cD^\ell_{k'} \to \cX_i\}_i$ such that $\cD^\ell_{k'} \xrightarrow{\varphi_{i+1}} \cX_{i+1} \to \cX_i$ is 2-isomorphic to $\varphi_i$ and $\cD^\ell_{k''} \to \cD^\ell_{k'} \xrightarrow{\varphi_i} \cX_n$ is 2-isomorphic to $\varphi'_i$. We will induct on $i$ and thus assume we already have $\varphi = \varphi_0, \dots, \varphi_i$. Note that by setup, the generic point of $\varphi_i$ lands in $\cU_i$. Thus by \autoref{lemmaLiftTwistedArcsProperRepresentable}, there exists a twisted arc $\psi_{i+1}: \cD^\ell_{k'} \to \cZ_{i+1}$ such that $\cD^\ell_{k'} \xrightarrow{\psi_{i+1}} \cZ_{i+1} \to \cX_i$ is 2-isomorphic to $\varphi_i$. By the uniqueness part of \autoref{lemmaLiftTwistedArcsProperRepresentable}, $\cD^\ell_{k''} \to \cD^\ell_{k'} \xrightarrow{\psi_{i+1}} \cZ_{i+1}$ is 2-isomorphic to $\cD^\ell_{k''} \xrightarrow{\varphi'_{i+1}} \cX_{i+1} \to \cZ_{i+1}$. Noting that $\cX_{i+1} \to \cZ_{i+1}$ is an open immersion, this implies that $\psi_{i+1}$ factors as $\cD^{\ell}_{k'} \xrightarrow{\varphi_{i+1}} \cX_{i+1} \to \cZ_{i+1}$ for some twisted arc $\varphi_{i+1}$. Furthermore the composition $\cD^\ell_{k''} \to \cD^\ell_{k'} \xrightarrow{\varphi_{i+1}} \cX_{i+1}$ is 2-isomorphic to $\varphi'_{i+1}$. Therefore we have our desired sequence $\varphi_0, \dots, \varphi_n$. Since $\cD^\ell_{k''} \to \cD^\ell_{k'} \xrightarrow{\varphi_n} \cX_n$ is 2-isomorphic to $\varphi'_n$, we have that $\varphi_n$ has equivalence class in $\cC'$ and we are done.

\item Suppose $\cC'$ is a cylinder. Then $\cC'$ is automatically a bounded cylinder since $\cX'$ is finite type and tame. In particular the bounded cylinder $\cC' \cap |\sJ^\ell(\cX')| = \emptyset$ for all but finitely many $\ell$. Thus we may assume that $\cC' \subset |\sJ^\ell(\cX')|$ for some $\ell \in \Z_{>0}$. Consider the following 2-commutative diagram
\[
\xymatrix{
\sJ^\ell(\cX') \ar[r]\ar[d] & \sL(\widetilde{\sW}(\cX')) \ar[d]\\
\sJ^\ell(\cX) \ar[r] & \sL(\widetilde{\sW}(\cX))
}
\]
and let $\cE', \cE$ be the images of $\cC'$ in $|\sL(\widetilde{\sW}(\cX'))|, |\sL(\widetilde{\sW}(\cX))|$, respectively. Then $\cE'$ is a bounded cylinder by \autoref{theoremTwistedToWarped}. Note that over $\cX^s$, the map $\widetilde{\sW}(\cX') \to \widetilde{\sW}(\cX)$ is the isomorphism $\cU' \to \cX^s$. Then since $\cE' \subset |\sL(\widetilde{\sW}(\cX'))| \setminus |\sL(\widetilde{\sW}(\cX') \setminus \cU')|$, \cite[Proposition 3.5]{SatrianoUsatine5} implies that $\cE$ is a bounded cylinder. Then $\cC$ is a (bounded) cylinder by \autoref{theoremTwistedToWarped}.
\end{enumerate}
\end{proof}

\begin{lemma}\label{lemmaProperMapToSpaceLiftingArcsToTwistedArcs}
Let $\cX$ be a finite type Artin stack over $k$ with affine diagonal, let $Y$ be a finite type algebraic space over $k$, let $\cX \to Y$ be a tame proper morphism, let $V \hookrightarrow Y$ be an open-sub-algebraic space, and let $\cU$ be the preimage of $V$ in $\cX$. Assume that $|\cU|$ is dense in $|\cX|$ and that the map $\cU \to V$ induced by $\cX \to Y$ is an isomorphism. Let $E \subset |\sL(Y)| \setminus |\sL(Y \setminus V)|$, and let $\cC$ be the preimage of $E$ in $|\sJ(\cX)|$.
\begin{enumerate}[label=(\alph*)]

\item For any field extension $k'$ of $k$, the map
\[
	\overline{\cC}(k') \to E(k')
\]
is bijective.

\item If $E$ is a cylinder, then $\cC$ is a cylinder.

\end{enumerate}
\end{lemma}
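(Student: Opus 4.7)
The plan is to factor $\cX \to Y$ through the coarse moduli space $\cX \to X$, and to treat each factor separately via its own valuative criterion. Since $\cX \to Y$ is tame and proper, $\cX$ has finite tame inertia and therefore admits a coarse moduli space $\cX \to X$ (Keel--Mori in the tame setting), and the induced proper morphism $X \to Y$ is an isomorphism over $V$: the preimage of $V$ in $X$ is the coarse moduli of $\cU$, which equals $\cU \cong V$ since $\cU$ is already an algebraic space. This two-step factorization reduces (a) to two well-known valuative criteria.

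For surjectivity in (a), I would start with an arc $\overline{\varphi}\colon D_{k'} \to Y$ in $E$, use the isomorphism $\cU \cong V$ to canonically lift its generic fiber to $\Spec(k'\llparenthesis t\rrparenthesis) \to X$, and then apply the valuative criterion for the proper morphism $X \to Y$ of algebraic spaces to uniquely extend this to an arc $\overline{\psi}\colon D_{k'} \to X$. Applying the valuative criterion for tame proper Artin stacks (Abramovich--Olsson--Vistoli) to the coarse moduli map $\cX \to X$, after a base change $t \mapsto t^{1/\ell}$ for a suitable $\ell$, the arc $\overline{\psi}$ lifts to a $\mu_\ell$-equivariant representable morphism $\Spec(k'\llbracket t^{1/\ell}\rrbracket) \to \cX$, which descends to a representable twisted arc $\varphi\colon \cD^\ell_{k'} \to \cX$ lying in $\cC(k')$.

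For injectivity, if $\varphi_1 \in \sJ^{\ell_1}(\cX)(k')$ and $\varphi_2 \in \sJ^{\ell_2}(\cX)(k')$ both lie in $\cC(k')$ and map to the same $\overline{\varphi} \in E(k')$, then composing with $\cX \to X$ yields two extensions of the common punctured lift of $\overline{\varphi}$, which agree by uniqueness in the valuative criterion for $X \to Y$. The uniqueness clause of the valuative criterion for $\cX \to X$ then forces $\ell_1 = \ell_2$ and supplies a canonical isomorphism $\varphi_1 \cong \varphi_2$.

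For (b), I would write $E = \theta_n^{-1}(E_n)$ with $E_n \subset |\sL_n(Y)|$ constructible, and use the canonical morphism $\sJ^\ell_n(\cX) \to \sL_n(Y)$ of finite type Artin stacks given by composing a twisted $n$-jet $\cD^\ell_{n,k'} \to \cX$ with $\cX \to Y$ and invoking the universal property of the coarse moduli space $\cD^\ell_{n,k'} \to D_{n,k'}$ (which applies because $Y$ is an algebraic space). Let $\cC_n \subset |\sJ_n(\cX)|$ be the preimage of $E_n$ under the disjoint union over $\ell$ of these maps; its intersection with each $|\sJ^\ell_n(\cX)|$ is constructible as a preimage of a constructible set under a morphism of finite type Artin stacks, so $\cC_n$ is locally constructible. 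Part (a) applied to every field extension then gives $\cC = \theta_n^{-1}(\cC_n)$, exhibiting $\cC$ as a cylinder. The main obstacle I expect is the injectivity in (a): \emph{a priori}, different values of $\ell$ could yield representable twisted lifts of the same arc of $Y$, and what rules this out is precisely the uniqueness clause in the valuative criterion for tame proper Artin stacks applied to $\cX \to X$.
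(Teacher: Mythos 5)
Your overall strategy is sound and part (b) is essentially the paper's own argument (truncate $E$, pull $E_n$ back along the natural maps $\sJ^\ell_n(\cX)\to\sL_n(Y)$, and observe the compatibility of truncation with the induced map to $\sL(Y)$ — note this compatibility, not part (a), is what gives $\cC=\theta_n^{-1}(\cC_n)$). For part (a), however, your route genuinely differs from the paper's: the paper does not factor through the coarse space at all, but applies the \emph{arithmetic} valuative criterion for proper tame stacks of Bresciani--Vistoli (their Theorem 3.1) directly to the base change $\cX\times_Y D_{k'}\to D_{k'}$; that single citation delivers, for an arbitrary field extension $k'$ of $k$, the existence and uniqueness of the integer $\ell$ and of a representable extension $\cD^\ell_{k'}\to\cX\times_Y D_{k'}$ of the generic section, which is exactly the bijection $\overline{\cC}(k')\to E(k')$.

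The gap in your version is concentrated in the second lifting step. You invoke a ``valuative criterion for tame proper Artin stacks (Abramovich--Olsson--Vistoli)'' to lift the arc of the coarse space $X$ to a $\mu_\ell$-equivariant map $\Spec(k'\llbracket t^{1/\ell}\rrbracket)\to\cX$ descending to a representable twisted arc. The AOV-type statements are proved over strictly henselian bases or algebraically closed residue fields; here $k'$ is an arbitrary extension of $k$, and over a non-closed field the lift produced by such arguments is a priori only a representable map from a flat proper stack $\cT\to D_{k'}$ that becomes $\cD^\ell$ after a finite extension — i.e.\ a possibly nontrivial form of the twisted disc, governed by nontrivial $\mu_\ell$-torsors over $k'$. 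Asserting that the lift can be taken $\mu_\ell$-equivariant on the nose, and that $\ell$ and the lift are unique, is precisely the arithmetic content that must be argued; in this paper it is supplied either by citing Bresciani--Vistoli or by the triviality-of-forms result (\autoref{prop:no-non-trivial-forms-truncated-twisted}) proved in Section 6. A smaller point: the existence part of the valuative criterion for the proper morphism of algebraic spaces $X\to Y$ in general only guarantees a lift after an extension of the DVR; in your situation this can be repaired (e.g.\ by the normalization argument of \autoref{lemmaLiftTwistedArcsProperRepresentable} with $\ell=1$, using that $X\to Y$ is an isomorphism over $V$ and the arc is generically in $V$), but it deserves a word. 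With those two points addressed — most importantly a correct reference or argument for the twisted lift over arbitrary $k'$ — your coarse-space factorization gives a valid alternative proof of (a).
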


\begin{proof}
\begin{enumerate}[label=(\alph*)]

\item Noting that $\cD^\ell_{k'} \to \Spec(k'\llbracket t \rrbracket)=D_{k'}$ is the $\ell$th root of the special point of $D_{k'}$, the desired result follows immediately from \cite[Theorem 3.1]{BrescianiVistoli} applied to $\cX\times_Y D_{k'}\to D_{k'}$.

\item If $E$ is a cylinder then it is the preimage of some constructible subset $E_n \subset |\sL_n(Y)|$. Letting $\cC_n$ be the preimage of $E_n$ in $|\sJ_n(\cX)|$, we have that $\cC_n$ is constructible and its preimage $\cC$ in $|\sJ(\cX)|$ is therefore a cylinder.

\end{enumerate}
\end{proof}

\begin{proposition}\label{propPreimageIntersectedWithERLocus}
Let $\cX$ be an equidimensional smooth finite type Artin stack over $k$ with affine diagonal, let $Y$ be a finite type algebraic space over $k$, let $\cX \to Y$ be a map that factors as a good moduli space morphism followed by a tame proper morphism, let $V \hookrightarrow Y$ be an open sub-algebraic-space, and let $\cU$ be the preimage of $V$ in $\cX$. Assume that $|V|$ is dense in $|Y|$, that $|\cU|$ is dense in $|\cX|$, and that the map $\cU \to V$ induced by $\cX \to Y$ is an isomorphism. Let $E \subset |\sL(Y)| \setminus |\sL(Y \setminus V)|$, and let $\cC$ be the preimage of $E$ in $|\sJ(\cX)|$.
\begin{enumerate}[label=(\alph*)]

\item For any field extension $k'$ of $k$, the map
\[
	\overline{(\cC \cap \cA_{\cX})}(k') \to E(k')
\]
is bijective.

\item If $E$ is a cylinder, then $\cC \cap \cA_\cX$ is a bounded cylinder.

\end{enumerate}
\end{proposition}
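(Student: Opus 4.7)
The plan is to reduce to the tame proper setting by passing to the Edidin--Rydh canonical reduction of stabilizers $\cX'\to\cX$ and applying \autoref{lemmaProperMapToSpaceLiftingArcsToTwistedArcs} to the coarse space map of $\cX'$. By \autoref{propositionCrepantResolutionImpliesStableGMS}, the strongly birational map $\cX\to Y$ factors through its properly stable good moduli space $X$ with $X\to Y$ proper, and the stable locus $\cX^s$ contains $\cU$. The map $\cX'\to\cX$ is proper, representable, and an isomorphism over $\cX^s$; since we are in characteristic $0$, $\cX'$ is a tame Deligne--Mumford stack. Letting $X'$ denote its coarse space, the map $\cX'\to X'$ is tame proper and $X'\to X$ is proper; setting $V_{X'}\subset X'$ and $\cU''\subset\cX'$ to be the preimages of $V$, I would verify the chain $\cU''\to V_{X'}\to V_X\to V$ consists of isomorphisms and that both $\cU''$ and $V_{X'}$ are dense (using that the Edidin--Rydh modifications occur outside the dense open $\cU$).

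For part (a), I would prove bijectivity by factoring $|\sJ(\cX)|\to|\sL(Y)|$ through $|\sL(X')|$ via lifts to $\cX'$. For surjectivity, given $\alpha\in E(k')$, properness of $X'\to Y$ together with the fact that it is an isomorphism over $V$ produces a unique lift $\alpha_{X'}\in\sL(X')(k')\setminus\sL(X'\setminus V_{X'})(k')$; then \autoref{lemmaProperMapToSpaceLiftingArcsToTwistedArcs}(a) applied to $\cX'\to X'$ gives a unique $\psi\in\overline{\sJ(\cX')}(k')$ mapping to $\alpha_{X'}$, and postcomposing with $\cX'\to\cX$ yields a class in $\overline{(\cC\cap\cA_\cX)}(k')$ mapping to $\alpha$. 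For injectivity, given $\varphi_1,\varphi_2\in\overline{(\cC\cap\cA_\cX)}(k')$ both mapping to the same $\alpha$, \autoref{lemmaLiftingToEdidinRydh}(a) lifts each $\varphi_i$ to some $\psi_i\in\overline{\sJ(\cX')}(k')$, both of which must map to the unique lift $\alpha_{X'}$ in $\sL(X')(k')$; the bijectivity half of \autoref{lemmaProperMapToSpaceLiftingArcsToTwistedArcs}(a) applied to $\cX'\to X'$ then forces $\psi_1=\psi_2$, hence $\varphi_1=\varphi_2$.

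For part (b), suppose $E$ is a cylinder. Its preimage $E'\subset|\sL(X')|$ is a cylinder since preimages of constructible sets under the morphisms of finite type Artin stacks $\sL_n(X')\to\sL_n(Y)$ remain constructible, and $E'$ lies in $|\sL(X')|\setminus|\sL(X'\setminus V_{X'})|$. Applying \autoref{lemmaProperMapToSpaceLiftingArcsToTwistedArcs}(b) to $\cX'\to X'$, the preimage $\cC'$ of $E'$ in $|\sJ(\cX')|$ is a cylinder, and since $\cU''\subset\cU'$ (where $\cU'$ is the preimage of $\cX^s$ in $\cX'$), we have $\cC'\subset|\sJ(\cX')|\setminus|\sJ(\cX'\setminus\cU')|$. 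Then \autoref{lemmaLiftingToEdidinRydh}(b) shows the image of $\cC'$ in $|\sJ(\cX)|$ is a bounded cylinder; this image equals $\cC\cap\cA_\cX$ by part (a), completing the proof.

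The main obstacle will be the preliminary verification of the Edidin--Rydh setup---in particular, that the coarse space map $X'\to X$ is proper and that the density and isomorphism properties ($\cU''\to V_{X'}\to V_X\to V$, density of $\cU''\subset\cX'$ and $V_{X'}\subset X'$) descend appropriately from the stack level to the coarse space level---which requires a careful unpacking of the explicit structure of the Edidin--Rydh algorithm recalled in the proof of \autoref{lemmaLiftingToEdidinRydh}.
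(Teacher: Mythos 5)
Your overall strategy---pass to the Edidin--Rydh reduction $\cX'\to\cX$, lift arcs through a tame proper morphism via \autoref{lemmaProperMapToSpaceLiftingArcsToTwistedArcs}, and then descend to $\cC\cap\cA_\cX$ using \autoref{lemmaLiftingToEdidinRydh}---is the paper's strategy. The only structural difference is that you insert the coarse space $X'$ of $\cX'$ and apply the lifting lemma to $\cX'\to X'$, treating $X'\to Y$ separately by the valuative criterion, whereas the paper applies \autoref{lemmaProperMapToSpaceLiftingArcsToTwistedArcs} in one shot to the composite $\cX'\to Y$, which by \cite[Theorem 2.11(2c) and (4c)]{EdidinRydh} is itself tame and proper (with $\cU'\to V$ an isomorphism and $|\cU'|$ dense by \cite[Theorem 2.11(4b), Remark 2.12]{EdidinRydh}), and then concludes exactly as you do with \autoref{lemmaLiftingToEdidinRydh}. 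Your splitting of (a) into surjectivity (unique arc lift plus the lemma) and injectivity (lift $\varphi_i$ by \autoref{lemmaLiftingToEdidinRydh}(a), compare over $X'$) and your treatment of (b) are fine modulo the setup.

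There is, however, a genuine flaw in that setup: you assert that $\cX'\to\cX$ is proper. It is not in general: as recalled in the proof of \autoref{lemmaLiftingToEdidinRydh}, each stage of the Edidin--Rydh algorithm is only an open immersion into a proper representable morphism (a saturated Reichstein transform is an open substack of a blow-up), so $\cX'\to\cX$ is typically non-proper---for instance for $[\bA^2_k/\bG_m]$ with weights $1,-1$, which is exactly why $\cA_\cX$ need not be a cylinder. Your claim that $X'\to X$ (hence $X'\to Y$) is proper appears to rest on this false assertion, and properness of $X'\to Y$ is what you use for the existence and uniqueness of the lifts $\alpha_{X'}$, i.e., for both halves of (a) and implicitly for (b). The fact you need is true but must be sourced differently: either invoke Edidin--Rydh for properness (projectivity) of the induced map on good moduli spaces $X'\to X$ and combine it with properness of $X\to Y$ from \autoref{propositionCrepantResolutionImpliesStableGMS}, or---more simply, as the paper does---quote that the composite $\cX'\to Y$ is tame and proper and apply \autoref{lemmaProperMapToSpaceLiftingArcsToTwistedArcs} directly to it with the open $V\subset Y$, which renders the detour through $X'$ unnecessary.
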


\begin{remark}
\autoref{propositionCrepantResolutionImpliesStableGMS} implies that $\cX$ admits a properly stable good moduli space, so $\cA_\cX$ is well defined.
\end{remark}

\begin{proof}
By \autoref{propositionCrepantResolutionImpliesStableGMS}, $\cX$ has a properly stable good moduli space that is proper over $Y$, and the stable locus $\cX^s$ of $\cX$ contains $\cU$. Let $\cX' \to \cX$ be the canonical reduction of stabilizers of Edidin-Rydh, let $\cU'$ be the preimage of $\cU$ in $\cX'$, and let $\cC'$ be the preimage of $\cC$ in $|\sJ(\cX')|$. By \cite[Theorem 2.11(2c) and (4c)]{EdidinRydh}, the composition $\cX' \to \cX \to Y$ is tame and proper. By \cite[Remark 2.12 and Definition 4.1]{EdidinRydh}, the map $\cX' \to \cX$ is finite type, separated, and representable, so $\cX'$ is finite type and has affine diagonal over $k$. By \cite[Theorem 2.11(4b)]{EdidinRydh} and the fact that $|\cU|$ is dense in $|\cX^s|$, we have $|\cU'|$ is dense in $|\cX'|$. By \cite[Remark 2.12]{EdidinRydh}, the map $\cU' \to \cU$ induced by $\cX' \to \cX$ is an isomorphism. Therefore \autoref{lemmaProperMapToSpaceLiftingArcsToTwistedArcs} implies that $\overline{\cC'}(k') \to E(k')$ is bijective for any field extension $k'$ of $k$ and that $\cC'$ is a cylinder if $E$ is a cylinder. Noting that $\cC \cap \cA_\cX$ is the image of $\cC'$ in $|\sJ(\cX)|$ by the definition of $\cA_\cX$, the proposition follows from \autoref{lemmaLiftingToEdidinRydh}.
\end{proof}

\begin{corollary}\label{corollaryLogTerminalConvergenceExpressionForGorensteinWithCanonical}
Let $\cX$ be a smooth irreducible finite type Artin stack over $k$ with affine diagonal, let $Y$ be a $\Q$-Gorenstein irreducible finite type scheme over $k$, let $\cX \to Y$ be a strongly birational map that factors as a good moduli space morphism followed by a tame proper morphism. Then $Y$ has log-terminal singularities if and only if $\bL^{-\ord_{K_{\cX/Y}} - \wt_\cX}$ is integrable on $\cA_\cX$, and in that case
\[
	\mu^\Gor_Y(\sL(Y)) = \int_{\cA_\cX} \bL^{-\ord_{K_{\cX/Y}} - \wt_\cX} \diff\nu_\cX.
\]
\end{corollary}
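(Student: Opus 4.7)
The plan is to apply the twisted change of variables formula \autoref{corollaryTwistedChangeOfVariables} term-by-term to a compatible partition of $\sL(Y)$ and $\cA_\cX$ indexed by values of $\ord_{\sI_{Y,m}}$, and then sum. Let $V \subset Y$ be the dense open subscheme over which $\cX \to Y$ is an isomorphism, and let $\cU \subset \cX$ be its preimage, so that $\cU \to V$ is an isomorphism. For each pair $(n,N)\in\Z_{\geq0}^2$, I will consider the cylinder
\[
E_{n,N} = \{\alpha \in \sL(Y) : \ord_{\sI_{Y \setminus V}}(\alpha) \leq N,\ \ord_{\sI_{Y,m}}(\alpha) = n\} \subset \sL(Y)\setminus\sL(Y\setminus V)
\]
and set $\cC_{n,N} = \zeta^{-1}(E_{n,N}) \cap \cA_\cX$. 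By \autoref{propPreimageIntersectedWithERLocus}, $\cC_{n,N}$ is a bounded cylinder in $|\sJ(\cX)|$ and the map $\cX \to Y$ induces a bijection $\overline{\cC_{n,N}}(k') \to E_{n,N}(k')$ for every field extension $k'$ of $k$. Moreover $\cC_{n,N}$ is disjoint from $|\sJ(\cX\setminus\cU)|$ since the preimage of $Y\setminus V$ in $\cX$ is exactly $\cX\setminus\cU$.

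Applying \autoref{corollaryTwistedChangeOfVariables} to $(\cC_{n,N},E_{n,N})$ yields $\mu_Y(E_{n,N}) = \int_{\cC_{n,N}} \bL^{-\het_{\cX/Y}-\wt_\cX}\,d\nu_\cX$. By \autoref{propComparingRelativeCanonicalAndHeight}, on the complement of $|\sJ(\cX\setminus\cU)|$ we have $\het_{\cX/Y} = \ord_{K_{\cX/Y}} + \tfrac{1}{m}\ord_{\sI_{Y,m}}$, and $\ord_{\sI_{Y,m}}$ is identically $n$ on $\cC_{n,N}$. Factoring the constant $\bL^{-n/m}$ out of the integral and rearranging produces
\[
\bL^{n/m}\mu_Y(E_{n,N}) = \int_{\cC_{n,N}} \bL^{-\ord_{K_{\cX/Y}}-\wt_\cX}\,d\nu_\cX,
\]
and by the Denef--Loeser definition of the Gorenstein measure the left-hand side equals $\mu^\Gor_Y(E_{n,N})$.

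To conclude, I will sum over $(n,N)$. As $(n, N)$ ranges over $\Z_{\geq 0}^2$, the sets $E_{n,N}$ exhaust $\sL(Y)$ up to $\sL(Y\setminus V) \cup \{\ord_{\sI_{Y,m}}=\infty\}$, which is $\mu^\Gor_Y$-negligible, while the $\cC_{n,N}$ exhaust $\cA_\cX$ up to $\cA_\cX \cap |\sJ(\cX\setminus\cU)|$, which has $\nu_\cX$-measure zero by \autoref{theoremThinSubsetOfTwistedJets} applied to the closed substack $\cX\setminus\cU$ of strictly smaller dimension than $\cX$. Consequently, the series $\sum_{n,N}\mu^\Gor_Y(E_{n,N})$ converges in $\widehat{\sM}_k[\bL^{1/m}]$ if and only if $Y$ has log-terminal singularities (this is the standard characterization of log-terminality via $\mu^\Gor_Y$), and the series $\sum_{n,N}\int_{\cC_{n,N}} \bL^{-\ord_{K_{\cX/Y}}-\wt_\cX}\,d\nu_\cX$ converges if and only if $\bL^{-\ord_{K_{\cX/Y}}-\wt_\cX}$ is integrable on $\cA_\cX$. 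The term-by-term identity then gives both equivalences simultaneously, together with the desired formula. The main obstacle will be rigorously managing the convergence of this double series in the dimension-completed ring $\widehat{\sM}_k[\bL^{1/m}]$: one must control the partial sums with respect to the dimension filtration so that the identity passes correctly to the limit, and reconcile the notion of integrability on $\cA_\cX$ (defined via level sets of $\ord_{K_{\cX/Y}}+\wt_\cX$) with the level-set partition by $\ord_{\sI_{Y,m}}$ used here.
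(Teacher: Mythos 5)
Your skeleton is the same as the paper's: restrict to cylinders in $\sL(Y)$ avoiding $Z=Y\setminus V$, pull back to $\cA_\cX$ via \autoref{propPreimageIntersectedWithERLocus}, apply \autoref{corollaryTwistedChangeOfVariables} together with \autoref{propComparingRelativeCanonicalAndHeight} (your extra indexing by the value $n$ of $\ord_{\sI_{Y,m}}$ just makes explicit how the Gorenstein twist is absorbed, which the paper leaves implicit), and dispose of the locus over $Z$ using \autoref{theoremThinSubsetOfTwistedJets}. However, there are genuine gaps in the way you assemble the pieces. First, your family $E_{n,N}=\{\ord_{\sI_{Y\setminus V}}\leq N,\ \ord_{\sI_{Y,m}}=n\}$ is nested in $N$ for each fixed $n$, so the double series $\sum_{n,N}\mu^\Gor_Y(E_{n,N})$ counts each arc infinitely often and is not the quantity you want; you would need either the disjoint version $\ord_{\sI_{Y\setminus V}}=N$, or (as the paper does) the increasing exhaustion $E^n=\sL(Y)\setminus\theta_{Y,n}^{-1}(Z)$ with a limit rather than a sum.

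Second, and more substantively, the two ``iff'' statements you need are asserted rather than proved, and they are exactly the content of the corollary. The equivalence ``$Y$ log-terminal $\Leftrightarrow$ convergence'' is standard for an increasing cylindrical exhaustion of $\sL(Y)\setminus\sL(Z)$ (this is what the paper cites from Chambert-Loir--Nicaise--Sebag), not automatically for an arbitrary disjoint partition; passing between your refined partition and that criterion requires norm/summability control in $\widehat{\sM}_k[\bL^{1/m}]$ that you defer. Note also that for the ``only if'' direction you cannot argue that $\sL(Y\setminus V)\cup\{\ord_{\sI_{Y,m}}=\infty\}$ is $\mu^\Gor_Y$-negligible, since $\mu^\Gor_Y$ is only available once log-terminality is known; the argument must run through $\mu_Y$ and the cylinders themselves. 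Similarly, on the stack side integrability of $\bL^{-\ord_{K_{\cX/Y}}-\wt_\cX}$ on $\cA_\cX$ is defined via measurability and summability of the level sets of $\ord_{K_{\cX/Y}}+\wt_\cX$, which is a different decomposition from your $(n,N)$-pieces; reconciling the two (including measurability of those level sets intersected with $\cA_\cX$) is precisely what the paper gets from \autoref{corollaryTwistedChangeOfVariables}(a),(b) applied along the increasing exhaustion, together with $\nu_\cX(|\sJ(\cX\setminus\cU)|)=0$. Until the convergence bookkeeping you flag as ``the main obstacle'' is actually carried out, the proposal establishes the term-by-term identity but not the equivalence of convergence on the two sides, which is the heart of the statement.
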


\begin{proof}
Because $\cX \to Y$ is strongly birational, there exists an nonempty open subscheme $V \subset Y$ such that $\cX \to Y$ is an isomorphism over $V$. Let $\cU$ be the preimage of $V$ in $\cX$, and let $Z$ be $Y \setminus V$ with its reduced subscheme structure. For any $n \in \Z_{\geq 0}$ set
\[
	E^n = \sL(Y) \setminus \theta_{Y,n}^{-1}(Z),
\]
and let $\cC^n$ be the preimage of $E^n$ in $\sJ(\cX)$. Note that $E^n$ is a cylinder, $E_n \subset E_{n+1}$, and $\bigcup_{n \in \Z_{\geq 0}} E_n = \sL(Y) \setminus \sL(Z)$. Noting that $V$ is smooth, each $\mu^\Gor_Y(E^n)$ is well defined. Therefore $Y$ has log-terminal singularities if and only if the sequence $\{\mu^\Gor_Y(E^n)\}_{n \in \Z_{\geq 0}}$ converges (see e.g., \cite[Chapter 7 Proposition 3.4.2]{ChambertLoirNicaiseSebag}), and in that case
\[
	\mu^\Gor_Y(\sL(Y)) = \mu^\Gor(\sL(Y) \setminus \sL(Z)) = \lim_{n \to \infty} \mu^\Gor_Y(E^n).
\]
By \autoref{propPreimageIntersectedWithERLocus}, each $\cC^n \cap \cA_\cX$ is a bounded cylinder and $\overline{(\cC^n \cap \cA_\cX)}(k') \to E^n(k')$ is a bijection for any field extension $k'$ of $k$. Note that $\cX$ has a good moduli space by \autoref{propositionCrepantResolutionImpliesStableGMS}. Thus \autoref{corollaryTwistedChangeOfVariables} and \autoref{propComparingRelativeCanonicalAndHeight} imply that
\[
	\mu^\Gor_Y(E^n) = \int_{\cC^n \cap \cA_\cX} \bL^{-\ord_{K_{\cX/Y}} - \wt_\cX} \diff\nu_\cX.
\]
Since $\bigcup_{n \in \Z_{\geq 0}} (\cC^n \cap \cA_\cX) = \cA_\cX \setminus |\sJ(\cX \setminus \cU)|$ and $\nu_\cX(|\sJ(\cX \setminus \cU)|) = 0$ by \autoref{theoremThinSubsetOfTwistedJets}, $\bL^{-\ord_{K_{\cX/Y}} - \wt_\cX}$ is integrable on $\cA_\cX$ if and only if the sequence
\[
	\left\{ \int_{\cC^n \cap \cA_\cX} \bL^{-\ord_{K_{\cX/Y}} - \wt_\cX} \diff\nu_\cX \right\}_{n \in \Z_{\geq 0}}
\]
converges, and in that case
\begin{align*}
	\int_{\cA_\cX}  \bL^{-\ord_{K_{\cX/Y}} - \wt_\cX} \diff\nu_\cX &= \int_{\cA_\cX \setminus |\sJ(\cX \setminus \cU)|}  \bL^{-\ord_{K_{\cX/Y}} - \wt_\cX} \diff\nu_\cX\\
	&= \lim_{n \to \infty} \int_{\cC^n \cap \cA_\cX}  \bL^{-\ord_{K_{\cX/Y}} - \wt_\cX} \diff\nu_\cX.
\end{align*}
\end{proof}

Finally, we obtain some consequences of \autoref{corollaryLogTerminalConvergenceExpressionForGorensteinWithCanonical} and complete our proofs of the main theorems of this paper.

\begin{corollary}\label{corollaryLogTerminalWithEffectiveDMResolution}
Let $Y$ be a $\Q$-Gorenstein irreducible finite type scheme over $k$, let $\cX$ be a smooth irreducible finite type Artin stack over $k$, and let $\cX \to Y$ be a tame proper birational map. If $K_{\cX/Y} \geq 0$, then $Y$ has log-terminal singularities.
\end{corollary}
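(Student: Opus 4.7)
By \autoref{corollaryLogTerminalConvergenceExpressionForGorensteinWithCanonical}---which applies because the hypothesis that $\cX\to Y$ is tame, proper, and birational implies it factors as a good moduli space morphism (the coarse/good moduli space of $\cX$) followed by a tame proper morphism---it suffices to verify that $Y$ has log-terminal singularities. My plan is to establish this directly by a classical discrepancy computation, comparing $\cX$ to a scheme-theoretic log-resolution of $Y$ via a common smooth stacky modification; this bypasses the need to verify the integrability statement of that corollary by hand.

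Choose a log-resolution $\pi\colon X'\to Y$ (Hironaka) and write $K_{X'/Y}=\sum_i a_i E_i$, where the $E_i$ are snc. The Kawamata log-terminal condition is $a_i>-1$ for all $i$, and I will in fact show $a_i\geq 0$. Produce a common smooth stacky modification $\cZ$ by taking the reduced structure on the fibered product $\cX\times_Y X'$ and applying functorial resolution of singularities for Artin stacks in characteristic zero (Bergh, Bergh--Rydh), yielding a smooth Artin stack $\cZ$ equipped with proper birational morphisms $\cZ\to\cX$ and $\cZ\to X'$ that are compatible over $Y$.

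The key input is effectiveness of the relative canonical divisors $K_{\cZ/\cX}$ and $K_{\cZ/X'}$. For $K_{\cZ/X'}$ this is the classical discrepancy statement for proper birational modifications of smooth schemes; for $K_{\cZ/\cX}$ the same holds by descent along a smooth scheme cover of $\cX$. Combining with $K_{\cX/Y}\geq 0$ gives $K_{\cZ/Y}=K_{\cZ/\cX}+(\cZ\to\cX)^*K_{\cX/Y}\geq 0$. Now compute the same divisor via the other factorization $K_{\cZ/Y}=K_{\cZ/X'}+(\cZ\to X')^*K_{X'/Y}$. Take the strict transform $F_i\subset\cZ$ of each $E_i$; then $F_i$ is non-exceptional for $\cZ\to X'$, so $\ord_{F_i}(K_{\cZ/X'})=0$, and by the snc condition $\ord_{F_i}((\cZ\to X')^*E_j)=\delta_{ij}$. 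Hence $a_i=\ord_{F_i}(K_{\cZ/Y})\geq 0$, proving $Y$ is klt.

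The hardest part will be executing the construction of $\cZ$ as a smooth Artin stacky common modification of $\cX$ and $X'$, together with the verification that the relative canonical divisor of a proper birational morphism of smooth Artin stacks is effective. Both should follow from resolution of singularities for Artin stacks in characteristic zero combined with standard \'{e}tale-local arguments, but the stacky effectiveness deserves careful verification since it requires passing to a smooth scheme cover of $\cX$ and descending divisorial information back to $\cZ\to\cX$.
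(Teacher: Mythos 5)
The fatal step is your claim that, for the strict transform $F_i \subset \cZ$ of $E_i$, one has $\ord_{F_i}(K_{\cZ/X'}) = 0$ and $\ord_{F_i}\bigl((\cZ\to X')^*E_j\bigr) = \delta_{ij}$. These are scheme-theoretic facts; they fail as soon as $\cZ$ has a nontrivial generic stabilizer along $F_i$, which is unavoidable here because the map $\cZ \to X'$ is not representable (it contracts the stackiness of $\cX$), so it is not \'etale-locally on $X'$ a morphism of smooth schemes. If $\mu_{r_i}$ is the generic stabilizer along $F_i$, acting faithfully on the normal direction, then $K_{\cZ/X'}$ contains the ramification term $(r_i-1)F_i$ even though $F_i$ dominates $E_i$, and $(\cZ\to X')^*E_i = r_iF_i + (\text{exceptional terms})$. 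Your argument as written would conclude $a_i \geq 0$, i.e., that $Y$ has \emph{canonical} singularities, and that is false under the hypotheses: take $Y = \bA^2_k/\mu_3$ (weights $(1,1)$) and $\cX = [\bA^2_k/\mu_3]$; this is a tame proper birational map with $K_{\cX/Y}=0$, yet the exceptional curve $E$ of the minimal resolution $X'$ has discrepancy $a = -1/3$. In that example $\cZ \cong [\mathrm{Bl}_0\bA^2_k/\mu_3]$, and one computes $K_{\cZ/X'} = 2F$ and $(\cZ\to X')^*E = 3F$, so $\ord_F(K_{\cZ/Y}) = 2 + 3a$; nonnegativity gives only $a \geq -2/3$, not $a \geq 0$. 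In general the corrected bookkeeping gives $r_ia_i + (r_i-1) + r_i\,\ord_{F_i}\bigl((\cZ\to\cX)^*K_{\cX/Y}\bigr) \geq 0$, hence $a_i \geq -1 + 1/r_i > -1$, which is exactly klt; so the strategy is salvageable, but you would need to define $K_{\cZ/X'}$ for this non-representable morphism, prove the stacky ramification formula and the pullback multiplicities, and justify the additivity of relative canonical divisors in that setting --- none of which is in your write-up, and it is precisely where the content lies.

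Two further remarks. Your opening appeal to \autoref{corollaryLogTerminalConvergenceExpressionForGorensteinWithCanonical} is vacuous as used: that result is an equivalence between log-terminality of $Y$ and integrability of $\bL^{-\ord_{K_{\cX/Y}}-\wt_\cX}$, so "it suffices to show $Y$ is log-terminal" merely restates the goal. The paper's proof goes the other way and is much shorter: since $\cX\to Y$ is tame, $\cX$ is a tame stack, so $\cA_\cX = |\sJ(\cX)|$ is a bounded cylinder; $\wt_\cX$ takes only finitely many values by \autoref{propositionWeightFunctionIsLocallyConstant}; $K_{\cX/Y}\geq 0$ bounds the exponent $-\ord_{K_{\cX/Y}}-\wt_\cX$ from above; and \autoref{theoremThinSubsetOfTwistedJets} discards the measure-zero locus where $\ord_{K_{\cX/Y}}=\infty$. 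Hence the integrand is integrable on $\cA_\cX$, and \autoref{corollaryLogTerminalConvergenceExpressionForGorensteinWithCanonical} then yields log-terminality --- no auxiliary resolution $X'$ or common modification $\cZ$ is needed.
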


\begin{proof}
Because having log-terminal singularities is Zariski-local, we may assume that $Y$ has affine diagonal. Then since tame proper maps have finite diagonal, $\cX$ has affine diagonal. Since $\cX \to Y$ is tame, $\cX$ is a tame stack. Thus $\cA_\cX = |\sJ(\cX)|$ is a bounded cylinder since $\sJ^\ell(\cX)$ is empty for all but finitely many $\ell$. Let $\cZ$ be the support of $K_{\cX/Y}$. By \autoref{theoremThinSubsetOfTwistedJets}, $\bL^{-\ord_{K_{\cX/Y}} - \wt_\cX}$ is integrable on $\cA_\cX$ if and only if it is integrable on the measurable set $\cA_\cX \setminus |\sJ(\cX \setminus \cZ)|$, and we note that $\ord_{K_{\cX/Y}}$ takes rational (i.e., not infinity) values on the latter. Therefore by \autoref{propositionWeightFunctionIsLocallyConstant} and the fact that $K_{\cX/Y} \geq 0$, the function $\bL^{-\ord_{K_{\cX/Y}} - \wt_\cX}$ is integrable on $\cA_\cX$. Thus \autoref{corollaryLogTerminalConvergenceExpressionForGorensteinWithCanonical} implies that $Y$ has log-terminal singularities.
\end{proof}

\begin{theorem}\label{theoremEffectiveResolutionImpiesLogTerminal}
Let $Y$ be a $\Q$-Gorenstein irreducible finite type scheme over $k$, let $\cX$ be a smooth irreducible finite type Artin stack over $k$, and let $\cX \to Y$ be a strongly birational map that factors as a good moduli space morphism followed by a tame proper morphism. If $K_{\cX/Y} \geq 0$, then $Y$ has log-terminal singularities.
\end{theorem}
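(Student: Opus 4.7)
The plan is to reduce this result to its special case \autoref{corollaryLogTerminalWithEffectiveDMResolution} by passing to the Edidin--Rydh canonical reduction of stabilizers. By \autoref{propositionCrepantResolutionImpliesStableGMS}, the factorization hypothesis on $\cX\to Y$ forces $\cX$ to be properly stable with good moduli space $X$ proper over $Y$. Thus I would form the Edidin--Rydh reduction $p\colon\cX'\to\cX$ of \cite[Theorem 2.11]{EdidinRydh}, a proper representable morphism with $\cX'$ a tame Deligne--Mumford stack.

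Next I would verify that $\cX'\to Y$ is a tame proper birational morphism, so that the hypotheses of \autoref{corollaryLogTerminalWithEffectiveDMResolution} are met. Tameness is automatic in characteristic $0$ since $\cX'$ is Deligne--Mumford. Properness follows by factoring $\cX'\to Y$ through the coarse moduli space $X'$ of $\cX'$: the map $\cX'\to X'$ is proper (coarse moduli of a tame DM stack), the induced map $X'\to X$ is proper by the universal property of coarse moduli applied to the proper map $\cX'\to X$, and $X\to Y$ is proper by \autoref{propositionCrepantResolutionImpliesStableGMS}. Strong birationality follows from $p$ being an isomorphism over the stable locus $\cX^s\supset\cU$ combined with $\cX\to Y$ being strongly birational through $\cU$.

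The crucial geometric input is that $\cX'$ is smooth and $K_{\cX'/\cX}\geq0$. This rests on a detailed analysis of the Edidin--Rydh algorithm, which iteratively performs stacky blowups (or root stacks) centered along the loci of maximal stabilizer dimension. In characteristic $0$ with affine diagonal, the reductivity of stabilizers combined with Luna-type slice theorems ensures that these centers are smooth, so the operations preserve smoothness. Moreover, each such operation contributes a non-negative summand to the relative canonical divisor, analogously to the classical formulas $K_{\widetilde{X}/X}=(\mathrm{codim}-1)E$ and $K_{\sqrt[r]{D/X}/X}=(1-1/r)\widetilde{D}$. Granting these facts, the pullback identity $K_{\cX'/Y}=K_{\cX'/\cX}+p^*K_{\cX/Y}\geq 0$ follows from the hypothesis $K_{\cX/Y}\geq0$ and the effectivity of pullbacks, and applying \autoref{corollaryLogTerminalWithEffectiveDMResolution} to $\cX'\to Y$ concludes that $Y$ has log-terminal singularities.

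The main obstacle is the careful verification of smoothness of $\cX'$ and effectivity of $K_{\cX'/\cX}$, performed by induction on the steps of the Edidin--Rydh reduction and requiring detailed bookkeeping of the stacky operations involved; the necessary ingredients should be extractable from \cite{EdidinRydh} in characteristic $0$, but must be assembled explicitly rather than invoked as a black box.
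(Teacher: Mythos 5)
Your proposal is correct and follows essentially the same route as the paper: pass to the Edidin--Rydh canonical reduction of stabilizers $\cX'\to\cX$ (using \autoref{propositionCrepantResolutionImpliesStableGMS} to guarantee the properly stable good moduli space), check that $\cX'\to Y$ is tame, proper, and birational, establish $K_{\cX'/Y}\geq 0$ via $K_{\cX'/\cX}\geq 0$ and additivity, and conclude with \autoref{corollaryLogTerminalWithEffectiveDMResolution}. The ``main obstacle'' you flag at the end is handled in the paper by direct citation rather than fresh bookkeeping: smoothness of $\cX'$ is \cite[Theorem 2.11(1c)]{EdidinRydh}, and the effectivity of $K_{\cX'/\cX}$ and the additivity $K_{\cX'/Y}=K_{\cX'/\cX}+$ (pullback of $K_{\cX/Y}$) are \cite[Lemma 8.2(1),(4)]{SatrianoUsatine3} applied to the stacky blowups appearing in the Edidin--Rydh algorithm.
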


\begin{proof}
\autoref{propositionCrepantResolutionImpliesStableGMS} implies that $\cX$ has a properly stable good moduli space $X$ and the induced morphism $X \to Y$ is proper. Let $\cX' \to \cX$ be the canonical reduction of stabilizers of Edidin-Rydh. Then $\cX'$ is smooth over $k$ by \cite[Theorem 2.11(1c)]{EdidinRydh}, and the composition $\cX' \to \cX \to Y$ is tame and proper by \cite[Theorem 2.11(2c) and (4c)]{EdidinRydh} and birational by \cite[Theorem 2.11(4b)]{EdidinRydh}. By \cite[Theorem 2.11(1c), Remark 2.12, and Definition 4.1]{EdidinRydh} and \cite[Lemma 8.2(1)]{SatrianoUsatine3}, we have that $K_{\cX' / \cX} \geq 0$. Then $K_{\cX'/Y} \geq 0$ by \cite[Lemma 8.2(4)]{SatrianoUsatine3}, so $Y$ has log-terminal singularities by \autoref{corollaryLogTerminalWithEffectiveDMResolution}.
\end{proof}

\begin{proof}[Proof of \autoref{mainTheoremCrepantResolutionImpliesLogTerminal}]
That (1) implies (2) is a special case of \autoref{theoremEffectiveResolutionImpiesLogTerminal}. That (2) implies (1) is \autoref{citedTheoremLogTerminalImpliesCrepantExists}.
\end{proof}

\begin{proof}[Proof of \autoref{maintheorem}]
By \autoref{mainTheoremCrepantResolutionImpliesLogTerminal}, $Y$ has log-terminal singularities. Thus \autoref{corollaryLogTerminalConvergenceExpressionForGorensteinWithCanonical} implies that $\bL^{-\wt_\cX}$ is integrable on $\cA_\cX$ and
\[
	\mu^\Gor_Y(\sL(Y)) = \int_{\cA_\cX} \bL^{-\wt_\cX}\diff\nu_\cX.
\]
For the last claim, first note that $\wt_\cX$ takes only finitely many values on each $|\sJ^\ell(\cX)|$ by \autoref{propositionWeightFunctionIsLocallyConstant}. Now Let $\cX' \to \cX$ be the canonical reduction of stabilizers of Edidin-Rydh. Since $\cX'$ is tame, the cyclotomic inertia stack $I_{\mu_\ell}(\cX)$ is empty for all but finitely many $\ell$. Therefore $\cA_\cX$ is contained in a finite union of the $|\sJ^\ell(\cX)|$, so $\wt_\cX$ takes only finitely many values on $\cA_\cX$.
\end{proof}

\bibliographystyle{alpha}
\bibliography{SHNCRAS}

\end{document}

\begin{lemma}\label{l:rankLX-dimX} 
    Let $\varphi\colon \Spec k' \to \cX$ with $k'$ a field and $\cX$ a smooth equidimensional stack. Then 
    \[
    \dim \cH^0(L\varphi^*L_{\cX}) - \dim \cH^1(L\varphi^*L_{\cX}) = \dim \cX.
    \]
\end{lemma}
\begin{proof}
    We may assume $\cX$ is an irreducible. Let $\rho: X \to \cX$ be a smooth cover by an irreducible scheme. Replacing $k'$ by a field extension, we may assume $\varphi$ lifts to $\widetilde{\varphi}: \Spec k' \to X$. Then we have an exact triangle
    \begin{align*}
        L\varphi^*L_{\cX} \to \widetilde{\varphi}^* \Omega^1_X \to \widetilde{\varphi}^* \Omega^1_{\cX/X}
    \end{align*}
    hence 
    \begin{align*}
        \dim \cH^0(L\varphi^* L_{\cX}) - \dim\cH^1(L\varphi^* L_{\cX}) &= \rank \Omega^1_X - \rank \Omega^1_{X/\cX} = \dim \cX.\qedhere
    \end{align*}
\end{proof}